\theoremstyle{plain}
\newtheorem{teo}{Theorem}[subsection]
\newtheorem{thm}[teo]{Theorem}
\newtheorem*{thm*}{Theorem}
\newtheorem{cor}[teo]{Corollary}
\newtheorem{lem}[teo]{Lemma}
\newtheorem{prop}[teo]{Proposition}		
\newtheorem{prob}[teo]{Problem}
\theoremstyle{definition}
\newtheorem{df}[teo]{Definition}
\newtheorem{exa}[teo]{Example}
\newtheorem{rmk}[teo]{Remark}
\DeclareMathOperator{\arc}{arc}
\DeclareMathOperator{\parts}{{\mathcal P}}
\DeclareMathOperator{\diam}{diam}
\DeclareMathOperator{\interior}{int}
\DeclareMathOperator{\dist}{dist}
\DeclareMathOperator{\card}{card}
\DeclareMathOperator{\cc}{Comp}
\newcommand{\azul}[1]{\textcolor{black}{#1}}
\newcommand{\clos}[1]{\overline{#1}}
\newcommand{\cwfin}{$\text{cw}_{\text F}$}
\newcommand{\U}{{\cal U}}
\newcommand{\A}{{\cal A}}
\newcommand{\R}    {\mathbb R}
\newcommand{\Z}  {\mathbb Z}
\newcommand{\continua}{{\mathcal C}}
\newcommand{\W}{\mathcal F}
\renewcommand{\epsilon}{\varepsilon}
\author{Alfonso Artigue}
\title{Dendritations of Surfaces}
\date{\today}
\begin{document}

\maketitle

\begin{abstract}
In this paper we develop a generalization of foliated manifolds in the context of metric spaces. 
In particular we study dendritations of surfaces that are defined as maximal atlases of compatible upper 
semicontinuous local decompositions into dendrites.
Applications are given in modeling stable and unstable sets of topological dynamical systems. 
For this purpose new forms of expansivity are defined.
\end{abstract}

\setcounter{tocdepth}{3}

\section{Introduction}
In this paper we develop a \emph{Theory of Foliations} from a viewpoint 
of \emph{Continuum Theory}. We define and study what we call \emph{continuumwise foliations} 
or simply \emph{cw-foliations} and as a special case \emph{dendritations}. 
These are generalizations of foliations of smooth manifolds, laminations, singular pseudo-Anosov foliations and 
the generalized foliations used by Hiraide to study expansive homeomorphisms. 
The idea is to consider monotone upper semicontinuous decompositions as \emph{local charts}.
We will not assume that the \emph{plaques} are distributed as a product structure as in standard foliation theory.
Moreover, two plaques in a common local chart have not even to be homeomorphic.

As we will show in Theorem \ref{thmCovCwExpFol}, cw-foliations are a conceptual framework to understand the 
distribution of local stable and unstable continua in the dynamical systems that we will consider. 
A motivation is to classify cw-expansive surface homeomorphisms. We say that $f$ is \emph{cw-expansive} (\emph{continuum-wise expansive}) \cite{Ka93} if 
there is $\delta>0$ such that if $\diam(f^n(A))<\delta$ for all $n\in\Z$ and $A$ is connected then 
$\card(A)=1$, i.e., $A$ is a singleton.
Important examples of cw-expansive dynamics are Anosov diffeomorphisms of a compact manifold of arbitrary dimension and 
pseudo-Anosov maps of compact surfaces with singular points and 1-prongs. 

For an Anosov diffeomorphism
local stable sets form foliated charts (at least $C^0$, see \cite{HPS}). 
If we consider an expansive homeomorphism of a compact surface, via \cites{L,Hi}, we know that local stable sets form 
a singular foliation. 
Recall that a homeomorphism $f$ is \emph{expansive} if there is $\delta>0$ such that if $\dist(f^n(x),f^n(y))<\delta$ for all $n\in\Z$ then $y=x$.
In fact, Hiraide and Lewowicz independently proved that expansive homeomorphisms of compact surfaces are conjugate to pseudo-Anosov diffeomorphisms. 
If we consider cw-expansivity on a compact surface we have that local stable sets may be far 
from determining foliations in the standard sense. 
Moreover, a local stable set may not even be a finite union of arcs.
For example, in \cite{ArAnomalous}, it is constructed a cw-expansive homeomorphism on a compact surface with a fixed point 
whose local stable set is connected but not locally connected, see \S \ref{secAnomalous}. 
This example suggests that the goal of classifying all the cw-expansive surface homeomorphisms requires new technology. 

In \S \ref{secCwNexp}, we introduce some definitions located between expansivity and cw-expansivity 
that will be called \emph{cwN-expansivity}. 
In a sense, a version of $N$-expansivity from a viewpoint of continuum theory. 
Recall that, given $N\geq 1$, we say that $f$ is \emph{N-expansive} \cite{Mo12} if there is $\delta>0$ such that 
if $A\subset X$ (an arbitrary subset) and $\diam(f^n(A))\leq\delta$ for all $n\in\Z$ then $\card(A)\leq N$ (i.e., $A$ has at most $N$ points).
We will say, see Definition \ref{defcwN}, that a homeomorphism is \emph{cwN-expansive} if there 
is $\delta>0$ such that if $A,B\subset X$ are continua, $\diam(f^n(A))<\delta$ for all $n\geq 0$ and 
$\diam(f^n(B))<\delta$ for all $n\leq 0$ then $\card(A\cap B)\leq N$.
Also, a homeomorphism is \emph{\cwfin-expansive} (where the $F$ means \emph{finite}) if there 
is $\delta>0$ such that if $A,B\subset X$ are continua, $\diam(f^n(A))<\delta$ for all $n\geq 0$ and 
$\diam(f^n(B))<\delta$ for all $n\leq 0$ then $A\cap B$ is a finite set.
These new forms of expansivity allow us to prove the local connection of stable sets and to conclude that 
they are dendrites. See Theorem \ref{thmCwfSuperficies}.
Recall that a \emph{dendrite} is a Peano continuum containing no simple closed curve, a \emph{continuum} is a compact connected metric space 
and a \emph{Peano continuum} is a locally connected continuum. 
It is known, see Theorem \ref{thmExConNT}, that such dendrites (the stable and 
unstable local sets mentioned above) have a uniform size, i.e., there is $\delta>0$ such that 
for all $x$ in the surface the stable and the unstable dendrites of $x$ meet the boundary of the disc 
centered at $x$ with radius $\delta$. 
In this way we arrive naturally to the concept of dendritic decomposition, see \S \ref{secDendDec}. 

The topic of topological decompositions has a long literature, see for example \cites{Sm,Ro,Da,Nadler2,ChaCha}. 
For applications in dynamical systems the reader is referred to \cites{ArAnomalous,CaPa,PX,BM,KTT}. 
A celebrated result proved by Moore in \cite{Mo25} states that 
the quotient space of an upper semicontinuous decomposition of the two-dimensional sphere 
in non-separating continua is again the sphere (assuming that the decomposition has at least two elements, in other case the quotient is a point). 
In some sense, Moore's decompositions are a generalization of zero-dimensional foliations, i.e., the decomposition in singletons. 
A standard foliated chart of $[0,1]\times[0,1]$ by horizontal lines has two properties: 
each plaque is an arc and the quotient space is an arc. 
By Proposition \ref{propLocChartDendritation}, a local chart of a dendritation has the following properties: each plaque is a dendrite and 
the quotient space is a dendrite. 
The mentioned properties of a standard foliated chart do not characterize the foliated chart, 
see Example \ref{exaCuasiBox}. 
In \S \ref{secFolBox} we give necessary and sufficient conditions for a decomposition to be 
a standard foliated chart.
In Corollary \ref{corCharFol} we conclude that a continuous and $\continua$-smooth dendritation is a standard foliation.
The terms \emph{continuous} and $\continua$-\emph{smooth} have a special meaning in this paper, see Definitions 
\ref{dfContDend} and \ref{dfCsmooth}.

In \cite{Hi89} Hiraide considered \emph{generalized foliations} 
for the study of expansivity from a topological viewpoint (see \S \ref{secConAtl}). 
However, this definition seems to be useful jointly with the pseudo orbit tracing property, 
and not in our case. 
For example, pseudo-Anosov singular foliations are not generalized foliations in the sense of Hiraide 
and they are our main examples of dendritations.
Our generalization of a foliation is designed to accompany cw-expansivity.

The concept of cwN-expansivity appears naturally in the deep study of the articles \cites{L,Hi}. 
In these papers expansive homeomorphisms of surfaces are classified. 
A careful reading reveals that several arguments can be done 
assuming cw1-expansivity instead of expansivity. 
The concept of cw1-expansivity was previously considered in \cite{Samba} 
where some topological properties of stable sets were proved.
In Theorem \ref{teoEquivPANo1prong} we show that every cw1-expansive homeomorphism of a compact surface is expansive. 
The key of this proof is Theorem \ref{thmCw1exp} where we give sufficient conditions for a cw1-expansive homeomorphism 
of a compact metric space to be expansive.
In Example \ref{exaCw1VsExp} we show that cw1-expansivity does not imply expansivity on arbitrary compact metric spaces 
(this space is not locally connected). 
In \S \ref{secPAS2} we show that there is a cw2-expansive homeomorphism of the two-dimensional sphere that is not 2-expansive.
It is a pseudo-Anosov with 1-prongs.

We obtained some general results on surface dendritations.
In Theorem \ref{thmGenCwfSup} we show that 
for every dendritation of a closed surface there is a residual set 
of points without ramifications.
In particular, generic leaves are one-dimensional submanifolds. 
This is a consequence of another result by Moore \cite{Mo28}, 
saying that at most a countable number of disjoint triods can be embedded in a plane.
In Theorem \ref{thmCwfSuperficies} we consider a \cwfin-expansive homeomorphism of a compact surface. 
We show that stable and unstable continua form dendritations. 
We also prove that: no leaf is a Peano-continuum, generic leaves are non-compact one-dimensional manifolds 
and in a dense subset of the surface stable and unstable leaves are topologically transverse.
One can feel that this result gives a nice \emph{generic picture} of what is a \cwfin-expansive homeomorphism of a compact surface. 
However, we think that the goal of classifying surface \cwfin-expansive homeomorphism is far from the present paper, not to mention cw-expansivity.

A brief sketch of the paper is as follows.
In \S \ref{secCwFolPeano} we introduce new forms of expansivity and we study the topology of stable and unstable sets. 
In \S \ref{secConThDec} we recall the main results from continuum theory that will be needed and 
study decompositions, the local charts of our foliations.
In \S \ref{secCwFoliations} we define and study cw-foliations on metric spaces. 
They are defined via atlases of upper semicontinuous decompositions. 
We study the induced partition of the space into leaves.
The contents of \S \ref{secCwFolPeano} and \S \ref{secCwFoliations} are independent.
In \S \ref{secStUnsCwFol} the results of the previous sections are joined to study the stable and the unstable 
cw-foliations defined by a cw-expansive homeomorphism of a metric space.
In \S \ref{secDendritations} we study some special cases of dendritations of surfaces. 
We give sufficient conditions to prove that they are (singular) foliations. 
Also, some properties of the stable and the unstable dendritations of a homeomorphism with 
some kind of expansivity are derived.
Throughout the paper several open problems are given.

The author thanks the referee for the careful reading and several suggestions that helped to improve the article.

\section{Variations of expansivity}
\label{secCwFolPeano}

We start presenting general results of cw-expansive homeomorphisms on Peano continua. 
In \S \ref{secCwNexp} we introduce cwN-expansivity and we summarize the main variations of 
expansivity that will be considered in this paper.
In \S \ref{secExamples} we present some examples 
that will be used to illustrate our results in subsequent sections.
In particular, in \S \ref{secPAS2} we give an example of a cw2-expansive homeomorphism on the two-dimensional sphere that is not 
2-expansive.
In \S \ref{secTopStaSet} basic topological properties of stable sets are stated. 
We recall the Invariant Continuum Theorem for such dynamics.
In \S \ref{secCapacitor} we introduce capacitors as a tool to understand 
what happens with unstable continua between two close stable plates. 
In \S \ref{secPartExp} we give another form of expansivity, called partial expansivity, 
that generalizes partial hyperbolicity of diffeomorphisms. We do not develop this concept in the 
paper, however we think that it could be of interest. 
In \S \ref{secRelExp} we consider the relationship between stable sets and stable continua. 
Also, we give sufficient conditions for a cwN-expansive homeomorphism to be N-expansive. 
For these purposes we introduce expansivity modulo an equivalence relation.

\subsection{CwN-expansivity}
\label{secCwNexp}
Let $f\colon X\to X$ be a homeomorphism of a compact metric space $(X,\dist)$. 
We say that $A\subset X$ is a \emph{subcontinuum} if $A$ is compact and connected.
Denote by $\continua(X)$ the space of subcontinua of $X$ 
$$\continua(X)=\{A\subset X: A\text{ is a non-empty continuum}\}.$$
In $\continua(X)$ we consider the Hausdorff metric. 
It is usually called \emph{hyperspace} of $X$ and has the remarkable properties of being compact (if $X$ is compact) 
and arc-connected (provided that $X$ is connected), see \cites{Nadler,Nadler2}.
For $\delta>0$ define the sets
\[
  \continua_\delta=\{A\in\continua(X):\diam(A)\leq\delta\},
\]
\[
  \continua^s_\delta=\{A\in \continua(X):f^n(A)\in\continua_\delta\text{ for all }n\geq 0\},
\]
\[
  \continua^u_\delta=\{A\in \continua(X):f^n(A)\in\continua_\delta\text{ for all }n\leq 0\}.
\]
The continua in $\continua^s_\delta$ are called $\delta$-\emph{stable} 
and those in $\continua^u_\delta$ are $\delta$-\emph{unstable}.
We also consider the sets
\begin{equation}
 \label{defCsCu}
 \begin{array}{l}
  \continua^s=\{A^s\in \continua(X):\lim_{n\to +\infty}\diam(f^n(A^s))= 0\},\\
  \continua^u=\{A^u\in \continua(X):\lim_{n\to -\infty}\diam(f^n(A^u))= 0\}.  
 \end{array}
\end{equation}
The continua in $\continua^s$ are called \emph{stable} 
and those in $\continua^u$ are \emph{unstable}.

\begin{rmk}
 The sets $\continua_\delta,\continua^s_\delta$ and $\continua^u_\delta$ are closed in $\continua(X)$.
\end{rmk}

\begin{df}
\label{defcwN}
  Given $N\geq 1$ we say that $f$ is \emph{cwN-expansive} if there is $\delta>0$ such that 
  if $A^s\in \continua^s_\delta$ and $A^u\in \continua^u_\delta$ then 
  $\card(A^s\cap A^u)\leq N$.
  In this case $\delta$ is a \emph{cwN-expansivity constant}.
\label{dfCwZ}
  We say that $f$ is \emph{\cwfin-expansive} if there is $\delta>0$ such that 
  if $A^s\in \continua^s_\delta$ and $A^u\in \continua^u_\delta$ then 
  $A^s\cap A^u$ is a finite set.
\end{df}

In Table \ref{tablaExp} the main variations of expansivity considered in this paper are summarized.

\begin{table}[ht]
\[
\begin{array}{ccccccc}
\hbox{expansivity}&\Leftrightarrow &\hbox{1-exp} & \Rightarrow & \hbox{{cw1-exp}} &&\\
&&\Downarrow && \Downarrow\\
&&\hbox{{2-exp}} & \Rightarrow & \hbox{{cw2-exp}}\\
&&\Downarrow && \Downarrow\\
&&\hbox{{\dots}} &  & \hbox{{\dots}}\\
&&\Downarrow && \Downarrow\\
&&\hbox{{N-exp}} & \Rightarrow & \hbox{{cwN-exp}}& & \\
&&&& \Downarrow\\
&& &  & \hbox{{\cwfin-exp}}&\Rightarrow & \hbox{{cw-expansivity}}\\
\end{array}
\]
\caption{Hierarchy of some generalizations of expansivity. 
The implications indicated by the arrows are easy to prove and hold 
for homeomorphisms on metric spaces.}
\label{tablaExp}
\end{table}

\subsection{Examples}
\label{secExamples}

In this section we explain the examples that we had in mind while developing this paper. 
We start with three classical families for which there are well established theories 
for modeling stable and unstable sets:
\begin{enumerate}
\item \emph{Anosov diffeomorphisms}. These diffeomorphisms are defined on smooth manifolds 
and are characterized by the uniform hyperbolicity on the tangent bundle.
Considering the distribution of stable and unstable sets,
they are the most regular kind of expansive homeomorphisms 
because they form continuous foliations (see \cite{HPS}). 
\item \emph{Smale spaces}. These are expansive homeomorphisms of compact metric spaces with local product structure (equivalently, canonical coordinates 
 or pseudo-orbit tracing property). 
 The category includes Smale's basic sets of Axiom A diffeomorphisms. 
 Stable and unstable sets can be modeled with \emph{Hiraide's generalized foliations} (see \S \ref{secConAtl}).
 \item \emph{Pseudo-Anosov diffeomorphisms}. These diffeomorphisms are defined on compact surfaces. 
 They have local product structure except for a finite number of points called \emph{singular}. 
 Stable and unstable sets form \emph{singular foliations} (see for example \cite{Hi}). 
\end{enumerate}
Next we describe other examples that will be essential in the development of the paper. 
Except for the first one, we assume that the reader is familiar with Smale's \emph{derived from Anosov diffeomorphisms} 
(which in particular is an interesting example of a Smale space).

\subsubsection{A pseudo-Anosov with 1-prongs}
\label{secPAS2}
The dynamics of pseudo-Anosov diffeomorphisms is not simple, at least from author's viewpoint. 
In this section we wish to discuss in detail some properties of a special example, 
a pseudo-Anosov with 1-prongs of the sphere. 
\azul{This example has been considered several times in the literature. 
It has some properties that may not be easy to predict at first sight.
In \cite{WaltersET}*{Example 1, p. 140} Walters considered it to show that 
a factor of an expansive homeomorphism may not be expansive.
In \cite{PaPuVi}*{\S 2.4} it is proved that 
the local stable set of some points is not locally connected.
In \cite{PaVi} it is shown that it is not entropy expansive, in fact they show that 
there are arbitrarily small horseshoes.
We will show that it is cw2-expansive but not $N$-expansive, for all $N\geq 1$.}

\azul{The example is as follows.} Let $T^2=S^1\times S^1$ be the two dimensional torus where $S^1=\R/\Z$. 
 Consider the equivalence relation $p\sim -p$ for $p\in T^2$. 
 The quotient space is a two-dimensional sphere $S^2=T^2/\sim$. 
 Denote by $\pi\colon T^2\to S^2$ the canonical projection.
 On the torus consider the Anosov diffeomorphism $\tilde f\colon T^2\to T^2$ defined by $\tilde f(x,y)=(2x+y,x+y)$. 
 Define $f\colon S^2\to S^2$ by $f(\pi(p))=\pi(\tilde f(p))$ for all $p\in T^2$.
\azul{For a more detailed construction the reader is referred to the works mentioned above.}
\begin{prop}
\azul{The homeomorphism $f$ is cw2-expansive.}
\end{prop}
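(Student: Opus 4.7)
The strategy is to lift the problem to the Anosov double cover $\tilde f : T^2 \to T^2$, exploit its cw1-expansivity, and descend to $S^2$ via the deck involution $\sigma(p) = -p$.

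The linear Anosov $\tilde f$ is expansive, and hence cw1-expansive: for all sufficiently small $\tilde\delta > 0$, every $\tilde\delta$-stable continuum lies in a local stable leaf and every $\tilde\delta$-unstable continuum in a local unstable leaf, so two such continua meet in at most one point. The projection $\pi : T^2 \to S^2$ is a branched double cover, ramified exactly over the four singular values $\pi(\fix(\sigma))$, and satisfies $\pi \circ \tilde f = f \circ \pi$. Using the local model $z \mapsto z^2$ at the branch points together with a compactness argument away from them, I choose $\delta \in (0, \tilde\delta)$ so that for every subcontinuum $C \subset S^2$ with $\diam(C) \leq \delta$, the preimage $\pi^{-1}(C)$ has at most two connected components, each of diameter at most $\tilde\delta$.

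I then claim that $\delta$ is a cw2-expansivity constant for $f$. Fix $A^s \in \continua^s_\delta$ and $A^u \in \continua^u_\delta$, and let $\tilde A^s$ be any component of $\pi^{-1}(A^s)$. For each $n \geq 0$ the set $\tilde f^n(\tilde A^s)$ is connected and projects onto $f^n(A^s)$, so it lies in a single component of $\pi^{-1}(f^n(A^s))$, whose diameter is at most $\tilde\delta$ by the choice of $\delta$. Therefore every component of $\pi^{-1}(A^s)$ is $\tilde\delta$-stable for $\tilde f$, and analogously every component of $\pi^{-1}(A^u)$ is $\tilde\delta$-unstable. By cw1-expansivity of $\tilde f$, each pair of such components meets in at most one point of $T^2$, giving
\[
|\pi^{-1}(A^s) \cap \pi^{-1}(A^u)| \leq 4.
\]

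To pass from $T^2$ down to $S^2$ I count $\sigma$-orbits in this intersection. Note that $\pi^{-1}(A^s)$ has two components (necessarily interchanged by $\sigma$) exactly when $A^s$ avoids the singular values; otherwise $\pi^{-1}(A^s)$ is a single $\sigma$-invariant continuum containing a branch point. When both $\pi^{-1}(A^s)$ and $\pi^{-1}(A^u)$ split into two components, no branch point of $T^2$ lies in either preimage, so $\sigma$ acts freely on the intersection and the at most four points there yield at most two orbits, whence at most two points of $A^s \cap A^u$. When at least one of the two preimages is connected, the cw1-bound improves to $|\pi^{-1}(A^s) \cap \pi^{-1}(A^u)| \leq 2$, and the $\sigma$-action again forces $|A^s \cap A^u| \leq 1$. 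The main delicate step is the initial choice of $\delta$: ensuring uniform smallness of the components of preimages of small continua requires handling the branched neighborhoods, where $\pi$ is topologically $z \mapsto z^2$, separately from the unramified region.
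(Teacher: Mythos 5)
Your proof is correct, but it takes a genuinely different route from the paper. The paper works directly on $S^2$: it invokes the stable and unstable singular foliations of $f$, observes that near a $1$-prong a small stable arc can meet a small unstable arc in at most two points, and then reduces everything to showing that every stable continuum is contained in a stable leaf (which it does by a density-of-periodic-points argument). You instead lift to the Anosov cover $\tilde f$ on $T^2$, where cw1-expansivity is the classical fact that small stable and unstable continua are transversal line segments, and you push the bound down through the branched double cover by counting $\sigma$-orbits; the two points of $A^s\cap A^u$ arise as two free $\sigma$-orbits among the at most four upstairs intersections, rather than from the geometry of a $1$-prong. Your approach buys self-containedness (only the linear Anosov map and the topology of the quotient are used, with no appeal to the singular foliation structure of $f$), at the price of the covering-space bookkeeping: the facts that $\pi^{-1}(C)$ has at most two components for $C$ a continuum (confluence of the open map $\pi$), that these components are uniformly small when $C$ is small, and that a component of $\pi^{-1}(A^s)$ is itself $\tilde\delta$-stable, all of which you handle correctly. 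Two harmless imprecisions: the biconditional ``two components exactly when $A^s$ avoids the singular values'' fails in the direction you do not use (a small continuum encircling a branch value without containing it has connected preimage), and in your second case the conclusion $\card(A^s\cap A^u)\leq 1$ needs the extra remark that a $\tilde\delta$-small continuum contains at most one of the four branch points --- though the bound $\leq 2$, which is all that cw2-expansivity requires, holds there regardless.
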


\begin{proof}
Denote by $\W^s$ and $\W^u$ the stable and the unstable singular foliations of $f$, respectively. 
These are transverse foliations except at the singularities. 
Singular points are 1-prongs and the foliations looks as in Figure~\ref{figPAS2Cantor}. 
Then, a small arc of the stable foliation intersects in at most two points 
an unstable arc. 
Thus, the proof is reduced to show that every stable continuum is contained in a stable leaf.
Arguing by contradiction, let $C\subset S^2$ be a stable continuum that it is not contained in a stable leaf. 
Then there is a hyperbolic periodic point $p\in S^2$ such that $\W^s(p)\cap C\neq\emptyset$, \azul{where $\W^s(p)$ denotes the stable leaf trough $p$}. 
Since $p$ is hyperbolic we see that $\diam(f^n(C))$ cannot go to zero as $n\to+\infty$ because 
$C$ is not contained in $\W^s(p)$. This contradiction proves that every stable continuum is contained in a stable leaf. 
\end{proof}

As usual we define \emph{local stable} and \emph{unstable sets} as 
\begin{eqnarray*}
W^s_\epsilon(p)=\{x\in S^2:\dist(f^n(p),f^n(x))\leq\epsilon\text{ for all }n\geq 0\}\\
W^u_\epsilon(p)=\{x\in S^2:\dist(f^n(p),f^n(x))\leq\epsilon\text{ for all }n\leq 0\}
\end{eqnarray*}
respectively. 
The next result can be derived from \cite{PaVi}, 
however, since we think that more details can be given, a proof is included. 
The author learned this proof from J. Vieitez and J. Lewowicz. 
We will use the notation $\clos{A}$ for the closure of $A$.

\begin{prop}
\label{propPAS2Cantor}
For all $\epsilon>0$ there is a Cantor set $C$ such that $\diam(f^n(C))\leq\epsilon$ for all $n\in\Z$,
in particular 
$f$ is not $N$-expansive for all $N\geq 1$. 
\end{prop}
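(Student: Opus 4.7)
The plan is to construct a Cantor set inside a single Bowen ball of $f$, working on the Anosov lift $\tilde f$ of $T^2$ and exploiting the folding given by $\iota\colon p\mapsto -p$. The key identity is
\[
\dist_{S^2}(\pi(\tilde p),\pi(\tilde q))=\min\bigl(\dist_{T^2}(\tilde p,\tilde q),\,\dist_{T^2}(\tilde p,-\tilde q)\bigr),
\]
so that $\pi(\tilde p)$ belongs to the Bowen ball of $\pi(\tilde q)$ at scale $\epsilon$ precisely when, for every $n\in\Z$, at least one of $\dist_{T^2}(\tilde f^n\tilde p,\tilde f^n\tilde q)$ and $\dist_{T^2}(\tilde f^n\tilde p,-\tilde f^n\tilde q)$ is $\leq\epsilon$. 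Finding uncountably many such $\tilde p$ (for a fixed generic $\tilde q$) then produces the desired Cantor set $C=\pi(\tilde K)\subset S^2$ with $\diam(f^nC)$ uniformly small, after rescaling $\epsilon$.

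Fix a non-periodic base point $\tilde q\in T^2$ so that $\{\tilde f^n\tilde q\}_{n\in\Z}$ is dense. For each sign sequence $s=(s_n)_{n\in\Z}\in\{\pm1\}^\Z$ define the candidate pseudo-orbit $\omega^s_n:=s_n\tilde f^n(\tilde q)$. Because $\tilde f$ commutes with $\iota$, the one-step jump $\dist_{T^2}(\tilde f(\omega^s_n),\omega^s_{n+1})$ equals $0$ when $s_n=s_{n+1}$ and equals $2\,\dist_{T^2}(\tilde f^{n+1}\tilde q,0)$ when $s_n\neq s_{n+1}$. Hence $(\omega^s_n)$ is an $\epsilon$-pseudo-orbit of $\tilde f$ iff every sign-switch of $s$ takes place at a return time $m\in T:=\{m\in\Z:\dist_{T^2}(\tilde f^m\tilde q,0)\leq\epsilon/2\}$. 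By density of the orbit the set $T$ is infinite, and the admissible sequences $\mathcal S_\epsilon\subset\{\pm1\}^\Z$ form a shift-invariant Cantor set.

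Since $\tilde f$ is Anosov it has the shadowing property, so each admissible pseudo-orbit is shadowed within $\epsilon$ (up to a universal multiplicative constant absorbed in $\epsilon$) by a unique actual $\tilde f$-orbit, giving a continuous assignment $s\mapsto\tilde p_s\in T^2$. To see that this map is injective, note that if $s\neq s'$ they disagree on some block $I$ between consecutive elements of $T$; at interior times $n\in I$ one has $\dist_{T^2}(\tilde f^n\tilde q,0)>\epsilon/2$, so $\dist_{T^2}(\omega^s_n,\omega^{s'}_n)>\epsilon$, and expansivity of $\tilde f$ rules out a common shadowing orbit once we arrange that such an $n$ has $\dist_{T^2}(\tilde f^n\tilde q,0)$ comfortably larger than the Anosov expansivity constant (achieved by generic choice of $\tilde q$). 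Therefore $\tilde K:=\{\tilde p_s:s\in\mathcal S_\epsilon\}$ is a Cantor subset of $T^2$; for generic $\tilde q$ it avoids the four $\iota$-fixed points, so $\pi$ is a local homeomorphism on $\tilde K$ and $C:=\pi(\tilde K)$ is a Cantor set of $S^2$ contained in the Bowen ball of $\pi(\tilde q)$. By construction $\diam(f^nC)\leq 2\epsilon$ for every $n\in\Z$, and a preliminary rescaling of $\epsilon$ delivers the stated bound; since $\card(C)=\infty$ we deduce that $f$ is not $N$-expansive for any $N\geq 1$.

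The main obstacle is the injectivity step: the admissible sequences are constrained to switch signs only at the sparse folding times in $T$, and one must confirm that despite this constraint the shadowing map $\mathcal S_\epsilon\to T^2$ is still injective. The argument above resolves this by observing that, between two consecutive switches, the pseudo-orbit follows a single branch $\pm\tilde f^n(\tilde q)$ which typically wanders far from $0$, so two sequences that disagree on a block separate macroscopically enough to be distinguished by Anosov expansivity.
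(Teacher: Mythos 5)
Your strategy --- encoding points of a Bowen ball by sign sequences $(s_n)$, realizing $\omega^s_n=s_n\tilde f^n(\tilde q)$ as pseudo-orbits of the Anosov lift and then shadowing --- is genuinely different from the paper's proof, which never leaves the sphere: there one takes a transitive point $p$, uses the folding at a $1$-prong to plant a second transitive point of the unstable arc of $p$ inside $W^s_{\epsilon/2}(p)$, iterates to get nested finite sets $C_n$ with $|C_n|=2^{n+1}$ inside $W^s_\epsilon(p)$, and takes the closure; perfectness is built in and total disconnectedness comes for free from cw-expansivity. Your route is workable and close in spirit to the small-horseshoe construction of Pacifico--Vieitez, but the step that is supposed to make $C$ uncountable --- injectivity of $s\mapsto\tilde p_s$ --- has a real gap.

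First, the metric identity you rely on is wrong: the involution $\iota(p)=-p$ of $T^2$ has four fixed points (the half-integer points), and $\dist_{T^2}(x,-x)$ is controlled by the distance from $x$ to that four-point set, not by $\dist_{T^2}(x,0)$. Hence $n\notin T$ does not give $\dist_{T^2}(\omega^s_n,\omega^{s'}_n)>\epsilon$ when $s_n\ne s'_n$ (take $\tilde f^n\tilde q$ near $(1/2,1/2)$). Second, two distinct admissible sequences need not disagree on a block containing a time outside $T$: if $T$ contains consecutive integers, a sequence can switch sign and switch back while staying inside $T$, so the two pseudo-orbits never separate by more than $\epsilon$ and shadowing may return the same orbit; the map is then not injective and the image could in principle fail to be perfect. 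Third, even at a time where the pseudo-orbits are more than $\epsilon$ apart, to conclude that the shadowed orbits differ you must beat twice the shadowing \emph{output} constant (for an $\epsilon$-pseudo-orbit this is $K\epsilon$ with $K\geq 1$), and this is not arranged by a ``generic choice of $\tilde q$.'' All three points are repairable --- define $T$ using the full fixed-point set of $\iota$, allow switches only at a sparse subset $T'\subset T$ whose gaps contain times at which the orbit is a definite distance from $\mathrm{Fix}(\iota)$, and run the shadowing at a scale small compared with that distance --- but as written the argument does not establish that $C$ is a Cantor set.
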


\begin{proof}
Let $q\in S^2$ be a 1-prong of $f$. 
Take $p\in S^2$ such that the orbit $\{f^n(p):n\geq 0\}$ is dense in $S^2$. 
A point with this property will be called \emph{transitive}.
Consider $\epsilon>0$. 
We will show that $W^s_\epsilon(p)$ contains a Cantor set contained in the unstable arc of $p$. 
Consider $n\geq 0$ such that there is $x_1\neq f^n(p)$ with $x_1\in W^s_{\epsilon/2}(f^n(p))\cap W^u_{\epsilon/2}(f^n(p))$. 
We have that $p_1=f^{-n}(x_1)\in W^s_{\epsilon/2}(p)\cap W^u_{\epsilon/2}(p)$. 
Also we can assume that $p_1$ is in the unstable arc of $p$, see Figure~\ref{figPAS2Cantor}. 
Since $f^n(p)$ and $f^n(p_1)$ are in a stable arc and 
$p$ is transitive we have that $p_1$ is transitive too. 
\begin{figure}[ht]
\center
  \includegraphics{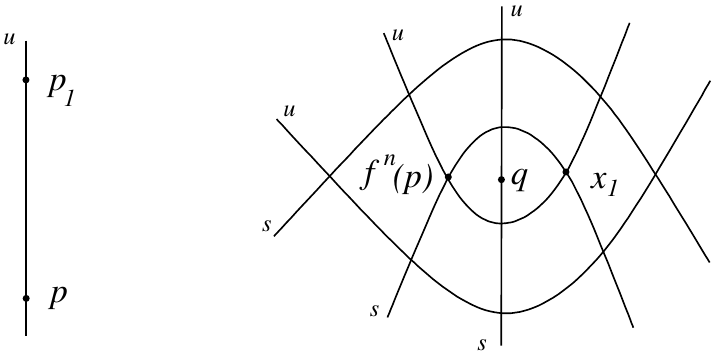} 
  \caption{The unstable arc of $p$ (left) and a neighborhood of the 1-prong $q$ (right). 
  Stable and unstable arcs are indicated with $s$ and $u$ respectively.}
  \label{figPAS2Cantor}
\end{figure}

Denote by $C_1=\{p,p_1\}$. 
For each $x\in C_1$ take a transitive point $y$ in the unstable arc of $p$ with 
$y\in W^s_{\epsilon/4}(x)$. 
Define $C_2$ as the set containing $C_1$ and these two new points. 
Again, for each $x\in C_2$ take a transitive point $y$ in the unstable arc of $p$ with 
$y\in W^s_{\epsilon/8}(x)$. Inductively we define a sequence of sets $\{C_n:n\geq 1\}$ 
with $|C_n|=2^{n+1}$. By construction each $C_n$ is contained in $W^s_\epsilon(p)$. 
Denote by $C=\clos{\cup_{n\geq 1}C_n}$. 
Since $W^s_\epsilon(p)$ is closed we have that $C\subset W^s_\epsilon(p)$. 
Also, $C$ has no isolated point and it cannot contain an arc because $f$ is cw-expansive (in fact cw2-expansive as we proved). 
Then, $C$ is a Cantor set contained in the unstable arc of $p$ and in $W^s_\epsilon(p)$.
\end{proof}

\subsubsection{\azul{Quasi-Anosov diffeomorphisms}}
\label{secQAnosov}

By definition, a \emph{quasi-Anosov} diffeomorphism is a $C^1$ diffeomorphism $f\colon M\to M$ 
of a compact manifold such that $\{\|df^n(v)\|:n\in\Z\}$ is unbounded for every non-vanishing tangent vector $v$, 
where $df^n$ is the differential of $f^n$ and $\|\cdot\|$ is the norm induced by a Riemannian metric on $M$.
They were characterized by Mañé \cite{Ma75} as Axiom A diffeomorphisms with quasi-transverse stable and unstable spaces.
Recall that \emph{Axiom A} means that the non-wandering set is hyperbolic and periodic points are dense in the non-wandering set. 
For an Axiom A diffeomorphism, at every point $x\in M$ the tangent space contains a contracting subspace $E^s_x$ and an expanding subspace $E^u_x$. 
The \emph{quasi-transversality} condition means that $E^s_x\cap E^u_x=0$ for all $x\in M$.

We proceed to sketch the construction of a particular quasi-Anosov diffeomorphism 
that is not Anosov \cite{FR}.
Consider two derived from Anosov diffeomorphisms 
$f_i\colon M_i\to M_i$, $i=1,2$, where $M_i$ is an $n$-torus,
$f_1$ is conjugate to $f_2^{-1}$ and
$f_1$ presents a codimension one shrinking repeller and a sink fixed point $p_1$.
Consequently, $f_2$ presents a codimension one expanding attractor and a source fixed point $p_2$.
Let $B_i$, $i=1,2$, be an open ball around $p_i$ such that $\clos{B_1}\subset f_1^{-1}(B_1)$ and 
$\clos{B_2}\subset f_2(B_2)$. 
Consider the manifolds with boundary $N_i=M_i\setminus B_i$
and a diffeomorphism $\varphi\colon 
\clos{f_1^{-1}(B_1)}\setminus B_1\to \clos{f_2(B_2)}\setminus B_2$
such that $M=N_1\cup N_2/x\sim \varphi(x)$ is a closed manifold and 
there is a diffeomorphism $f\colon M\to M$ extending the dynamics of $f_1$ and $f_2$.

In \cite{FR} it is proved that for $n=3$, there is a diffeomorphism $\varphi$ making $f$ quasi-Anosov.
As we said, the non-wandering set is hyperbolic. 
At wandering points stable and unstable manifolds are one-dimensional and its tangent lines are quasi-transverse.
Since $M$ is 3-dimensional, they are not transverse and $f$ is not Anosov.

\subsubsection{\azul{Q{$^r$}-Anosov diffeomorphisms}}
\label{secQrAnosov}
In \cite{ArRobNexp} the construction of \S \ref{secQAnosov}
was considered for the simpler case $n=2$. 
On a surface there is not enough space to construct a quasi-Anosov diffeomorphism (not being Anosov). 
However, the construction can be performed.
In this case the map $\varphi$ will introduce tangencies between stable and unstable manifolds at wandering points. 
Assuming that $f$ is of class $C^r$, if these tangencies are of order at most $r\geq 2$ we say that $f$ is \emph{Q}{$^r$}-\emph{Anosov}. 
In \cite{ArRobNexp} it is shown that the set of Q$^r$-Anosov diffeomorphisms 
of a closed surface is an open set (in the $C^r$ topology) of $r$-expansive diffeomorphisms, 
where $r$-\emph{expansive} means $N$-expansive with $N=r$.
These examples are Axiom A and show that $(N+1)$-expansivity does not imply $N$-expansivity.

\subsubsection{\azul{Anomalous cw-expansivity}}
\label{secAnomalous}

In \cite{ArAnomalous} it was considered another variation of the quasi-Anosov. 
On a surface, as in \S \ref{secQrAnosov}, 
it is defined a cw-expansive homeomorphism with what we called an \emph{anomalous saddle}. 
It is a fixed point whose stable set is connected but not locally connected. 
Let us briefly give a description. 
Let $E\subset \R^2$ be the set $(\{1\}\cup\{1+\frac1n\}_{n\geq 2})\times[0,1]$.
It is a countable union of vertical segments.
Consider the auxiliary map $T\colon\R^2\to\R^2$ defined as $T(p)=p/2$. 
Define $F^s=(\R\times\{0\})\cup_{n\geq 0} T^n(E)$ and $F^u=\{0\}\times \R$. 
The anomalous saddle in this case is the origin. 
In \cite{ArAnomalous} it is constructed a homeomorphism around $(0,0)$ such that 
its local stable set is $F^s$ and the unstable set is $F^u$ (intersected with a neighborhood of the origin). 
This anomalous saddle is \emph{inserted} in a $Q^r$-Anosov diffeomorphism of the previous section.
In the example, the intersection of local stable and unstable sets is totally disconnected, thus implying cw-expansivity.

%

\subsection{Invariant continua}
\label{secTopStaSet}
Our interest is centered at dynamical systems on surfaces, but some fundamental results on cw-expansivity hold for homeomorphisms on Peano continua, 
i.e., a connected, locally connected and compact metric space.
In this setting, Theorem \ref{thmExConNT} plays the role of the Invariant Manifold Theorem \cite{HPS} for hyperbolic diffeomorphisms on smooth manifolds.

The following result is a characterization of cw-expansivity in terms of stable and unstable continua. 
\azul{The direct part is known \cite{Ka93}*{\S 2}.}
The converse may be new, however it is quite direct from the definitions.

\begin{prop}
\label{propExConNT}
A homeomorphism $f\colon X\to X$ of a compact metric space is cw-expansive if and only if 
 there is $\epsilon^*>0$ such that 
  \begin{enumerate}
   \item $\continua^\sigma_{\epsilon^*}\subset \continua^\sigma$ and 
   \item for all $\epsilon>0$ there is $\delta>0$ such that 
   $\continua^\sigma\cap \continua_\delta\subset \continua^\sigma_\epsilon$
   \end{enumerate}
   for $\sigma=s$ and $\sigma=u$.
\end{prop}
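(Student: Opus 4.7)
The plan is to prove the two implications separately, with most of the work going into the converse, since the paper itself attributes the direct part to \cite{Ka93}*{\S 2}. Everything is symmetric under $s\leftrightarrow u$ via $f\leftrightarrow f^{-1}$, so I will only discuss $\sigma=s$.

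For the direct implication I would first handle (1). Let $A\in\continua^s_{\epsilon^*}$, where $\epsilon^*$ is a cw-expansivity constant. If $\diam(f^n(A))\not\to 0$, some subsequence $f^{n_k}(A)$ has diameter bounded below by $\eta>0$, and by compactness of $\continua(X)$ in the Hausdorff metric I may assume $f^{n_k}(A)\to B$ with $\diam(B)\geq\eta$. For every fixed $m\in\Z$ one has $f^{n_k+m}(A)\to f^m(B)$, and for $k$ large $n_k+m\geq 0$, so $\diam(f^{n_k+m}(A))\leq\epsilon^*$; passing to the limit, $\diam(f^m(B))\leq\epsilon^*$ for every $m\in\Z$. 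Cw-expansivity then forces $B$ to be a singleton, contradicting $\diam(B)\geq\eta$. For (2) I would refer to \cite{Ka93}*{\S 2}: given $A_n\in\continua^s$ with $\diam(A_n)\to 0$ but $\diam(f^{k_n}(A_n))>\epsilon$, one refines the choice of $k_n$ and takes Hausdorff limits of $f^{k_n}(A_n)$ to build a non-trivial continuum whose whole $f$-orbit is contained in $\continua_{\epsilon^*}$, contradicting cw-expansivity.

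For the converse I assume (1) and (2) for both $\sigma$ and plan to show that the same $\epsilon^*$ is a cw-expansivity constant. Let $A$ be a continuum with $\diam(f^n(A))\leq\epsilon^*$ for every $n\in\Z$, i.e.\ $A\in\continua^s_{\epsilon^*}\cap\continua^u_{\epsilon^*}$. By (1) for both $\sigma$, $A\in\continua^s\cap\continua^u$, so $\diam(f^n(A))\to 0$ as $n\to\pm\infty$. Assume for contradiction that $d:=\diam(A)>0$. Applying (2) with $\epsilon=d/2$ for both $\sigma=s$ and $\sigma=u$ yields a common $\delta>0$ with $\continua^\sigma\cap\continua_\delta\subset\continua^\sigma_{d/2}$. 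By the decay of diameters I can pick $N$ with $\diam(f^N(A))<\delta$; the continuum $C:=f^N(A)$ still lies in $\continua^s\cap\continua^u$, since these classes are invariant under the action of $f$ on $\continua(X)$. Applying (2) to $C$ for each $\sigma$ then gives $\diam(f^k(C))\leq d/2$ for every $k\in\Z$; since $A=f^{-N}(C)$, this yields $\diam(A)\leq d/2$, contradicting $\diam(A)=d$. Hence $A$ is a singleton, so $f$ is cw-expansive with constant $\epsilon^*$.

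The main obstacle is in the direct part's statement (2). Picking $k_n$ as the argmax of $k\mapsto\diam(f^k(A_n))$ need not make $\diam(f^{k_n}(A_n))\leq\epsilon^*$, and then the Hausdorff limit might have an orbit escaping $\continua_{\epsilon^*}$ so that cw-expansivity cannot be invoked directly. One must refine the choice of $k_n$ (or replace $A_n$ by a suitable subcontinuum) so that the limiting continuum has all of its $f$-iterates inside $\continua_{\epsilon^*}$; this is exactly where Kato's careful argument is needed. The remaining parts, namely (1) and the converse, reduce to compactness in $\continua(X)$ together with the $f$-invariance of $\continua^s$ and $\continua^u$.
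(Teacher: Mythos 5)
Your proposal is correct and follows essentially the same route as the paper: part (1) of the direct implication via Hausdorff limits of $f^{n_k}(A)$, part (2) deferred to Kato together with the correct observation that one must pass to subcontinua $C_k\subset A_k$ whose forward orbits stay in $\continua_\epsilon$ before taking limits (which is exactly what the paper's own sketch does), and a converse that iterates $A$ until it is $\delta$-small and then applies condition (2). The only cosmetic difference is in the converse, where the paper derives the contradiction from a single application of (2) for $\sigma=s$ after iterating backwards, whereas you apply (2) for both $\sigma$ to bound the whole orbit of $f^N(A)$; both arguments are valid and equivalent in substance.
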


\begin{proof}
 Direct. Let $\epsilon^*$ be such that if $A\subset X$ is connected and 
 $\diam(f^n(A))\leq\epsilon^*$ for all $n\in\Z$ then $A$ is a singleton. 
 Suppose that $\sigma=s$ and take $A\in \continua^s_{\epsilon^*}$, that is, 
 $\diam(f^n(A))\leq \epsilon^*$ for all $n\geq 0$. 
 If this diameter does not converge to 0 then there are $r>0$ and $n_k\to +\infty$ such that 
 $\diam(f^{n_k}(A))>r$ for all $k\geq 0$. 
 Since $\continua(X)$ is compact we can assume that $f^{n_k}(A)\to C$ in the Hausdorff metric. 
 We will show that 
 \begin{equation}
  \label{ecuEstCont}
  \diam(f^n(C))\leq\epsilon^*\text{ for all }n\in\Z.
 \end{equation}
 Since $n_k\to +\infty$, given $n\in\Z$ there is $k_0$ such that $n_k+n\geq 0$ for all $k\geq k_0$. 
 Then, $\diam(f^{n_k+n}(A))\leq \epsilon ^*$ for all $k\geq k_0$. 
 Since $f^{n_k+n}(A)=f^n(f^{n_k}(A))$ and $f^{n_k}(A)\to C$ we conclude (\ref{ecuEstCont}).
 Since $\diam (C)\geq r>0$ we have that $C$ is not a singleton. 
 This contradicts the cw-expansivity of $f$ and proves that $\continua^s_{\epsilon^*}\subset \continua^\sigma$.
 
 To prove the next part we argue by contradiction. 
 Suppose that there is $\epsilon>0$ and a sequence $A_k\in\continua^s$ such that 
 $\diam(A_k)\to 0$ and $\diam(f^{m_k}(A_k))>\epsilon$ for all $k\geq 0$ where $m_k\geq 0$. 
 Note that $m_k\to+\infty$ and that we can assume that $\epsilon<\epsilon^*$.
 Since $A_k\in\continua^s$ there is $n_k\geq m_k$ such that $\diam(f^n(A_k))\leq\epsilon$ for all $n\geq n_k$.
 We can take a subcontinuum $C_k\subset A_k$ and another divergent sequence $m'_k$ such that 
 $\diam(f^{m'_k}(C_k))=\epsilon$ and $\diam(f^n(C_k))\leq\epsilon$ for all $n\geq 0$. 
 \azul{Then, arguing as in the proof of (\ref{ecuEstCont}), we conclude that 
 a limit continuum of $f^{m'_k}(C_k)$ contradicts the cw-expansivity}. 

 Converse. Let us show that $\epsilon^*$ is a cw-expansivity constant. 
 Suppose that $\diam(f^n(A))\leq\epsilon^*$ for all $n\in\Z$. 
 Assume, by contradiction, that 
 there is $\epsilon\in (0,\diam(A)).$ 
 For this value of $\epsilon$ there is $\delta>0$ such that $\continua^s\cap \continua_\delta\subset \continua^s_\epsilon$. 
 Since $A\in \continua^u_{\epsilon^*}$ and $\continua^u_{\epsilon^*}\subset \continua^u$ 
 there is $m\leq 0$ such that $\diam(f^m(A))<\delta$. 
 Then $f^m(A)\in \continua^s\cap \continua_\delta$. 
 As $\diam(A)>\epsilon$ we have that $f^m(A)\notin\continua^s_\epsilon$. 
 This is a contradiction that finishes the proof.
\end{proof}

\azul{The next theorem states the existence of non-trivial stable and unstable continua through each point of the space. 
Moreover, these continua have diameter bounded away from zero. 
Their invariance is in fact given by Proposition \ref{propExConNT}.
The result was first proved independently by Hiraide and Lewowicz in \cites{Hi,L} for 
the classification of expansive homeomorphisms on compact surfaces. 
It was generalized by Kato in \cite{Ka2}*{Theorem 1.6} for cw-expansivity. 
A partial result was previously given by Mañé in \cite{Ma} to prove that minimal expansive homeomorphisms can only exist 
on totally disconnected spaces.}

\begin{rmk}
For the study of cw-expansive homeomorphisms it is essential to assume some kind of connection of the space. 
To illustrate this point consider that every homeomorphism of a Cantor set is cw-expansive. 
It turns that local connection is a good property to exploit the cw-expansivity. 
There is a minor loss of generality if, in addition, we assume that the space is a Peano continuum. 
Indeed, if $X$ is a locally connected compact metric space then it has a finite number of components. 
Therefore, a homeomorphism $f$ of $X$ will permute this components and 
taking a power of $f$ we will have a finite number of homeomorphisms of Peano continua.
\end{rmk}

For a set $\continua^*\subset\continua(X)$, as $\continua_\delta,\continua^\sigma$ and $\continua^\sigma_\epsilon$, 
define $\continua^*(x)=\{A\in\continua^*:x\in A\}$.

\begin{thm}[Invariant Continuum Theorem]
\label{thmExConNT}
  If $f$ is a cw-expansive homeomorphism of a Peano continuum $X$
  then for all $\epsilon>0$ there is $\delta>0$ such that 
  \begin{equation}
   \label{ecuICT}
   \continua^\sigma_\epsilon(x)\setminus\continua_\delta(x)\neq\emptyset
  \end{equation}
 for all $x\in X$ and $\sigma=s,u$.
%
%
\end{thm}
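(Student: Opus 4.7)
By symmetry, applying the result to $f^{-1}$ reduces us to the case $\sigma = s$. Fix $\epsilon > 0$, and, shrinking $\epsilon$ if necessary, assume $\epsilon < \epsilon^*$, where $\epsilon^*$ is the cw-expansivity constant furnished by Proposition~\ref{propExConNT}. By Proposition~\ref{propExConNT}(2), also fix $\delta_0 > 0$ so that $\continua^s \cap \continua_{\delta_0} \subset \continua^s_{\epsilon/2}$; this dictionary between ``small diameter'' and ``$\epsilon$-stable'' will be used to convert the bounds coming from cw-expansivity back into the conclusion (\ref{ecuICT}).

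The plan is to construct, at every $x \in X$, a continuum $C(x) \in \continua^s_\epsilon(x)$ with $\diam(C(x)) > \delta$ for a uniform $\delta > 0$. Since $X$ is a Peano continuum, it is uniformly locally connected, so there is $\eta = \eta(\epsilon) > 0$ such that each $x$ has a connected open neighborhood $U(x)$ with $\diam(U(x)) < \eta$. For $n \geq 0$ set
\[
V_n(x) = \{\, y \in \overline{U(x)} : \dist(f^j(y), f^j(x)) \leq \epsilon/3 \text{ for } 0 \leq j \leq n\,\},
\]
and let $C_n(x)$ be the component of $V_n(x)$ containing $x$. The $C_n(x)$ form a nested sequence of subcontinua through $x$, so $C(x) := \bigcap_n C_n(x)$ is a continuum with $\diam(f^j(C(x))) \leq 2\epsilon/3$ for every $j \geq 0$; thus $C(x) \in \continua^s_\epsilon(x)$ automatically.

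The crux is to obtain a uniform lower bound on $\diam(C(x))$. I would argue by contradiction. Suppose there is a sequence $x_k \in X$ with $\diam(C(x_k)) \to 0$; by compactness we may assume $x_k \to x_* \in X$. The idea is to use the Peano property at each iterate to inject fresh, $\eta$-sized continua into the forward orbit: by arc-connectedness of the hyperspace $\continua(X)$, for each $n$ there is a continuum $B_n \ni f^n(x_k)$ with $\diam(B_n) = \eta$. Pulling back $B_n$ by $f^{-n}$ and intersecting with $\overline{U(x_k)}$, one tracks the component through $x_k$; by boundary-bumping in $\overline{U(x_k)}$, either this component has diameter bounded below (giving $\diam(C_n(x_k))$ a lower bound, contradicting the assumption) or the pulled-back continua Hausdorff-converge, along suitable subsequences, to a non-degenerate continuum $A \subset X$ whose forward \emph{and} backward iterates remain of diameter $\leq \epsilon$. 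Such $A$ violates cw-expansivity via Proposition~\ref{propExConNT}(1) applied to both $\sigma = s$ and $\sigma = u$, and this is the contradiction.

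The main obstacle is precisely the Hausdorff-limit extraction in the last step: one has to simultaneously keep the pulled-back $B_n$ of fixed size $\eta$ and maintain them as subcontinua of $V_n(x_k)$ through $x_k$, passing to a double limit in $n$ and $k$. The uniformity in $x$ of the final constant $\delta$ is inherited from the uniformity of $\eta(\epsilon)$ given by the Peano continuum structure of $X$; without the local connectedness hypothesis (cf.\ the remark preceding the theorem) the insertion of the continua $B_n$ of controlled diameter would fail, and the argument would collapse.
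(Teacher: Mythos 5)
First, a point of reference: the paper does not prove Theorem \ref{thmExConNT} at all --- it is quoted from the literature (Kato \cite{Ka2}*{Theorem 1.6}, after Hiraide--Lewowicz and Ma\~n\'e), so there is no in-paper argument to compare yours against. Judged on its own, your write-up sets the problem up correctly (the reduction to $\sigma=s$, the restriction to $\epsilon<\epsilon^*$, and the observation that the component $C_n(x)$ of $x$ in $V_n(x)$ is the maximal candidate continuum, so that the whole theorem is equivalent to a uniform lower bound on $\diam(C_n(x))$) but the paragraph labelled ``the crux'' is not a proof, and the gap it leaves is exactly the content of the theorem.

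Concretely: the continuum $f^{-n}(B_n)$ obtained by pulling back an $\eta$-sized continuum $B_n\ni f^n(x_k)$ is small at time $n$ only; there is no control whatsoever on $\diam(f^j(f^{-n}(B_n)))$ for $0<j<n$, so the component of $x_k$ in $f^{-n}(B_n)\cap\clos{U(x_k)}$ need not lie in $V_n(x_k)$ and gives no lower bound on $\diam(C_n(x_k))$. The asserted dichotomy --- ``either this component has diameter bounded below, or the pulled-back continua converge to a non-degenerate continuum all of whose forward \emph{and} backward iterates have diameter $\leq\epsilon$'' --- does not follow from boundary bumping; in particular nothing in your construction forces the limit continuum to have small \emph{backward} iterates, which is what you need to contradict cw-expansivity. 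The standard proofs close this gap with a maximality argument that you do not have: for each $m\leq n$ one considers the largest continuum through $f^m(x)$ whose iterates up to time $n$ stay $\epsilon$-small, locates the last time $m$ at which this maximal diameter drops below a fixed threshold, and pulls back one step from that time; iterating and taking a Hausdorff limit then produces a non-degenerate continuum with small iterates over longer and longer \emph{two-sided} time windows, and only then does cw-expansivity give the contradiction. Without some version of that ``last exit time'' device (or an equivalent), the double limit you acknowledge as the main obstacle cannot be carried out, so the proposal should be regarded as incomplete rather than as an alternative proof.
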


This theorem has the following direct consequences.

\begin{rmk}[Uniform size of stable continua]
 Note that (\ref{ecuICT}) implies that for all $x\in X$ there are stable and unstable continua through $x$ 
 \azul{of diameter greater than $\delta$. 
 Consequently, these continua meet} the boundary of 
 the ball $B_{\delta/2}(x)$. 
\end{rmk}

\begin{rmk}[No stable points]
\label{rmkNoEstablePoints}
 From the Invariant Continuum Theorem we see that if $f$ is a cw-expansive homeomorphism of a Peano continuum $X$
 then neither stable nor unstable continua have interior points. 
 This is because if $A\subset X$ is a stable set with an interior point $x$ then we can take an unstable continuum contained in 
 $A$ that contradicts the cw-expansivity. 
 In particular there are no Lyapunov stable trajectories.
\end{rmk}

\begin{rmk}[Surfaces with boundary]
\label{rmkNoboundary}
Let us explain why we do not consider surfaces with boundary. 
Suppose that $f\colon S\to S$ is a cw-expansive homeomorphism 
of a compact surface with boundary. 
By Brouwer's Theorem on the Invariance of Domain \cite{HW} 
we know that $\partial S$ 
is invariant by $f$.
Then, the restriction $f\colon \partial S\to\partial S$ is cw-expansive.
This gives a contradiction because, 
on one hand there are non-trivial stable continua in $\partial S$, 
and on the other hand every non-trivial continuum of $\partial S$
has interior points (relative to $\partial S$). 
This is the argument to prove that the circle admits no expansive homeomorphisms 
that the author learned from Lewowicz.
\end{rmk}

\begin{prob}
 Do cw-expansive homeomorphisms of compact manifolds with non-empty boundary exist? 
For example, does there exist a cw-expansive homeomorphism of the 3-ball?
Does the 3-sphere admit expansive or cw-expansive homeomorphisms?
\end{prob}

We say that $C\subset X$ \azul{\emph{separates} $X$} if $X\setminus C$ is not connected.
We will show 
in Theorem \ref{nosepara}, under a \emph{natural} assumption on the Peano space $X$, that no stable set separates. 
Previously we prove a topological lemma.

\begin{lem}
\label{lemThmNosepara}
If $X$ is a continuum and every point has arbitrarily 
small neighborhoods with connected boundary then 
for all $\epsilon>0$ there is $\delta>0$ such that 
if $A\subset X$ is a closed set that separates $X$ and $\diam(A)<\delta$ 
then there is a component $V$ of $X\setminus A$ with $\diam(V)<\epsilon$.
\end{lem}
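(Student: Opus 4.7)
My plan is to argue by contradiction, using compactness to extract a limit, then localizing the separating sets inside a neighborhood with connected boundary, and finally applying boundary bumping from continuum theory.

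Suppose the conclusion fails for some $\epsilon_{0}>0$: there exist closed sets $A_{n}\subset X$ separating $X$ with $\diam(A_{n})\to 0$ such that every component of $X\setminus A_{n}$ has diameter at least $\epsilon_{0}$. Picking $a_{n}\in A_{n}$, compactness of $X$ yields a subsequence with $a_{n}\to p\in X$, and combined with $\diam(A_{n})\to 0$ this gives $A_{n}\to\{p\}$ in the Hausdorff metric.

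By the hypothesis I would then fix an open neighborhood $U$ of $p$ with $\partial U$ connected and $\diam(U)<\epsilon_{0}$; for $n$ large, $A_{n}\subset U$. Since $\partial U\subset X\setminus A_{n}$ is connected, it lies in a single component $V_{n}$ of $X\setminus A_{n}$. Any other component $C$ is connected and disjoint from $\partial U$, hence contained in one of the two disjoint open pieces $U$ and $X\setminus\overline{U}$ whose union is $X\setminus\partial U$; if $C\subset U$ then $\diam(C)\le\diam(U)<\epsilon_{0}$, already contradicting the assumption, so every component $C\ne V_{n}$ must lie in $X\setminus\overline{U}$.

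To dispose of this remaining case I would apply the boundary bumping theorem (Nadler, \emph{Continuum Theory}): in the continuum $X$, every component $W$ of the nonempty proper open set $X\setminus\overline{U}$ satisfies $\overline{W}\cap\overline{U}\ne\emptyset$, and the point of intersection must lie on $\partial U$ since no point of the open set $U$ can be a limit of points of $W\subset X\setminus\overline{U}$. Taking $W$ to be the component of $X\setminus\overline{U}$ containing $C$, the union $W\cup\partial U$ of two connected sets with meeting closures is connected; as a connected subset of $X\setminus A_{n}$ that meets $V_{n}$, it lies inside $V_{n}$, so $C\subset V_{n}$, contradicting $C\ne V_{n}$. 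The main obstacle is precisely this last step, where both halves of the hypothesis must be leveraged: the connectedness of $\partial U$ to single out a distinguished component $V_{n}$, and boundary bumping (which uses the connectedness of $X$) to prevent any other component of $X\setminus A_{n}$ from hiding outside $\overline{U}$.
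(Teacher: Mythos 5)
Your argument is correct, and it follows the same overall strategy as the paper's proof: enclose the separating set in a small open set $U$ with connected boundary, show that everything outside $U$ lies in a single component of $X\setminus A$, and conclude that some other component must sit inside $U$ and hence be small. The two implementations differ in both steps, though. For producing $\delta$, the paper avoids the contradiction/subsequence argument: it takes a finite cover of $X$ by open sets of diameter less than $\epsilon$ with connected boundaries and lets $\delta$ be a Lebesgue number of that cover, which gives the constant directly. For the key step, the paper proves the stronger intermediate fact that $X\setminus U_*$ is itself connected (if it split into disjoint nonempty closed sets $A'\cup B'$ with $\partial U_*\subset A'$, then $B'$ and $A'\cup\overline{U_*}$ would disconnect $X$), whereas you show only that each component of $X\setminus\overline{U}$ reaches $\partial U$ via the boundary bumping theorem and is therefore absorbed, together with $\partial U$, into the single component $V_n$. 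Both routes are valid: the paper's is elementary and self-contained, while yours imports a standard continuum-theory tool but needs slightly less (you never establish that the exterior of $U$ is connected, only that each of its pieces touches $\partial U$). One small point worth making explicit in your write-up: you should observe that $U\neq X$ --- which holds automatically in the only nontrivial case $\diam(X)\geq\epsilon_0$ --- so that $\partial U\neq\emptyset$ and the component $V_n$ containing it is actually defined.
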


\begin{proof}
  Given $\epsilon>0$ consider a finite open cover $\mathcal U$ of $X$ 
  such that $\diam(U)<\epsilon$ for all $U\in\mathcal U$ and $\partial U$ is connected. 
  Take $\delta>0$ such that if $\diam(A)<\delta$ then there is $U\in\mathcal U$ such that $A\subset U$. 
  Suppose that $A\subset X$ is closed, separates $X$ and $\diam(A)<\delta$. 
Take $U_*\in\mathcal U$ such that $A\subset U_*$. 
  
Let us show that $C=X\setminus U_*$ is connected. 
By contradiction, suppose that $C=A\cup B$ a union of disjoint, closed, non-empty sets. 
Since $\partial U_*$ is connected, we can assume that $\partial U_*\subset A$. 
Then, $B$ and $A\cup\clos{U_*}$ disconnects $X$, a contradiction that proves that $X\setminus U_*$ is connected.

Let $V_*$ be the component of $X\setminus A$ containing $X\setminus U_*$. 
Since $A$ separates $X$, there is at least another component. 
This component is contained in $U_*$ and has diameter smaller than $\epsilon$.
\end{proof}

\begin{teo}
\label{nosepara}
If $f$ is a cw-expansive homeomorphism of the Peano continuum $X$ and every 
point of $X$ has arbitrarily 
small neighborhoods with connected boundary
then no stable closed set separates $X$.
\end{teo}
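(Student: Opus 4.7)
}
My plan is to argue by contradiction. Suppose $A\subset X$ is a stable closed set that separates $X$. The idea is to extract from $A$ and the lemma a component of $X\setminus A$ whose closure is a \emph{stable continuum with nonempty interior}, which directly contradicts Remark \ref{rmkNoEstablePoints}.

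First I would refine the quantitative content of Lemma \ref{lemThmNosepara}. Inspecting its proof, when a separating closed set $B$ with $\diam(B)<\delta$ is contained in a cover element $U_*$ with $\diam(U_*)<\epsilon$, the component $V_*\supset X\setminus U_*$ is uniquely determined and satisfies $\diam(V_*)\geq \diam(X)-\epsilon$, while every other component of $X\setminus B$ lies in $U_*$ and has diameter $<\epsilon$. So for $\epsilon$ small (say $\epsilon<\diam(X)/3$), the components of $X\setminus B$ split into one ``big'' component (diameter $\geq 2\epsilon$, say) and possibly several ``small'' components (diameter $<\epsilon$). Now choose $\epsilon$ additionally so that $\epsilon<\epsilon^*$ (a cw-expansivity constant) and so that, by uniform continuity of $f$ on the compact set $X$, $\diam(S)<\epsilon$ implies $\diam(f(S))<\diam(X)/2$ — hence $\diam(f(S))$ is strictly smaller than the diameter of any ``big'' component.

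With this $\epsilon$ fixed, Lemma \ref{lemThmNosepara} supplies a $\delta>0$; since $A\in\continua^s$, for all $n\geq N$ large we have $\diam(f^n(A))<\delta$, so the dichotomy above applies to $X\setminus f^n(A)$. Fix $n_0\geq N$ and let $V_{n_0}$ be a small component of $X\setminus f^{n_0}(A)$. Because $f$ is a homeomorphism, $f(V_{n_0})$ is exactly one of the components of $X\setminus f^{n_0+1}(A)$; by the uniform-continuity bound it has diameter $<\diam(X)/2$, so it cannot be the big component and must therefore be small. Iterating, $V_{n_0+k}:=f^k(V_{n_0})$ is small (diameter $<\epsilon$) for every $k\geq 0$. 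Setting $U:=f^{-n_0}(V_{n_0})$, a component of $X\setminus A$, gives $\diam(f^m(\clos U))=\diam(f^m(U))<\epsilon$ for all $m\geq n_0$.

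Hence $f^{n_0}(\clos U)\in\continua^s_\epsilon$, and since $\epsilon<\epsilon^*$, Proposition \ref{propExConNT} shows $f^{n_0}(\clos U)\in\continua^s$; equivalently $\diam(f^m(\clos U))\to 0$, so $\clos U$ itself is a stable continuum. However $U$ is a nonempty open subset of $X$, so $\clos U$ has interior points, contradicting Remark \ref{rmkNoEstablePoints}. The step I anticipate as the main obstacle is the middle one: verifying that the small component propagates \emph{forward} under $f$, for which one genuinely needs the big/small dichotomy (not merely the existence of one small component per time step provided by the lemma) together with uniform continuity of $f$. Everything else is a routine packaging of the invariance results from \S\ref{secTopStaSet}.
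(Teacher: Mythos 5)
Your proof is correct, and it shares the overall skeleton of the paper's argument (argue by contradiction, push $A$ forward until it is smaller than the $\delta$ of Lemma \ref{lemThmNosepara}, extract a small complementary component $U$, show $\clos U$ is a stable set with interior, and contradict Remark \ref{rmkNoEstablePoints}), but the key middle step is handled by a genuinely different mechanism. The paper shows $\clos U$ is stable in one stroke via Theorem \ref{thmExConNT}: every point of $U$ carries a stable continuum of uniform size, which must reach $\partial U\subset f^n(A)$, and since $\partial U$ is stable the whole of $\clos U$ is stable. You instead never invoke the Invariant Continuum Theorem at this stage; you upgrade Lemma \ref{lemThmNosepara} to a big/small dichotomy (one component of diameter $>\diam(X)-\epsilon$ containing $X\setminus U_*$, all others inside $U_*$ hence of diameter $<\epsilon$ --- this is indeed available from the lemma's proof, though not from its statement) and propagate smallness forward: $f(V_m)$ is a component of $X\setminus f^{m+1}(A)$, uniform continuity caps its diameter below that of the big component, so it stays small, whence $f^{n_0}(\clos U)\in\continua^s_\epsilon\subset\continua^s$ by Proposition \ref{propExConNT}. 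What your route buys is self-containment and an explicit verification of the uniformity that the paper's phrase ``therefore $\clos U$ is a stable set'' leaves implicit; what it costs is the extra bookkeeping ($\epsilon<\diam(X)/3$, the uniform-continuity bound, and the induction on $m$) and reliance on the proof rather than the statement of the lemma. Both arguments are sound; note only that your notation $A\in\continua^s$ is a slight abuse since $A$ need not be connected --- all you use, and all that is needed, is $\diam(f^n(A))\to 0$.
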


\begin{proof}
Arguing by contradiction assume that $A\subset X$ is a closed stable set separating $X$. 
\azul{As $f$ is a homeomorphism, every iterate of $A$ separates $X$.}
\azul{Since it is stable, taking a positive iterate,} we can suppose that it is as small as we want. 
By Lemma \ref{lemThmNosepara} there is a small component $U$ of its complement. 
By Theorem \ref{thmExConNT} each point of $U$ has a stable continuum meeting $\partial U$. 
\azul{Since $\partial U\subset A$ we have that $\partial U$ is stable, therefore, $\clos{U}$ is a stable set.}
Since $\clos{U}$ has interior points we have a contradiction with Remark \ref{rmkNoEstablePoints}.
\end{proof}

\begin{rmk}
If a Peano continuum has no locally separating points then 
every point has arbitrarily small neighborhoods with connected boundary. 
See \cite{Jo} for a proof.
\end{rmk}

\begin{rmk}
 Theorem \ref{nosepara} holds if $X$ is a compact manifold with or without boundary. 
 In the one-dimensional case, intervals \azul{do not have} connected boundary, but since there are not cw-expansive 
 homeomorphisms on one-dimensional manifolds the theorem holds true. 
\end{rmk}

The following example shows that in Theorem \ref{nosepara} we need to assume 
that every point has arbitrarily small neighborhoods with connected boundary.

\begin{exa}
  Consider two copies of an Anosov diffeomorphism of the two-dimensional 
  torus identifying two fixed points. 
  The gluing point has not arbitrarily small neighborhoods with connected boundary. 
  Also, this point forms a stable set and it (locally and globally) separates. 
\end{exa}

\subsection{Capacitors}
\label{secCapacitor}

The following technical definition is based on the arguments of the proof of \cite{L}*{Lemma 2.3}.

\begin{df}
Given $x\in X$, $\epsilon,r>0$, an $(\epsilon,r,x)$-\emph{capacitor} is a triple $(A,G,B)$ such that: 
\begin{enumerate}
 \item $A,B,G\subset X$, $A$ and $B$ are disjoint continua, $x\in \clos G$, $G$ is open,
 \item $(\partial G)\cap B_r(x)\subset A\cup B$,
 \item there is a continuum $\gamma\subset \clos{G}\cap B_{r/2}(x)$ meeting $A$ and $B$,
 \item $G\subset B_\epsilon(A)$ or $G\subset B_\epsilon (B)$.
\end{enumerate}
In this case $A$ and $B$ are the \emph{plates} of the capacitor, 
$\epsilon$ is the \emph{separation} of the plates and $r$ is the \emph{radius}.
See Figure~\ref{figCap}.
\begin{figure}[ht]
\center
  \includegraphics{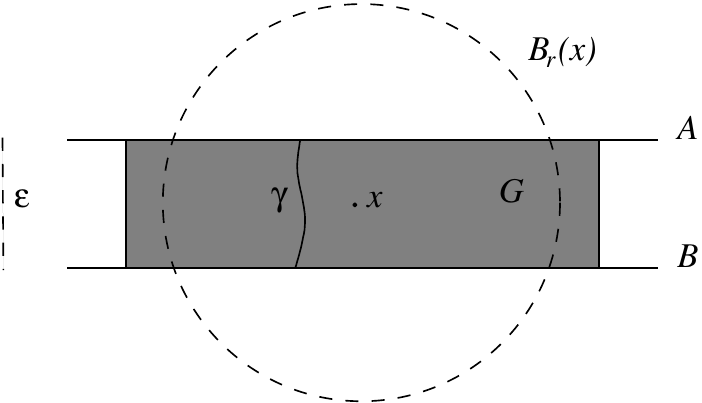} 
  \caption{A capacitor of separation $\epsilon$ and radius $r$ in the plane.}
  \label{figCap}
\end{figure}
We say that a capacitor has \emph{stable plates} if $A,B$ are stable sets for the homeomorphism $f\colon X\to X$.
\end{df}

In the next result we study unstable continua between two close stable plates.
It generalizes \cite{L}*{Lemma 2.3}
and it will be applied in Theorem \ref{thmCwfSuperficies} for the study of \cwfin-expansivity on surfaces. 
\begin{teo}
\label{teoCap}
Assume that $f\colon X\to X$ is a cw-expansive homeomorphism of a Peano continuum $X$.
Then, for all $r>0$ small there is $\epsilon>0$ such that if 
 $(A,G,B)$ is an $(\epsilon,r,x)$-capacitor 
\azul{with stable plates} then: 
 \begin{enumerate}
  \item for all $y\in B_{r/2}(x)\cap\clos{G}$ there is an unstable continuum from $y$ to $A\cup B$ 
  contained in $\clos{G}\cap B_r(x)$,
  \item there is an unstable continuum $C\subset\clos{G}\cap B_r(x)$ meeting $A$ and $B$. 
 \end{enumerate}
\end{teo}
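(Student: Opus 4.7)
The plan is to combine the Invariant Continuum Theorem (Theorem \ref{thmExConNT}) with a boundary-bumping argument and, in the critical case, a Hausdorff-limit contradiction with cw-expansivity. I would first fix $r > 0$ small enough that $r \leq \epsilon^*/2$ (with $\epsilon^*$ a cw-expansivity constant as in Proposition \ref{propExConNT}) and such that Theorem \ref{thmExConNT} applied with parameter $r/4$ produces $\delta_0 > 0$ with $\delta_0 \leq \delta$, where $\delta$ is the constant from Proposition \ref{propExConNT}(2) associated to $\epsilon^*$. These choices guarantee that every unstable continuum of diameter at most $r/4$ produced below automatically lies in the closed set $\continua^u_{\epsilon^*}$.

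For (1), take $y \in B_{r/2}(x) \cap \clos{G}$ and by Theorem \ref{thmExConNT} produce an unstable continuum $C^u \ni y$ with $\delta_0 \leq \diam(C^u)$ and $C^u \subset B_{r/4}(y) \subset B_r(x)$. Let $K$ be the component of $C^u \cap \clos{G}$ containing $y$: a subcontinuum of $C^u$, hence unstable, and contained in $\clos{G} \cap B_r(x)$. If $C^u \not\subset \clos{G}$, then boundary bumping inside $C^u$ forces $K$ to meet $\partial G$, and since $K \subset B_r(x)$ the hypothesis $\partial G \cap B_r(x) \subset A \cup B$ delivers the desired unstable continuum. Otherwise $C^u \subset \clos{G}$ and, if $C^u$ fails to meet $A \cup B$, I would derive a contradiction for small $\epsilon$ as follows. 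Argue by contradiction with $\epsilon_n \to 0$: take capacitors $(A_n, G_n, B_n)$ with $G_n \subset B_{\epsilon_n}(A_n)$ (WLOG) and unstable continua $C^u_n \subset \clos{G_n} \subset \clos{B_{\epsilon_n}(A_n)}$ disjoint from $A_n$ with $\diam(C^u_n) \geq \delta_0$. Passing to subsequential Hausdorff limits $C^u_n \to C^u_*$ and using closedness of $\continua^u_{\epsilon^*}$, $C^u_*$ is an unstable continuum of diameter at least $\delta_0$. The critical step is to extract subcontinua $A'_n \subset A_n$ of diameter at most $\delta$ with $C^u_n \subset B_{\epsilon_n}(A'_n)$; such $A'_n$ automatically lie in $\continua^s_{\epsilon^*}$ by Proposition \ref{propExConNT}(2), and the Hausdorff limit $A'_*$ is stable and contains $C^u_*$. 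Then $C^u_*$ belongs to both $\continua^u_{\epsilon^*}$ and $\continua^s_{\epsilon^*}$ while being nondegenerate, contradicting cw-expansivity.

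For (2), apply (1) to every $y \in \gamma$ and let $E_A$ be the set of $y \in \gamma$ admitting an unstable continuum $C \subset \clos{G} \cap B_r(x)$ with $y \in C$ and $C \cap A \neq \emptyset$; define $E_B$ analogously. By (1), $\gamma = E_A \cup E_B$; both sets are nonempty (singletons in $\gamma \cap A$ and $\gamma \cap B$ serve as witnesses) and both are closed in $\gamma$, by a Hausdorff limit argument using the closedness of $\continua^u_{\epsilon^*}$. Connectedness of $\gamma$ yields $y \in E_A \cap E_B$, and the union of witness continua through such a $y$ is an unstable continuum in $\clos{G} \cap B_r(x)$ meeting both $A$ and $B$. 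The main obstacle is the truncation step for (1): choosing $A'_n$ both small enough to be uniformly stable and large enough to shadow $C^u_n$ within $\epsilon_n$, which I would attack via boundary bumping inside the continuum $A_n$ applied to a thin neighborhood of $C^u_n$; without this truncation the limit of the $A_n$'s need not be stable, since $\continua^s$ is generally not closed in $\continua(X)$.
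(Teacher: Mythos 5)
Your overall route is the paper's: part (2) is argued exactly as in the paper (cover $\gamma$ by the two closed sets of points joined by an unstable continuum to $A$, respectively to $B$, and use connectedness of $\gamma$), and part (1) is the same contradiction scheme with a sequence of capacitors of separation tending to $0$, the Invariant Continuum Theorem, and Hausdorff limits; your boundary-bumping reduction to the case $C^u\subset\clos{G}$ matches the paper's terser step ``$C_n\subset G_n$''.

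The gap is exactly where you flag it, and the repair you sketch does not work. The truncation you need --- a subcontinuum $A'_n\subset A_n$ with $\diam(A'_n)\leq\delta$ and $C^u_n\subset B_{\epsilon_n}(A'_n)$ --- is false for general continua: if $A_n$ is the graph of $x\mapsto M\sin(x/\eta_n)$ over $[0,L]$ with $M$ large and $\eta_n\to 0$, and $C^u_n=[0,L]\times\{0\}$, then $C^u_n\subset B_{\pi\eta_n}(A_n)$, yet any subcontinuum of $A_n$ coming within $O(\eta_n)$ of both endpoints of $C^u_n$ must contain a full oscillation and hence has diameter at least $2M$. Boundary bumping inside $A_n$ only produces subcontinua of prescribed diameter near each point of $C^u_n$ separately; it cannot force a single connected piece to shadow all of $C^u_n$. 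The way out is not to truncate at all. The paper passes directly to $A=\lim A_n$ and uses $C\subset A$ (which follows from $C_n\subset G_n\subset B_{1/n}(A_n)$); this makes $C$ stable as soon as the plates lie in the closed set $\continua^s_{\epsilon^*}$, which is automatic when they have diameter at most the $\delta$ of Proposition \ref{propExConNT} --- as they do in the application in Theorem \ref{thmCwfSuperficies}, where the plates are stable plaques of a small disc --- and this is the reading of ``stable plates'' under which the statement should be proved. With that reading your argument closes immediately: $A\in\continua^s_{\epsilon^*}$ by closedness, so $C\subset A$ is stable, while $C\in\continua^u_{\epsilon^*}$ by your diameter bookkeeping, contradicting cw-expansivity. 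For plates that are merely in $\continua^s$ with no uniform rate of contraction, neither your truncation nor a direct passage to the limit of the $A_n$ establishes stability of the limit continuum, so the uniformity hypothesis on the plates is genuinely needed.
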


\begin{proof}
Arguing by contradiction assume that there are 
$r>0$, a sequence of $(1/n,r,x_n)$-capacitors $(A_n,G_n,B_n)$ with stable plates
%
and $y_n\in B_{r/2}(x_n)\cap\clos{G_n}$ with no unstable continuum from $y_n$ to $A_n\cup B_n$ 
contained in $\clos{G_n}\cap B_r(x_n)$.
Since $X$ is a Peano continuum, by Theorem \ref{thmExConNT} 
for each $n$ there is an unstable continuum $C_n\subset B_r(x_n)$ 
containing $y_n$ and intersecting the boundary of $B_r(x_n)$. 
Since $(\partial G_n)\cap B_r(x_n)\subset A_n\cup B_n$  
we conclude that $C_n\subset G_n$. 
Taking subsequences, we can assume that $A_n\to A$ and $C_n\to C$ in the Hausdorff metric. 
By definition of capacitor we have that $G_n\subset B_{1/n}(A_n)$ 
which implies that $C\subset A$. 
Therefore $C$ is a continuum that is stable and unstable.  
Since $\diam(C)>0$ we have a contradiction with the 
cw-expansivity of $f$. 

Given $r>0$ consider $\epsilon>0$ satisfying the first item. 
Let $(A,G,B)$ be an $(\epsilon,r,x)$-capacitor. 
By definition, there is a continuum $\gamma\subset \clos{B_{r/2}(x)}$ 
\azul{and points $p^A,p^B$}
with $p^A\in A\cap \gamma$ and $p^B\in B\cap\gamma$.
Define 
\[
\begin{array}{l}
\gamma^A=\{y\in\gamma:\exists \hbox{ an unstable continuum from } y \hbox{ to }A \hbox{ contained in }\clos G\cap B_r(x)\},\\
\gamma^B=\{y\in\gamma:\exists \hbox{ an unstable continuum from } y \hbox{ to }B \hbox{ contained in }\clos G\cap B_r(x)\}.  
\end{array}
\]
The sets $\gamma^A$ and $\gamma^B$ are non-empty because they contain $p^A$ and $p^B$ respectively. 
They are closed sets and, as we have shown, they cover $\gamma$. 
Since $\gamma$ is connected, 
there is a point $z\in \gamma^A\cap\gamma^B$. 
\end{proof}

\begin{rmk}
\label{rmkAcordeon}
Let $f\colon M\to M$ be an expansive homeomorphism of a compact three-manifold.
In \cite{Vi2002} it is shown that local stable sets are 
locally connected 
if $f$ is smooth and without wandering points.
By Theorem \ref{teoCap} we know that a stable set of $f$ 
cannot be homeomorphic to the closure of the set
$$C=\{(x,y,z):-1\leq x\leq 1, 0<z\leq 1, y=\sin(1/z)\}.$$
\end{rmk}

\subsection{\azul{Partial expansivity}}
\label{secPartExp}

Consider a homeomorphism $f\colon X \to X$ of a compact metric space.
We will use some concepts of topological dimension. We refer the reader to \cite{HW} for the definitions and basic properties.

\begin{df}
\label{dePartExp}
Given an integer $d\geq -1$, we say that $f$ is \emph{partially expansive} 
with \emph{central dimension} $d$ and \emph{expansivity constant}
$\epsilon>0$ 
if for every non-trivial compact set $C\subset X$ with $\dim(C)>d$ 
there is $k\in\Z$ such that 
$\diam(f^k(C))\geq\epsilon$.
\end{df}

 As usual, we say that $f$ 
 is \emph{sensitive to initial conditions}
 if there is $\rho>0$ such that for all $x\in M$ and for all $r>0$ there are $y\in B_r(x)$ and 
 $n\in\Z$ such that $\dist(f^n(y),f^n(x))>\rho$.

\begin{prop}
\label{propPartExpMenosUno}
For a homeomorphism $f\colon X\to X$ the following hold:
\begin{enumerate}
 \item $f$ is expansive if and only if it is partially expansive with $d=-1$,
 \item $f$ is cw-expansive if and only if it is partially expansive with $d=0$,
\item if in addition $X$ is a compact manifold of dimension $n$ 
then $f$ is sensitive to initial conditions if and only if 
$f$ is partially expansive with central dimension $d=n-1$.
\end{enumerate}
\end{prop}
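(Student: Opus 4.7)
The plan is to treat each item separately by reducing the relevant notion of expansivity to a statement about compact subsets of a fixed minimal topological dimension. The three parts are of the same nature: partial expansivity forbids orbits of compact sets of dimension $>d$ from staying small, so the only freedom in the proof is identifying which compact sets carry the ``data'' of the other expansivity notion.

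For item (1), partial expansivity with $d=-1$ applies to every non-trivial compact set, since $\dim(C)\geq 0$ always. In one direction, given distinct $x,y\in X$, I would apply partial expansivity to $C=\{x,y\}$ to obtain $\dist(f^k(x),f^k(y))=\diam(f^k(C))\geq \epsilon$ for some $k$, which is expansivity with constant $\epsilon$. Conversely, if $f$ is expansive with constant $\delta$ and $C$ is a non-trivial compact set, pick any two distinct points in $C$ and use expansivity on them to get $\diam(f^k(C))\geq\delta$.

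For item (2), the standing fact I would invoke is that a compact metric space has topological dimension $0$ if and only if it is totally disconnected (Menger--Urysohn); equivalently, $\dim(C)\geq 1$ forces $C$ to contain a non-degenerate subcontinuum $K$. If $f$ is cw-expansive and $\dim(C)>0$, I apply cw-expansivity to such a $K\subset C$ to get $\diam(f^k(K))\geq\epsilon$, hence $\diam(f^k(C))\geq\epsilon$. For the converse, given a non-trivial connected $A\subset X$ I would pass to $\clos A$, which is a non-trivial continuum with $\dim(\clos A)\geq 1$, and then use $\diam(f^k(A))=\diam(f^k(\clos A))$ together with partial expansivity.

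Item (3) is where the manifold hypothesis is used, via the dimension-theoretic fact that a subset $C$ of an $n$-manifold satisfies $\dim(C)=n$ if and only if $\interior(C)\neq\emptyset$ (this is in Hurewicz--Wallman \cite{HW}, or may be seen from Brouwer's invariance of domain as used in Remark \ref{rmkNoboundary}). Assuming sensitivity with constant $\rho$, for any compact $C$ with $\dim(C)>n-1$ I pick $x\in\interior(C)$ and $r>0$ with $B_r(x)\subset C$, apply sensitivity to find $y\in B_r(x)\subset C$ and $k$ with $\dist(f^k(y),f^k(x))>\rho$, and read off $\diam(f^k(C))>\rho$. Conversely, assuming partial expansivity with constant $\epsilon$ and central dimension $n-1$, I take $C=\clos{B_{r/2}(x)}$ for the given $x$ and $r$; its interior contains $B_{r/2}(x)$, so $\dim(C)=n$, and partial expansivity gives points $a,b\in C$ with $\dist(f^k(a),f^k(b))\geq\epsilon$ for some $k$. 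The triangle inequality then yields $y\in\{a,b\}\subset B_r(x)$ with $\dist(f^k(y),f^k(x))\geq\epsilon/2$, which is sensitivity with $\rho=\epsilon/2$.

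There is no real technical obstacle; the only delicate point worth stating carefully is the equivalence of ``$\dim(C)=n$'' with ``$\interior(C)\neq\emptyset$'' for compact subsets of an $n$-manifold, which is what makes the jump to central dimension $n-1$ work in (3). Once these dimension-theoretic facts are quoted, each implication is a two-line diameter estimate.
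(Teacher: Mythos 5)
Your proposal is correct and follows essentially the same route as the paper's sketch: both arguments reduce each equivalence to the same three dimension-theoretic facts ($\dim(C)>-1$ iff $C\neq\emptyset$, positive dimension iff $C$ contains a non-degenerate continuum, and $\dim(C)=n$ iff $\interior(C)\neq\emptyset$ in an $n$-manifold, the last cited from Hurewicz--Wallman exactly as in the paper). You merely spell out the routine diameter estimates that the paper leaves to the reader, and these are all carried out correctly.
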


\begin{proof}[Sketch of the proof]
Since the arguments are quite direct we only give the details that we consider more relevant.
To prove the first part, note that by definition 
(see \cite{HW}) the condition $\dim(C)>-1$ means $C\neq\emptyset$. 
Then, $C\subset X$ with $\dim(C)>-1$ is non-trivial if and only if it has at least 
two points. To conclude the stated equivalence one has to note that 
$\diam(\{x,y\})=\dist(x,y)$.

The statement related to cw-expansivity follows because positive dimension is equivalent 
to contain a non-trivial continuum. 

For the last part we recall \cite{HW}*{Corollary 1, p. 46}
that if $X$ is a compact $n$-dimensional manifold 
and $C\subset X$ then $\dim(C)=n$ if and only if $C$ has non-empty interior.
\end{proof}

%
%

Let $\phi\colon\R\times X\to X$ be a continuous flow. 
We consider the following weak form of expansivity.
We say that a flow $\phi$ is \emph{separating} \cite{ArKinExp}
if there is $\delta>0$ (a \emph{separating constant}) such that if 
$\dist(\phi_t(x),\phi_t(y))<\delta$ for all $t\in\R$ then $y=\phi_s(x)$ 
for some $s\in\R$. 
Examples of separating flows are expansive flows in the sense of 
Bowen-Walters \cite{BW} and $k^*$-expansive flows as 
defined by Komuro \cite{Ko84}. 

\begin{prop}
If $\phi$ is a separating flow then
for all $T\neq 0$ the homeomorphism 
$\phi_T\colon X\to X$ is partially expansive with $d=1$.
\end{prop}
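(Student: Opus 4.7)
The plan is to convert the discrete-time diameter condition back into a continuous-time one, apply the separating property to force $C$ into a single orbit of $\phi$, and then bound the dimension of any orbit by $1$.

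First I would fix a separating constant $\delta>0$ of $\phi$ and use uniform continuity of the flow on $X\times[-|T|,|T|]$ to select $\epsilon>0$ such that $\dist(a,b)<\epsilon$ implies $\dist(\phi_s(a),\phi_s(b))<\delta/2$ for every $s\in[-|T|,|T|]$. I claim this $\epsilon$ is a partial expansivity constant for $\phi_T$ with $d=1$. Arguing by contradiction, suppose there is a non-trivial compact $C\subset X$ with $\dim(C)>1$ and $\diam(\phi_T^k(C))<\epsilon$ for every $k\in\Z$. Writing an arbitrary $t\in\R$ as $t=kT+s$ with $|s|\leq|T|$, the identity $\phi_t(C)=\phi_s(\phi_T^k(C))$ together with the choice of $\epsilon$ gives $\diam(\phi_t(C))\leq \delta/2<\delta$ for every $t\in\R$.

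Fix $x_0\in C$. For every $y\in C$ one then has $\dist(\phi_t(x_0),\phi_t(y))<\delta$ for all $t$, so the separating property yields $y=\phi_{s(y)}(x_0)$ for some $s(y)\in\R$; hence $C\subset\orb(x_0)$. It remains to bound $\dim\orb(x_0)$. If $x_0$ is fixed, the orbit is a point and $C$ is trivial, a contradiction. If $x_0$ is periodic of least period $p>0$, the orbit map factors through an injective continuous map $\R/p\Z\to X$; compactness of the domain makes this a homeomorphism onto $\orb(x_0)$, so $\orb(x_0)\cong S^1$ and $\dim\orb(x_0)=1$.

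In the aperiodic case the orbit map $\psi(s)=\phi_s(x_0)$ is injective on $\R$, and for each $n\in\Z$ the restriction $\psi|_{[n,n+1]}$ is a continuous injection from a compact space into a Hausdorff space, hence a homeomorphism onto its image, which is an arc of topological dimension one. Since $\orb(x_0)=\bigcup_{n\in\Z}\phi_{[n,n+1]}(x_0)$ is a countable union of closed one-dimensional subsets, the countable sum theorem for topological dimension \cite{HW} gives $\dim\orb(x_0)\leq 1$. Because $C$ is closed in $\orb(x_0)$, we conclude $\dim(C)\leq 1$, the desired contradiction. The main obstacle I anticipate is this last step: an aperiodic orbit is an injective continuous image of $\R$, but its subspace topology can differ substantially from that of $\R$ (e.g.\ when the orbit is recurrent), so the bound $\dim\orb(x_0)\leq 1$ cannot simply come from a homeomorphism to a subset of $\R$ and must instead be obtained through a local-to-global dimension argument such as the sum theorem.
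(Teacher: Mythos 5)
Your proof is correct and follows essentially the same route as the paper's: pass from the discrete-time smallness of the sets $\phi_T^k(C)$ to smallness of $\phi_t(C)$ for all real $t$ via uniform continuity of the flow on $[-|T|,|T|]\times X$, invoke the separating property to confine $C$ to a single orbit, and conclude $\dim(C)\leq 1$. The paper compresses the final step into the assertion that $C$ is an orbit segment, whereas your case analysis (fixed, periodic, aperiodic) together with the countable sum theorem supplies the justification that this assertion leaves implicit.
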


\begin{proof}
Let $\sigma>0$ be a separating constant 
and take $\delta>0$ such that 
if $C\subset X$ is a continuum and $\diam(C)<\delta$ then $\diam(\phi_s(C))<\sigma$ 
for all $s\in[-T,T]$. 
Then, if $\diam(\phi_{nT}(C))<\delta$ for all $n\in\Z$ we have that 
$\diam(\phi_s(C))<\sigma$ for all $s\in\R$. 
Therefore, $C$ is an orbit segment and consequently $\dim(C)\leq 1$. 
\end{proof}


\subsection{Relative expansivity}
\label{secRelExp}
%

Given a homeomorphism $f\colon X\to X$ 
it is usual to define the \emph{stable set} of $x\in X$ 
as 
\[
 W^s(x)=\{y\in X: \lim_{n\to +\infty}\dist(f^n(x),f^n(y))= 0\}.
\]
and the \emph{unstable set} 
\[
 W^u(x)=\{y\in X: \lim_{n\to -\infty}\dist(f^n(x),f^n(y))= 0\}.
\]
Consider the equivalence relation $x\sim_s y$, $x,y\in X$, if there is a continuum $C\subset X$ such that 
$x,y\in C$ and $\diam(f^n(C))\to 0$ as $n\to +\infty$. 
Similarly we define $\sim_u$ (taking $n\to-\infty$). 
The equivalence class of $x$ will be denoted as $\W^s(x)$ (and $\W^u(x)$). 
We will give conditions that allow us to prove that $W^s=\W^s$ and $W^u=\W^u$.

\begin{df}
Let $\W$ be a partition of $X$.
We say that $f$ is \emph{separating mod} $\W$ if 
 there is $\delta>0$ such that if $\dist(f^n(x),f^n(y))<\delta$ for all $n\geq 0$ then $y\in \W(x)$
\end{df}

\begin{prop}
 If $f$ is separating mod $\W^s$ then $W^s=\W^s$.
\end{prop}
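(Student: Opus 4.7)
The plan is to prove the two inclusions $\W^s \subseteq W^s$ and $W^s \subseteq \W^s$ separately. The first inclusion does not require the separating hypothesis, while the second is where it enters essentially.

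For $\W^s(x) \subseteq W^s(x)$, I would just unwind definitions. If $y \in \W^s(x)$ there is a continuum $C$ containing both $x$ and $y$ with $\diam(f^n(C)) \to 0$ as $n\to+\infty$. Since $\dist(f^n(x),f^n(y)) \leq \diam(f^n(C))$, the right-hand side tends to $0$, so $y \in W^s(x)$.

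For the reverse inclusion $W^s(x) \subseteq \W^s(x)$, let $\delta>0$ be the constant given by the separating mod $\W^s$ hypothesis. Suppose $y \in W^s(x)$, so $\dist(f^n(x),f^n(y)) \to 0$. Choose $N\geq 0$ so that $\dist(f^n(x),f^n(y)) < \delta$ for every $n \geq N$. Setting $x' = f^N(x)$ and $y' = f^N(y)$ this says $\dist(f^n(x'),f^n(y')) < \delta$ for all $n \geq 0$. By the separating mod $\W^s$ property, $y' \in \W^s(x')$, so there is a continuum $C \ni x',y'$ with $\diam(f^n(C)) \to 0$ as $n \to +\infty$.

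Finally pull back: the set $C' = f^{-N}(C)$ is still a continuum (as the image of a continuum under the homeomorphism $f^{-N}$), it contains $x$ and $y$, and $\diam(f^n(C')) = \diam(f^{n-N}(C)) \to 0$ as $n \to +\infty$, because a shift of the index by a fixed integer does not affect convergence. Hence $x \sim_s y$, i.e., $y \in \W^s(x)$. The only step that needs any attention is the reindexing after the pullback, and it is completely routine; there is no real obstacle in this proof.
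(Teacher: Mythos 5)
Your proof is correct and follows essentially the same route as the paper: take $N$ so that the orbits stay $\delta$-close from time $N$ on, apply the separating hypothesis at $f^N(x)$, $f^N(y)$ to get a stable continuum, and pull it back by $f^{-N}$. The paper's version is just terser, leaving the trivial inclusion $\W^s\subseteq W^s$ and the reindexing implicit.
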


\begin{proof}
Assume that $f$ is separating mod $\W^s$ with $\delta$ as in the definition.
Suppose that $\dist(f^n(x),f^n(y))\to 0$ as $n\to+\infty$. 
 Take $k$ such that for all $n\geq k$ it holds that $\dist(f^n(x),f^n(y))<\delta$. 
 Then there is a stable continuum $C$ containing $f^k(x),f^k(y)$. 
 This implies that $x,y$ are in the stable continuum $f^{-k}(C)$. 
 Then $W^s=\W^s$.
\end{proof}

\begin{prob}
 Assuming that $f$ is cw-expansive does the condition $\W^s=W^s$ imply 
 that it is separating mod $\W^s$?
\end{prob}

\begin{prop}
 The pseudo-Anosov with 1-prongs on the two-dimensional sphere 
 given in \S \ref{secPAS2} is not separating mod $\W^s$.
\end{prop}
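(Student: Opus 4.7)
The plan is to argue by contradiction, combining the Cantor set produced in Proposition~\ref{propPAS2Cantor} with the structural fact, already proven in the cw2-expansivity argument of~\S\ref{secPAS2}, that every stable continuum of this pseudo-Anosov is contained in a single leaf of the stable singular foliation.

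Suppose that $f$ is separating mod~$\W^s$ with constant~$\delta>0$. Fix $\epsilon\in(0,\delta)$ and apply Proposition~\ref{propPAS2Cantor} to obtain a Cantor set $C$ containing the transitive point~$p$, contained in an unstable arc through~$p$, and satisfying $\diam(f^n(C))\leq\epsilon$ for every $n\in\Z$. For each $y\in C$ we then have $\dist(f^n(p),f^n(y))\leq\epsilon<\delta$ for all $n\geq 0$, so the separating hypothesis provides a stable continuum $K_y$ containing $\{p,y\}$. By the structural fact above, $K_y$ is contained in the leaf of the stable singular foliation through~$p$. Since $y$ also lies on the unstable leaf through~$p$, the whole Cantor set $C$ must sit inside the intersection of the stable and unstable leaves through~$p$.

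To complete the argument I would show that this intersection is at most countable, which will contradict the uncountability of~$C$. Let $\delta_0>0$ be a cw2-expansivity constant of~$f$ and, using Proposition~\ref{propExConNT}(2), fix $\delta_1>0$ with $\continua^s\cap\continua_{\delta_1}\subset\continua^s_{\delta_0}$, and analogously for the unstable side. Each leaf through~$p$ is a connected, separable, one-dimensional immersed subset of~$S^2$ that can be exhausted by countably many compact sub-arcs of surface diameter at most~$\delta_1$; every such stable sub-arc lies in $\continua^s_{\delta_0}$ and every such unstable sub-arc lies in $\continua^u_{\delta_0}$. Cw2-expansivity then caps the intersection of any pair of these sub-arcs at two points, making the full intersection of the two leaves a countable union of finite sets.

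The main obstacle is the covering step above: one must check that a leaf of this pseudo-Anosov, viewed as a subset of~$S^2$, is a countable union of compact subcontinua of any prescribed small surface diameter. This rests on the intrinsic structure of the leaf as a second-countable, $\sigma$-compact $1$-manifold, with possibly one endpoint at the $1$-prong, together with the local Lipschitz nature of the leaf immersion into~$S^2$, which ensures that short intrinsic arcs have small surface diameter.
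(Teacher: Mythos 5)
Your argument is correct and follows essentially the same route as the paper: both combine the Cantor set of Proposition \ref{propPAS2Cantor} with the observation that $\W^s(p)$ meets the unstable arc of $p$ in only a countable set, which is incompatible with the uncountable set $C$ lying in $\W^s(p)$. The paper asserts that countability in one line, whereas you justify it via cw2-expansivity and a covering of the stable and unstable leaves by small sub-arcs; that elaboration is sound.
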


\begin{proof}
\azul{By Proposition \ref{propPAS2Cantor}}, for all $\epsilon>0$ 
 there is a Cantor set $C$ contained in an unstable arc and contained 
 in $W^s_\epsilon(p)$ for some $p$ in this unstable arc. 
 Since $\W^s(p)$ cuts the unstable arc in a countable set, 
 there is $a\in C$ that is not in $\W^s(p)$. 
 This proves that $f$ is not separating mod $\W^s$.
\end{proof}

\begin{prob}
Does the example in \S \ref{secPAS2} satisfy $W^s=\W^s$? 
The solution could be simple but we were not able to solve it.
\end{prob}

\begin{df}
\label{dfExpModFsigma}
 We say that $f$ is \emph{expansive mod} $\W^s$ if for all $\epsilon>0$ there is $\delta>0$ such that 
 if $\dist(f^n(x),f^n(y))<\delta$ for all $n\geq 0$ then $x$ and $y$ are in a common $\epsilon$-stable continuum. 
 \end{df}

 Note that every homeomorphism expansive mod $\W^s$ is separating mod $\W^s$.

\begin{prop}
\label{propCwnNexpExpMod}
 If $f\colon X\to X$ is a cwN-expansive homeomorphism of a Peano continuum 
 and $f$ is expansive mod $\W^s$ and $f^{-1}$ is expansive mod $\W^u$ then $f$ is N-expansive.
\end{prop}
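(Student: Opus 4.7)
The plan is to combine the two ``expansive mod'' hypotheses to manufacture, for any orbit-bounded set $A\subset X$, a stable continuum $C^s$ and an unstable continuum $C^u$ both containing $A$, and then to apply the hypothesis of cwN-expansivity to the pair $(C^s,C^u)$.

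First, let $\delta_0>0$ be a cwN-expansivity constant for $f$, and set $\epsilon=\delta_0/2$. Using that $f$ is expansive mod $\W^s$, pick $\delta_s>0$ such that $\dist(f^n(x),f^n(y))<\delta_s$ for all $n\geq 0$ forces $x,y$ to lie in a common $\epsilon$-stable continuum. Apply the same definition to $f^{-1}$ (expansive mod $\W^u$) to obtain $\delta_u>0$ producing, for backward-close pairs, a common $\epsilon$-unstable continuum. Set $\delta=\tfrac12\min(\delta_s,\delta_u)$; this will be the candidate N-expansivity constant.

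Now suppose $A\subset X$ satisfies $\diam(f^n(A))\leq\delta$ for every $n\in\Z$. Fix $x_0\in A$. For each $y\in A$ the forward orbit pair $(x_0,y)$ is $\delta_s$-close, so there is $C^s_y\in\continua^s_\epsilon$ with $\{x_0,y\}\subset C^s_y$. Let $\tilde C^s=\bigcup_{y\in A}C^s_y$; since every $C^s_y$ contains $x_0$, the union is connected, and the same sharing argument gives $\diam(f^n(\tilde C^s))\leq 2\epsilon=\delta_0$ for all $n\geq 0$. Take the closure: $C^s=\overline{\tilde C^s}$ is a continuum (here we use that $X$ is compact), and since $f^n$ is a homeomorphism, $f^n(C^s)=\overline{f^n(\tilde C^s)}$, which has the same diameter; hence $C^s\in\continua^s_{\delta_0}$. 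Repeating the construction with $f^{-1}$ and $\W^u$ produces $C^u\in\continua^u_{\delta_0}$ with $A\subset C^u$.

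Finally, both $C^s$ and $C^u$ are valid candidates in Definition \ref{defcwN} with constant $\delta_0$, and $A\subset C^s\cap C^u$. CwN-expansivity gives $\card(C^s\cap C^u)\leq N$, hence $\card(A)\leq N$, proving that $\delta$ is an N-expansivity constant. The only delicate step is the ``bouquet'' passage from individual $\epsilon$-stable arcs $C^s_y$ to a single element of $\continua^s_{\delta_0}$ containing all of $A$, which is where the choice $\epsilon=\delta_0/2$ is crucial; the Peano continuum hypothesis itself is not needed in the argument beyond compactness, but it was already used implicitly in the ``expansive mod'' setup of the previous section.
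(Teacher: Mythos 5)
Your proof is correct and follows essentially the same route as the paper's: choose $\epsilon$ to be half a cwN-expansivity constant, use the two ``expansive mod'' hypotheses to enclose the orbit-bounded set in a $2\epsilon$-stable and a $2\epsilon$-unstable continuum, and apply cwN-expansivity. The paper simply asserts the existence of the $2\epsilon$-stable continuum containing $C$, whereas you make the bouquet-union-and-closure construction explicit; this is a harmless elaboration, not a different argument.
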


\begin{proof}
Let $\alpha>0$ be a cwN-expansivity constant for $f$ and take $\epsilon=\alpha/2$. 
Consider $\delta$ from Definition \ref{dfExpModFsigma} and 
suppose that $\diam(f^n(C))<\delta$ for all $n\in\Z$. 
For $x,y\in C$ we have that $\dist(f^n(x),f^n(y))<\delta$ for all 
$n\geq 0$. 
Then, there is an $\epsilon$-stable continuum containing $x,y$. 
We have that there is a $2\epsilon$-stable continuum containing $C$. 
Similarly, there is a $2\epsilon$-unstable continuum containing $C$. 
Since $2\epsilon=\alpha$ is a cwN-expansivity constant for $f$ we have that 
$\card(C)\leq N$. This proves that $\delta$ is an N-expansivity constant for $f$.
\end{proof}

\begin{prob}
\label{probExpModFs}
 It seems that pseudo-Anosov diffeomorphisms of surfaces without 1-prongs are 
 expansive mod $\W^s$. 
 We know that the pseudo-Anosov with 1-prongs of \S \ref{secPAS2} is not expansive mod $\W^s$.
 Are the examples in \S \ref{secAnomalous} and \S \ref{secQrAnosov} expansive mod $\W^s$? 
 It would be interesting to understand which cw-expansive surface homeomorphisms are
 expansive mod $\W^s$.
\end{prob}

\section{Continuum theory and decompositions}
\label{secConThDec}

In this section we review some results from continuum theory that will be used throughout the article. 
Also, we study decompositions, that will play the role of \emph{foliated charts} in the next section.

\subsection{\azul{Background on Continuum Theory}}

We recall that a \emph{continuum} is a compact connected metric space. 
General references for continuum theory are \cites{Nadler2,Kur1,Kur}.

\subsubsection{Partitions and monotone restrictions}
\label{secPartandMonRes}
Let $(X,\dist)$ be a compact metric space and denote by $\parts(X)$ the set of subsets of $X$.
A \emph{partition} of $X$ is a function 
$Q\colon X\to\parts(X)$ such that:
\begin{enumerate}
  \item $x\in Q(x)$ for all $x\in X$,
  \item $y\in Q(x)$ if and only if $x\in Q(y)$ and 
  \item $x\in Q(y)$ and $y\in Q(z)$ implies $x\in Q(z)$.
\end{enumerate}
A partition $Q$ is \emph{monotone} if each $Q(x)$ is connected. 
Given $Y\subset X$ and $x\in Y$ denote by $\cc_x(Y)$ the component of $Y$ containing $x$.
For a partition $Q\colon X\to \parts(X)$ 
and $Y\subset X$ define the \emph{monotone restriction} $Q|^m_Y\colon Y\to\parts(Y)$ as 
\begin{equation}
 \label{ecuRest}
 Q|^m_Y(x)=\cc_x(Q(x)\cap Y)
\end{equation}
for all $x\in Y$.
\begin{prop}
\label{propRestRest}
 If $Q$ is a partition of $X$ and $Z\subset Y\subset X$ then $$(Q|^m_Y)|^m_Z=Q|^m_Z.$$ 
\end{prop}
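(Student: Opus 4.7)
The plan is to verify the identity pointwise: for each $x\in Z$, I will show that
\[
 \cc_x\bigl(\cc_x(Q(x)\cap Y)\cap Z\bigr)=\cc_x(Q(x)\cap Z),
\]
which is exactly $(Q|^m_Y)|^m_Z(x)=Q|^m_Z(x)$ after unfolding the definition (\ref{ecuRest}) twice on the left and once on the right. Since $Z\subset Y$, the inner intersections are related by $(Q(x)\cap Y)\cap Z=Q(x)\cap Z$, so the whole argument reduces to a purely topological fact about components of subsets.

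The key set-theoretic lemma I will use (and briefly justify) is: for any sets $B\subset A\subset X$ with $x\in B$, one has $\cc_x(B)\subset\cc_x(A)$, because any connected subset of $B$ containing $x$ is also a connected subset of $A$ containing $x$. I will apply it in both directions. For the inclusion $\subset$: since $\cc_x(Q(x)\cap Y)\cap Z\subset Q(x)\cap Z$, the lemma gives $\cc_x(\cc_x(Q(x)\cap Y)\cap Z)\subset\cc_x(Q(x)\cap Z)$. For the inclusion $\supset$: the set $\cc_x(Q(x)\cap Z)$ is a connected subset of $Q(x)\cap Y$ containing $x$, hence contained in $\cc_x(Q(x)\cap Y)$; being also contained in $Z$, it lies in $\cc_x(Q(x)\cap Y)\cap Z$, and being connected and containing $x$, it is contained in $\cc_x(\cc_x(Q(x)\cap Y)\cap Z)$.

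There is no real obstacle: the statement is a formal consequence of the definition of monotone restriction together with the elementary behavior of connected components under taking subsets. The only point to be slightly careful about is keeping track of which ambient set each $\cc_x(\cdot)$ is taken in, but once one writes the nested expressions out this is transparent. I expect the final write-up to fit in a few lines with no use of the hyperspace structure, compactness, or any dynamical input.
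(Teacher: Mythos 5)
Your proposal is correct and follows essentially the same route as the paper: both reduce to the pointwise identity $\cc_x(\cc_x(Q(x)\cap Y)\cap Z)=\cc_x(Q(x)\cap Z)$ and prove the two inclusions by the elementary monotonicity of components under set inclusion, with the $\supset$ direction using that a connected subset of $Q(x)\cap Z$ containing $x$ sits inside $\cc_x(Q(x)\cap Y)$ because $Z\subset Y$. No gap; your write-up is if anything slightly more explicit than the paper's, which dispatches the $\subset$ inclusion in one line.
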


\begin{proof}
From the definition (\ref{ecuRest}) we see that we have to show that 
\[
 \cc_x(\cc_x(Q(x)\cap Y)\cap Z)=\cc_x(Q(x)\cap Z)
\]
for all $x\in Z$. 
To prove the inclusion $\supset$ consider a connected set $C\subset Q(x)\cap Z$ such that $x\in C$. 
Since $Z\subset Y$ we have that $C\subset Q(x)\cap Y$. 
Then, $C\subset \cc_x(Q(x)\cap Y)$ because $x\in C$.
Given that $C\subset Z$ we conclude that $C\subset \cc_x(\cc_x(Q(x)\cap Y)\cap Z)$. 
Then $\cc_x(Q(x)\cap Z)\subset\cc_x(\cc_x(Q(x)\cap Y)\cap Z)$.
The converse inclusion is easier to prove. It follows from the fact that $\cc_x(Q(x)\cap Y)\subset Q(x)$.
\end{proof}

We say that a partition $Q$ is \emph{upper semicontinuous} if for all $x\in X$ and every open set $U$ containing $Q(x)$ 
there is a neighborhood $V$ of $x$ such that if $y\in V$ then 
$Q(y)\subset U$. 

\begin{rmk}
 \label{rmkUpSCont}
 $Q$ is upper semicontinuous if and only if given $x_n\to x$ such that $Q(x_n)\to C$ in the Hausdorff metric 
then $C\subset Q(x)$. 
The upper semicontinuity of $Q$ implies that each $Q(x)$ is a closed subset of $X$.
\end{rmk}

We say that a partition $Q$ is \emph{continuous at} $x\in X$ 
if for every $x_n\to x$ we have that $Q(x_n)\to Q(x)$ in the Hausdorff metric. 
We say that $Q$ is \emph{continuous} if it is continuous at every 
$x\in X$.
A set $G\subset X$ is \emph{residual} if it is a 
countable intersection of open and dense subsets of $X$.

\begin{prop}
\label{propCoc1}
If $Q$ is an upper semicontinuous partition of $X$ then: 
\begin{enumerate}
 \item \cite{Kur}*{p. 70-71} there is a residual subset $G\subset X$ such that $Q$ is continuous at every $x\in G$, 
 \item \cite{Nadler2}*{Theorem 3.9} if in addition $X$ is a continuum then $X/Q$ with its quotient topology is a continuum.\footnote{It is clear that $X/Q$ is compact and connected since it is the quotient of the continuum $X$. 
 In \cite{Nadler2}*{Theorem 3.9} it is shown that $X/Q$ is metrizable.}
 \end{enumerate}
\end{prop}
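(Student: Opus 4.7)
The plan is to treat the two parts separately, in both cases reducing to standard machinery from continuum theory.

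For part (1), I would view the assignment $x \mapsto Q(x)$ as a set-valued map from $X$ into the hyperspace of non-empty closed subsets of $X$ equipped with the Hausdorff metric. By Remark \ref{rmkUpSCont}, upper semicontinuity is exactly the condition that any Hausdorff limit $C$ of $Q(x_n)$ along a sequence $x_n\to x$ satisfies $C\subset Q(x)$, while continuity at $x$ adds the reverse inclusion $Q(x)\subset C$. The strategy is a Baire-category argument: for each integer $n\geq 1$ I would define
$$G_n=\{x\in X:\exists\,\delta>0\text{ such that }Q(x)\subset B_{1/n}(Q(y))\text{ for every }y\in B_\delta(x)\}.$$
Each $G_n$ is open by construction, and the crucial step is to prove that $G_n$ is dense in $X$; this is Fort's classical theorem for upper semicontinuous set-valued maps into compact metric spaces, as developed in \cite{Kur}*{p. 70-71}. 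Then $G=\bigcap_n G_n$ is residual, and at each point of $G$ both inclusions hold, so $Q$ is continuous on $G$.

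For part (2), compactness and connectedness of $X/Q$ follow immediately from continuity and surjectivity of the quotient map $\pi\colon X\to X/Q$ applied to the continuum $X$. The substantive content is metrizability. I would first prove that $X/Q$ is Hausdorff: given $Q(x)\neq Q(y)$, the classes are disjoint closed subsets of the normal space $X$, so one can separate them by disjoint open sets $U\supset Q(x)$ and $V\supset Q(y)$; passing to the saturations $U'=\{z\in X:Q(z)\subset U\}$ and $V'=\{z\in X:Q(z)\subset V\}$, upper semicontinuity makes $U'$ and $V'$ open, while they are automatically saturated and disjoint, so $\pi(U')$ and $\pi(V')$ separate the two points of $X/Q$. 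Next, one extracts a countable basis of $X/Q$ by projecting a suitable countable family of saturated open sets built from a countable basis of $X$ using the same saturation construction plus compactness of each $Q(x)$. Urysohn's metrization theorem then gives metrizability, completing the proof that $X/Q$ is a continuum.

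The main obstacle I anticipate is the density of the sets $G_n$ in part (1) — a careful oscillation argument that is the heart of Fort's theorem — together with, in part (2), the verification that finitely many basic open sets can be combined into a single saturated open set approximating a given saturated open neighborhood; both are well-known but slightly delicate manipulations, and for the full details I would defer to \cite{Kur} and \cite{Nadler2}*{Theorem 3.9}.
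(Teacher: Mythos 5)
The paper gives no proof of this proposition, deferring entirely to the cited references (\cite{Kur} for part (1), \cite{Nadler2}*{Theorem 3.9} for part (2)) apart from the footnote observing that compactness and connectedness of $X/Q$ are immediate; your sketch is a faithful reconstruction of the standard arguments contained in those references (Fort's theorem via Baire category for (1); saturation of open sets to get Hausdorffness and a countable basis, then Urysohn, for (2)) and is correct in outline. The one imprecision is that each $G_n$ is not open ``by construction'' alone: to pass from $x\in G_n$ to a nearby point $x'$ one must replace $Q(x)$ by $Q(x')$ inside the $1/n$-neighborhood condition, which uses the upper semicontinuity of $Q$ (and a little room in the strict inequality); this is routine but is precisely where the hypothesis enters.
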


\subsubsection{Local connection}
\label{secLocConBack}
A continuum is \emph{hereditarily locally connected} if every subcontinuum is locally connected. 
A \emph{convergence continuum} $A$ of a compact metric space $X$ is a 
non-trivial subcontinuum of $X$ for which
there is a sequence of continua $A_i\subset X$ such that $A_i\to A$ in the Hausdorff metric, 
$A_i\cap A=\emptyset$ and $A_i\cap A_j=\emptyset$ for all $i\neq j$. 

\begin{teo}[\cite{Nadler2}*{Theorem 10.4}]
\label{teoLocConConvCont}
A continuum $X$ is hereditarily locally connected if and only if $X$ contains no 
convergence continuum. 
\end{teo}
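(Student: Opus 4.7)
The plan is to prove both implications by contrapositive, using two classical tools from continuum theory: the Boundary Bumping Theorem (every component of a proper closed subset of a continuum meets its boundary) and the compactness of the hyperspace $(\continua(X),d_H)$ in the Hausdorff metric.

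$(\Leftarrow)$ Assuming $X$ has no convergence continuum, I would show that every subcontinuum $Y\subset X$ is locally connected. Suppose instead $Y$ fails local connectedness at some $y_0\in Y$: there exist $\epsilon>0$ and a sequence $y_n\to y_0$ in $Y$ such that each $y_n$ lies in a different component of $K:=\clos{B_\epsilon(y_0)}\cap Y$ than $y_0$. Let $E_n$ denote the component of $y_n$ in $K$ and $E_0$ the component of $y_0$. Since $y_n\to y_0\notin E_n$, after a subsequence the $E_n$ are pairwise distinct, hence pairwise disjoint. Boundary Bumping applied to each $E_n$ inside $K$ forces $E_n\cap\partial B_\epsilon(y_0)\neq\emptyset$, so $\diam(E_n)\geq \epsilon/2$ for $n$ large, and hyperspace compactness lets me extract $E_n\to A$ in the Hausdorff metric. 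Because $A$ is connected, $A\subset K$, and $y_0\in A$, it follows that $A\subset E_0$; therefore $A\cap E_n=\emptyset$ for every $n\geq 1$, and $\diam(A)\geq \epsilon/2$. Thus $A$ is a convergence continuum in $Y\subset X$, a contradiction.

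$(\Rightarrow)$ Assuming $X$ contains a convergence continuum $A$ witnessed by pairwise disjoint continua $A_n\to A$ with $A_n\cap A=\emptyset$, I would exhibit a subcontinuum of $X$ that is not locally connected. The candidate is a minimal subcontinuum $K\subset X$ containing $A\cup\bigcup_n A_n$; such $K$ exists by Brouwer's reduction theorem, since the family of subcontinua of $X$ containing this set is non-empty (as $X$ itself belongs to it) and closed under nested intersections by hyperspace compactness. I would then argue that $K$ cannot be locally connected at any $y_0\in A$: if it were, a small connected open neighborhood $V$ of $y_0$ in $K$ would meet $A_n$ for all sufficiently large $n$, and by combining minimality of $K$ with Boundary Bumping applied inside $K\setminus V$, one can excise a small piece of $K$ near $y_0$ while retaining a subcontinuum that still contains $A$ and cofinitely many $A_n$, contradicting minimality.

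The main obstacle is the $(\Rightarrow)$ direction: converting the heuristic picture that ``$K$ is ragged near $A$ because infinitely many disjoint $A_n$ accumulate there'' into a rigorous violation of minimality of $K$. The delicate point is that the connecting glue inside $K$ joining $A$ to the various $A_n$ is supplied by $X$ and may weave through any prescribed neighborhood of $y_0$, so naive excision could sever $K$ into pieces no longer containing all the required $A_n$. I expect to resolve this by a refined boundary-bumping argument inside $K\setminus V$ together with hyperspace compactness, or alternatively via the Property S characterization of local connectedness, showing that a covering of $K$ by subcontinua of small diameter is incompatible with an accumulation of pairwise disjoint continua $A_n$ of diameter bounded below converging onto $A$.
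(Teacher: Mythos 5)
The paper offers no proof of this statement (it is quoted from Nadler's book), so I am measuring your proposal against the standard argument. Your $(\Leftarrow)$ direction is correct and is essentially that argument: negate local connectedness of a subcontinuum $Y$ at $y_0$, extract pairwise disjoint components $E_n$ of $\clos{B_\epsilon(y_0)}\cap Y$ containing points $y_n\to y_0$, use boundary bumping to push each $E_n$ out to the sphere of radius $\epsilon$ so that $\diam(E_n)\geq\epsilon/2$, and take a Hausdorff limit $A$, which lands inside the component $E_0$ of $y_0$ and is therefore disjoint from every $E_n$. That half is complete.

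The $(\Rightarrow)$ direction contains a genuine gap, and the claim your plan hinges on is false as stated. Take $X=[0,1]^2$, $A=\{0\}\times[0,1]$, $A_n=\{1/n\}\times[0,1]$, and let $K$ be the harmonic comb $A\cup\bigcup_n A_n\cup([0,1]\times\{0\})$. This $K$ \emph{is} a minimal subcontinuum containing $A\cup\bigcup_n A_n$ (every point of $K$ outside $A\cup\bigcup_n A_n$ is a cut point separating $A$ from some $A_n$, so no proper subcontinuum of $K$ contains all of them), and yet $K$ is locally connected at $(0,0)\in A$, since $B_r((0,0))\cap K$ is connected for every $r$. Hence ``$K$ cannot be locally connected at any $y_0\in A$'' is not provable, and the excision argument you propose must break down at such a point: at $(0,0)$ the small connected neighborhood $V$ exists, meets cofinitely many $A_n$, and produces no violation of minimality. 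What you actually need is that $K$ fails to be locally connected \emph{somewhere}, and your sketch gives no mechanism for locating that point; you flag this yourself as the unresolved step. The standard repair discards minimality altogether. If $X$ is not locally connected you are done, so assume it is. Pick $a,b\in A$ with $\dist(a,b)=4\epsilon$, a connected open $U\ni a$ with $\clos{U}\subset B_\epsilon(a)$, and $N$ with $A_i\cap U\neq\emptyset$ for $i\geq N$ (possible since $A_i\to A\ni a$). Then $M=\clos{U}\cup A\cup\bigcup_{i\geq N}A_i$ is a subcontinuum of $X$ (compactness because the closure of $\bigcup_{i\geq N}A_i$ adds only points of $A$), and $M$ is not locally connected at $b$: for $\eta<\min(\epsilon,\dist(b,\clos{U}))$ the set $\clos{B_\eta(b)}\cap M$ decomposes into $\clos{B_\eta(b)}\cap A$ and the traces of the $A_i$, each $A_i$ lying at positive distance from $A$ and from the union of the remaining $A_j$, so every connected neighborhood of $b$ inside $B_\eta(b)\cap M$ is contained in $A$, while every neighborhood of $b$ in $M$ meets infinitely many $A_i$, which are disjoint from $A$. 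I recommend replacing your minimal-continuum scheme with this direct construction.
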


\begin{teo}[\cite{HY}*{Theorem 3-17}]
\label{teoLocConArc}
Every connected, locally connected, complete metric space is arc-connected. 
\end{teo}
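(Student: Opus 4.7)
The plan is to prove the theorem in three stages: first promote local connectedness to local path-connectedness (this is where completeness enters), then deduce global path-connectedness by the standard clopen argument, and finally sharpen any path into an arc.

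For local path-connectedness, fix $x\in X$ and $\epsilon>0$ and, using local connectedness, choose a connected open neighborhood $U$ of $x$ with $\diam(U)<\epsilon$. Given $y\in U$, I want to produce a continuous map $\gamma\colon[0,1]\to \clos{U}$ with $\gamma(0)=x$ and $\gamma(1)=y$. The approach is a classical chain-refinement: inductively construct finite chains $\mathcal{C}_n=(V^n_0,\dots,V^n_{k_n})$ of connected open sets of diameter less than $2^{-n}$ joining $x$ to $y$, each chain a refinement of the previous one in the sense that every link of $\mathcal{C}_{n+1}$ is contained in some link of $\mathcal{C}_n$ and consecutive links meet. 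Using this nested chain data, define approximations $\gamma_n$ on a finite set of dyadic parameters in $[0,1]$, with $\gamma_n(j/k_n)$ chosen in $V^n_{j-1}\cap V^n_{j}$, in such a way that the refinement structure forces $\sup_t \dist(\gamma_n(t),\gamma_{n+1}(t))\leq 2^{-n}$. Completeness of $X$ then yields a uniform limit $\gamma\colon[0,1]\to X$ with the desired endpoints.

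Global path-connectedness is then standard: fix a base point $a\in X$ and let $P(a)$ denote the set of points joined to $a$ by a path. Local path-connectedness implies that both $P(a)$ and $X\setminus P(a)$ are open, so connectedness of $X$ gives $P(a)=X$. Finally, to upgrade a path $\gamma\colon[0,1]\to X$ with distinct endpoints $a,b$ to an arc inside $\gamma([0,1])$, I would apply a purely topological pruning argument (no completeness needed): use Zorn's lemma on the collection of continuous maps $[0,1]\to\gamma([0,1])$ sending $0$ to $a$ and $1$ to $b$, partially ordered by image inclusion, and verify that a minimal element must be injective, hence an arc; alternatively, argue directly by excising, for each pair of parameters $s<t$ with $\gamma(s)=\gamma(t)$, the loop $\gamma|_{[s,t]}$ and reparametrizing.

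The main obstacle is Stage 1, specifically arranging the chain refinements so that the interpolants $\gamma_n$ form a \emph{uniformly} Cauchy sequence of continuous functions rather than merely being pointwise close on a countable dense set; completeness of $X$ is precisely what converts this uniform Cauchy property into an actual continuous map valued in $X$. Stages 2 and 3 are formal consequences, with Stage 3 relying only on the Hausdorff property of $X$.
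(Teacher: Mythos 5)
The paper does not actually prove this statement---it is quoted from Hocking--Young with a citation---so the only meaningful benchmark is the classical proof given there. Your Stages 1 and 2 follow that classical route: finite chains of connected open sets exist between any two points of a connected space for any open cover (reachability by a chain is a clopen condition), local connectedness lets you refine chains to mesh $2^{-n}$ inside the links of the previous chain, completeness turns the resulting uniformly Cauchy interpolants into a limit path, and the clopen argument globalizes. Apart from the bookkeeping you already flag, these two stages are sound.

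The genuine gap is Stage 3. The Zorn argument you propose---continuous maps $[0,1]\to\gamma([0,1])$ with the prescribed endpoints, ordered by inclusion of images, with a minimal element claimed to be injective---is not justified at the chain step: a descending chain of images is a nested family of Peano subcontinua containing $a$ and $b$, and their intersection is a continuum containing $a$ and $b$, but nothing guarantees that it contains the image of a path from $a$ to $b$ (nested locally connected continua can shrink to a continuum of $\sin(1/x)$ type, which is not path-connected between the relevant points), so lower bounds for chains are not produced. The ``excise each loop'' alternative likewise does not terminate or converge for paths whose self-intersection sets are uncountable, e.g.\ a coordinate of a space-filling curve. The fact you are reaching for (any path between distinct points of a Hausdorff space contains an arc between them) is true, but its correct proof applies Zorn to a different poset---closed sets $A\subseteq[0,1]$ containing $0$ and $1$ such that $\gamma$ identifies the two endpoints of every complementary interval of $A$---and is not a formality; and the tempting shortcut ``$\gamma([0,1])$ is a Peano continuum, and Peano continua are arcwise connected'' is circular here, since that is a special case of the theorem being proved. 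The cleanest repair, and the one Hocking--Young actually use, is to strengthen Stage 1 so that the arc appears directly: work with \emph{simple} chains (non-consecutive links disjoint), each going straight through its predecessor with closures of links contained in links of the previous chain; the nested intersection of the unions of the closed links is then compact (totally bounded and complete), and the limit parametrization is automatically injective because parameters in non-adjacent subintervals of the $n$-th partition land in disjoint links. That eliminates Stage 3 altogether.
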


\begin{teo}[Sierpi\'nski's Theorem \cite{Kur1}*{p. 218}] 
\label{teoSierpinski} A continuum $C$ is Peano
if and only if for all $\epsilon>0$ there is a finite cover $C_1,\dots,C_n$ 
of $C$ by connected sets of diameter less than $\epsilon$. 
\end{teo}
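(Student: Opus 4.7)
The plan is to prove both directions separately, with the forward direction being essentially immediate from compactness.

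For the forward direction, suppose $C$ is Peano. Given $\epsilon>0$, local connectedness provides, for each $x\in C$, an open connected neighborhood $U_x$ with $\operatorname{diam}(U_x)<\epsilon$. Compactness of $C$ yields a finite subcover $U_{x_1},\dots,U_{x_n}$, which is the required finite cover by connected sets of small diameter.

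For the converse, I will show local connectedness at an arbitrary point $x\in C$. Fix $\epsilon>0$ and apply the hypothesis to $\epsilon/2$ to obtain a finite cover $C_1,\dots,C_n$ by connected sets of diameter less than $\epsilon/2$. Split the index set into $I=\{i:x\in\overline{C_i}\}$ and its complement, and define $V=\bigcup_{i\in I}\overline{C_i}$. Each $\overline{C_i}$ is connected (closure of a connected set) and has the same diameter as $C_i$, so $V$ is a finite union of connected sets sharing the point $x$; it is therefore connected with $\operatorname{diam}(V)<\epsilon$. The remaining task is to exhibit an open ball around $x$ contained in $V$, which will show that $x$ has arbitrarily small connected neighborhoods, i.e.\ that $C$ is locally connected at $x$.

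The main step, and the only one that is not formal, is constructing this open neighborhood. Since for each $j\notin I$ we have $x\notin\overline{C_j}$, the quantity $d(x,\overline{C_j})$ is strictly positive; because the complement of $I$ is finite, $\delta=\min_{j\notin I}d(x,\overline{C_j})>0$ is well defined. Any $y\in B_\delta(x)$ belongs to some $C_k$ by the covering property, and the choice of $\delta$ forces $k\in I$, so $y\in\overline{C_k}\subset V$, giving $B_\delta(x)\subset V$. I expect no real obstacle here: the only place one has to be slightly careful is in passing from the $C_i$ (which are not assumed open or closed) to their closures without losing connectedness or inflating diameters, and in recalling that a finite union of connected sets with a common point is connected.
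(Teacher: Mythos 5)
Your proof is correct; the paper states this result only as a citation to Kuratowski and gives no proof of its own, and your argument is the standard one (the hypothesis is Sierpi\'nski's ``property S,'' and your $V=\bigcup_{i\in I}\overline{C_i}$ construction is exactly the classical proof that property S implies local connectedness). The only point worth noting is that your $V$ is a connected neighborhood of $x$ that need not be open, so strictly you establish connectedness im kleinen at every point; this is equivalent to local connectedness when it holds at \emph{every} point, a standard fact that is safe to invoke here.
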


\subsubsection{Unicoherence}
\label{secUnicoh}
A continuum $C$ is \emph{unicoherent} 
if given subcontinua $A,B\subset C$ such that $A\cup B=C$ then $A\cap B$ 
is connected. 
A continuum is \emph{hereditarily unicoherent} if every subcontinuum is 
unicoherent. 
To show the difference between these concepts and for future reference let us recall the following result.

\begin{thm}[Janisewski's Theorem \cite{Kur}*{p. 506}] 
\label{teoJanis}
 The union of two subcontinua $A,B$ of the 2-sphere disconnects the sphere if and only if $A\cap B$ is disconnected.
\end{thm}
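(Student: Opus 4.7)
The plan is to translate both sides of the biconditional into vanishing statements about reduced \v{C}ech cohomology, and then read the equivalence off a Mayer--Vietoris sequence. The central tool is Alexander duality on $S^2$: for every compact $K\subset S^2$, reduced \v{C}ech cohomology is isomorphic to reduced singular homology of the complement via $\check H^1(K)\cong \tilde H_0(S^2\setminus K)$. Thus ``$K$ separates $S^2$'' is equivalent to $\check H^1(K)\neq 0$, while ``$K$ is disconnected'' is equivalent to $\tilde H^0(K)\neq 0$. This dictionary converts Janiszewski's theorem into a purely algebraic statement about the pair $A,B$.

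With this translation in hand, I would write down the reduced \v{C}ech Mayer--Vietoris sequence for the cover $\{A,B\}$ of $A\cup B$:
\[
  \tilde H^0(A)\oplus\tilde H^0(B)\;\longrightarrow\;\tilde H^0(A\cap B)\;\stackrel{\delta}{\longrightarrow}\;\check H^1(A\cup B)\;\longrightarrow\;\check H^1(A)\oplus\check H^1(B).
\]
Since $A$ and $B$ are continua, $\tilde H^0(A)=\tilde H^0(B)=0$, so $\delta$ is injective. This already yields the implication ``$A\cap B$ disconnected $\Rightarrow$ $A\cup B$ separates $S^2$''. For the converse one needs the right-hand term to vanish, which corresponds to the (implicit) hypothesis that neither $A$ nor $B$ individually separates $S^2$ --- a hypothesis one must read into the statement, since without it the biconditional obviously fails (take $A=B$ a simple closed curve). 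Granted this, $\delta$ becomes an isomorphism and the reverse implication follows at once.

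The main obstacle, once the algebraic machinery is in place, is handling the degenerate cases that fall outside the bare Mayer--Vietoris argument: the case $A\cap B=\emptyset$, where the sequence as stated does not directly apply and one must argue planarly (e.g.\ by separately treating whether the disjoint continua lie in a common complementary region of each other), and the trivial case where $A$ or $B$ equals $S^2$. The other subtle point is that Alexander duality here is for \v{C}ech, not singular, cohomology --- one must justify this by appealing to the fact that arbitrary compacta in $S^2$ may fail to have the homotopy type of a CW complex, so the \v{C}ech version is the one that pairs with singular homology of the open complement. With these points addressed and the non-separating hypothesis made explicit, the entire proof reduces to three lines of homological algebra.
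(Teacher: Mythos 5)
The paper offers no proof of Theorem \ref{teoJanis}: it is quoted from Kuratowski \cite{Kur}, where it is derived by classical point--set methods (cuttings of the sphere between two points and the Phragm\'en--Brouwer properties), so there is no in-paper argument to compare against. Your cohomological route is the standard modern substitute and is essentially sound. Alexander duality for compacta in $S^2$ (necessarily in its \v{C}ech form, as you note) converts ``$K$ separates'' into $\check H^1(K)\neq 0$, and the Mayer--Vietoris sequence of the closed cover $\{A,B\}$ --- valid for \v{C}ech cohomology of compact pairs --- gives injectivity of $\delta\colon \tilde H^0(A\cap B)\to \check H^1(A\cup B)$ from the connectedness of $A$ and $B$ alone. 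That establishes the sufficiency direction unconditionally, and it is exactly this direction that the paper actually needs to deduce unicoherence of $S^2$ (where $A\cup B=S^2$ forces $\check H^1(A\cup B)=0$, hence $\tilde H^0(A\cap B)=0$) and the implication $2\to 1$ in Proposition \ref{propHerUnico}. What your argument buys over Kuratowski's is brevity and a transparent accounting of which hypothesis drives which implication; what it costs is the \v{C}ech machinery and a separate check of the degenerate cases $A\cap B=\emptyset$ and $A\cup B=S^2$, both of which you flag.

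Your most valuable observation is that the necessity direction is false as literally stated: for $A=B$ a simple closed curve (or $A$ a circle and $B$ a point of it) the union separates while $A\cap B$ is connected. Kuratowski's formulation carries the hypothesis that neither $A$ nor $B$ separates the sphere, which in your dictionary is precisely the vanishing of $\check H^1(A)\oplus\check H^1(B)$ that makes $\delta$ an isomorphism. So the defect lies in the statement as transcribed, not in your proof. One caveat for the place where the paper does invoke necessity (the implication $1\to 2$ in Proposition \ref{propHerUnico}, concluding that a separating plaque is not unicoherent): that use rests on the classical fact that a separating continuum in $S^2$ admits \emph{some} decomposition with disconnected intersection, which does not follow from the bare Mayer--Vietoris sequence and needs the full strength of Kuratowski's treatment; your corrected statement covers the cited theorem itself but not, by itself, that application.
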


By Janisewski's Theorem we see that the 2-sphere is a unicoherent continuum. 
It is not hereditarily unicoherent because it contains a circle, which is not unicoherent.

\subsubsection{Dendrites}
\label{secBackDend}
A \emph{dendrite} is a Peano continuum $X$ containing no simple closed curve.
The points of a dendrite are classified as: 
\emph{end point} if its complement is connected,
\emph{ramification} or \emph{branch} point if its complement has at least three components,
\emph{regular point} if its complement has two components.
The following results from \cite{Nadler2}*{\S 10} summarizes several properties of dendrites.

\begin{teo}
\label{teoSubContDend}
Every subcontinuum of a dendrite is a dendrite.
\end{teo}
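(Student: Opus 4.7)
The plan is to check the two defining properties of a dendrite for $C$: absence of simple closed curves and the Peano property. The first is immediate, because any simple closed curve in $C$ would also sit inside $X$, contradicting that $X$ is a dendrite. Compactness and connectedness of $C$ are built into the definition of subcontinuum. So the entire content is to show that $C$ is locally connected.

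The route I would take is to show that $X$ itself is \emph{hereditarily} locally connected, which implies local connectedness of every subcontinuum, in particular $C$. By Theorem~\ref{teoLocConConvCont} this reduces to showing that $X$ contains no convergence continuum. So assume for contradiction that $A \subset X$ is a convergence continuum, with pairwise disjoint subcontinua $A_i \subset X$, disjoint from $A$, and $A_i \to A$ in the Hausdorff metric. The goal is to manufacture a simple closed curve in $X$ from this data, which would contradict that $X$ is a dendrite.

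Pick two distinct points $a,b \in A$ (possible since $A$ is nondegenerate). Using Theorem~\ref{teoLocConArc}, fix an arc $\alpha \subset X$ from $a$ to $b$. Take small, disjoint, open, connected neighborhoods $U$ of $a$ and $V$ of $b$ in $X$ (available because $X$ is locally connected); shrinking if necessary we may assume both are disjoint from some fixed interior point of $\alpha$ outside $U\cup V$. For all sufficiently large $i$, the Hausdorff convergence $A_i \to A$ forces $A_i \cap U \neq \emptyset$ and $A_i \cap V \neq \emptyset$; pick $a_i \in A_i \cap U$ and $b_i \in A_i \cap V$. Since $U$ and $V$ are arcwise connected (open connected subsets of a Peano continuum, again by Theorem~\ref{teoLocConArc}) we get an arc $\beta_i \subset U$ from $a$ to $a_i$ and an arc $\gamma_i \subset V$ from $b$ to $b_i$. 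In the continuum $A_i$, together with some arc $\delta_i \subset X$ from $a_i$ to $b_i$ inside a small neighborhood of $A_i$ (using arcwise connectedness of $X$), we form the closed loop $\alpha \cup \beta_i \cup \delta_i \cup \gamma_i$. Because $A_i$ is disjoint from $A$, and for large $i$ the detour $\beta_i \cup \delta_i \cup \gamma_i$ passes through $A_i$ while $\alpha$ stays near $A$ away from $U \cup V$, the loop is nondegenerate, and any such nondegenerate loop inside a Peano continuum contains a simple closed curve.

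The main obstacle is the last sentence: ensuring that the concatenated closed loop is not merely a retraction of an arc onto itself, so that it actually encloses a simple closed curve rather than just retracing a tree. I would handle this by picking $i$ so large that $A_i$ is much closer to $A$ than $\alpha$ wanders outside of $U\cup V$, so that $\delta_i$ cannot be a subarc of $\alpha$; combined with standard extraction of a simple closed curve from a nontrivial closed path in an arcwise connected Peano space, this produces the desired contradiction.
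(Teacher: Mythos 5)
Your reduction is sound in outline: $C$ inherits compactness, connectedness, and the absence of simple closed curves from $X$, so everything rests on local connectedness, and deducing that from ``$X$ is hereditarily locally connected'' via Theorem \ref{teoLocConConvCont} is a legitimate route. (Note that the paper itself offers no proof of this statement --- it is imported from \cite{Nadler2}*{\S 10}, where the usual argument goes through hereditary unicoherence and the separation characterization of Theorem \ref{teoCharDend} rather than through convergence continua --- so there is no in-paper argument to compare against; your proposal stands or falls on its own.)

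The genuine gap is in the last step. The principle you invoke, that ``any nondegenerate loop inside a Peano continuum contains a simple closed curve,'' is false: in a triod with center $o$ and ends $a,b,c$, the concatenation of the arcs $[a,b]$, $[b,c]$, $[c,a]$ is a nondegenerate closed path whose image is the triod itself, which contains no simple closed curve. Likewise, ``$\delta_i$ is not a subarc of $\alpha$'' does not prevent $\beta_i\cup\delta_i\cup\gamma_i$ from being a tree whose unique $a$--$b$ arc is still $\alpha$. The correct tool is: \emph{the union of two distinct arcs with the same endpoints contains a simple closed curve} (take a maximal open subarc of one arc disjoint from the other; its closure meets the other arc exactly at its two endpoints). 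To apply it you must produce an arc $\alpha'$ from $a$ to $b$ inside $\beta_i\cup\delta_i\cup\gamma_i$ with $\alpha'\neq\alpha$, and the clean way is to make the whole detour miss a single interior point $m\in\alpha\setminus(U\cup V)$: since the $A_i$ are pairwise disjoint, at most one contains $m$, so for a suitable large $i$ you have $\dist(m,A_i)>0$ and can take $\delta_i$ inside a \emph{connected open} neighborhood of $A_i$ avoiding $m$ (this also repairs a second slip: arcwise connectedness of $X$ does not place $\delta_i$ in a small neighborhood of $A_i$; you need that neighborhood to be open and connected, hence arcwise connected). Then $\beta_i\cup\delta_i\cup\gamma_i$ is a Peano continuum containing $a$ and $b$ but not $m$, the arc $\alpha'$ it contains from $a$ to $b$ misses $m$ and so differs from $\alpha$, and the two-arcs lemma yields the contradiction. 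With these corrections your strategy does work, but as written the decisive step rests on a false statement.
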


\begin{teo}
\label{teoDenRamNum}
The set of all the ramification points of a dendrite is countable.
\end{teo}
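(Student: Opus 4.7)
The plan is to encode each ramification point by an ordered triple from a countable dense subset of $X$ and then verify that this encoding is injective, so that the set of ramification points embeds into a countable set. I will rely on two standard properties of a dendrite $X$: between any two points $a,b \in X$ there is a unique arc $[a,b]$; and if $p$ is a cut point and $C$ is the component of $X\setminus\{p\}$ containing $a$, then $[a,p]\setminus\{p\}$ is contained in $C$.

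First I would fix a countable dense set $D\subset X$, available because $X$ is a compact metric space. For each ramification point $p$, the definition gives at least three components $C_1,C_2,C_3$ of $X\setminus\{p\}$; these are open since $X$ is locally connected and $X\setminus\{p\}$ is open. Density of $D$ then allows me to pick representatives $d_i \in D\cap C_i$ for $i=1,2,3$ and to associate to $p$ the triple $(d_1,d_2,d_3) \in D^3$.

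The main step is to show that $p$ is recovered from $(d_1,d_2,d_3)$ as the unique element of $[d_1,d_2]\cap[d_1,d_3]\cap[d_2,d_3]$. Existence is clear: since $d_i$ and $d_j$ lie in distinct components of $X\setminus\{p\}$, the unique arc joining them must pass through $p$. For uniqueness, the second fact above gives the inclusion $[d_i,d_j] \subset C_i\cup\{p\}\cup C_j$, and since $C_1,C_2,C_3$ are pairwise disjoint an easy set-theoretic computation shows that the triple intersection collapses to $\{p\}$. Consequently the assignment $p\mapsto(d_1,d_2,d_3)$ is injective, and the set of ramification points embeds in the countable set $D^3$.

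The only mild obstacle is the second dendrite fact, but it follows quickly from uniqueness of arcs: the set $[a,p]\setminus\{p\}$ is connected and contains $a$, and is disjoint from $p$, so it must sit in a single component of $X\setminus\{p\}$, namely $C$.
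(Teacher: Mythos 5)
Your argument is correct and complete. Note that the paper itself offers no proof of this statement: it is quoted, together with the other dendrite facts in \S 2 (Theorems \ref{teoSubContDend}--\ref{teoWaz}), from Nadler's \emph{Continuum Theory}, so there is no in-paper argument to compare against. Your encoding of a ramification point $p$ by a triple $(d_1,d_2,d_3)$ of points of a countable dense set lying in three distinct components of $X\setminus\{p\}$, followed by the recovery of $p$ as the unique point of $[d_1,d_2]\cap[d_1,d_3]\cap[d_2,d_3]$, is a standard and self-contained route; the only ingredients are unique arcwise connectedness of a dendrite and the openness of components of $X\setminus\{p\}$ (from local connectedness), both of which you invoke correctly, and the small elided step that $[d_i,d_j]$ decomposes as $[d_i,p]\cup[p,d_j]$ once one knows $p\in[d_i,d_j]$ is immediate from uniqueness of arcs.
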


\begin{teo}
\label{teoCharDend}
If $X$ is a compact metric space then the following statements are equivalent: 
\begin{enumerate}
 \item $X$ is a dendrite,
 \item $X$ is a hereditarily unicoherent Peano continuum,
 \item $X$ is connected and any two points of the continuum are 
separated by a third point.
\end{enumerate}
\end{teo}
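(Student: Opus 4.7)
The plan is to prove (1) $\Leftrightarrow$ (2) first as a short step, then (1) $\Rightarrow$ (3), and finally the delicate direction (3) $\Rightarrow$ (1).

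For (1) $\Rightarrow$ (2): by Theorem \ref{teoSubContDend} every subcontinuum of a dendrite is itself a dendrite, so it suffices to show that any dendrite $X$ is unicoherent. Write $X = A \cup B$ with $A, B$ subcontinua and suppose for contradiction that $A \cap B$ is disconnected; pick $p, q$ in distinct components of $A \cap B$. Since $A$ and $B$ are Peano continua, Theorem \ref{teoLocConArc} provides arcs $\alpha \subset A$ and $\beta \subset B$ from $p$ to $q$. A standard extraction (cutting $\alpha$ and $\beta$ at their first divergence from $p$ and first reconvergence) produces subarcs meeting only at endpoints, and these form a simple closed curve in $X$, contradicting (1). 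The reverse (2) $\Rightarrow$ (1) is immediate: any simple closed curve is the union of two arcs meeting in a two-point set, which is disconnected, so the curve itself is not unicoherent.

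For (1) $\Rightarrow$ (3): given distinct $x, y$, Theorem \ref{teoLocConArc} yields an arc $\alpha$ from $x$ to $y$, and I claim any interior point $z$ of $\alpha$ separates $x$ from $y$ in $X$. Otherwise there would be a subcontinuum $C \subset X \setminus \{z\}$ containing both $x$ and $y$, and arc-connectedness of the Peano continuum $C$ would provide a second arc $\beta \subset C$ from $x$ to $y$ avoiding $z$; the same first-divergence trick applied to $\alpha \neq \beta$ then yields a simple closed curve, contradicting (1).

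The real work lies in (3) $\Rightarrow$ (1). The absence of simple closed curves is easy: if $S \subset X$ were a simple closed curve and $z \in X$ separated some distinct $x, y \in S$, then $S \setminus \{z\}$ (an arc if $z \in S$, or all of $S$ otherwise) would be a connected subset of $X \setminus \{z\}$ containing both $x$ and $y$, a contradiction. The main obstacle is local connectedness. The plan is to apply Sierpi\'nski's Theorem \ref{teoSierpinski}: for each $\epsilon > 0$, one must cover $X$ by finitely many connected sets of diameter less than $\epsilon$. The strategy is to cover $X$ by finitely many $\epsilon/4$-balls, use the separation hypothesis on each pair of far-apart balls to extract a finite set of separators $F \subset X$, and then use compactness to control the components of $X \setminus F$ so that each one has diameter less than $\epsilon$. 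The hard part is bootstrapping from the pointwise separation hypothesis to a uniform metric control over those components, and this is where essentially all of the difficulty of the theorem is concentrated.
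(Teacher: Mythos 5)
The paper does not actually prove this statement: it is quoted as background from \cite{Nadler2} (\S 10), so there is no in-paper argument to compare against, and your proposal must stand on its own. The parts you do carry out are correct: (1) $\Leftrightarrow$ (2) and (1) $\Rightarrow$ (3) work exactly as you describe, using Theorem \ref{teoSubContDend}, Theorem \ref{teoLocConArc}, and the standard extraction by which two distinct arcs with common endpoints yield a simple closed curve; the ``no simple closed curve'' half of (3) $\Rightarrow$ (1) is also fine.

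The genuine gap is the local connectedness in (3) $\Rightarrow$ (1), which you leave as a ``plan'', and the plan as described does not work. Choosing one separating point $z_{ij}$ for each pair of centers of an $\epsilon/4$-net produces a finite set $F$, but a point $z_{ij}$ that separates $x_i$ from $x_j$ carries no metric information: it separates those two points, not the two balls, so nothing prevents the component of $X\setminus F$ containing $x_i$ from winding around $F$ and still having large diameter. The ``bootstrapping'' you flag is therefore not a deferred computation but the missing idea. A clean way to close it with a tool already quoted in the paper is to show that (3) forbids convergence continua and then invoke Theorem \ref{teoLocConConvCont}. Indeed, suppose $A$ is a non-trivial subcontinuum with $A_i\to A$ in the Hausdorff metric, the $A_i$ pairwise disjoint and disjoint from $A$. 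Pick $x\neq y$ in $A$ and, by (3), a point $z$ with $X\setminus\{z\}=U\cup V$, a separation with $x\in U$ and $y\in V$. For all large $i$ the continuum $A_i$ meets both open sets $U$ and $V$, hence cannot avoid $z$; but the $A_i$ are pairwise disjoint, so at most one of them contains $z$. This contradiction shows $X$ has no convergence continuum, hence is (hereditarily) locally connected, and combined with your ``no simple closed curve'' argument this completes (3) $\Rightarrow$ (1).
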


\begin{teo}
\label{teoWaz}
Every dendrite can be embedded in the plane. 
Moreover, the Wazewski's universal dendrite is a dendrite in $\R^2$ which contains a 
topological copy of any dendrite.
\end{teo}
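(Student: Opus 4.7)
The plan is that the first sentence of the statement follows at once from the second, so I would concentrate on constructing Wa\.zewski's universal dendrite $W\subset\R^2$ and verifying its universal property; planarity of an arbitrary dendrite then reduces to $W\hookrightarrow\R^2$.

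First I would construct $W$ explicitly by iteration. Start with a horizontal segment $A_0\subset\R^2$ and a countable dense subset $\{p_{0,n}\}_{n\geq 1}\subset A_0$. At each $p_{0,n}$ attach a sequence of arcs $\{B_{0,n,k}\}_{k\geq 1}$ transverse to $A_0$, with diameters summable and tending to $0$, placed so that all pieces produced at different base points are pairwise disjoint except at their designated attaching points. Iterate: on every newly attached arc, repeat the same construction at a countable dense subset, with diameters again shrinking and chosen small enough to fit inside shrinking neighbourhoods. Let $W$ be the closure in $\R^2$ of the union of all arcs produced at every stage. Because the construction produces, for every $\epsilon>0$, a finite cover by connected pieces of diameter less than $\epsilon$, Sierpi\'nski's theorem (Theorem \ref{teoSierpinski}) gives that $W$ is a Peano continuum. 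No stage introduces a simple closed curve, hence $W$ contains none and is a dendrite by Theorem \ref{teoCharDend}. By construction every accumulation point of the branching is again a ramification point, so $W$ has ramification points of countable order forming a dense subset of every arc of $W$.

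For universality, fix a dendrite $D$. Using Theorem \ref{teoDenRamNum}, its ramification set is countable; enumerate it together with a countable dense subset as $\{y_n\}_{n\geq 1}$. Build an embedding $h\colon \overline{\{y_n\}}=D\to W$ inductively by extending along the unique arc (Theorem \ref{teoCharDend}) from the already-embedded subdendrite $D_n$ to $y_{n+1}$. At the attaching point $z\in D_n$, choose a ramification point $w\in W$ of countable order within distance $\eta_n$ of $h(z)$ (possible by density), pick a still-unused branch of $W$ at $w$, and embed the new arc into a subarc of that branch of diameter at most $\eta_n$. By matching the cyclic order of branches at each ramification of $D$ with a choice of branches at the image in $W$ (there are always countably many free ones) and by making $\eta_n\to 0$ fast enough relative to the moduli of continuity of the partial embeddings, $h$ extends to a continuous map on all of $D$ by a Cauchy-sequence argument. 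Injectivity in the limit follows because distinct branches of $W$ meet only at the common ramification point, which mirrors the hereditary unicoherence of $D$.

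The main obstacle is the bookkeeping in the inductive construction: simultaneously (a) reserving enough fresh branches of $W$ at each scale so that no future extension is blocked, (b) choosing the diameters $\eta_n$ small enough to make $h$ uniformly continuous while large enough to accommodate the new arc, and (c) verifying injectivity in the limit, which ultimately relies on the uniqueness of arcs in a dendrite (Theorem \ref{teoCharDend}) to show that two accumulation points with the same image would have to be joined in $D$ by an arc mapping into a single branch of $W$, forcing them to coincide.
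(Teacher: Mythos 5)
The paper does not actually prove this statement: it is quoted as background from \cite{Nadler2}*{\S 10}, so where the paper supplies a citation you are supplying a construction. Your construction of $W$ itself is essentially the standard one and is fine in outline (though note that "every accumulation point of the branching is again a ramification point" is false as stated — end points of $W$ are also limits of ramification points; what you need, and what your construction does give, is that ramification points are dense on every arc of $W$).

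The genuine gap is in the universality argument, at the inductive extension step. When you pass from the subdendrite $D_n$ to $D_{n+1}=D_n\cup A$, the new arc $A$ meets $D_n$ at one specific point $z$, and an embedding $h_{n+1}$ extending $h_n$ must send $A$ to an arc of $W$ meeting $h_n(D_n)$ exactly at $h_n(z)$ and nowhere else. Your recipe instead selects a ramification point $w$ of $W$ merely \emph{near} $h_n(z)$ and routes the image of $A$ into a free branch at $w$. If $w\neq h_n(z)$ this cannot work: since arcs in a dendrite are unique, any arc of $W$ from $h_n(z)$ into a branch at $w$ must begin with the subarc of $h_n(D_n)$ joining $h_n(z)$ to $w$, so the image of $A$ overlaps $h_n(D_n)$ in a nondegenerate arc and injectivity is lost. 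The alternative — adjusting $h_n$ so that $h_n(z)$ lands exactly on a ramification point of $W$ with free branches — forces you to perturb the map on all of $D_n$ at every later stage, and then continuity of the limit is not enough: a uniform limit of embeddings is continuous but need not be injective. One must maintain quantitative separation estimates of the form $\dist(h_n(a),h_n(b))\geq\delta_k$ whenever $a,b\in D$ are at distance at least $1/k$, stable under all subsequent perturbations, and one must arrange in advance that the countably many ramification points of $D$ (Theorem \ref{teoDenRamNum}) are sent to ramification points of $W$ of infinite order before the arcs between them are filled in. This bookkeeping is precisely the content of the theorem — it is what the standard proofs (e.g.\ the inverse-limit representation of a dendrite by finite trees with monotone bonding maps used in \cite{Nadler2}) are organized to deliver — and your sketch acknowledges it as "the main obstacle" without resolving it.
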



\subsection{Decompositions}
\label{secDecomp}
The standard theory of foliations is based on a special kind of local partitions in plaques, i.e., \emph{foliated charts}. 
On a surface $S$, a $C^0$ foliated chart is a homeomorphism $\varphi\colon U\subset S\to (0,1)\times (0,1)$ where $U\subset S$ 
is an open subset.
If $\varphi=(\varphi_1,\varphi_2)$ then the plaques of $U$ are $\varphi_2^{-1}(y)$ for all $y\in (0,1)$. 
We can define a map $Q\colon U\to\continua(S)$ as $Q(p)=\varphi_2^{-1}(\varphi(p))$.
This map $Q$ is a continuous monotone partition of $U$.

For a pseudo-Anosov diffeomorphism, the stable and unstable sets form \emph{singular foliations}. 
At a singular point, an $n$-prong with $n>2$, it is not possible to define a local chart as above where the plaques are the local stable sets. 
The partition in local stable sets in a neighborhood of a singularity is monotone and upper semicontinuous. 
The (full) continuity is lost at the singularity. This is the idea we have in mind for the next definition of \emph{decomposition}, which is a standard concept in continuum theory.

Given a compact metric space $X$, we will consider a compact subset $Y\subset X$. 
We can think that $Y$ is the closure of the open set $U$ in the surface considered above. 
However, the theory is developed in such a way that $Y$ may not be connected and may have empty interior.

\begin{df}
\label{dfDec}
A \emph{decomposition} of $Y$ is a monotone upper semicontinuous partition $Q\colon Y\to\continua(Y)$. 
The sets $Q(x)$ are the \emph{plaques} of the decomposition.
\end{df}

\begin{exa}[Extremal examples]
\label{exaExtremal}
For a compact metric space $Y$ define $Q_{\min}(x)=\{x\}$ and $Q_{\max}(x)=\cc_x(Y)$. 
It holds that $Q_{\min}$ and $Q_{\max}$ are decompositions of $Y$, a proof can be found in \cite{Mo25}*{Theorem 24}.
For every decomposition $Q$ of $Y$ we have that $Q_{\min}(x)\subset Q(x)\subset Q_{\max}(x)$ for all $x\in Y$.
\end{exa}

For standard foliations, the restriction of a foliated chart to an arbitrary subset may not be a foliated chart. 
This is because the product structure may be lost. 
Next we show that the restriction of a decomposition is a decomposition.

\begin{prop}
\label{propRestDecEsDec}
 If $Q$ is a decomposition of the compact set $Y$
 and $Z\subset Y$ is compact then $Q|^m_Z$ is a decomposition.
\end{prop}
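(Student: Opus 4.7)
The plan is to verify, in turn, the four requirements for $Q|^m_Z$ to be a decomposition: the partition axioms, connectedness of each plaque (monotonicity), closedness/compactness of each plaque so that $Q|^m_Z$ takes values in $\continua(Z)$, and finally upper semicontinuity. The first three are essentially bookkeeping once one writes out the definition; the substantive step is the last one, which will rest on the upper semicontinuity of $Q$ together with the fact that a Hausdorff limit of a convergent sequence of continua in a compact metric space is itself a continuum.

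For the partition axioms, reflexivity follows from $x\in Q(x)\cap Z$ together with the fact that $x$ lies in its own component. For symmetry and transitivity, note that if $y\in Q|^m_Z(x)=\cc_x(Q(x)\cap Z)$, then in particular $y\in Q(x)$, so $Q(x)=Q(y)$ because $Q$ is a partition; hence $Q(x)\cap Z=Q(y)\cap Z$, and since $\cc_x$ and $\cc_y$ pick out the same component of this set (both $x$ and $y$ lie in it), $Q|^m_Z(x)=Q|^m_Z(y)$. Monotonicity holds because the component $\cc_x(Q(x)\cap Z)$ is by construction connected. For compactness, $Q(x)\cap Z$ is a closed subset of the compact set $Y$, and in a compact Hausdorff space the components of a closed set are closed, hence compact; so $Q|^m_Z(x)\in\continua(Z)$.

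For upper semicontinuity I would use the formulation in Remark \ref{rmkUpSCont}: suppose $x_n\to x$ in $Z$ and $Q|^m_Z(x_n)\to C$ in the Hausdorff metric; I must show $C\subset Q|^m_Z(x)$. Since $\continua(Y)$ is compact, pass to a subsequence along which $Q(x_n)\to C'$ in the Hausdorff metric. Upper semicontinuity of $Q$ gives $C'\subset Q(x)$. Because $Q|^m_Z(x_n)\subset Q(x_n)$ for each $n$, passing to the Hausdorff limit yields $C\subset C'\subset Q(x)$; and because $Q|^m_Z(x_n)\subset Z$ with $Z$ closed, also $C\subset Z$. Each $Q|^m_Z(x_n)$ is a continuum containing $x_n$, so the Hausdorff limit $C$ is a continuum containing $x$. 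Thus $C$ is a connected subset of $Q(x)\cap Z$ containing $x$, which forces $C\subset\cc_x(Q(x)\cap Z)=Q|^m_Z(x)$, as required.

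The only point that requires a little care, and which I would flag as the main obstacle, is the interaction between the operation of taking components and the operation of taking Hausdorff limits: one must be sure that the limit $C$ actually lands inside the correct component $\cc_x(Q(x)\cap Z)$ rather than in some other component of $Q(x)\cap Z$. This is handled by the two facts used above, that $x\in C$ (because $x_n\to x$ and $x_n\in Q|^m_Z(x_n)$) and that $C$ is connected (Hausdorff limits of continua are continua); together they pin $C$ to the component through $x$. Everything else is a direct unfolding of the definitions.
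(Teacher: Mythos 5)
Your proof is correct and the substantive step (upper semicontinuity) follows exactly the paper's argument: pass to a subsequence so that $Q(x_n)$ converges, use upper semicontinuity of $Q$, and then use that the limit is a continuum containing $x$ and contained in $Q(x)\cap Z$ to pin it to the right component. The extra verification of the partition axioms and compactness of plaques is bookkeeping the paper dispatches with ``by definition,'' and your explicit remark that $x\in C$ makes precise a point the paper leaves implicit.
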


\begin{proof}
By definition (\ref{ecuRest}) $Q|^m_Z$ is monotone. 
Suppose that $x_n\to x$ and $Q|^m_Z(x_n)\to C$. 
Taking a subsequence we can also assume that $Q(x_n)\to D$ for some subcontinuum $D\subset X$.
Since $Q|^m_Z(x_n)\subset Q(x_n)$ and $Q$ is upper semicontinuous 
we have that $C\subset D\subset Q(x)$ 
and, as $C$ is a continuum contained in $Z\cap Q(x)$, 
we conclude that $C\subset Q|^m_Z(x)$.
This proves that $Q|^m_Z$ is upper semicontinuous.
\end{proof}

\begin{exa}
\label{exaDecProd}
 Let $P$ be a continuum and consider a compact metric space $Z$. 
 Define $Y=P\times Z$ and $Q\colon Y\to\continua(Y)$ as 
 $Q(p,z)=P\times \{z\}$. 
If we consider the product topology on $Y$ then
$Q$ is a continuous decomposition of $Y$.
\end{exa}

\begin{df}
\label{dfProSt}
Given two decompositions $Q_i\colon Y_i\to \continua(Y_i)$, $i=1,2$, 
we say that $Q_1$ and $Q_2$ are \emph{equivalent} if there is a 
homeomorphism $h\colon Y_1\to Y_2$ such that 
$h(Q_1(p))=Q_2(h(p))$ for all $p\in Y_1$. 
We say that a decomposition is a \emph{product structure} 
if it is equivalent to a decomposition as in Example \ref{exaDecProd}.
\end{df}

\begin{prop}
 A decomposition $Q_1$ of a continuum $Y$ is a product structure if and only if 
 there is a decomposition $Q_2$ of $Y$ 
 such that 
 \begin{equation}
  \label{ecuCardUno}
  \card(Q_1(x)\cap Q_2(y))=1
 \end{equation}
 for all $x,y\in Y$.
\end{prop}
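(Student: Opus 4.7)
The plan is to prove both directions separately, with the forward direction being short and the main work concentrated on the converse. For the forward direction, assume $Q_1$ is equivalent via a homeomorphism $h\colon Y\to P\times Z$ to the decomposition $Q(p,z)=P\times\{z\}$ of Example \ref{exaDecProd}. Since $Y$ is a continuum and $P$ is a continuum, $Z$ is forced to be connected, hence a continuum. Define $Q_2(y)=h^{-1}(\{p_0\}\times Z)$ where $(p_0,z_0)=h(y)$; this is a continuous decomposition (itself a product structure), and $h(Q_1(x)\cap Q_2(y))=(P\times\{z(x)\})\cap(\{p(y)\}\times Z)$ is a single point for every $x,y\in Y$.

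For the converse, fix a base point $x_0\in Y$ and set $P=Q_1(x_0)$, $Z=Q_2(x_0)$; both are continua by the definition of decomposition. By hypothesis, for every $y\in Y$ the intersections $Q_2(y)\cap P$ and $Q_1(y)\cap Z$ are singletons; denote their unique points by $p(y)$ and $z(y)$ respectively, and set $h(y)=(p(y),z(y))\in P\times Z$. I would verify injectivity as follows: if $h(y_1)=h(y_2)$, then $p(y_1)\in Q_2(y_1)\cap Q_2(y_2)$ forces $Q_2(y_1)=Q_2(y_2)$ (plaques of a partition either coincide or are disjoint), and similarly $Q_1(y_1)=Q_1(y_2)$; applying the hypothesis with both arguments equal to $y_1$ gives $\{y_1\}=Q_1(y_1)\cap Q_2(y_1)\ni y_2$. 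For surjectivity, given $(p,z)\in P\times Z$ let $y$ be the unique point of $Q_1(z)\cap Q_2(p)$; since $z\in Z\cap Q_1(y)$ and the latter intersection is a singleton, $z(y)=z$, and similarly $p(y)=p$.

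The key remaining step is continuity of $h$, which is the main obstacle since we only have upper semicontinuity of $Q_1$ and $Q_2$. Suppose $y_n\to y$ in $Y$; I want $p(y_n)\to p(y)$. Since $P$ is compact it suffices to show every convergent subsequence of $\{p(y_n)\}$ has limit $p(y)$. Passing to subsequences, assume $p(y_{n_k})\to q\in P$ and, using compactness of $\continua(Y)$, that $Q_2(y_{n_k})\to C$ in the Hausdorff metric. Remark \ref{rmkUpSCont} gives $C\subset Q_2(y)$, while Hausdorff convergence together with $p(y_{n_k})\in Q_2(y_{n_k})$ gives $q\in C$; hence $q\in Q_2(y)\cap P=\{p(y)\}$, forcing $q=p(y)$. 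The argument for $z$ is symmetric, so $h$ is continuous. Since $Y$ is compact and $P\times Z$ is Hausdorff, the continuous bijection $h$ is a homeomorphism.

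It remains to check that $h$ carries $Q_1$-plaques to horizontal slices $P\times\{z_0\}$: if $y'\in Q_1(y)$ then $Q_1(y')=Q_1(y)$, so $z(y')=z(y)$; conversely, surjectivity of $h$ shows that for each $p\in P$ there is some $y'$ with $h(y')=(p,z(y))$, and the equality $z(y')=z(y)$ together with the singleton property forces $Q_1(y')=Q_1(y)$. Thus $h(Q_1(y))=P\times\{z(y)\}$, which is exactly the equivalence required by Definition \ref{dfProSt}. The only delicate point in the whole argument is the upper-semicontinuity-to-continuity passage for the coordinate maps $p,z$; everything else is formal manipulation of the singleton hypothesis.
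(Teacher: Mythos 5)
Your proof is correct, but the converse direction takes a genuinely different route from the paper's. The paper builds the product out of the two quotient spaces: it defines $h(x)=(Q_1(x),Q_2(x))\in Y/Q_1\times Y/Q_2$, gets bijectivity from (\ref{ecuCardUno}), and gets continuity essentially for free because the quotient projections are continuous; the only nontrivial input is Proposition \ref{propCoc1}, which guarantees that $Y/Q_1$ and $Y/Q_2$ are metrizable continua. You instead realize the two factors as concrete transversal cross-sections $P=Q_1(x_0)$ and $Z=Q_2(x_0)$ sitting inside $Y$, and define the coordinates by intersecting plaques with these sections; the price is that continuity of $p(\cdot)$ and $z(\cdot)$ is no longer automatic and has to be extracted from upper semicontinuity, which you do correctly via the Hausdorff-limit argument (any subsequential limit $q$ of $p(y_{n_k})$ lies in a Hausdorff limit $C\subset Q_2(y)$ by Remark \ref{rmkUpSCont}, and also in the closed set $P$, hence in the singleton $P\cap Q_2(y)$). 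What your version buys is independence from the metrizability of the quotient and an explicit identification of the factors as subcontinua of $Y$; what the paper's version buys is brevity, since continuity is built into the quotient topology. Your forward direction and the final verification that $h$ carries $Q_1$-plaques onto slices match the paper's in substance.
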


\begin{proof}
If $Q_1$ is a product structure, 
we can assume that $Y=P\times Z$. 
Since $Y$ is a continuum we have that $P$ and $Z$ are continua. 
If $Q_1(p,z)=P\times \{z\}$ we can define $Q_2(p,z)=\{p\}\times Z$. 
Then 
$$Q_1(p_1,z_1)\cap Q_2(p_2,z_2)=\{(p_2,z_1)\}$$
for all $(p_1,z_1),(p_2,z_2)\in P\times Z$.

To show the converse note that by Proposition \ref{propCoc1}
we know that $Y/Q_1$ and $Y/Q_2$ are continua. 
Consider $h\colon Y\to Y/Q_1\times Y/Q_2$ 
defined by $$h(x)=(Q_1(x),Q_2(x)).$$ 
The condition (\ref{ecuCardUno}) implies that $h$ is bijective.
\azul{Since we consider the quotient topology, 
the projections onto $Y/Q_1$ and $Y/Q_2$ are continuous
and $h$ is continuous.}
Since $Y$ is compact we have that $h^{-1}$ is continuous.
Given that $h(Q_1(x))=\{Q_1(x)\}\times Y/Q_2$ the proof ends.
\end{proof}

\begin{df}
We say that a compact subset $K\subset Y$ is a \emph{representative set} of the decomposition $Q$
if $Q(x)\cap K\neq\emptyset$ for all $x\in Y$.
If $K$ is a representative set we say that $Q$ is a decomposition of $(Y,K)$.
\end{df}

We will consider decompositions of $(\clos{U},\partial U)$, with $U$ an open set, 
because for a cw-expansive homeomorphism 
we know that the diameter of the stable and the unstable sets are bounded away from zero. 
Therefore, if $U$ is small, each stable plaque in $U$ meets the boundary of $U$.

\begin{prop}
 If $Q$ is a decomposition of $(Y,K)$,
 $U\subset Y$ is open and $U\cap K=\emptyset$ then 
 $Q|^m_{\clos{U}}$ is a decomposition of $(\clos{U},\partial U)$.
\end{prop}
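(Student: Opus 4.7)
The plan is to verify the two conditions: $Q|^m_{\clos{U}}$ is a decomposition of $\clos{U}$ in the sense of Definition \ref{dfDec}, and $\partial U$ is a representative set for it.

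The first condition is immediate from Proposition \ref{propRestDecEsDec} applied with $Z=\clos{U}$, which is compact. It yields a monotone upper semicontinuous partition of $\clos{U}$ into subcontinua, so there is essentially nothing to do here.

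For the representativeness condition, fix $x\in\clos{U}$ and set $C=Q(x)$ and $D=Q|^m_{\clos{U}}(x)=\cc_x(C\cap\clos{U})$. The goal is $D\cap\partial U\neq\emptyset$. If $x\in\partial U$ there is nothing to prove, so assume $x\in U$. Because $K$ is representative for $Q$, pick $p\in C\cap K$; since $U\cap K=\emptyset$, we have $p\in Y\setminus U$ and in particular $C\not\subset U$. If additionally $C\subset\clos{U}$, then $D=C$ and $p\in C\cap(\clos{U}\setminus U)=C\cap\partial U$, giving the conclusion.

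The remaining case is $C\not\subset\clos{U}$, where $E:=C\cap\clos{U}$ is a proper closed subset of the continuum $C$ and $D$ is its component through $x$. The heart of the proof is a standard boundary bumping argument. Suppose for contradiction that $D\cap\partial U=\emptyset$; since $D\subset\clos{U}$, this forces $D\subset U$. Because $E$ is a compact metric space in which components coincide with quasi-components, and $E\setminus U=C\cap\partial U$ is closed in $E$ and disjoint from $D$, there exists a clopen subset $F$ of $E$ with $x\in F\subset U$. Using $U\cap C\subset\clos{U}\cap C=E$, openness of $F$ in $E$ together with $F\subset U\cap C$ upgrades to openness of $F$ in $C$; and closedness of $F$ in $E$, together with $E$ being closed in $C$, gives closedness of $F$ in $C$. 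Thus $F$ is a nonempty clopen proper subset of the continuum $C$ (proper because $F\subset U$ while $C\not\subset U$), contradicting the connectedness of $C$. This boundary bumping step is the only substantive obstacle; everything else is direct bookkeeping from the definitions and from Proposition \ref{propRestDecEsDec}.
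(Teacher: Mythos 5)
Your proof is correct and follows the same route as the paper: the first half is exactly the appeal to Proposition \ref{propRestDecEsDec}, and for the boundary condition the paper simply cites the boundary bumping theorem (\cite{HY}*{Theorem 2-16} or \cite{Engel}*{6.1.25}), which is precisely the quasi-component/clopen-set argument you carry out in full. The only addition on your side is the explicit case split on whether $Q(x)\subset\clos{U}$, which is where the hypothesis $U\cap K=\emptyset$ is genuinely needed and which the paper leaves implicit.
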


\begin{proof}
From Proposition \ref{propRestDecEsDec} we know that $Q'=Q|^m_{\clos{U}}$ is a decomposition.
Given $x\in U$ the fact that $Q'(x)\cap\partial U\neq\emptyset$ follows by \cite{HY}*{Theorem 2-16} 
or \cite{Engel}*{6.1.25}.
\end{proof}

In the following subsections we study some particular types of decompositions. 
Applications will be given in \S \ref{secDendritations} for the study of decompositions of two-dimensional discs.

\subsubsection{Dendritic decompositions}
\label{secDendDec}
Let $Q\colon Y\to \continua(Y)$ be a decomposition of the continuum $Y$. 

\begin{df}
\label{dfDendCoDend}
 We say that $Q$ is \emph{dendritic} if $Q(x)$ is a dendrite for all $x\in Y$. 
 We say that $Q$ is \emph{codendritic} if $Y/Q$ is a dendrite with the quotient topology.
\end{df}


Dendritic decompositions are a generalization of one-dimensional foliated charts. 
Codendritic decompositions generalizes codimension one foliated charts.
In \S \ref{CwFCwExp} we will study codendritic 
and dendritic decompositions of a two-dimensional disc. Some general properties can be derived in the generality of a continuum $Y$.

\begin{prop}
\label{propCoDendGen}
 If $Q\colon Y\to\continua(Y)$ is a codendritic decomposition 
 whose plaques have empty interior 
 then there is a residual subset $G\subset Y$ such that 
 $Y\setminus Q(x)$ has 1 or 2 components for all $x\in G$.
\end{prop}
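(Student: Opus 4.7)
The plan is to push the problem to the quotient $D=Y/Q$, which by hypothesis is a dendrite, so that the question about components of $Y\setminus Q(x)$ becomes the analogous question for points of a dendrite. Let $\pi\colon Y\to D$ be the quotient projection. By Proposition \ref{propCoc1}, $D$ is a continuum; since $Y$ is compact and $D$ is Hausdorff, $\pi$ is a continuous closed surjection whose fibers are exactly the plaques $Q(x)$. The identity $\pi^{-1}(\pi(x))=Q(x)$ yields $Y\setminus Q(x)=\pi^{-1}(D\setminus\{\pi(x)\})$, and since the dendrite $D$ is locally connected, the components $C_1,\dots,C_k$ of the open set $D\setminus\{\pi(x)\}$ are themselves open. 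The $\pi^{-1}(C_i)$ are then pairwise disjoint open saturated subsets of $Y$ whose union is $Y\setminus Q(x)$, so everything reduces to showing that each $\pi^{-1}(C_i)$ is connected.

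This is the substantive step and is an instance of the standard fact that a closed continuous surjection with connected fibers pulls every connected set back to a connected set. I would prove it by contradiction: if $\pi^{-1}(C_i)=A\cup B$ with $A,B$ non-empty, disjoint and relatively closed in $\pi^{-1}(C_i)$, then connectedness of each fiber forces $A$ and $B$ to be saturated; taking closures in $Y$ and using that $\pi$ is closed, one checks that $\pi(A)=\pi(\overline A)\cap C_i$ and analogously for $B$, so $\pi(A)$ and $\pi(B)$ are non-empty, disjoint, and relatively closed in $C_i$, contradicting its connectedness. Hence the number of components of $Y\setminus Q(x)$ equals the number of components of $D\setminus\{\pi(x)\}$.

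To finish, by Theorem \ref{teoDenRamNum} the set $R$ of ramification points of $D$ is countable; for $t\in D\setminus R$ the space $D\setminus\{t\}$ has $1$ or $2$ components, depending on whether $t$ is an end point or a regular point. Set $G=Y\setminus\pi^{-1}(R)$. Then $\pi^{-1}(R)=\bigcup_{r\in R}Q(x_r)$ is a countable union of plaques, each of which is closed with empty interior by the standing hypothesis, hence nowhere dense; Baire gives that $\pi^{-1}(R)$ is meager and $G$ is residual. By the previous step, $Y\setminus Q(x)$ has $1$ or $2$ components for every $x\in G$. The main difficulty is the connectedness of the preimages $\pi^{-1}(C_i)$; the only other place the hypotheses enter is in the Baire argument, where the empty-interior assumption on plaques is essential to guarantee that the exceptional set $\pi^{-1}(R)$ is meager.
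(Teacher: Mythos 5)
Your proof is correct and follows essentially the same route as the paper: pass to the quotient dendrite $Y/Q$, invoke the countability of its ramification points (Theorem \ref{teoDenRamNum}), and use the empty-interior hypothesis on plaques to make the complement of the ramification plaques a residual set. The only difference is that you explicitly justify the step the paper leaves implicit --- that the closed monotone quotient map pulls the components of $(Y/Q)\setminus\{\pi(x)\}$ back to the components of $Y\setminus Q(x)$ --- and your argument for that standard fact is sound.
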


\begin{proof}
From Theorem \ref{teoDenRamNum}
we have that the set of ramification points 
of the quotient dendrite $Y/Q$ is at most countable.
Let $Q_1,Q_2,\dots$ be the ramification points of $Y/Q$. 
We have that $Y\setminus Q_i$ is open in $Y$. 
It is dense in $Y$ because no $Q_i$ has interior points.
Define $G=Y\setminus \cup Q_i$. 
Since for every point $x$ of the residual set $G$ 
we know that $Q(x)$ is not a ramification point, 
we conclude that $Y\setminus Q(x)$ has 1 or 2 components.
\end{proof}

In the hypothesis of Proposition \ref{propCoDendGen} we cannot conclude that $Y\setminus Q(x)$ has 2 components for all $x$ in a residual subset of $Y$ 
(as could seem natural). See Example \ref{exaEspinaGenerica}.

With respect to the next proposition, if we think that $Q^1$ represents stable continua and $Q^2$ unstable continua, then the condition 
$Q^1(z)\cap Q^2(z)=\{z\}$ is related with cw1-expansivity (recall that 
for cw1-expansivity we require that local stable and unstable continua intersects 
in at most one point).

\begin{prop}
\label{propDendCoDend}
 Let $Q^1, Q^2$ be decompositions of the continuum $Y$ such that 
 $Q^1(z)\cap Q^2(z)=\{z\}$ for all $z\in Y$. 
 If $Q^1$ is codendritic then $Q^2$ is dendritic.
\end{prop}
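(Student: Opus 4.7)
The plan is to show that each plaque $Q^2(x)$ embeds as a subcontinuum of the quotient dendrite $Y/Q^1$, and then invoke Theorem \ref{teoSubContDend} to conclude that $Q^2(x)$ itself is a dendrite.

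Fix $x\in Y$. By Definition \ref{dfDec}, $Q^2(x)$ is a subcontinuum of $Y$. Let $\pi\colon Y\to Y/Q^1$ be the quotient projection, which is continuous, and consider its restriction $\pi_x\colon Q^2(x)\to Y/Q^1$. The main observation is that $\pi_x$ is injective: if $z_1,z_2\in Q^2(x)$ satisfy $\pi(z_1)=\pi(z_2)$, then $z_2\in Q^1(z_1)$; since $z_1,z_2\in Q^2(x)=Q^2(z_1)$, the hypothesis $Q^1(z_1)\cap Q^2(z_1)=\{z_1\}$ forces $z_2=z_1$.

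Once injectivity is established, compactness of $Q^2(x)$ makes $\pi_x$ a homeomorphism onto its image $\pi_x(Q^2(x))$, which is a subcontinuum of $Y/Q^1$. By hypothesis $Y/Q^1$ is a dendrite, so by Theorem \ref{teoSubContDend} every subcontinuum of $Y/Q^1$ is a dendrite; in particular $\pi_x(Q^2(x))$ is a dendrite, and therefore so is $Q^2(x)$. Since $x$ was arbitrary, $Q^2$ is dendritic in the sense of Definition \ref{dfDendCoDend}.

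I do not expect any significant obstacle here. The only point that deserves a sentence of care is that $Y/Q^1$ is indeed a continuum (hence its subcontinua inherit metrizability), which is provided by Proposition \ref{propCoc1}(2), and that the quotient topology makes $\pi$ continuous by definition. The condition $Q^1(z)\cap Q^2(z)=\{z\}$ is used in exactly one place, to rule out that two distinct points of the same $Q^2$-plaque lie in a common $Q^1$-plaque; this is precisely what makes the restricted projection injective and reduces the problem to the already-known statement that subcontinua of dendrites are dendrites.
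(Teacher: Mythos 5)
Your proposal is correct and follows essentially the same route as the paper: restrict the quotient projection $Y\to Y/Q^1$ to the plaque $Q^2(x)$, use the hypothesis $Q^1(z)\cap Q^2(z)=\{z\}$ to get injectivity, invoke compactness to obtain an embedding, and conclude via Theorem \ref{teoSubContDend}. No gaps.
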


\begin{proof}
Given $x\in Y$ consider $f\colon Q^2(x)\to Y/Q^1$ defined as $f(y)=Q^1(y)$. 
With the quotient topology on $Y/Q^1$ we have that $f$ is continuous because it is the restriction of the decomposition map 
$Q^1$. The condition $Q^1(z)\cap Q^2(z)=\{z\}$ for all $z\in Y$, gives us that $f$ is injective. 
Since $Q^2(x)$ is compact, $f$ is a homeomorphism onto its image. 
The image of $f$ is a subcontinuum of the quotient dendrite $Y/Q^1$. 
Then, the result follows by Theorem \ref{teoSubContDend} (every subcontinuum of a dendrite is a dendrite).
\end{proof}

\begin{rmk}
 The converse of Proposition \ref{propDendCoDend} is not true. Take two one-dimensional foliations on a three-dimensional ball. 
 Both foliations are dendritic but no one is codendritic.
\end{rmk}



\subsubsection{Smooth and unicoherent decompositions}
\label{secSmoothUniDec}

Our motivation for including the following kind of decompositions is to include 
the example \S \ref{secAnomalous} (the anomalous saddle) in the theory. 
The example presents local stable sets that are not locally connected but hereditarily unicoherent.

\begin{df}
A decomposition is 
 \emph{hereditarily unicoherent} 
 if each plaque is hereditarily unicoherent. 
\end{df}

 If $P$ is an hereditarily unicoherent continuum then
 given $x,y\in P$ there is a unique minimal continuum containing $x$ and $y$. 
 If $Q$ is a hereditarily unicoherent decomposition and $y\in Q(x)$ this minimal continuum will be denoted by $Q(x,y)$. 

 We remark that in continuum theory the term \emph{smooth} has a particular meaning that is not related with any class of differentiability 
 (or at least the author cannot see any connection). To avoid confusions we will call it $\continua$-\emph{smooth}.
 
\begin{df}
\label{dfCsmooth}
A decomposition $Q$ of a continuum $Y$ is $\continua$-\emph{smooth}
if it is hereditarily unicoherent 
and if $x_n\to x$, $y_n\in Q(x_n)$ and $y_n\to y$ then 
$Q(x_n,y_n)\to Q(x,y)$ in the Hausdorff metric.  
\end{df}

\begin{prop}
\label{propHuacImpDend}
 Every $\continua$-smooth decomposition is dendritic. 
\end{prop}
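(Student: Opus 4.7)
The plan is to invoke the characterization of dendrites given in Theorem \ref{teoCharDend}: a continuum is a dendrite if and only if it is a hereditarily unicoherent Peano continuum. Since hereditary unicoherence of each plaque is built into the definition of $\continua$-smoothness, it suffices to prove that each plaque $P=Q(x_0)$ is locally connected. I would reduce this further through Theorem \ref{teoLocConConvCont} and establish the stronger fact that $P$ contains no convergence continuum, from which hereditary (and in particular ordinary) local connection follows.

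So I would argue by contradiction, assuming $A\subset P$ is a convergence continuum, with disjoint continua $A_n\subset P$ satisfying $A_n\to A$ in the Hausdorff metric and $A_n\cap A=\emptyset$. Since $A$ is non-trivial, pick distinct $a,b\in A$ and, using the Hausdorff convergence, choose $a_n,b_n\in A_n$ with $a_n\to a$ and $b_n\to b$. Now apply $\continua$-smoothness three times, always staying inside the plaque $P$: with the constant sequence $x_n=a$ and $y_n=a_n$ one gets $Q(a,a_n)\to Q(a,a)=\{a\}$; symmetrically $Q(b_n,b)\to\{b\}$; and with $x_n=a_n$, $y_n=b_n$ one obtains $Q(a_n,b_n)\to Q(a,b)$. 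The first two limits yield $\diam Q(a,a_n)\to 0$ and $\diam Q(b_n,b)\to 0$.

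To close the argument I would use hereditary unicoherence of $P$ twice. On one hand $A$ is a subcontinuum containing $a$ and $b$, so $Q(a,b)\subset A$; on the other hand $A_n$ is a subcontinuum containing $a_n$ and $b_n$, so $Q(a_n,b_n)\subset A_n$, which forces $Q(a_n,b_n)\cap A=\emptyset$. Since $Q(a,a_n)\cup Q(a_n,b_n)\cup Q(b_n,b)$ is a continuum through $a$ and $b$, hereditary unicoherence also gives that $Q(a,b)$ is contained in this union; intersecting with $A$ discards the $Q(a_n,b_n)$ piece, leaving
\[
Q(a,b)\subset Q(a,a_n)\cup Q(b_n,b).
\]
For $n$ large enough the two pieces on the right sit inside disjoint small balls around $a$ and $b$, contradicting the connectedness of $Q(a,b)$. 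The step I expect to be the most delicate is the simultaneous bookkeeping that $Q(a_n,b_n)$ is trapped inside $A_n$ (and so disjoint from $A$) while $Q(a,b)$ is trapped inside $A$; once this pincer is in place the contradiction is a routine diameter estimate.
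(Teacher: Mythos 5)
Your proof is correct, and it takes a somewhat different route from the paper's. The paper's own argument is shorter and more direct: it takes a point $x$ at which a plaque $P$ fails to be connected im kleinen, picks $x_n\to x$ lying in components of $B_\epsilon(x)\cap P$ other than that of $x$, observes that each $Q(x,x_n)$ must then leave $B_\epsilon(x)$ and so has diameter bounded away from zero, and contradicts $\continua$-smoothness, which forces $Q(x,x_n)\to Q(x,x)=\{x\}$. You instead reduce local connection to the absence of convergence continua via Theorem \ref{teoLocConConvCont} and close with a unicoherence pincer. Both arguments hinge on the same key consequence of $\continua$-smoothness — that $Q(x,x_n)\to\{x\}$ whenever $x_n\to x$ inside a plaque — but yours additionally exploits the minimality of $Q(a,b)$ inside $A$ and of $Q(a_n,b_n)$ inside $A_n$, and in exchange delivers the formally stronger conclusion that each plaque is hereditarily locally connected (which is automatic once the plaque is known to be a dendrite, but is obtained here without that detour). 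One small economy: the third application of smoothness, $Q(a_n,b_n)\to Q(a,b)$, is never used; the pincer only needs $\diam Q(a,a_n)\to 0$, $\diam Q(b_n,b)\to 0$ and the containments $Q(a,b)\subset A$ and $Q(a_n,b_n)\subset A_n$.
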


\begin{proof}
By Theorem \ref{teoCharDend}
it is sufficient to prove that 
each plaque is locally connected.
If $P$ is a non-locally connected plaque then there is 
$x\in P$ without arbitrarily small and connected neighborhoods. 
Let $\epsilon>0$ be such that $\cc_x (B_\epsilon(x)\cap P)$ is not 
a neighborhood of $x$ in $P$. 
Then we can take $x_n\in P$ such that 
$x_n\to x$ and $\cc_{x_n}(B_\epsilon(x)\cap P)$ is disjoint from 
$\cc_x(B_\epsilon(x)\cap P)$. 
Then, $\diam(Q(x,x_n))$ is bounded away from zero. 
But $x_n\to x$ and $Q(x,x)=x$. 
This contradicts that $Q$ is $\continua$-smooth and finishes the proof.
\end{proof}

\begin{prop}
\label{propRestSmooth}
 If $Q$ is a $\continua$-smooth decomposition of a continuum $Y$ and $Z\subset Y$ is compact then 
 $Q|^m_Z$ is $\continua$-smooth.
\end{prop}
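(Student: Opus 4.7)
The plan is to verify the two conditions of Definition \ref{dfCsmooth} for $Q'=Q|^m_Z$. Since by Proposition \ref{propRestDecEsDec} $Q'$ is already a decomposition, the task reduces to checking that each plaque of $Q'$ is hereditarily unicoherent and that the irreducible subcontinua of $Q'$ vary continuously in the Hausdorff metric. Hereditary unicoherence will be immediate: each plaque $Q'(x)=\cc_x(Q(x)\cap Z)$ is a subcontinuum of the hereditarily unicoherent $Q(x)$, and any subcontinuum of a hereditarily unicoherent continuum is again hereditarily unicoherent (every sub-subcontinuum is still a subcontinuum of $Q(x)$).

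The key observation for the second condition will be that the irreducible continuum between two points is insensitive to passage to an intermediate ambient continuum. Concretely, if $y\in Q'(x)$, then $Q'(x)$ is a subcontinuum of $Q(x)$ containing both $x$ and $y$, so by the minimality defining $Q(x,y)$ we have $Q(x,y)\subset Q'(x)$. Since $Q'(x)$ is itself hereditarily unicoherent and $Q(x,y)$ is a continuum inside $Q'(x)$ joining $x$ and $y$, the minimality applied inside $Q'(x)$ forces the identification $Q'(x,y)=Q(x,y)$.

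With this identification in hand, the smoothness condition transfers from $Q$ to $Q'$ essentially for free. Given $x_n\to x$ in $Z$, $y_n\in Q'(x_n)$ with $y_n\to y$, the upper semicontinuity of $Q'$ supplies $y\in Q'(x)$, so both $Q'(x_n,y_n)$ and $Q'(x,y)$ are defined. Since $Q'(x_n)\subset Q(x_n)$ we also have $y_n\in Q(x_n)$, and the $\continua$-smoothness of $Q$ yields $Q(x_n,y_n)\to Q(x,y)$ in the Hausdorff metric. The preceding paragraph then gives
\[
Q'(x_n,y_n)=Q(x_n,y_n)\to Q(x,y)=Q'(x,y),
\]
which is exactly the required convergence.

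I do not foresee a serious obstacle: the whole argument hinges on the elementary fact that in a hereditarily unicoherent continuum the irreducible continuum between two points is preserved under restriction to any subcontinuum containing both of them, after which everything else reduces to the upper semicontinuity of $Q'$ and the $\continua$-smoothness already available for $Q$.
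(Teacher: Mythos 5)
Your proof is correct and follows essentially the same route as the paper's: hereditary unicoherence passes to subcontinua, the irreducible continuum $Q(x,y)$ is unchanged by restriction to any subcontinuum of $Q(x)$ containing $x$ and $y$, and the convergence then transfers directly from the $\continua$-smoothness of $Q$. You simply make explicit two small points the paper leaves implicit (that $y\in Q|^m_Z(x)$ via upper semicontinuity, and the two-sided minimality argument giving $Q'(x,y)=Q(x,y)$).
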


\begin{proof}
Since each plaque of $Q$ is hereditarily unicoherent we have that each plaque 
of $Q|^m_Z$ is hereditarily unicoherent. 
Note that if $Q(x,y)\subset Z$ then $Q(x,y)=Q|^m_Z(x,y)$.
Suppose that $x_n\to x$, $y_n\in Q|^m_Z(x_n)$ and $y_n\to y$. 
We know that $Q(x_n,y_n)\to Q(x,y)$. 
Since $Q(x_n,y_n)=Q|^m_Z(x_n,y_n)$ and $Q(x,y)=Q|^m_Z(x,y)$ we have that $Q|^m_Z$ is $\continua$-smooth.
\end{proof}

\section{Foliations from continuum theory}
\label{secCwFoliations}
In this section we present a detailed exposition of
our \emph{foliations} from the viewpoint of continuum theory. 
Since we were not able to find this concept in the literature 
we develop the theory from the most basic concepts. 

In the \emph{standard theory of foliations}\footnote{For precision, fix the meaning of this expression to: the theory developed in \cite{CaCo}.}
there are at least two main viewpoints for the definition of foliation:
\begin{enumerate}
\item A partition of the manifold into leaves (satisfying certain conditions). 
This approach seems to be preferred in dynamical systems as it appears in \cite{BS}*{\S 5.13} and \cite{AH}*{\S 6.7}.
\item A maximal atlas of foliated charts. 
This definition is usual in 
texts on foliations as \cite{CaCo,CaNe}.
\end{enumerate}

\noindent{\bf Leaves-atlas equivalence}. (\cite{CaCo}*{Theorem 1.2.18})
Both definitions of foliation coincide.

Roughly speaking, the proof is as follows.
Given an atlas, we construct chains of plaques and then, the leaves. 
For the converse implication, 
one needs to recover the plaques from the leaves. 
A plaque is defined as the component of the intersection of a leaf with a local chart. 

In this section we introduce some levels of generalizations of foliations. 
The following table summarizes the definitions and their implications.
\begin{table}[ht]
\[
\begin{array}{ccc}
\text{dendritation}&\Rightarrow& \text{Peano foliation}\\
\Downarrow&&\Downarrow \\
\text{cw-foliation}&\Rightarrow&\continua\text{-foliation}\\
\end{array}
\]
\caption{Hierarchy of $\continua$-foliations. 
Dendritations, see \S \ref{secDendritations}, are Peano cw-foliations of surfaces.}
\label{tablaFol}
\end{table}

As we will see in \S \ref{exaBackGammon}, the $\continua$-foliations do not satisfy the 
leaves-atlas equivalence. 
This depends on the local connection of plaques. 
Then, we introduce the Peano foliations as $\continua$-foliations with locally connected plaques. 
In Theorem \ref{teoUnaPLC2}
we show that Peano foliations satisfy the leaves-atlas equivalence, see Equation (\ref{ecuFiel}).
For the study of cw-expansive homeomorphisms we introduce cw-foliations. 

\subsection{$\continua$-foliations}

Let $(X,\dist)$ be a compact metric space and denote by $\tau$ the topology of $X$.
Given two closed sets $Y_1,Y_2\subset X$ and we say that two decompositions 
$Q_i$ of $Y_i$, $i=1,2$, are \emph{compatible} if $$Q_1|^m_{Y_1\cap Y_2}=Q_2|^m_{Y_1\cap Y_2}.$$
Recall that $\clos{U}$ denotes the closure of $U$.
\begin{df}
 An \emph{atlas} is a collection of compatible decompositions 
 $$\A=\{Q_{\clos{U}}\colon \clos{U}\to\continua (\clos{U})\}_{U\in\U}$$
 where $\U$ is an open cover of $X$. 
 In this case we say that $\A$ is an atlas \emph{over} $\U$.
\end{df}

The atlases can be ordered by inclusion 
and by Zorn's Lemma we have that every atlas is contained in a maximal atlas.

\begin{df}
 A $\continua$-\emph{foliation} is a maximal atlas. 
\end{df}

A basis $\U$ of the topology of $X$ is \emph{complete} if $V\in\U$ and $U\subset V$ is open then $U\in\U$.

\begin{prop}
\label{propAtlasViaCover}
Every $\continua$-foliation is defined over a complete basis.
\end{prop}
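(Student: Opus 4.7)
The plan is to use the maximality of the atlas together with the restriction principle (Proposition \ref{propRestDecEsDec}) and the associativity of monotone restriction (Proposition \ref{propRestRest}) to show that any open subset of a chart is automatically a chart. This will simultaneously give completeness and the basis property.

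\textbf{Setup.} Let $\A=\{Q_{\clos{U}}\}_{U\in\U}$ be a $\continua$-foliation, i.e., a maximal atlas over the open cover $\U$. Pick $V\in\U$ and an arbitrary open set $W\subset V$. Since $\clos{W}$ is closed in the compact space $X$, it is compact; by Proposition \ref{propRestDecEsDec} the restriction
\[
  Q'_{\clos{W}}:=Q_{\clos{V}}|^m_{\clos{W}}
\]
is a well-defined decomposition of $\clos{W}$. I want to argue that $\A\cup\{Q'_{\clos{W}}\}$ is still an atlas over the open cover $\U\cup\{W\}$; then maximality of $\A$ will force $W\in\U$.

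\textbf{Compatibility verification (the main step).} For any $U\in\U$ I must show
\[
  Q'_{\clos{W}}|^m_{\clos{W}\cap\clos{U}}=Q_{\clos{U}}|^m_{\clos{W}\cap\clos{U}}.
\]
Because $\clos{W}\cap\clos{U}\subset\clos{W}\subset\clos{V}$, Proposition \ref{propRestRest} rewrites the left side as $Q_{\clos{V}}|^m_{\clos{W}\cap\clos{U}}$. Because also $\clos{W}\cap\clos{U}\subset\clos{V}\cap\clos{U}$, a second application of Proposition \ref{propRestRest} gives
\[
  Q_{\clos{V}}|^m_{\clos{W}\cap\clos{U}}=\bigl(Q_{\clos{V}}|^m_{\clos{V}\cap\clos{U}}\bigr)|^m_{\clos{W}\cap\clos{U}}.
\]
Now the original compatibility of $Q_{\clos{V}}$ and $Q_{\clos{U}}$ in $\A$ replaces the inner restriction by $Q_{\clos{U}}|^m_{\clos{V}\cap\clos{U}}$, and a third application of Proposition \ref{propRestRest} (using $\clos{W}\cap\clos{U}\subset\clos{V}\cap\clos{U}\subset\clos{U}$) collapses the whole expression to $Q_{\clos{U}}|^m_{\clos{W}\cap\clos{U}}$, as required. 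Hence $\A\cup\{Q'_{\clos{W}}\}$ is an atlas, and maximality forces $W\in\U$ with $Q_{\clos{W}}=Q'_{\clos{W}}$.

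\textbf{Completeness and basis property.} The previous paragraph is exactly the statement that $V\in\U$ and $W\subset V$ open imply $W\in\U$, i.e., $\U$ is complete. To see that $\U$ is a basis of the topology of $X$, take any open $O\subset X$ and any $x\in O$; since $\U$ covers $X$, pick some $V\in\U$ with $x\in V$. Then $V\cap O$ is an open subset of $V$, so by completeness $V\cap O\in\U$, and $x\in V\cap O\subset O$. Therefore $\U$ is a complete basis.

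\textbf{Main obstacle.} The only non-formal part is bookkeeping the three nested applications of Proposition \ref{propRestRest} in the compatibility check; once one keeps careful track of the inclusions $\clos{W}\cap\clos{U}\subset\clos{V}\cap\clos{U}\subset\clos{V}$, the verification is essentially automatic, and everything else is a direct consequence of maximality.
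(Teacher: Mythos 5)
Your proposal is correct and follows essentially the same route as the paper: restrict $Q_{\clos{V}}$ to $\clos{W}$, verify compatibility with every chart via Proposition \ref{propRestRest} together with the compatibility of $V$ with the other charts, and invoke maximality. The paper's version is just a more condensed form of the same chain of equalities, leaving the maximality invocation and the basis property implicit.
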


\begin{proof}
Let $\W$ be a $\continua$-foliation over $\U$.
Let $U$ be an open set contained in $V\in\U$. 
Define $Q_{\clos{U}}=Q_{\clos{V}}|^m_{\clos{U}}$.
To show that $U\in\U$ we will show that $Q_{\clos{U}}$ is compatible with the decompositions of 
$\W$. If $W\in\U$ then 
\[
 Q_{\clos{U}}|^m_{\clos{U}\cap\clos{W}}=
 (Q_{\clos{V}}|^m_{\clos{U}})|^m_{\clos{U}\cap\clos{W}}=
 Q_{\clos{V}}|^m_{\clos{U}\cap\clos{W}}=
 Q_{\clos{W}}|^m_{\clos{U}\cap\clos{W}}
\]
where the second equality follows 
by Proposition \ref{propRestRest}
and the last one is the compatibility of the decompositions of $V$ and $W$.
\end{proof}

Notice that the decompositions $Q_{\clos{U}}$ are defined on the closures of the open sets of $\U$. 
We will consider $Q_U=Q_{\clos{U}}|^m_U$ for $U\in\U$.

\begin{df}
An \emph{open plaque} 
is a set of the form
$Q_U(x)$
for $x\in U\in\U$.
\end{df}

\begin{prop}
\label{propTopoFina}
If $\A$ is an atlas over the complete basis 
$\U$ then the set of open plaques 
$\{Q_U(x):x\in U\in\U\}$
is a basis of a topology $\tau_\A$ of $X$. 
If $\A_1\subset \A_2$ are two of such atlases then $\tau_{\A_1}=\tau_{\A_2}\subset \tau$.
\end{prop}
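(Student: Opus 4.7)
The plan is to verify in turn: (a) the open plaques satisfy the axioms of a basis and thus generate a topology $\tau_\A$; (b) enlarging the atlas from $\A_1$ to $\A_2$ leaves this topology unchanged; (c) the generated topology is contained in $\tau$.

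For (a), the covering axiom is immediate since $\U$ is a cover of $X$ and $x\in Q_U(x)$ whenever $x\in U\in\U$. For the intersection axiom, take open plaques $Q_{U_1}(x_1)$ and $Q_{U_2}(x_2)$ sharing a point $y$. Because $\U$ is a basis of $\tau$, I pick $V\in\U$ with $y\in V\subset U_1\cap U_2$; the compatibility of the atlas yields $Q_{\clos{V}}=Q_{\clos{U_i}}|^m_{\clos{V}}$, and Proposition \ref{propRestRest} then identifies $Q_V(y)$ with $\cc_y(Q_{\clos{U_i}}(y)\cap V)$. This set is connected, contains $y$, and lies inside $Q_{\clos{U_i}}(x_i)\cap U_i$, so taking the $x_i$-component gives $Q_V(y)\subset Q_{U_i}(x_i)$ for $i=1,2$.

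For (b), both atlases are over complete bases $\U_i$ by hypothesis, and $\A_1\subset\A_2$ forces $\U_1\subset\U_2$; every $\A_1$-basis plaque is then literally an $\A_2$-basis plaque, whence $\tau_{\A_1}\subset\tau_{\A_2}$. Conversely, fix an $\A_2$-plaque $Q_U(x)$ with $U\in\U_2$. For each $y\in Q_U(x)$ I pick $V\in\U_1$ with $y\in V\subset U$, possible since $\U_1$ is a basis of $\tau$; the same computation as in (a), this time carried out inside $\A_2$, gives $Q_V(y)\subset Q_U(x)$. Hence $Q_U(x)$ is a union of $\A_1$-basis plaques, proving $\tau_{\A_2}\subset\tau_{\A_1}$.

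The main obstacle is (c): one must show each basic set $Q_U(x)$ is $\tau$-open. My plan is, fixing $y\in Q_U(x)$, to produce a $\tau$-neighborhood of $y$ contained in $Q_U(x)$. The idea is to apply upper semicontinuity of $Q_{\clos{U}}$ at $y$ so that for $z$ in a small $\tau$-neighborhood $N$ of $y$ the plaque $Q_{\clos{U}}(z)$ stays close to $Q_{\clos{U}}(y)=Q_{\clos{U}}(x)$; then, choosing $V\in\U$ with $y\in V\subset U\cap N$, compatibility identifies $Q_V(z)$ with $\cc_z(Q_{\clos{U}}(z)\cap V)$, which I expect to fall inside $Q_U(x)$ because $z$ is forced to lie near $y\in Q_U(x)$. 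The hardest step will be ruling out that $z$ slips into a foreign component of $Q_{\clos{U}}(z)\cap U$ disjoint from the one containing $x$, since monotone restriction supplies only one-sided Hausdorff control; I would address this by shrinking $V$ around $y$ so the admissible component is trapped in a region already known to sit in $Q_U(x)$, thereby covering each $y\in Q_U(x)$ by a $\tau$-open piece of $Q_U(x)$.
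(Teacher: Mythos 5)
Your parts (a) and (b) are correct and essentially reproduce the paper's argument: the paper gets the intersection axiom by noting that completeness of $\U$ gives $U\cap V\in\U$ directly and that $Q_{U\cap V}(x)\subset Q_U(x)\cap Q_V(x)$, whereas you pick a smaller $V\in\U$ inside the intersection; both routes rest on the same use of compatibility and Proposition \ref{propRestRest}, and your treatment of (b) is if anything slightly more careful than the paper's, since you verify the inclusion at every point of the plaque rather than only at $x$.

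Part (c), however, is a genuine gap, and no amount of shrinking $V$ will close it, because the inclusion you are trying to prove is false. An open plaque $Q_U(x)$ is almost never $\tau$-open: for the decomposition $Q_{\min}$ of Example \ref{exaExtremal} the plaques are singletons, and for the atlases arising from cw-expansive homeomorphisms the plaques have empty $\tau$-interior (this is exactly item 2 of Proposition \ref{propCwFolDec1}). The inclusion as printed in the statement, $\tau_{\A_1}=\tau_{\A_2}\subset\tau$, is a typo for $\tau\subset\tau_{\A_1}=\tau_{\A_2}$; every subsequent invocation of this proposition (in Theorems \ref{teoUnaPLC2}, \ref{teoBaseNumLeaf} and \ref{thmPCompIffPeano}) uses it in the form ``the plaque topology is finer than $\tau$''. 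The correct argument is one line and is the one the paper actually gives: $U=\bigcup_{x\in U}Q_U(x)$ exhibits every $U\in\U$ as a union of open plaques, so $U\in\tau_\A$, and since $\U$ is a basis of $\tau$ this yields $\tau\subset\tau_\A$. You should replace your part (c) by this observation and reverse the direction of the inclusion you set out to prove.
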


\begin{proof}
Since $\U$ is a cover of $X$, the open plaques cover $X$. 
Given $x\in Q_U(y)\cap Q_V(z)$, $U,V\in\U$, 
notice that $Q_U(y)\cap Q_V(z)=Q_U(x)\cap Q_V(x)$ and 
$Q_{U\cap V}(x)\subset Q_U(x)\cap Q_V(x)$. 
Since $\U$ is complete $U\cap V\in\U$ and $Q_{U\cap V}(x)$ is an open plaque.
This proves that the open plaques form a basis of a topology.
Since $U=\cup_{x\in U} Q_U(x)$ for all $U\in\U$, it holds that $\tau_\A\subset \tau$.

Suppose that $\A_1\subset \A_2$ are atlases over $\U_1\subset \U_2$, respectively.
It is clear that $\tau_{\A_1}\subset \tau_{\A_2}$.
Given $x\in U\in \U_2$ take $V\in \U_1$ such that $x\in V\subset U$. 
Then, $Q_V(x)\subset Q_U(x)$ and $\tau_{\A_2}\subset \tau_{\A_1}$.
\end{proof}

\begin{df}
The topology $\tau_\A$ will be called \emph{the plaque topology}. 
Since it does not depend on the atlas $\A$ (over a complete basis) representing the $\continua$-foliation $\W$ it will also 
be denoted as $\tau_\W$.
\end{df}


\subsubsection{Chains of plaques}

Let $\A$ be an atlas over a complete basis $\U$ of the compact metric space $(X,\dist)$. 
A \emph{chain of} $\A$-\emph{plaques} is a sequence $P_1,\dots,P_n$ 
of open plaques of $\A$ such that $P_i\cap P_{i+1}\neq\emptyset$ for all $i=1,\dots n-1$.
Given $x\in P_1$ and $y\in P_n$ we say that the chain goes from $x$ to $y$.

\begin{df}For a point $x\in X$ the $\A$-\emph{leaf} of 
$x$ is the set $\A(x)$ defined by: $y\in \A(x)$ if there is a chain of open plaques 
from $x$ to $y$.
\end{df}

\begin{prop}
\label{propLeafVsComp}
Every component of $(X,\tau_\A)$ is contained in a $\A$-leaf.
\end{prop}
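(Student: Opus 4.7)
The plan is to show that the $\A$-leaves form a partition of $X$ into sets that are clopen in the plaque topology $\tau_\A$; then any connected subset of $(X,\tau_\A)$ must lie inside a single leaf, and the claim follows at once.

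First I would check that the relation ``$y\in \A(x)$'' is an equivalence relation. Reflexivity uses the fact that $\U$ covers $X$, so any $x$ lies in some open plaque $Q_U(x)$ that is a chain of length one from $x$ to itself. Symmetry is obtained by reversing a chain, and transitivity by concatenation (this uses that $P_n\cap P_1'\neq\emptyset$ when $y\in P_n\cap P_1'$). Hence the $\A$-leaves partition $X$.

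Next, the key step: each $\A$-leaf is open in $\tau_\A$. Given $y\in \A(x)$ with a witnessing chain $P_1,\dots,P_n$ ending at $P_n\ni y$, the set $P_n$ is an open plaque, hence $\tau_\A$-open (by Proposition \ref{propTopoFina}). For any $z\in P_n$, the extended sequence $P_1,\dots,P_n,P_n$ is still a chain from $x$ to $z$, so $z\in \A(x)$. Therefore $P_n\subset \A(x)$, showing that $\A(x)$ is a neighborhood in $\tau_\A$ of each of its points.

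Because $\{\A(x):x\in X\}$ is a partition of $X$ by $\tau_\A$-open sets, each $\A$-leaf is the complement of the union of the remaining leaves and is therefore also $\tau_\A$-closed. Thus the leaves are exactly the clopen classes of a partition, so any $\tau_\A$-connected subset must be entirely contained in one leaf. In particular, every $\tau_\A$-component is contained in a single $\A$-leaf. I do not anticipate any serious obstacle; the only subtlety is making sure that the open plaques do form a basis of $\tau_\A$ (which is precisely what Proposition \ref{propTopoFina} provides) so that the step ``$P_n$ is a $\tau_\A$-open neighborhood of $y$'' is legitimate.
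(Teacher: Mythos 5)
Your proposal is correct and is exactly the argument the paper gives: the paper's one-line proof says the result "follows because the $\A$-leaves form a partition of $X$ in $\tau_\A$-open sets," and your write-up simply supplies the routine verifications (equivalence relation via concatenation of chains, openness of each leaf as a union of open plaques, hence clopenness of the classes).
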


\begin{proof}
It follows because the $\A$-leaves form a partition of $X$ in $\tau_\A$-open sets.
\end{proof}

We will define a metric on $X$ defining the topology $\tau_\A$. 
Define $\dist_\A\colon X\times X\to \R$ as 
\[
 \dist_\A(x,y)=\inf \sum_{i=1}^n\diam(P_i),
\]
where the infimum is taken over all the chains of open plaques $P_1,\dots,P_n$, $n\geq 1$, with $x\in P_1$ and $y\in P_n$. 
If $x$ and $y$ are in different leaves we set
$\dist_\A(x,y)=+\infty$.
It is standard to prove that $\dist_\A$ is a metric on $X$. 

\begin{lem}
\label{lemPlacasChicas} 
If $U\in \U$, $P_1,\dots,P_n$ is a chain of open plaques contained in $U$ and 
 $x\in P_1$ then each $P_i\subset Q_U(x)$.
\end{lem}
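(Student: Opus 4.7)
The plan is to prove the lemma by induction on $n$, with the base case $n=1$ being the substantive step, where compatibility of the atlas is invoked.

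For the base case, write $P_1 = Q_{V}(x)$ for some $V\in\U$ (replacing the anchor point by $x$, which is legitimate since $x\in P_1$). By definition of open plaque,
\[
P_1 = Q_{\clos V}|^m_{V}(x) = \cc_x\bigl(Q_{\clos V}(x)\cap V\bigr),
\]
so $P_1$ is a connected subset of $V$ containing $x$ and contained in $Q_{\clos V}(x)$. By hypothesis $P_1\subset U$, hence $P_1\subset\clos V\cap\clos U$, so using compatibility of $Q_{\clos V}$ and $Q_{\clos U}$ and the fact that $P_1$ is connected containing $x$,
\[
P_1\subset \cc_x\bigl(Q_{\clos V}(x)\cap\clos U\bigr) = Q_{\clos V}|^m_{\clos V\cap \clos U}(x) = Q_{\clos U}|^m_{\clos V\cap \clos U}(x) \subset Q_{\clos U}(x).
\]
Since additionally $P_1\subset U$ and $P_1$ is connected with $x\in P_1$, we conclude $P_1\subset \cc_x(Q_{\clos U}(x)\cap U) = Q_U(x)$.

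For the inductive step, suppose $P_1,\dots,P_k\subset Q_U(x)$. Pick $y\in P_k\cap P_{k+1}$; then $y\in Q_U(x)$, so $Q_U(y)=Q_U(x)$. Now $P_{k+1}$ is an open plaque containing $y$ and contained in $U$, so the base case (applied with $y$ in place of $x$) gives $P_{k+1}\subset Q_U(y)=Q_U(x)$, completing the induction.

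The only subtle point — and the one where compatibility really earns its keep — is the base case, specifically the observation that because $Q_{\clos V}(x)\subset\clos V$, intersecting with $\clos V\cap\clos U$ is the same as intersecting with $\clos U$, which is what allows the compatibility identity to convert an inclusion into $Q_{\clos V}(x)$ into an inclusion into $Q_{\clos U}(x)$. Everything else is a routine manipulation of monotone restrictions.
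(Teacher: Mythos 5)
Your proof is correct and follows essentially the same route as the paper's: reduce to the single-plaque case by restricting to the overlap of the two chart domains (where the paper invokes Proposition \ref{propRestRest} on $V_i=U\cap U_i$ and you invoke compatibility on $\clos{V}\cap\clos{U}$ directly, which amounts to the same thing), then propagate along the chain by induction using the nonempty consecutive intersections. No gaps.
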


\begin{proof}
 Suppose that $P_i= Q_{U_i}(x_i)$ with $U_i\in\U$. 
 Define $V_i=U\cap U_i$.
 Since $P_i\subset U$, by Proposition \ref{propRestRest} we have that
 $P_i=Q_{V_i}(x_i)$ and  
 $P_1\subset Q_U(x)$. 
 As $P_1\cap P_2\neq\emptyset$ we conclude that $P_2\subset Q_U(x)$. 
 By induction, each $P_i\subset Q_U(x)$.
\end{proof}

\begin{prop}
\label{propTauAMetric}
 The topology defined by $\dist_\A$ is $\tau_\A$.
\end{prop}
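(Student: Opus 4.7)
The plan is to show that the two topologies share neighborhood bases at every point. For this it suffices to prove two inclusions: every open plaque contains a $\dist_\A$-ball around each of its points, and conversely every $\dist_\A$-ball contains an open plaque around each of its points.

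For the inclusion $\tau_{\dist_\A}\subset\tau_\A$, fix $x\in X$ and $\epsilon>0$. Since $\U$ is a complete basis (Proposition \ref{propAtlasViaCover}), I can take $U\in\U$ with $x\in U$ and $\diam(U)<\epsilon$. For any $y\in Q_U(x)$, the single-plaque chain $P_1=Q_U(x)$ witnesses
\[
\dist_\A(x,y)\leq \diam(Q_U(x))\leq\diam(U)<\epsilon,
\]
so $Q_U(x)\subset B_{\dist_\A}(x,\epsilon)$, giving the desired $\tau_\A$-neighborhood inside the metric ball.

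For the inclusion $\tau_\A\subset\tau_{\dist_\A}$, fix a basic open set $Q_U(x)$ with $x\in U\in\U$. Choose $\epsilon>0$ so that the original $\dist$-ball $B_\epsilon(x)$ is contained in $U$. I claim $B_{\dist_\A}(x,\epsilon)\subset Q_U(x)$. Suppose $\dist_\A(x,y)<\epsilon$, witnessed by a chain of open plaques $P_1,\dots,P_n$ from $x$ to $y$ with $\sum_{i=1}^n\diam(P_i)<\epsilon$. Since consecutive plaques share a point, a straightforward induction using the triangle inequality shows that every $z\in P_i$ satisfies $\dist(x,z)\leq \sum_{j=1}^i\diam(P_j)<\epsilon$, hence $P_i\subset B_\epsilon(x)\subset U$ for every $i$. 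By Lemma \ref{lemPlacasChicas} applied to the chain $P_1,\dots,P_n$ contained in $U$ and starting at $x\in P_1$, each $P_i\subset Q_U(x)$, and in particular $y\in P_n\subset Q_U(x)$.

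The main subtlety is the ``chain reach'' estimate in the second direction: one must verify that a chain of plaques with small total diameter, starting at $x$, stays inside a $\dist$-ball of radius equal to that total diameter around $x$. This is the ingredient that allows Lemma \ref{lemPlacasChicas} to be invoked and reduces the problem to a single chart. The first direction is essentially a tautology once one uses the freedom (from the completeness of $\U$) to shrink $U$.
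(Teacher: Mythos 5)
Your proof is correct and follows essentially the same route as the paper: both directions reduce to Lemma \ref{lemPlacasChicas} after observing that a chain of total diameter less than $\epsilon$ starting at $x$ stays in $B_\epsilon(x)$ (the paper phrases your inductive ``chain reach'' estimate as $\diam(\cup_i P_i)<\epsilon$ for the connected union), and the reverse inclusion is the same shrink-$U$ observation. No gaps.
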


\begin{proof}
Given $x\in U\in\U$, consider $\delta>0$ such that 
$B_\delta(x)\subset U$.
Take $y\in X$ such that $\dist_\A(x,y)<\delta$. 
We will show that $y\in Q_U(x)$. 
Since $\dist_\A(x,y)<\delta$ there is a chain of plaques $P_i$, $i=1,\dots,n$,
such that $x\in P_1$, $y\in P_n$ and $\sum_{i=1}^n\diam(P_i)<\delta$.
Then $\diam(\cup_{i=1}^nP_i)<\delta$ and $\cup_{i=1}^nP_i\subset U$. 
By Lemma \ref{lemPlacasChicas} we conclude that 
$\cup_{i=1}^nP_i\subset Q_U(x)$. 
Consequently $y\in Q_U(x)$.
To prove the converse consider a ball $B=\{y\in X:\dist_\A(x,y)<r\}$, $r>0$ and $x\in X$. 
Taking $U\in\U$ with $x\in U$ and $\diam(U)<r$ 
we see that $Q_U(x)\subset B$ and $B\in\tau_\A$.
\end{proof}

\begin{prop}
\label{propCompleto}
Every leaf and $X$ itself are complete (as metric spaces) with respect to $\dist_\A$. 
\end{prop}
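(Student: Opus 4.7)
The plan is to reduce completeness of $X$ to completeness of a single leaf, and to prove the latter by producing a candidate limit via compactness of $(X,\dist)$ and then verifying that this candidate lies in the right leaf and is approached in the plaque topology. The key preliminary estimate is $\dist(x,y)\le \dist_\A(x,y)$ whenever the right side is finite: any chain of plaques $P_1,\dots,P_k$ from $x$ to $y$ yields, by picking $z_i\in P_i\cap P_{i+1}$ and applying the triangle inequality in $(X,\dist)$, the bound $\dist(x,y)\le\sum_i\diam(P_i)$. Given a $\dist_\A$-Cauchy sequence $(x_n)$, this inequality makes it $\dist$-Cauchy, so by compactness of $(X,\dist)$ it converges to some $x\in X$. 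Moreover, once $\dist_\A(x_n,x_m)$ is finite for all large $n,m$, such $x_n$ lie in a common leaf $L$, so it suffices to show $x\in L$ and $\dist_\A(x_n,x)\to 0$; completeness of all of $X$ is then an immediate corollary.

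Fix $\epsilon>0$. I would choose $U\in\U$ with $x\in U$ and $\diam(U)<\epsilon$, then $\delta>0$ with $B_\delta(x)\cap X\subset U$. Using the $\dist$-convergence $x_n\to x$ and the Cauchy property, pick $n_0$ so that $x_n\in B_{\delta/2}(x)$ for $n\ge n_0$ and $\dist_\A(x_n,x_m)<\delta/2$ for $n,m\ge n_0$. Fix $n\ge n_0$. For any $m\ge n_0$, take a chain of open plaques $P_1,\dots,P_k$ from $x_n$ to $x_m$ of total diameter below $\delta/2$. The triangle inequality forces every point of $\bigcup_i P_i$ to lie within $\delta/2$ of $x_n$, hence inside $B_\delta(x)\subset U$, and then Lemma \ref{lemPlacasChicas} places the whole chain inside $Q_U(x_n)$. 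In particular $x_m\in Q_U(x_n)$.

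Since $Q_U(x_n)\subset Q_{\clos U}(x_n)$ and $Q_{\clos U}(x_n)$ is closed in $X$, passing to the limit $m\to\infty$ gives $x\in Q_{\clos U}(x_n)$, while the choice of $\delta$ ensures $x\in U$. The delicate step is now to upgrade this to $x\in Q_U(x_n)$: the component $Q_U(x_n)$ of $Q_{\clos U}(x_n)\cap U$ is closed in that subspace, so the fact that $x\in\clos{Q_U(x_n)}\cap(Q_{\clos U}(x_n)\cap U)$ forces $x\in Q_U(x_n)$. This places $x$ in the leaf of $x_n$ and yields the estimate $\dist_\A(x_n,x)\le\diam(Q_U(x_n))\le\diam(U)<\epsilon$, proving $x_n\to x$ in $\dist_\A$. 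The main obstacle is precisely this last component/closure bookkeeping: it is the only place where the definition of $Q_U$ as a \emph{component} of the restricted decomposition (rather than as the whole intersection $Q_{\clos U}(x_n)\cap U$) is really exploited, and without that distinction one would only conclude $x\in Q_{\clos U}(x_n)$, which is insufficient to establish leaf membership.
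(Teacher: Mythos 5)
Your proof is correct and follows essentially the same route as the paper's: use $\dist\le\dist_\A$ to get a $\dist$-limit $x$ by compactness, then trap the tail of the sequence in a single plaque $Q_U(x_n)$ of a small chart $U$ around $x$ via Lemma \ref{lemPlacasChicas}, and use closedness of that plaque (as a component of the closed set $Q_{\clos U}(x_n)\cap U$) to conclude $x$ lies in it. Your explicit treatment of the component/closure step is in fact slightly more careful than the paper's, which passes over it quickly.
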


\begin{proof}
 Let $x_n$ be a Cauchy sequence with respect to $\dist_A$. 
 The definition of the metric allows us to assume that the sequence is contained in a leaf. 
 Also, its limit (as we will prove that it exists) must be in this leaf. 
 Thus, the proof is reduced to show that $X$ is complete. 
 
 Since $\dist\leq\dist_\A$, 
 $x_n$ is a Cauchy sequence with respect to $\dist$. 
 Then, there is $x\in X$ such that $\dist(x_n,x)\to 0$. 
 Take $\epsilon>0$ such that $U=B_\epsilon(x)\in\U$.
 Take $l\geq 1$ such that $\dist_\A(x_n,x_l)<\epsilon/2$ and $\dist(x_n,x)<\epsilon/2$ for all $n\geq l$. 
 Consider a sequence of open plaques $P_n$ such that $x_l,x_n\in P_n$ and $\diam(P_n)<\epsilon/2$. 
 Take $U_n\in \U$ such that $U_n\subset U$ and $Q_{U_n}(x_l)=P_n$. 
 We know that $Q_{U_n}(x_l)\subset Q_U(x_l)$. 
 Since $x_n\in Q_{U_n}(x_l)$ and $x_n\to x$ (in $\dist$) we conclude that $x\in Q_U(x_l)$. 
 Therefore, $x_n\in Q_U(x)$ for all $n\geq l$. 
 This proves that $\dist_\A(x_n,x)\to 0$.
\end{proof}

\subsection{Peano foliations}


Let $X$ be a compact metric space with topology $\tau$.

\begin{df}
An atlas $\A$ is a \emph{Peano atlas} if its open plaques are locally connected in $\tau$. 
A \emph{Peano foliation} is a maximal Peano atlas.
\end{df}

By \emph{maximal Peano atlas} we mean maximal among all Peano atlases, see Example \ref{exaSolenoide} 
for a maximal Peano atlas that is not maximal among all the atlases. 
Every Peano atlas is contained in a Peano foliation and 
every Peano foliations is defined over a complete basis.
Given a topology $\tau_*$ on $X$ and $Y\subset X$ denote by 
$\tau_*|_Y$ the relative topology on $Y$ induced by $\tau_*$.

\begin{teo}
\label{teoUnaPLC2}
If $\W$ is a Peano foliation over $\U$ then: 
\begin{enumerate}
\item for every open plaque $P$ it holds that $\tau|_P=\tau_\W|_P$,
\item every open plaque is connected in $\tau_\W$,
  \item $(X,\tau_\W)$ is locally connected, 
  \item every leaf is arc-connected in both topologies $\tau_\W$ and $\tau$,
  \item every leaf is a component of $(X,\tau_\W)$,
  \item for every leaf $L$, $U\in\U$ and $x\in L\cap U$ it holds that 
  \begin{equation}
   \label{ecuFiel}
   \cc_x(L\cap U)=Q_U(x). 
  \end{equation}
\end{enumerate}
\end{teo}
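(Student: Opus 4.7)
The plan is to prove the six items sequentially, with (1) serving as the key step that uses the Peano hypothesis (each open plaque is $\tau$-locally connected), and the remainder following from (1) combined with the general facts about $\tau_\W$ recorded in Propositions \ref{propTopoFina}, \ref{propTauAMetric}, \ref{propCompleto}, and \ref{propLeafVsComp}.

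For (1), one inclusion of $\tau|_P$ and $\tau_\W|_P$ comes directly from the relation between $\tau$ and $\tau_\W$ in Proposition \ref{propTopoFina}. For the reverse, given any $\tau_\W$-basic open subset of $P=Q_U(x)$ of the form $Q_V(y)\cap P$ with $V\in\U$ and $y\in V\cap P$, Proposition \ref{propRestRest} and the compatibility of decompositions let us rewrite it as $\cc_y(P\cap V)$. Since the Peano hypothesis says $P$ is $\tau$-locally connected and $P\cap V$ is $\tau$-open in $P$, this component is $\tau$-open in $P$.

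Once (1) is in place, items (2)--(5) follow. For (2), $P$ is $\tau$-connected (as a component of $Q_{\clos U}(x)\cap U$) and (1) transfers this to $\tau_\W$-connectedness. For (3), the basis of $\tau_\W$ given by open plaques consists of $\tau_\W$-connected sets by (2). For (5), any two points of a leaf are joined by a chain $P_1,\dots,P_n$ of open plaques with $P_i\cap P_{i+1}\neq\emptyset$, whose union is $\tau_\W$-connected, so each leaf is $\tau_\W$-connected, and Proposition \ref{propLeafVsComp} gives the equality with a $\tau_\W$-component. For (4), the $\tau_\W$-arc-connectedness comes from (3), $\tau_\W$-completeness (Propositions \ref{propCompleto} and \ref{propTauAMetric}), and Theorem \ref{teoLocConArc}; a $\tau_\W$-continuous path into $L$ is locally contained in a single open plaque, where by (1) it is also $\tau$-continuous, and this globalises to $\tau$-arc-connectedness.

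For (6), the inclusion $Q_U(x)\subset\cc_x(L\cap U)$ is immediate from the $\tau$-connectedness of $Q_U(x)$. For the reverse, for each $y\in L\cap U$ with $Q_U(y)\neq Q_U(x)$ one constructs a $\tau$-open $V_y\subset X$ with $y\in V_y$ and $V_y\cap Q_U(x)=\emptyset$: if $Q_{\clos U}(y)\neq Q_{\clos U}(x)=:P$, take $V_y=X\setminus P$ (using that $P$ is $\tau$-closed); if $Q_{\clos U}(y)=P$, the Peano hypothesis makes $Q_U(x)$ and $Q_U(y)$ disjoint $\tau$-open components of $P\cap U$ in $P$, yielding a $\tau$-open $V_y\subset X$ with $V_y\cap P=Q_U(y)$ and hence $V_y\cap Q_U(x)=\emptyset$. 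The union of these $V_y$ shows $Q_U(x)$ is $\tau$-closed in $L\cap U$; a parallel argument around each $z\in Q_U(x)$, combining local connectedness of $Q_U(x)$ with the upper semicontinuity of $Q_{\clos U}$ to preclude accumulation of other plaques of $L$ at $z$, shows $Q_U(x)$ is also $\tau$-open in $L\cap U$, whence $\cc_x(L\cap U)=Q_U(x)$. This $\tau$-openness of $Q_U(x)$ in $L\cap U$ is the main obstacle of the theorem; items (1)--(5) reduce to direct verifications once (1) is secured.
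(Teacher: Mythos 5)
Your treatment of items (1)--(5) is essentially the paper's own argument: the reverse inclusion in (1) via components of $P\cap V$ together with Proposition \ref{propRestRest} is the same device the paper uses (it picks $W\in\U$ with $P\cap W$ connected and identifies $P\cap W=Q_W(x)$), and (2)--(5) then follow by the same chain through Propositions \ref{propTopoFina}, \ref{propLeafVsComp}, \ref{propCompleto} and Theorem \ref{teoLocConArc}. One small imprecision in (1): $Q_V(y)\cap P$ need not \emph{equal} $\cc_y(P\cap V)$ (compatibility only gives that the components through $y$ agree, so the intersection may have several components); but since $\cc_{y'}(P\cap V)$ is a $\tau$-open-in-$P$ neighborhood of each $y'\in Q_V(y)\cap P$ contained in $Q_V(y)\cap P$, the conclusion $\tau_\W|_P\subset\tau|_P$ still stands.

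The genuine gap is in (6). You assert that $Q_U(x)$ is $\tau$-open in $L\cap U$, justified by saying that upper semicontinuity of $Q_{\clos U}$ precludes accumulation of other plaques of $L$ at a point $z\in Q_U(x)$. This is false, and it fails precisely for the motivating examples of the paper: for a linear foliation of the torus with dense leaves, or the stable foliation of an Anosov diffeomorphism, $L\cap U$ is a countable union of parallel plaques whose transverse coordinates are dense in an interval, so \emph{every} $\tau$-neighborhood of every point of $Q_U(x)$ meets infinitely many other plaques of the same leaf; $Q_U(x)$ is relatively closed but not relatively open in $L\cap U$. Upper semicontinuity gives no contradiction here: if $y_n\to z$ with $y_n$ in other plaques of $L$, it only demands that any Hausdorff limit of $Q_{\clos U}(y_n)$ be contained in $Q_{\clos U}(z)$, which is exactly what happens when plaques of a dense leaf accumulate. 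Since a closed connected subset need not be a whole component, your closedness half (which is fine) does not by itself give $\cc_x(L\cap U)\subset Q_U(x)$. The paper's proof avoids any openness-in-$L\cap U$ claim: it writes $C=\cc_x(L\cap U)$ as a union of small open plaques $P_j=Q_{U_j}(x_j)$ with $U_j\subset U$, uses $Q_{U_j}=Q_U|^m_{U_j}$ to show that every $P_j$ meeting $Q_U(x)$ is already contained in $Q_U(x)$, and then runs the connectedness argument inside $Q_U(x)$ with its relative topology $\tau|_{Q_U(x)}$, where local connectedness of the plaque (not of $L\cap U$) is what makes the relevant sets open. Your openness step needs to be replaced by an argument of that kind.
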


\begin{proof}
Let $P$ be an open plaque. By Proposition \ref{propTopoFina} we know that $\tau\subset\tau_\W$ 
and then $\tau|_P\subset\tau_\W|_P$.
To show that $\tau_\W|_P\subset\tau|_P$ it is sufficient to prove that if $P'$ is another open plaque then 
$P\cap P'\in \tau|_P$.
Take $x\in P\cap P'$ and suppose that $P=Q_U(x)$ and $P'=Q_V(x)$ with $U,V\in\U$. 
Since $\U$ is a complete basis of $\tau$ there is $W\in\U$ such that 
$x\in W\subset U\cap V$ and $P\cap W$ is connected ($P$ is locally connected at $x$). 
Then $P\cap W=\cc_x(P\cap W)=\cc_x(Q_U(x)\cap W)=Q_W(x)$.
Since $W\subset U\cap V$ we can apply Proposition \ref{propRestRest} to conclude that $Q_W(x)\subset P\cap P'$. 
Given that $Q_W(x)=P\cap W\in\tau|_P$ we conclude that $\tau_\W|_P\subset\tau|_P$. Then $\tau|_P=\tau_\W|_P$. 

By definition, every open plaque is connected in $\tau$ and then 
we have that $P$ is connected in $\tau_\W$. 
Given that open plaques form a basis for $\tau_{\W}$, we conclude that $(X,\tau_\W)$ is locally connected.

To prove that the leaves are arc-connected we first show that 
they are $\tau_\W$-connected.
Take $x,y$ in a leaf $L$. 
We know that there is a chain of plaques $P_1,\dots,P_n$ from $x$ to $y$. 
Since each plaque is 
$\tau_\W$-connected, 
$\cup_{i=1,\dots,n}P_i$ is $\tau_\W$-connected. 
Fixing $x$ and varying $y\in L$ we conclude that $L$ is $\tau_\W$-connected.
Then, we know that each leaf $L$ is connected and locally connected in $\tau_\W$. 
In addition, by Proposition \ref{propCompleto}, we know that $(L,\dist_{\W})$ is a complete metric space. 
Then, we can apply 
Theorem \ref{teoLocConArc}
to conclude the arc-connection of $L$.
This implies that every leaf is contained in a component of $(X,\tau_\W)$. 
The converse inclusion follows by Proposition \ref{propLeafVsComp}.

\azul{To prove (\ref{ecuFiel}) take a leaf $L$, $U\in\U$, $x\in L\cap U$
and define $C=\cc_x(L\cap U)$.
By the definition of leaf, it is clear that $Q_U(x)\subset C$.
We will show the converse inclusion. 
For this purpose, since $C$ is a union of open plaques, we can write 
$$C=\cup_{j\in J}P_j$$ 
with $P_j=Q_{U_j}(x_j)$ and some index set $J$.}
By Proposition \ref{propRestRest} we can suppose that $U_j\subset U$ for all $j\in J$.
Define $$J_1=\{j\in J: P_j\cap Q_U(x)\neq\emptyset\}$$ and $J_2=J\setminus J_1$.
Since $Q_{U_j}=Q_U|^m_{U_j}$, if $j\in J_1$ then $P_j\subset Q_U(x)\subset C$. 
Define $V_1=\cup_{j\in J_1} P_j$ and $V_2=\cup_{j\in J_2} P_j$. 
Define $V'_1=V_1\cap Q_U(x)$ and $V'_2=V_2\cap Q_U(x)$.
Then $Q_U(x)\subset V'_1\cup V'_2$. 
Since $Q_U(x)$ is locally connected with respect to $\tau$, 
we conclude that 
$V'_1,V'_2\in \tau|_{Q_U(x)}$. 
Since $V'_1\cap V'_2=\emptyset$, $V'_1\neq\emptyset$ and $Q_U(x)$ is connected in $\tau$ we conclude 
that $V'_2=\emptyset$.
Then $Q_U(x)\subset \cup_{j\in J_1}Q_{U_j}(x_j)$. 
Since $Q_{U_j}(x_j)\subset Q_U(x)$ for all $j\in J_1$
we conclude that $Q_U(x)= \cup_{j\in J_1}Q_{U_j}(x_j)$. That is, $Q_U(x)=C=\cc_x(L\cap U)$.
\end{proof}

We will give two examples of $\continua$-foliations with particular properties 
to show the necessity of the hypothesis of Theorem \ref{teoUnaPLC2}.  
First we show that a leaf may not be connected in the plaque topology (for a non-Peano foliation).

\begin{exa}
\label{exaExSinTopo}
Consider the plane continuum 
$X=X_1\cup X_2$ where $X_1=\{0\}\times[-1,1]$ and $X_2=\{(x,\sin(1/x)):x\in (0,1]\}$. 
On $X$ consider the complete basis $\U=\tau$ (the relative open subsets of the plane with its usual topology).
Let $\W$ be the $\continua$-foliation defined as $Q_{\clos{U}}(x)=\cc_x({\clos{U}})$ for every $U\in\tau$.
We have only one leaf $L=X$. 
We see that this leaf is not connected in $\tau_\W$ since it has two components $X_1$ and $X_2$.
Note that $X$ itself is an open plaque that is not locally connected. Then $\W$ is not a Peano foliation. 
It is an exercise for the interested reader to check which conclusions of Theorem \ref{teoUnaPLC2} are satisfied in this example.
In Example \ref{exaDecSen} we give a decomposition of a square with a leaf like the set $X$.
\end{exa}

In the following variation of the previous example we show that $(X,\tau_A)$ may not be locally connected.

\begin{exa}
\label{exaExSinTopo2}
For $n$ a non-negative integer define
\[
X_n=\{(x,(3+\sin(1/x))/4^n)\in\R^2: x\in (0,1]\}  
\]
and $X=(\{0\}\times [0,1])\cup ([0,1]\times\{0\})\cup\bigcup_{n\geq 0} X_n$. 
See Figure~\ref{figSenos}. 
As in the previous example consider the $\continua$-foliation 
defined by $Q_{\clos{U}}(x)=\cc_x({\clos{U}})$ for every $U\in\tau$.
We have that $(0,0)$ has not connected neighborhoods in the plaque topology.
\begin{figure}[ht]
 \center
 \includegraphics[scale=1]{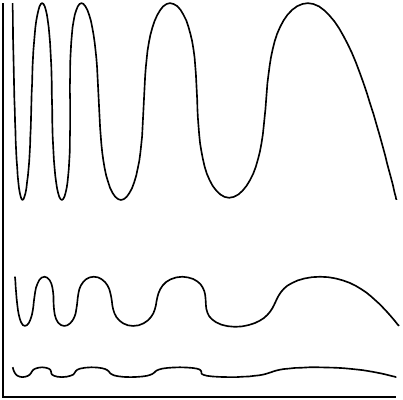}
 \caption{In this continuum the topology $\tau_\A$ is not locally connected around the origin.}
 \label{figSenos}
 \end{figure} 
\end{exa}

We think that the following result is interesting because it allows us to characterize Peano foliations in terms 
of the plaque topology.

\begin{cor}
For an atlas $\A$ over a complete basis
the following statements are equivalent:
\begin{enumerate}
 \item $\A$ is a Peano atlas,
 \item $\tau|_P=\tau_\A|_P$ for every open plaque $P$.
\end{enumerate}
\end{cor}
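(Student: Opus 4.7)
The plan is to handle the two implications separately, and in both cases reduce essentially to results already in place.

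For (1)$\Rightarrow$(2), the key observation is that this is almost a restatement of the first item of Theorem \ref{teoUnaPLC2} once we pass from a Peano atlas to a containing Peano foliation. Every Peano atlas $\A$ is contained in a maximal one, i.e., in a Peano foliation $\W$, by a standard Zorn's Lemma argument applied to the family of Peano atlases refining $\A$. Each open plaque of $\A$ is still an open plaque of $\W$, and by Proposition \ref{propTopoFina} the inclusion $\A\subset\W$ yields $\tau_\A=\tau_\W$. Applying Theorem \ref{teoUnaPLC2}(1) to $\W$ gives $\tau|_P=\tau_\W|_P=\tau_\A|_P$ for every open plaque $P$ of $\A$.

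For (2)$\Rightarrow$(1), I would verify directly that every open plaque $P=Q_U(x_0)$ (with $U\in\U$) is locally connected in $\tau$. Fix $y\in P$ and an arbitrary $\tau$-open neighborhood $W$ of $y$. Pick any $\tau$-open $V$ with $y\in V\subset W\cap U$; since $\U$ is a complete basis, $V\in\U$, so $Q_V(y)$ is an open plaque of $\A$. As a plaque it is a continuum, hence $\tau$-connected, and by construction $Q_V(y)\subset V\subset W$. Moreover, Proposition \ref{propRestRest} applied to $V\subset U$ gives $Q_V=Q_U|^m_V$, so $Q_V(y)\subset Q_U(y)=Q_U(x_0)=P$. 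By hypothesis (2), $\tau|_P=\tau_\A|_P$; and since $Q_V(y)$ is an open plaque it is open in $\tau_\A$, hence open in $\tau_\A|_P=\tau|_P$. Thus $Q_V(y)$ is a $\tau$-connected neighborhood of $y$ in $P$ contained in $W\cap P$, which shows $P$ is locally connected at $y$ in $\tau$.

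The argument is quite short; no step should be a serious obstacle. The only point that requires a little care is the interplay between completeness of $\U$ and Proposition \ref{propRestRest}: completeness lets us insert an arbitrarily small $V\in\U$ between $y$ and $W\cap U$, and the restriction identity then forces the new open plaque $Q_V(y)$ to sit inside the original plaque $P$, so the connected neighborhoods produced genuinely witness local connectedness of $P$ rather than of some larger set.
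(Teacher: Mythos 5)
Your proof is correct and follows essentially the same route as the paper: the direct implication is obtained from Theorem \ref{teoUnaPLC2} (you merely make explicit the passage to a containing Peano foliation and the invariance of the plaque topology from Proposition \ref{propTopoFina}), and the converse is the same local argument producing small connected plaques $Q_V(y)\subset P$ that are $\tau|_P$-open by hypothesis. The extra care you take with Proposition \ref{propRestRest} to ensure $Q_V(y)\subset P$ is a welcome detail the paper leaves implicit.
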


\begin{proof}
The direct part follows by Theorem \ref{teoUnaPLC2}.
To prove the converse, let $P$ be an open plaque. 
We have to prove that $P$ is locally connected in $\tau$. 
Given $x\in P$ we can take a small neighborhood $U\in\U$ of $x$. 
By definition, the plaque $Q_U(x)$ is connected in $\tau$. 
Also, $Q_U(x)$ is a neighborhood of $x$ in the plaque topology. 
Since, by hypothesis, $\tau|_P=\tau_\A|_P$, 
we conclude that $Q_U(x)$ is a (connected) neighborhood of $x$ in $\tau|P$. 
Then, $P$ is locally connected with respect to $\tau$.
\end{proof}

We say that a leaf is \emph{plaque-compact} if it is compact in the plaque topology.

\begin{thm}
\label{thmPCompIffPeano}
Let $\W$ be a Peano foliation.
An $\W$-leaf is plaque-compact if and only if it is a Peano continuum in the relative topology of $\tau$.
\end{thm}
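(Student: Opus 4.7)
The plan is to compare the two topologies on $L$ directly: the ambient topology $\tau$ and the plaque topology $\tau_\W$. Since $\tau\subset\tau_\W$ by Proposition \ref{propTopoFina}, the identity
\[
 i\colon (L,\tau_\W|_L)\to(L,\tau|_L)
\]
is continuous, and $\tau|_L$ is Hausdorff because $X$ is. I will run each implication by transferring properties across $i$.

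\textbf{Forward direction.} Assume $L$ is plaque-compact. Then $i$ is a continuous bijection from a compact space to a Hausdorff space, hence a homeomorphism. I then read off what $(L,\tau_\W|_L)$ looks like using Theorem \ref{teoUnaPLC2}: the leaf $L$ is a $\tau_\W$-component (item 5) and a union of open plaques, so $L$ is $\tau_\W$-open; moreover $(X,\tau_\W)$ is locally connected (item 3), so the subspace $(L,\tau_\W|_L)$ is connected and locally connected. Transporting these facts through the homeomorphism $i$ shows that $(L,\tau|_L)$ is a compact, connected, locally connected metric space, i.e.\ a Peano continuum.

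\textbf{Converse direction.} Assume $L$ is a Peano continuum in $\tau$. The plan is to show $\tau_\W|_L=\tau|_L$; together with $\tau$-compactness of $L$, this immediately yields plaque-compactness. Only the nontrivial inclusion $\tau_\W|_L\subset\tau|_L$ requires work. A basis for $\tau_\W|_L$ is given by sets $P\cap L$ with $P$ an open plaque; since every open plaque is $\tau_\W$-connected (Theorem \ref{teoUnaPLC2}(2)) and the leaves are the $\tau_\W$-components of $X$, any open plaque that meets $L$ lies entirely inside $L$. Thus it suffices to show that every open plaque $P=Q_U(x)\subset L$ is $\tau|_L$-open. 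This is exactly what formula (\ref{ecuFiel}) of Theorem \ref{teoUnaPLC2} delivers: $Q_U(x)=\cc_x(L\cap U)$. Because $L$ is locally connected in $\tau$ and $L\cap U$ is $\tau|_L$-open, its components are $\tau|_L$-open, so $P$ is open in $\tau|_L$.

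\textbf{Main obstacle.} Neither direction requires genuinely new continuum-theoretic input; the bookkeeping rests entirely on Theorem \ref{teoUnaPLC2}. The only delicate moment is in the converse, where one must match two potentially distinct notions of component: the $\tau$-component appearing in (\ref{ecuFiel}) versus the $\tau|_L$-component relevant to local connectedness of $L$. These coincide because $L\cap U$ is a subspace of $X$ and the component of a point in a subspace is intrinsic. Once this identification is made, the equality $\tau|_L=\tau_\W|_L$ drops out and the theorem follows.
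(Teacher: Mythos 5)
Your proof is correct, but it routes both implications through the equality $\tau_\W|_L=\tau|_L$, which is not how the paper argues. For the forward direction the paper covers $L$ by plaques of diameter less than $\epsilon$ (using the complete basis $\U_\epsilon$), extracts a finite subcover by plaque-compactness, and invokes Sierpi\'nski's Theorem \ref{teoSierpinski}; you instead observe that the identity $(L,\tau_\W|_L)\to(L,\tau|_L)$ is a continuous bijection from a compact space onto a Hausdorff space, hence a homeomorphism, and transport connectedness and local connectedness from the plaque topology (Theorem \ref{teoUnaPLC2}, items 3--5). Your version avoids Sierpi\'nski's criterion entirely and yields as a byproduct that the two topologies agree on a plaque-compact leaf, at the cost of leaning on the local connectedness of $(X,\tau_\W)$ and the $\tau_\W$-openness of $L$ (both of which are indeed available here). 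For the converse the two arguments are closer in spirit --- both ultimately rest on (\ref{ecuFiel}) together with the local connectedness of $L$ --- but the paper extracts a finite subcover of an arbitrary plaque cover by shrinking each $U_x$ to a $V_x$ with $V_x\cap L$ connected, whereas you prove the inclusion $\tau_\W|_L\subset\tau|_L$ once and let $\tau$-compactness of $L$ do the rest; your observation that the component in (\ref{ecuFiel}) is intrinsic to the subspace $L\cap U$ is exactly the point that makes this work. Both implementations are sound; yours is the more economical packaging.
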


\begin{proof}
Assume that $L$ is a plaque-compact leaf. 
Since the plaque topology is finer than the relative topology, Proposition \ref{propTopoFina}, we have that 
$L$ is compact in the relative topology.
Given $\epsilon>0$ we define the complete basis 
\begin{equation}
\label{ecuUdeltaa}
 \U_\epsilon=\{U\in\tau:\diam(U)<\epsilon\}.
\end{equation}
We assume that $\epsilon$ is so small that $\U_\epsilon\subset\U$, where $\U$ is the complete basis of $\W$.
Consider the following cover of $L$ $$\U_L=\{Q_U(x):x\in L\cap U, U\in \U_\epsilon\}.$$
Since $L$ is plaque-compact, there is a finite subcover. 
The plaques of this subcover have diameter smaller than $\epsilon$. 
Applying Sierpi\'nski's Theorem \ref{teoSierpinski} 
we conclude the local connection of $L$ in the relative topology.

Converse. 
Assume that the leaf $L$ is a Peano continuum in $\tau$.
Take a cover $\U_L=\{P_i\}_{i\in I}$ of $L$ by open plaques. 
Given $x\in L$ take $U_x\in\U$ such that $x\in P_i=Q_{U_x}(x)$. 
Since $L$ is locally connected and $\U$ is a complete basis 
there is $V_x\in\U$ such that $x\in V_x\subset U_x$
and $V_x\cap L$ is connected. 
Then, by Theorem \ref{teoUnaPLC2},
$Q_{V_x}(x)=V_x\cap L$.
Since $\{V_x\cap L:x\in L\}$ is a cover of $L$ by relative open sets of $L$ and 
$L$ is compact we can take $x_1,\dots,x_n$ such that $V_{x_1},\dots,V_{x_n}$ cover $L$. 
Since $V_x\subset U_x$, we have that 
$\{Q_{U_{x_1}}(x_1),\dots,Q_{U_{x_n}}(x_n)\}$ is a finite subcover of $\U_L$. 
This proves that $L$ is compact in the plaque topology.
\end{proof}

\begin{rmk}
In Example \ref{exaExSinTopo} we see that a compact leaf (in the relative topology) 
may not be plaque-compact (it fails to be locally connected). 
\end{rmk}

\begin{rmk}
A locally connected leaf may not be compact. 
Take a plane flow (with $X$ a compact invariant annulus) with limit cycles. 
An orbit converging to a cycle is locally connected but it is not compact. 
\end{rmk}

Note that the examples given above (Examples \ref{exaExSinTopo} and \ref{exaExSinTopo2}) are not Peano foliations but they satisfy (\ref{ecuFiel}) in Theorem \ref{teoUnaPLC2}. 
The next subsection is devoted to describe an example of a $\continua$-foliation not satisfying (\ref{ecuFiel}).

\subsubsection{Devil's backgammon}
\label{exaBackGammon}
In Theorem \ref{teoUnaPLC2} we proved that for Peano foliations, if 
$L$ is a leaf then for every $U\in\U$ and $x\in L\cap U$ it holds that 
$\cc_x(L\cap U)$ is the plaque of $x$ in $U$. 
That is, knowing the leaves and the basis $\U$ we can recover the plaques. 
This is the \emph{leaves-atlas equivalence} mentioned in the beginning of this section. 
The next example shows that a maximal atlas (with non-locally connected plaques) can contain strictly more information than 
the leaves and the basis, i.e., does not satisfy (\ref{ecuFiel}).

We start constructing a continuum $X\subset [0,1]\times [0,1]$. 
Let $K\subset [0,1]$ be the ternary Cantor set. 
A point $x\in K$ can be expressed as $x=\sum_i2/3^{n_i}$ for some increasing sequence $n_i$ of integers. 
Consider the function $g\colon K\to[0,1]$ 
such that if $x=\sum_i2/3^{n_i}\in K$ then $g(x)=\sum_i1/2^{n_i}$.
For each $y\in[0,1]$ there are 1 or 2 preimages by $g$. 
For $x\in K$ let $I_x$ be the closed segment line from $(x,0)$ to $(g(x),1)$. 
Define 
\[
 X=([0,1]\times\{0\})\cup\bigcup_{x\in K} I_x.
\]
We have that $X$ is an arc-connected continuum. Note that it is not locally connected.
It can also be obtained by deleting open triangles from the square, as shown in Figure~\ref{figDelTri}.
 \begin{figure}[ht]
 \center
 \includegraphics[scale=1]{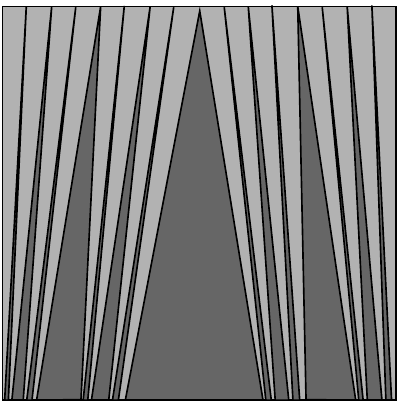}
 \caption{The Devil's Backgammon continuum is obtained from the square by deleting the dark open triangles. 
 Each triangle is determined by $I_x$ and $I_y$ if $g(x)=g(y)$ and $x\neq y$.}
 \label{figDelTri}
 \end{figure}

 Consider the open sets 
 \[ 
  \begin{array}{l}
U=\{(x,y)\in X:y<2/3\},\\
V=\{(x,y)\in X:y>1/3\}.   
  \end{array}
 \] 
 Define $Q_U(x,y)=U$ for all $(x,y)\in U$. 
 Given $p\in V$ suppose that $p\in I_a$. 
 Define 
 \[
  J_p=\left\{
  \begin{array}{ll}
   I_a & \text{ if } g^{-1}(g(a))=\{a\},\\
   I_a\cup I_b & \text{ if } g^{-1}(g(a))=\{a,b\}
  \end{array}
  \right.
 \]
and $Q_V(p)=\cc_p (J_p\cap V)$. 
The decompositions $Q_U,Q_V$ define a $\continua$-foliation $\W$.
It is clear that it is not a Peano foliation since it has non-locally connected open plaques.
Note that there is only one leaf, namely $L=X$. 

In this example we see that
for $p\in V$ we have that $\cc_p(L\cap V)=L\cap V$ and it is not $Q_V(p)$.
As we said, this means that the plaques cannot be recovered by the leaves and the basis.

%
%

\subsection{Cw-foliations}
\label{secCwFol}
It must be recalled that we have in mind the study of the distribution of stable and unstable 
continua of cw-expansive homeomorphisms on surfaces and Peano continua. 
This guides us in selecting the properties that we require in the following definition. 

Recall that a subset of $X$ is \emph{meagre} if it can be expressed as 
the union of countably many nowhere dense subsets of $X$. 
A set is \emph{nowhere dense} if its closure has empty interior.


\begin{df}
An atlas $\A$ is a \emph{cw-atlas} if its $\A$-leaves:
\begin{enumerate}
 \item are a countable union of open plaques,
 \item are meagre sets and 
 \item have diameter bounded away from zero.
\end{enumerate}
A \emph{cw-foliation} 
is a $\continua$-foliation with a cw-atlas.
\end{df}

More observations about this definition must be given.

\begin{rmk}
Note that requiring that a leaf is a countable union of plaques does not imply that the leaf has a countable basis. 
The plaques in this countable union may not be small.
\end{rmk}

\begin{rmk}
A foliation of a compact manifold $M$ in the standard sense 
is a cw-foliation if and only if the dimension of the leaves is positive and 
less than $\dim(M)$. On a surface it means \emph{one-dimensional}.
\end{rmk}

In the standard theory of foliations the leaves are immersed submanifolds 
and have a countable basis for its leaf topology. Also, the intersection of a leaf 
with a local chart is a countable number of plaques. 
For Peano cw-foliations these properties can be recovered.
A \emph{Peano cw-foliation} is a Peano foliation with a cw-atlas.

\begin{teo}
\label{teoBaseNumLeaf}
If $\W$ is a Peano cw-foliation
then: 
\begin{enumerate}
 \item $\tau_\W|_L$ has a countable basis for every leaf $L$,
 \item for every leaf $L$ and every $U\in\tau$ the set $L\cap U$ has a countable number 
of components with respect to both topologies $\tau$ and $\tau_\W$.
\end{enumerate}
\end{teo}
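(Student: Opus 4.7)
The plan is to prove (1) first and then deduce both parts of (2) from it, using Theorem \ref{teoUnaPLC2} as the main input together with the cw-atlas hypothesis.

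For (1), the idea is to exploit the fact that a cw-atlas presents each leaf as a \emph{countable} union of open plaques. Writing $L=\bigcup_{n\in\N}P_n$ with each $P_n$ an open plaque, Theorem \ref{teoUnaPLC2}(1) gives $\tau_\W|_{P_n}=\tau|_{P_n}$. Since $(X,\tau)$ is compact metric, each $P_n$ is separable metric in $\tau$ and hence second countable in both topologies. Each $P_n$ is open in $(L,\tau_\W|_L)$, so gluing countable bases of the $P_n$ produces a countable basis of $\tau_\W|_L$.

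For the $\tau_\W$ half of (2), I would first note that $L$ is $\tau_\W$-open, since it is a union of open plaques, and hence $L\cap U$ is $\tau_\W$-open because $U\in\tau\subset\tau_\W$ (Proposition \ref{propTopoFina}). Theorem \ref{teoUnaPLC2}(3) says that $(X,\tau_\W)$ is locally connected, so $L\cap U$ inherits local connectedness and its $\tau_\W$-components are $\tau_\W$-open. By (1), $L\cap U$ has a countable basis in $\tau_\W$, and a family of pairwise disjoint nonempty open sets in a second-countable space is at most countable, giving the conclusion.

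For the $\tau$ half of (2), I would pass from the $\tau_\W$-components to the $\tau$-components using only the inclusion $\tau\subset\tau_\W$. Any $\tau_\W$-connected subset is $\tau$-connected, so each $\tau_\W$-component of $L\cap U$ lies inside a unique $\tau$-component. Since the $\tau_\W$-components partition $L\cap U$, this assignment is surjective onto the set of $\tau$-components, which is therefore at most countable.

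I do not expect a serious obstacle; the proof is essentially bookkeeping once one has Theorem \ref{teoUnaPLC2} and the countability in the definition of cw-atlas. The most delicate step is the transfer of second countability from the ambient $\tau$ to the plaque topology on each plaque, which relies precisely on the equality $\tau|_P=\tau_\W|_P$ furnished by the Peano hypothesis; without local connectedness of plaques this step (and hence both conclusions) would fail, as hinted at by Examples \ref{exaExSinTopo} and \ref{exaExSinTopo2}.
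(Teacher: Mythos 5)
Your proposal is correct and follows essentially the same route as the paper: part (1) via the countable union of open plaques from the cw-atlas together with $\tau|_P=\tau_\W|_P$ from Theorem \ref{teoUnaPLC2}, and part (2) via local connectedness of $(X,\tau_\W)$ plus the fact that a locally connected second-countable space has countably many components, passing to $\tau$-components through the inclusion $\tau\subset\tau_\W$. Your explicit justification of the last transfer (each $\tau_\W$-component sits in a unique $\tau$-component and the assignment is onto) is slightly more detailed than the paper's one-line remark, but it is the same argument.
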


\begin{proof}
Since $X$ is a compact metric space it has a countable basis. 
By Theorem \ref{teoUnaPLC2} we know that $\tau|_P=\tau_\W|_P$, for every open plaque $P$.
We conclude that each plaque has a countable basis.
Consequently, as every leaf is a countable union of plaques, we have that each leaf has a countable basis. 

Consider a leaf $L$ and $U\in\tau$.
We know that $L\in\tau_\W$. 
From Proposition \ref{propTopoFina} we have that $U\in\tau_\W$. 
Then $U\cap L\in\tau_\W$.
We know that $\tau_\W|_L$ has a countable basis.
Therefore, $\tau_\W|_{U\cap L}$ has a countable basis. 
By Theorem \ref{teoUnaPLC2}, knowing that $U\cap L\in\tau_\W$, 
we conclude that $U\cap L$ is locally connected with respect to $\tau_\W$.
Then\footnote{Every locally connected topological space with a countable basis has 
 at most a countable number of components. 
 This can be proved as follows. 
 Since the space is locally connected, its components are open 
 and given that the space has a countable basis each component contains an open set of the countable basis. 
 As the components are disjoint, there is at most a countable number of components.} 
 $U\cap L$ has a countable number of components 
with respect to $\tau_\W$.
Since $\tau\subset\tau_\W$ the same holds for $\tau$.
\end{proof}

For future reference (Proposition \ref{propCwFolDec}) we give one more result.
Recall from (\ref{ecuUdeltaa}) that $\U_\delta$ is the complete basis containing 
all the open sets of diameter smaller than $\delta$.

\begin{prop}
\label{propCwFolDec1}
 If $\A$ is a cw-atlas over the complete basis $\U$ of $X$ then 
 there is $\delta>0$ such that for all $x\in U\in \U_\delta$ 
 it holds that: 
 \begin{enumerate}
  \item $Q_{\clos{U}}(x)\cap\partial U\neq\emptyset$ and 
  \item $x$ is not a $\tau$-interior point of $Q_U(x)$.
 \end{enumerate}
\end{prop}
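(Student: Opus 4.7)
The plan is to treat the two conclusions separately, since (2) is a short Baire--category observation that does not use the diameter bound at all, while (1) is the main content and relies on the uniform lower bound on leaf diameters.

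For (2), I would argue by contradiction: if $x$ were a $\tau$-interior point of $Q_U(x)$, there would be a nonempty $\tau$-open set $V\subset Q_U(x)\subset L$ with $L$ the $\A$-leaf of $x$. Since $X$ is compact metric, every nonempty open subset of $X$ is itself a Baire space in the relative topology, and in particular cannot be meagre in itself. On the other hand, $L$ is meagre in $X$ by the cw-atlas definition, and writing $L=\bigcup_n A_n$ with each $A_n$ nowhere dense in $X$, one checks $V=\bigcup_n(V\cap A_n)$ is meagre in $V$. This contradiction rules out interior points and yields (2) with no constraint on $\delta$.

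For (1), fix $\delta_0>0$ such that every $\A$-leaf has $\tau$-diameter at least $\delta_0$, and take any $\delta\in(0,\delta_0)$. Fix $x\in U\in\U_\delta$ and suppose for contradiction that $Q_{\clos U}(x)\cap\partial U=\emptyset$. Then $Q_{\clos U}(x)\subset U$, so $Q_{\clos U}(x)\cap U=Q_{\clos U}(x)$ is connected and the open and closed plaques coincide: $Q_U(x)=Q_{\clos U}(x)$. A short application of compatibility and the monotone restriction identity of Proposition \ref{propRestRest} shows $Q_U(y)=Q_U(x)$ for every $y\in Q_{\clos U}(x)$, hence $Q_{\clos U}(x)\subset L$ where $L$ is the leaf of $x$. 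Since $Q_{\clos U}(x)$ and $\partial U$ are disjoint compact sets, $\eta:=\dist(Q_{\clos U}(x),\partial U)>0$, and since $\diam L\geq\delta_0>\diam U$ one can choose $y_0\in L\setminus U$.

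Now I would pick a chain of open plaques $P_1,\dots,P_n$ from $x$ to $y_0$ and refine it---using that $\U$ is a complete basis, so plaques of arbitrarily small $\tau$-diameter exist---into a chain $P_1',\dots,P_N'$ with $\diam P_i'<\eta$ for every $i$. An induction on $i$ then shows $P_i'\subset Q_U(x)$ for all $i$: in the inductive step, any $z\in P_{i-1}'\cap P_i'$ lies in $Q_U(x)$, so $\dist(z,\partial U)\geq\eta$, and hence $P_i'$ (of diameter less than $\eta$ and containing $z$) is disjoint from $\partial U$; being $\tau$-connected and meeting $U$, the decomposition $X\setminus\partial U=U\sqcup(X\setminus\clos U)$ forces $P_i'\subset U$, and Lemma \ref{lemPlacasChicas} promotes this to $P_i'\subset Q_U(x)$. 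Consequently $y_0\in P_N'\subset Q_U(x)\subset U$, contradicting $y_0\notin U$. The main obstacle, in my view, is justifying the refinement step: turning an abstract chain of open plaques into one with arbitrarily small plaques while preserving nonempty consecutive intersections. This will require a careful use of compatibility between the decompositions $Q_{\clos V}$ for $V\subset U_i$, together with the $\tau$-connectedness of each $P_i$, to subdivide each original plaque into a sub-chain of arbitrarily small sub-plaques.
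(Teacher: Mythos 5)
Your argument for item (2) is fine and is essentially the paper's: a plaque sits inside a meagre leaf, and a meagre subset of the compact metric space $X$ has empty interior by Baire category. The problem is in item (1), and it lies exactly where you flag it: the refinement step is not merely delicate, it is false in the generality of cw-atlases. A cw-atlas does not require plaques to be locally connected, and for a non-locally connected plaque $P_i=Q_{U_i}(x_i)$ the small sub-plaques $Q_V(z)$ with $V\subset U_i$ are components of $Q_{\clos{U_i}}(x_i)\cap V$, hence need not be relatively open in $P_i$; consequently the $\tau$-connectedness of $P_i$ does not allow you to join two of its points by a chain of small sub-plaques. Concretely, for a plaque of the type of Example \ref{exaDecSen} (a topologist's sine curve together with its limit segment), a point on the oscillating part and a point on the limit segment lie in a single large plaque, so they are joined by a one-plaque chain, but every chain of plaques of small diameter starting at the first point stays on the oscillating part and never reaches the second. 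None of the cw-atlas axioms (leaves are countable unions of plaques, meagre, of diameter bounded below) excludes this, so no chain from $x$ to $y_0$ with all plaques of diameter less than $\eta$ need exist and your induction never gets started.

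The paper's proof of (1) avoids refinement entirely and is not by contradiction: keep the original chain $P_1,\dots,P_n$ from $x$ to some $y\notin\clos U$ in the leaf of $x$, let $P_m$ be the first plaque not contained in $U$ (it exists because $y\notin\clos U$), apply Lemma \ref{lemPlacasChicas} to get $P_1,\dots,P_{m-1}\subset Q_U(x)$, pick $z\in P_{m-1}\cap P_m$, and observe that the component of $P_m\cap\clos U$ through $z$ must meet $\partial U$ (boundary bumping, since $P_m$ is connected and not contained in $U$) while being contained in $Q_{\clos U}(x)$ by compatibility of the decompositions. If you want to salvage your write-up, replace the refinement by this ``first escaping plaque'' argument; the rest of your setup is then unnecessary.
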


\begin{proof}
From the definition of cw-atlas we know that 
there is $\delta>0$ such that every leaf has diameter greater than $\delta$. 
Take $x\in U\in \U_\delta$. 
Since the leaf of $x$ has diameter greater than $\delta$ there is $y\notin \clos{U}$ in the leaf of $x$. 
Consider a chain of plaques $P_1,\dots,P_n$ from $x$ to $y$. 
Let $P_m$ be the first of these plaques that is not contained in $U$. 
By Lemma \ref{lemPlacasChicas} the plaques $P_1,\dots,P_{m-1}$ are contained in $Q_U(x)$.
Take $z\in P_{m-1}\cap P_m$. 
Since $P_m$ is connected we have that $\cc_z(P_m\cap \clos U)$ cuts $\partial U$.
Given that $\cc_z(P_m\cap \clos U)\subset Q_{\clos{U}}(x)$ we have that $Q_{\clos{U}}(x)\cap\partial U\neq\emptyset$.

Since each leaf is a meagre set, no plaque has interior points. 
\end{proof}
The next example shows that the converse of Proposition \ref{propCwFolDec1} is not true.

\begin{exa}
\label{exaCantor3D}
 Let $K\subset [0,1]$ be a Cantor set. 
 Define $X=K\times[0,1]\times[0,1]\cup [0,1]\times[0,1]\times\{0\}$. 
 Define the projection $\pi\colon X\subset\R^3\to\R$ given by $\pi(x,y,z)=y$ and the 
 decomposition $\W$ as $\W(x,y,z)=\pi^{-1}(y)$. 
 For $\delta\in(0,1/2)$ and the Euclidean metric on $X$, define $Q_U(p)=\W|^m_U(p)$. 
 It defines an atlas over $\U_\delta$. 
 No leaf has interior points and every leaf has diameter greater than $\delta$. 
 But this atlas does not define a cw-foliation because the leaves are not a countable union plaques.
\end{exa}

\subsection{Continuous atlas}
\label{secConAtl}
We say that an atlas $\A$ over a complete basis $\U$ is \emph{continuous} if the set 
\[
 \{U\in\U: Q_{\clos{U}}\text{ is continuous}\}
\]
is a basis of the topology of $X$. The continuity of $Q_{\clos{U}}$ means the continuity of the 
map $Q_{\clos{U}}\colon \clos{U}\to\continua(Q_{\clos{U}})$ with respect to the Hausdorff metric.

\begin{rmk}
The stable foliation of a pseudo-Anosov diffeomorphism 
with 1-prongs (as in \S \ref{secPAS2})
defines a continuous atlas. 
In Theorem \ref{teoClassRegCwFol} we will show that continuous Peano cw-foliations of compact surfaces (i.e., continuous dendritations),
are in fact foliations (in the standard sense) with a finite number of 1-prongs. 
\end{rmk}

\begin{exa}[Hiraide's generalized foliations]
Let $(X,\dist)$ be a Peano continuum. 
A partition $\W$ of $X$ is a \emph{generalized foliation} \cites{Hi89, AH} if for all 
$x\in X$ there are non-trivial \azul{arc}-connected subsets $D,K\subset X$ 
with $D\cap K=\{x\}$, a connected open neighborhood $N$ of $x$ in $X$ 
and a homeomorphism $\phi_x\colon D\times K\to N$
such that: 
\begin{enumerate}
 \item $\phi_x(y,x)=y$ for all $y\in D$, $\phi_x(x,z)=\azul{z}$ for all $z\in K$,
 \item for every leaf $L$ there is an at most countable set $B\subset K$ such that $N\cap L=\phi_x(D\times B)$. 
\end{enumerate}
Generalized foliations are a main concept for the study of expansive homeomorphisms with canonical coordinates.
Notice that the stable foliation of the pseudo-Anosov diffeomorphism of \S \ref{secPAS2}
is not a generalized foliation.
We can show that every generalized foliation defines a continuous atlas as follows.
Given $x\in X$ consider a local coordinate $\phi\colon D\times K\to N$ around $x$. 
Take $D'$ and $K'$ compact neighborhoods of $x$ in $D$ and $K$ respectively.
Let $U$ be an open set containing $x$ such that $\clos{U}= \phi(D'\times K')$. 
Since on $D\times K$ we consider the product topology, 
we have that $\W|^m_{\clos{U}}$ is continuous. 
\end{exa}

\section{Expansivity and cw-foliations}
\label{secStUnsCwFol}

The results that we have obtained will be now applied in the study of the 
stable and unstable cw-foliations determined by a cw-expansive homeomorphism. 
Also, we will give sufficient conditions for cw1-expansivity to imply expansivity.

\subsection{Stable and unstable cw-foliations}
\label{secSUCwfols}

Let $f\colon X\to X$ be a homeomorphism of a compact metric space. 
Recall the definitions of stable and unstable continua given in (\ref{defCsCu}).

\begin{df}
A decomposition of a closed subset of $X$ is \emph{stable} if every plaque is stable.
An atlas $\A$ is \emph{stable} if each decomposition is stable. 
A $\continua^s$-\emph{foliation} is a maximal stable atlas.
The unstable versions of these definitions are analogous. 
\end{df}

As before, \emph{maximal stable atlas} means maximal among all the stable atlases, see Example \ref{exaSolenoide}.
For a closed subset $Y\subset X$ define 
$Q^s_Y\colon Y\to \parts(Y)$ 
by $y\in Q^s(x)$ if there is a stable continuum 
$C\subset Y$ such that $x,y\in C$. 
For every homeomorphism $f$ and closed subset $Y\subset X$ 
we have that $Q^s_Y$ is a monotone partition of $Y$. 
Similarly we define $Q^u_Y$ considering unstable continua.
Define:
\begin{equation}
\label{ecuUdelta}
\begin{array}{l}
 \U^s=\{U\in\tau: Q^s_{\clos{U}}\text{ is a stable decomposition}\},\\
 \A^s=\{Q^s_{\clos{U}}:U\in\U^s\}.\\ 
\end{array}
\end{equation}
Note that $\U^s$ and $\A^s$ depend on $f$. 
The sets $\U^u$ and $\A^u$ are defined similarly considering the inverse of $f$.

We will show that if 
$f$ is cw-expansive then
$\U^\sigma$ is a cover of $X$ 
and that in this case 
$\A^\sigma$ is a stable atlas of a cw-foliation.
By definition it is clear that $\A^\sigma$ is a maximal stable (unstable) atlas. 
We define the \emph{stable} and \emph{unstable cw-foliations} of $f$ as $\W^s$, $\W^u$ respectively.

An atlas $\A$ over $\U$ is \emph{invariant} if:
\begin{itemize}
 \item $f^n(U)\in \U$ and 
\item $Q_{f^n(\clos{U})}(f^n(x))=f^n(Q_{\clos{U}}(x))$ for all $x\in \clos{U}$
\end{itemize}
for all $n\in\Z$ and all $U\in\U$. 
The purpose of this section is to prove the following result.

\begin{thm}
\label{thmCovCwExpFol}
 If $f$ is a cw-expansive homeomorphism of a Peano continuum then 
 $\W^s,\W^u$ are invariant cw-foliations without plaque-compact leaves.
\end{thm}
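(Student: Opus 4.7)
The plan is to verify that $\A^s$ is a maximal stable cw-atlas (the case of $\W^u$ being symmetric), that it is $f$-invariant, and that no leaf is plaque-compact. \textbf{Task 1: $\A^s$ is a stable atlas covering $X$.} Using Proposition \ref{propExConNT}, I would pick $\epsilon^* > 0$ and $\delta > 0$ so that every stable continuum of diameter less than $\delta$ lies in $\continua^s_{\epsilon^*/2}$ and conversely every continuum in $\continua^s_{\epsilon^*}$ is stable. For an open $U$ with $\diam(U) < \delta$ and $x \in \clos U$, the set $Q^s_{\clos U}(x)$ is the union of all stable continua in $\clos U$ through $x$; each lies in $\continua^s_{\epsilon^*/2}$ and contains $x$, so its forward iterates sit in $B_{\epsilon^*/2}(f^n(x))$ and the union does too. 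This yields $\diam(f^n(Q^s_{\clos U}(x))) \leq \epsilon^*$ for all $n \geq 0$, and closedness of $\continua^s_{\epsilon^*/2}$ under Hausdorff limits forces $Q^s_{\clos U}(x)$ to be closed, so Proposition \ref{propExConNT} upgrades it to a stable continuum. The same Hausdorff-limit argument gives upper semicontinuity. Compatibility on $\clos{U_1} \cap \clos{U_2}$ is immediate because any subcontinuum of a stable continuum is itself stable; maximality is by construction; and $f$-invariance follows because $f$ carries stable continua in $\clos U$ to stable continua in $\clos{f(U)}$.

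\textbf{Task 2: Leaves have diameter bounded below and are meagre.} The Invariant Continuum Theorem \ref{thmExConNT} provides $\delta' > 0$ such that every point of $X$ sits on a stable continuum of diameter at least $\delta'$ contained in its $\W^s$-leaf; this gives the uniform lower bound on leaf diameters. For meagreness, every open plaque is contained in a stable continuum, and Remark \ref{rmkNoEstablePoints} rules out $\tau$-interior points of stable continua, so each open plaque is nowhere dense. Once Task 3 supplies the countable plaque cover of each leaf, meagreness follows.

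\textbf{Task 3: Each leaf is a countable union of open plaques.} My plan is to fix a countable basis $\{V_i\}_{i \in \N}$ of $\tau$ with every $V_i \in \U^s$ and to lean on the dynamics. For a leaf $L$ with basepoint $x \in L$, any $y \in L$ is joined to $x$ by a stable continuum $C \subset L$; iterating forward shrinks $C$ until $f^n(C) \subset V_i$ for some $i$, placing $y$ in the $\tau_\W$-open set $f^{-n}(Q^s_{V_i}(f^n(x)))$, which by Task 1 is a union of $\A^s$-plaques. This indexes a cover of $L$ by $\N \times \N$. The main obstacle will be to ensure only countably many distinct open plaques are actually produced, because a single $V_i$ may a priori host uncountably many plaques meeting $L$; the plan is to combine the dynamical shrinking with separability of $X$ to extract a countable subcollection still covering $L$.

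\textbf{Task 4: No leaf is plaque-compact.} Suppose $L$ were $\tau_\W$-compact. Since open plaques form a $\tau_\W$-basis, a finite subcover $L = P_1 \cup \cdots \cup P_k$ exists, and each $P_i$ is contained in a stable continuum, so $\diam(f^n(P_i)) \to 0$ for every $i$. Because $L$ is $\tau$-connected (its chain-of-plaques structure expresses it as a union of overlapping continua), so is the image $f^n(L) = f^n(P_1) \cup \cdots \cup f^n(P_k)$; a connected union of at most $k$ sets of diameter at most $\epsilon_n$ has diameter at most $k\epsilon_n$ by the standard chain lemma, so $\diam(f^n(L)) \to 0$. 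But by $f$-invariance of $\A^s$ each $f^n(L)$ is itself an $\W^s$-leaf, so Task 2 forces $\diam(f^n(L)) \geq \delta'$, a contradiction that completes the proof.
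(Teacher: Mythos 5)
Your proposal is correct and follows essentially the same route as the paper: Proposition \ref{propExConNT} feeds the verification that $Q^s_{\clos U}$ is a stable decomposition for small $U$ (the paper's Lemma \ref{lemCwExpBasCovPre}), the leaf is identified with $Q^s_X(x)$ and written as a countable union of plaques along the forward orbit (Lemma \ref{lemStCover}), and the lower diameter bound from Theorem \ref{thmExConNT} kills both interior points of plaques and plaque-compact leaves via the same finite-subcover contradiction. The ``obstacle'' you flag in Task 3 is not actually there: every plaque your construction produces is of the form $Q^s_{f^{-n}(V_i)}(x)$ for the \emph{fixed} basepoint $x$, so the collection is already indexed by $(n,i)\in\N\times\N$ and countability is automatic (the paper does the same with the single ball $B_{\delta/2}(f^n(x))$ for each $n$).
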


The proof is developed in some lemmas that do not assume that $f$ is cw-expansive.

\begin{lem}
\label{lemInvStDec}
If $Q^\sigma_Y$ is a stable (unstable) decomposition, $\sigma=s,u$, and $Z\subset Y$ is closed then $$Q^\sigma_Z=Q^\sigma_Y|^m_Z.$$
 \end{lem}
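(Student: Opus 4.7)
The plan is to establish the two inclusions $Q^\sigma_Z(x)\subset Q^\sigma_Y|^m_Z(x)$ and $Q^\sigma_Y|^m_Z(x)\subset Q^\sigma_Z(x)$ separately, working with $\sigma=s$ (the unstable case being symmetric by passing to $f^{-1}$). Throughout I will use two bookkeeping observations that follow directly from the definition in (\ref{defCsCu}): first, any subcontinuum of a stable continuum is stable, since diameters only decrease under restriction; and second, if $C\subset Y$ is a stable continuum with $x\in C$, then by the definition of $Q^s_Y$ every point of $C$ belongs to $Q^s_Y(x)$, hence $C\subset Q^s_Y(x)$.

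For the inclusion $Q^s_Z(x)\subset Q^s_Y|^m_Z(x)$, I would take $y\in Q^s_Z(x)$ and choose a stable continuum $C\subset Z$ with $x,y\in C$. Since $Z\subset Y$, the second observation above gives $C\subset Q^s_Y(x)$, and as $C\subset Z$ we get $C\subset Q^s_Y(x)\cap Z$. Because $C$ is connected and contains $x$, it lies inside $\cc_x(Q^s_Y(x)\cap Z)=Q^s_Y|^m_Z(x)$, and in particular $y$ does.

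For the converse inclusion, I would exploit the hypothesis that $Q^s_Y$ is a stable \emph{decomposition}: by definition this means the plaque $P:=Q^s_Y(x)$ is itself a stable continuum. Let $D=\cc_x(P\cap Z)$. Since $P$ and $Z$ are closed, so is $P\cap Z$, hence its component $D$ containing $x$ is a continuum. As $D\subset P$, the first observation shows $D$ is stable. Since $D\subset Z$ and contains $x$, any $y\in D$ is witnessed by $D$ itself to lie in $Q^s_Z(x)$. This gives $Q^s_Y|^m_Z(x)\subset Q^s_Z(x)$.

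Combining the two inclusions yields $Q^s_Z=Q^s_Y|^m_Z$; the argument for $\sigma=u$ is identical. The only conceptual point to be careful about is that we need the \emph{decomposition} hypothesis on $Q^s_Y$ (not merely the pointwise definition) in order to guarantee that the whole plaque $P$ is a genuine stable continuum whose subcontinua inherit stability; without that, the restriction argument would not directly produce a single stable continuum in $Z$ joining $x$ to an arbitrary $y\in\cc_x(P\cap Z)$.
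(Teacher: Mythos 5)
Your proof is correct and follows essentially the same route as the paper's: the first inclusion is verbatim the paper's argument, and for the converse the paper likewise invokes the stable-decomposition hypothesis to assert that $\cc_x(Q^s_Y(x)\cap Z)$ is a stable continuum contained in $Z$ (you merely make explicit the two facts the paper leaves implicit, namely that the plaque itself is a stable continuum and that subcontinua of stable continua are stable).
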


\begin{proof}
%
Take $x\in Z$ and $y\in Q^s_Z(x)$. 
Then there is a stable continuum $C$ such that $x,y\in C\subset Z$. 
Since $Z\subset Y$, we have that $C\subset Q^s_Y(x)$. 
Since $x\in C$ and $C$ is connected we have that $C\subset Q^s_Y|^m_Z(x)$. 
Consequently $y\in Q^s_Y|^m_Z(x)$ and $Q^s_Z(x)\subset Q^s_Y|^m_Z(x)$.

Let us show the converse inclusion. 
Take $x\in Z$ and $y\in Q^s_Y|^m_Z(x)$. 
Recall that $Q^s_Y|^m_Z(x)=\cc_x(Q^s_Y(x)\cap Z)$.
Since $Q^s_Y$ is a stable decomposition we have that 
$\cc_x(Q^s_Y(x)\cap Z)$ is a stable continuum.
Given that $\cc_x(Q^s_Y(x)\cap Z)\subset Z$ we have that 
$\cc_x(Q^s_Y(x)\cap Z)\subset Q^s_Z(x)$ and $y\in Q^s_Z(x)$.
Then $Q^s_Y|^m_Z(x)\subset Q^s_Z(x)$.
\end{proof}

\begin{lem}
\label{lemStCover}
If $\U^\sigma$ covers $X$, $\sigma=s,u$, then: 
\begin{enumerate}
 \item $\A^\sigma$ is a stable atlas over the complete basis $\U^\sigma$, 
 \item every stable (unstable) continuum is contained in an open $\A^\sigma$-plaque,
 \item $Q^\sigma_X(x)$ is the $\A^\sigma$-leaf of $x$ for all $x\in X$, 
 \item every $\A^\sigma$-leaf is a countable union of open $\A^\sigma$-plaques.
\end{enumerate}
\end{lem}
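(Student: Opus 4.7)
The plan is to prove the four items in sequence, using Lemma \ref{lemInvStDec} as the main tool for compatibility and restriction.

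For item (1), I would first check that $\U^s$ is a complete basis: if $V$ is an open subset of some $U\in\U^s$, Lemma \ref{lemInvStDec} gives $Q^s_{\clos V}=Q^s_{\clos U}|^m_{\clos V}$, which by Proposition \ref{propRestDecEsDec} is a decomposition whose plaques, being subcontinua of stable continua, are themselves stable, so $V\in\U^s$. Combined with the covering hypothesis this yields a complete basis. Compatibility of $Q^s_{\clos U}$ and $Q^s_{\clos V}$ on the overlap follows from a double application of Lemma \ref{lemInvStDec}: each restriction to $\clos U\cap \clos V$ equals $Q^s_{\clos U\cap\clos V}$, so the two agree. Stability of each decomposition is by construction.

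For item (2), given a stable continuum $C$, the claim is that $C$ sits inside a single open $\A^s$-plaque. Once some $U\in\U^s$ with $C\subset U$ is produced, the argument is direct: for any chosen $x\in C$, the fact that $C\subset\clos U$ is a stable continuum through $x$ gives $C\subset Q^s_{\clos U}(x)$, and then connectedness of $C$ together with $C\subset U$ yields $C\subset\cc_x(Q^s_{\clos U}(x)\cap U)=Q^s_U(x)$. Producing such a $U$ is delicate; I would cover $C$ by finitely many charts from $\U^s$ and patch them, using that a finite union of pairwise intersecting stable continua is again stable via $\diam(f^k(C_1\cup C_2))\le \diam(f^k C_1)+\diam(f^k C_2)$, and then verify that a suitable enlargement still lies in $\U^s$. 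For item (3), (2) gives one inclusion: any $y\in Q^s_X(x)$ lies in a stable continuum through $x$, which fits in a common open plaque with $x$, placing $y$ in the $\A^s$-leaf of $x$. The converse uses that a chain of open plaques $P_1,\dots,P_n$ with $x\in P_1$ and $y\in P_n$ has $\bigcup P_i$ connected (from successive overlaps) and stable (by the same additive diameter estimate), hence $y\in Q^s_X(x)$.

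For item (4), since $X$ is compact metric it is second countable, so I would extract a countable subfamily $\{U_n\}_{n\ge 1}\subset\U^s$ that is still a basis of $\tau$. Every open plaque meeting a leaf $L$ refines to a union of plaques of the form $Q^s_{U_n}(y)$ with $y\in L\cap U_n$, so the problem reduces to showing that for each fixed $n$ only countably many distinct such plaques appear. The main obstacle lies here, because leaves of a general $\continua$-foliation need not be locally connected, so the standard ``locally connected plus second countable implies countably many components'' argument is not directly available. I would address this by exploiting upper semicontinuity of $Q^s_{\clos U_n}$ together with compactness of $\continua(X)$ to stratify the disjoint family of candidate plaques by diameter and rule out uncountable disjoint families of uniform size, yielding a countable exhaustion of $L$.
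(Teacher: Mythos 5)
Your item (1) and your deduction of item (3) from item (2) match the paper. The genuine gaps are in items (2) and (4), and both come from the same missing idea: the proof must use the dynamics of $f$, not only the topology of the charts. For item (2), the plan to cover $C$ by finitely many charts of $\U^s$ and ``verify that a suitable enlargement still lies in $\U^s$'' does not work: $\U^s$ is closed under passing to open subsets but not under unions, and for a large open set $V$ the partition $Q^s_{\clos V}$ typically fails to be a stable decomposition --- its classes are unions of infinitely many chained stable continua, which need not be stable, closed, or upper semicontinuous (in the extreme case $V=X$ the class of $x$ is the whole, generally dense, set $Q^s_X(x)$). Moreover, even if your patched union were a stable continuum, it would not be an open plaque, which by definition is a set of the form $Q^s_U(x)$ for a \emph{single} $U\in\U^s$, so you genuinely need one chart containing all of $C$. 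The paper gets this by iterating forward: since $C$ is stable, $\diam(f^n(C))\to 0$, so for $n$ large $f^n(C)$ has diameter below a Lebesgue number of the cover $\U^s$ and fits inside a single $U\in\U^s$; then $C\subset Q^s_{f^{-n}(U)}(x)$, and $f^{-n}(U)\in\U^s$ because $f$ carries stable continua to stable continua, so $\U^s$ and $\A^s$ are $f$-invariant.

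For item (4) the same problem recurs. Reducing to a countable basis $\{U_n\}$ and then trying to show that each $U_n$ meets a leaf in countably many plaques by ``ruling out uncountable disjoint families of uniform size'' cannot succeed as stated: a compact metric space contains uncountably many pairwise disjoint continua of the same diameter, and upper semicontinuity of $Q^s_{\clos{U_n}}$ places no countability restriction on how many of its plaques a single leaf can meet; Example \ref{exaCantor3D} exhibits an atlas with exactly the chart-level properties you are invoking whose leaves are not countable unions of plaques. The paper again uses the dynamics: fixing $\delta>0$ so that every ball $B_{\delta/2}(p)$ lies in $\U^s$, the sets $P_n=f^{-n}\bigl(Q^s_{B_{\delta/2}(f^n(x))}(f^n(x))\bigr)$, $n\geq 0$, form an explicit countable family of open plaques (by invariance), and any $y\in Q^s_X(x)$ lies in some $P_k$ because the stable continuum joining $x$ to $y$ has $f^k$-image of diameter less than $\delta/2$ for $k$ large. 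You should restructure items (2) and (4) around this forward-iteration argument.
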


\begin{proof}
Assume that $\U^s$ covers $X$.
By Lemma \ref{lemInvStDec} we know that $\U^s$ is a complete basis. 
  Consequently, $\A^s$ is a stable atlas.
Let $C\subset X$ be a stable continuum.
Since $\U^s$ covers $X$
we can take $n\geq 0$ and $U\in \U$ such that $f^n(C)\subset U\in \U^s$. 
Then $C\subset Q^s_{f^{-n}(U)}(x)$ for all $x\in C$. 
Since $\U^s$ is $f$-invariant, $Q^s_{f^{-n}(U)}(x)$ is a stable open plaque containing $C$. 
It is clear that the leaf of $x$ is contained in $Q^s_X(x)$. The converse inclusion follows by the previous item.

Let us show that each leaf is a countable union of open plaques. 
Take $\delta>0$ such that every open subset of diameter smaller than $\delta$ belongs to $\U^s$. 
Then $B_{\delta/2}(x)\in\U^s$ for all $p\in X$ 
and we can consider the open plaque $Q^s_{B_{\delta/2}(x)}(x)$.
For $x\in X$ and $n\geq 0$ define 
\[
P_n=f^{-n}(Q^s_{B_{\delta/2}(f^n(x))}(f^n(x))). 
\]
We have that $P_n$ is a stable open plaque. 
We have proved that the leaf of $x$ is $Q^s_X(x)$. 
Then $P_n\subset Q^s_X(x)$ for all $n\geq 0$. 
Given $y\in Q^s_X(x)$ let us show that there is $k\geq 0$ such that $y\in P_k$. 
If $y\in Q^s_X(x)$ then there is a stable continuum $C$ containing $x$ and $y$. 
Since $C$ is stable, there is $k\geq 0$ such that $\diam(f^k(C))<\delta/2$. 
Given that $f^k(x)\in f^k(C)$ and that $f^k(C)$ is a stable continuum 
we have that $f^k(C)\subset Q^s_{B_{\delta/2}(f^k(x))}(f^k(x))$.
Then $y\in P_k$. This proves that $Q^s_X(x)=\cup_{n\geq 0}P_n$, a countable union of plaques.
\end{proof}

\begin{lem}
 \label{lemCwExpBasCovPre}
 If $f$ is a homeomorphism of a compact metric space $X$ and there are $\epsilon,\delta>0$ such that 
 $\continua^\sigma\cap\continua_\delta\subset\continua^\sigma_\epsilon$ and 
 $\continua^\sigma_{2\epsilon}\subset\continua^\sigma$ 
 for $\sigma=s,u$ then $\U^\sigma$ covers $X$. 
\end{lem}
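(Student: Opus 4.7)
The plan is to prove, by symmetry, only the statement for $\sigma=s$ (for $\sigma=u$ one applies the same argument to $f^{-1}$). The strategy is to show that every open set $U\subset X$ with $\diam(\clos{U})\leq\delta$ lies in $\U^s$, i.e., that $Q^s_{\clos{U}}$ is a stable decomposition; since these $U$ cover $X$, this gives the conclusion. So fix such a $U$.

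First I would verify the partition, monotonicity and compactness conditions. Monotonicity is automatic: $Q^s_{\clos U}(x)$ is a union of connected sets all containing $x$, so it is connected. Transitivity of the relation (hence the partition property) follows because if $C_1,C_2\subset\clos U$ are stable continua with $C_1\cap C_2\neq\emptyset$, then $C_1\cup C_2\subset\clos U$ is connected and $\diam(f^n(C_1\cup C_2))\leq\diam(f^n(C_1))+\diam(f^n(C_2))\to 0$, so $C_1\cup C_2\in\continua^s$. To see that each plaque $P:=Q^s_{\clos U}(x)$ is compact, note that any stable continuum $C\subset\clos U$ satisfies $\diam(C)\leq\delta$, so by the first hypothesis $C\in\continua^s_\epsilon$, i.e. $\diam(f^n(C))\leq\epsilon$ for all $n\geq 0$. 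Since every such $C$ through $x$ contains $f^n(x)$ after iteration, its image lies in the $\epsilon$-ball around $f^n(x)$, hence $\diam(f^n(P))\leq 2\epsilon$ for all $n\geq 0$. The same bound holds for $\clos P$, so $\clos P\in\continua^s_{2\epsilon}\subset\continua^s$ by the second hypothesis. Thus $\clos P$ is a stable continuum in $\clos U$ containing $x$, which by maximality in the definition of $P$ forces $\clos P\subset P$, so $P$ is compact. In particular each plaque is a stable continuum.

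For upper semicontinuity, using Remark \ref{rmkUpSCont}, suppose $x_n\to x$ and $P_n:=Q^s_{\clos U}(x_n)\to D$ in the Hausdorff metric. Then $D\subset\clos U$ is a continuum containing $x$, and from $\diam(f^k(P_n))\leq 2\epsilon$ for every $k\geq 0$ we get $\diam(f^k(D))\leq 2\epsilon$ as well. The hypothesis $\continua^s_{2\epsilon}\subset\continua^s$ then gives $D\in\continua^s$, so $D$ is a stable continuum in $\clos U$ through $x$ and therefore $D\subset P$. This completes the verification that $Q^s_{\clos U}$ is a stable decomposition.

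There is essentially no hard step; the one thing to flag is the role of the factor two in the hypotheses. Stable continua in $\clos U$ are individually controlled at scale $\epsilon$ by the first hypothesis, but the plaque is a union of such continua sharing only the point $x$, which inflates the uniform bound on forward iterates to $2\epsilon$; this is exactly why the second hypothesis is stated with $2\epsilon$ rather than $\epsilon$, and it is what makes the closure-and-stability step in the middle paragraph go through.
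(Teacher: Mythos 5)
Your proof is correct and follows essentially the same route as the paper's: bound each stable continuum through $x$ in $\clos U$ by $\epsilon$ via the first hypothesis, get the $2\epsilon$ bound on the plaque as a union through the common point, invoke $\continua^s_{2\epsilon}\subset\continua^s$ for stability, and verify upper semicontinuity by passing the $2\epsilon$ bound to the Hausdorff limit. Your extra care with closures and the partition axioms only makes explicit what the paper leaves implicit.
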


\begin{proof}
We give the proof for $\sigma=s$.
Let $Y\subset X$ be a compact subset with $\diam(Y)\leq\delta$.
We will show that $Q^s_Y$ is a stable decomposition of $Y$.
Take $x\in Y$. 
By definition we have that $Q^s_Y(x)$ is connected for all $x\in Y$. 
We have that $Q^s_Y(x)$ is the union of all the stable 
continua $C$ contained in $Y$ and containing $x$. 
Since $\continua^s\cap \continua_\delta\subset\continua^s_\epsilon$ and 
$\diam(Y)\leq\delta$ we have that each $C\in \continua^s_\epsilon$.
This implies that 
\begin{equation}
 \label{ecuPlacaEstable}
 \diam(f^k(Q^s_Y(x))\leq2\epsilon\text{ for all }k\geq 0.
\end{equation}
Since $\continua^s_{2\epsilon}\subset\continua^s$ 
we have that $Q^s_Y(x)$ is a stable continuum. 
Take $x_n\to x$ in $Y$ with $Q^s_Y(x_n)\to C$.
From (\ref{ecuPlacaEstable}) we know that $\diam(f^k(C))\leq2\epsilon$ for all $k\geq 0$. 
Therefore, we conclude that $C$ is stable. 
Then, $C\subset Q^s_Y(x)$ and the proof ends.
\end{proof}

%

\begin{proof}[Proof of Theorem \ref{thmCovCwExpFol}]
By Proposition \ref{propExConNT} 
we know that $f$ satisfies the hypothesis of Lemma \ref{lemCwExpBasCovPre}. 
Then $\U^\sigma$ covers $X$. 
Applying Lemma \ref{lemStCover}
we conclude that $\U^\sigma$ is a complete basis, that $\A^\sigma$ is an atlas 
and that every $\A^\sigma$-leaf is a countable union of open $\A^\sigma$-plaques. 
By Theorem \ref{thmExConNT} we know that $\continua^\sigma_\epsilon(x)\setminus\continua_\delta(x)\neq\emptyset$.
Then, every leaf has diameter greater than $\delta$.
 The cw-expansivity and the condition $\continua^\sigma_\epsilon(x)\setminus\continua_\delta(x)\neq\emptyset$ 
 imply that each stable plaque has empty interior. 
 Therefore, stable and unstable leaves are meagre sets.
This proves that $\W^\sigma$ are cw-foliations.

Suppose that $L$ is a plaque-compact leaf of $\A^s$. 
Let $\{P_i\}_{i\in I}$ be a cover of $L$ 
by stable plaques. 
Since $L$ is plaque-compact we can assume that $I$ is finite. 
Then $\diam(f^n(L))\to 0$ as $n\to+\infty$. 
This gives us stable leaves of arbitrarily small diameter, a contradiction to Theorem \ref{thmExConNT}.
\end{proof}

%

%

\begin{rmk}
 The quasi-Anosov diffeomorphism \S \ref{secQAnosov}
 is expansive and Theorem \ref{thmCovCwExpFol} can be applied. 
 In this example we see that the plaques of $\W^s$ and $\W^u$ may not have constant dimension. 
 There are stable plaques of dimension 1 and 2.
\end{rmk}

\begin{exa}[One-dimensional expanding attractor]
\label{exaSolenoide}
 Let $g\colon T^2\to T^2$ be a derived from Anosov diffeomorphism of the two-dimensional torus
 and denote by $X\subset T^2$ the non-trivial basic set (an expanding attractor). 
 Let $f\colon X\to X$ be the restriction of $g$. 
 We have that $X$ is locally a product of a Cantor set and an arc and every leaf is dense in $X$. 
 The dynamics of $f$ on $X$ expands the length of each arc contained in $X$ 
 and every proper subcontinuum is an unstable arc. 
 That is, given an open set $U\subset X$, such that $\clos{U}\neq X$, 
 the unstable plaque of $x\in U$ is, simply, its component in $U$ (an arc). 
 Then, the unstable cw-foliation is a Peano foliation.
 It is remarkable that in this case, 
 the unstable atlas is compatible with the trivial decomposition $Q\colon X\to\continua(X)$ given by 
 $Q(x)=X$ for all $x\in X$. However, $X$, as a plaque of this decomposition $Q$, is neither unstable nor locally connected. 
 Then, a maximal atlas extending a Peano unstable atlas may not be neither unstable nor Peano.
\end{exa}

\begin{prob}
For a cw-expansive homeomorphism of a Peano continuum, is it true that every continuum contained on a stable leaf is stable?  
 \end{prob}

 \begin{prob}Can a stable leaf of a cw-expansive homeomorphism of a Peano continuum 
be compact in the relative topology?
By Theorem \ref{thmCovCwExpFol} we only know that it cannot be 
plaque-compact.
\end{prob}

\subsection{Generating pairs}

If $\U^1$ and $\U^2$ are complete bases then $\U^1\cap \U^2$ is a complete basis. 
If $\A^i=\{Q^i_U:U\in\U^i\}$ is an atlas over the complete basis $\U^i$, $i=1,2$, 
then both atlases are defined in a common complete basis, namely $\U=\U^1\cap \U^2$. 

The following concept is the key to prove in \S \ref{teoEquivPANo1prong}
that cw1-expansivity implies expansivity on a compact surface.

\begin{df}
\label{dfGenPair}
Given two atlases $\A^1$ and $\A^2$ of the continuum $X$, we say that 
they \emph{generate (the topology of $X$)} if for all $B_\epsilon(x)\in\U$ there is 
$\delta>0$ such that if $y\in B_\delta(x)$ then 
$Q^i_{B_\epsilon(x)}(x)\cap Q^j_{B_\epsilon(x)}(y)\neq\emptyset$ 
for $\{i,j\}=\{1,2\}$.
\end{df}

\begin{exa} 
A pair of transverse foliations of a smooth manifold (with the standard meaning of these concepts) 
form a generating pair of cw-foliations. 
Another example is formed by 
the stable and the unstable singular foliations of a pseudo-Anosov map of a compact surface, 
even with singularities and 1-prongs.\footnote{We were tempted to say \emph{transversal} instead of \emph{generating pair} in Definition \ref{dfGenPair}, 
but at 1-prongs the foliations look closer to a tangency than to a transversal point, at least from author's viewpoint.} 
\end{exa}

\begin{rmk}
\label{rmkNeighSU}
For $r>0$ and $x\in X$ define 
\[
  su_r(x)=\{y\in X:\exists A^s\in \continua^s_r(y),A^u\in \continua^u_r(x)\text{ with } A^u\cap A^s\neq\emptyset\}
\]
Also define $y\in us_r(x)$ if $x\in su_r(y)$.
If $f\colon X\to X$ is cw-expansive then the cw-foliations $\A^s$ and $\A^u$ generate if and only if
$su_\epsilon(x)\cap us_\epsilon(x)$ is a neighborhood of $x$, for all $x\in X$ and for all $\epsilon>0$.  
The proof follows by the definitions.
\end{rmk}

\begin{rmk}
Notice that for pseudo-Anosov singular foliations 
the size of the ball covered by $su_r(x)$ is not uniform.
For $x$ close to a singularity, the maximal ball contained in $su_r(x)$ is small. 
\end{rmk}

\begin{exa}[Quadratic tangencies]
 Let $\W^1$ be the foliation of $\R^2$ whose leaves are horizontal lines $y=c$ for $c\in\R$. 
 Define $\W^2$ by $y=x^2+c$ for $c\in\R$. 
 It is easy to prove that these foliations do not generate at tangency points of the foliations. 
\azul{These kind of tangencies are present in the wandering set of the 
 Q{$^r$}-Anosov diffeomorphisms of \S \ref{secQrAnosov}.}
\end{exa}

\begin{exa}
Consider the vertical cw-foliation $\W^1$ of the plane and a cw-foliation $\W^2$ as in Figure~\ref{figFolNonGen}. 
In this case, for a point $x$ as in the figure, it holds that 
$\W^2|^m_{B_\epsilon(x)}(x)\cap \W^1|^m_{B_\epsilon(x)}(y)\neq\emptyset$ 
for all $y$ close to $x$. 
But, exchanging the foliations, we can find points $y$ arbitrarily close to $x$ such that
$\W^1|^m_{B_\epsilon(x)}(x)\cap \W^2|^m_{B_\epsilon(x)}(y)=\emptyset$.
This kind of non-generating points appear: 1) at the non-wandering set of the examples in \S \ref{secQrAnosov}
and 2) at wandering points in \azul{the example of \S \ref{secAnomalous}}. 

\end{exa}

 \begin{figure}
 \center
 \includegraphics[scale=1]{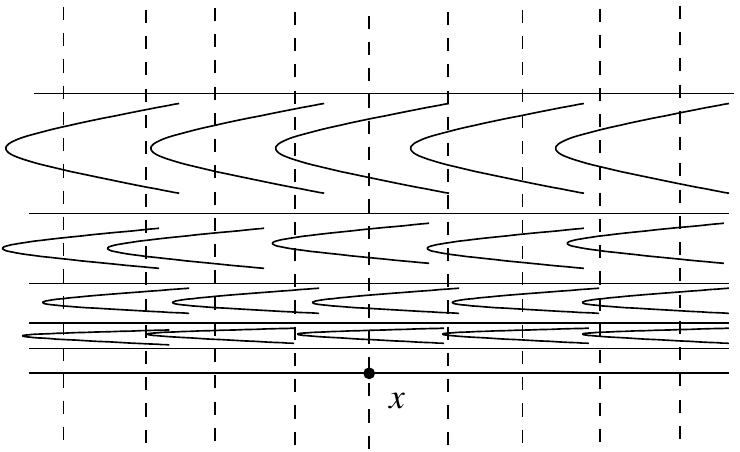}
 \caption{A non generating pair of plane cw-foliations}
 \label{figFolNonGen}
 \end{figure}

\begin{prop}
\label{propsurloccon}
  If $su_r(x)$ is a neighborhood of $x$, for all $x\in X$ and for all $r>0$ then $X$ is locally connected. 
\end{prop}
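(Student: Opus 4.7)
\medskip

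The plan is to prove local connectedness at an arbitrary point $x\in X$ by constructing, for each prescribed $\epsilon>0$, a connected neighborhood of $x$ of diameter at most $2r$ where $r=\epsilon/2$. The key observation is that the set $su_r(x)$, although a priori only a neighborhood, is contained in a naturally associated connected set built by pasting stable continua to unstable continua through $x$.

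Fix $x\in X$ and $\epsilon>0$, and set $r=\epsilon/2$. For every $y\in su_r(x)$, by definition there exist $A^s_y\in\continua^s_r(y)$ and $A^u_y\in\continua^u_r(x)$ with $A^s_y\cap A^u_y\neq\emptyset$. Set
\[
K_y \;=\; A^s_y\cup A^u_y.
\]
Since $A^s_y$ and $A^u_y$ are each continua with nonempty intersection, $K_y$ is connected. Moreover $x\in A^u_y\subset B_r(x)$ (as $x\in A^u_y$ and $\diam(A^u_y)\leq r$), and any point of $A^s_y$ lies within distance $r$ of some point of $A^u_y$, hence within $2r$ of $x$. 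Therefore $K_y\subset B_{2r}(x)=B_\epsilon(x)$, and $x,y\in K_y$.

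Now define
\[
C \;=\; \bigcup_{y\in su_r(x)} K_y.
\]
Because every $K_y$ contains the common point $x$ and each $K_y$ is connected, $C$ is connected. Since $y\in K_y$ for every $y\in su_r(x)$, we have $su_r(x)\subset C$, so $C$ contains an open neighborhood of $x$ by hypothesis. Finally $C\subset B_\epsilon(x)$ by the diameter estimate above. Hence $C$ is a connected neighborhood of $x$ of diameter at most $2r$, contained in $B_\epsilon(x)$. Since $x$ and $\epsilon$ were arbitrary, $X$ is locally connected.

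The proof is essentially immediate from the definitions; the only step worth flagging is the diameter bookkeeping that guarantees $K_y\subset B_\epsilon(x)$ uniformly in $y$, which uses that the unstable continuum $A^u_y$ is anchored at $x$ and has diameter at most $r$. No obstacle is expected.
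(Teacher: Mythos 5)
Your proof is correct and follows essentially the same route as the paper: both arguments attach to each $y\in su_r(x)$ a connected set $A^s_y\cup A^u_y$ containing $x$ and $y$ of controlled diameter, and take the union over $y$ to obtain a small connected neighborhood of $x$ (the paper observes this union is exactly $su_r(x)$, while you only need that it contains $su_r(x)$, which suffices). The only nitpick is that your diameter bound gives $K_y$ inside the \emph{closed} ball of radius $2r=\epsilon$; taking $r<\epsilon/2$ strictly, as the paper does, removes even that cosmetic issue.
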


\begin{proof}
  Given $x\in X$ and a neighborhood $U$ of $x$ take $\rho>0$ such that $B_\rho(x)\subset U$. 
  Take $r\in (0,\rho/2)$. 
  We know that $su_r(x)$ is a neighborhood of $x$. 
  For each $y\in su_r(x)$ there are $C^s\in \continua^s_r$ and $C^u\in \continua^u_r$ 
  such that $x\in C^u$, $y\in C^s$ and $C^u\cap C^s\neq\emptyset$. 
  Then, $\diam(C^s\cup C^u)\leq 2r$ and $\dist(x,y)<\rho$. 
  Then $su_r(x)\subset B_\rho(x)$. 
  Since $su_r(x)$ is a union of connected sets $C^s\cup C^u$, with a common point $x$, 
  we conclude that $su_r(x)$ is connected. This proves the local connection of $X$ at $x$. 
  Since $x$ is arbitrary, $X$ is locally connected.
\end{proof}


\subsection{Cw1-expansivity versus expansivity}
\label{secCw1VsExp}
We now investigate the following question. 
\begin{prob}
Does cw1-expansivity imply the expansivity of a homeomorphism $f$ on a Peano continuum $X$? 
\end{prob}

The answer is affirmative in the examples known by the author and in 
Proposition \ref{propCwnNexpExpMod} we gave some information on this problem.
In Theorem \ref{teoEquivPANo1prong} we will prove it for $X$ a compact surface.
On a Cantor set $X$ the identity is (trivially) cw1-expansive but not expansive.
Even assuming the connection of $X$, the local connection of $X$ is needed in the question because of the following example. 

\begin{exa}
\label{exaCw1VsExp}
Let $f_1\colon T^2\to T^2$ be an Anosov diffeomorphism with a fixed point $p$. 
Denote by $X_2$ the closure of $\{1/n:n\in \Z^+\}$ in the usual topology of $\R$. 
Define $Y=T^2\times X_2$ and $g\colon Y\to Y$ as $g(x,y)=(f_1(x),y)$. 
Now, collapsing the fixed points $\{(p,0),(p,1),(p,1/2),\dots\}$ to a single point we obtain a 
continuum $X$, see Figure~\ref{figCw1}. 
Also, we have a homeomorphism $f\colon X\to X$ induced by $g$. 
We have that $f$ is not expansive because the distance $\dist(f^n(x,1/k),f^n(x,1/l))$ is small for 
all $n\in\Z$ whenever $k$ and $l$ are sufficiently large.
By construction, $f$ is cw1-expansive. 
Notice that $X$ is a non-locally connected continuum. 
\end{exa}

\begin{figure}[ht]
\center
  \includegraphics[scale=.5]{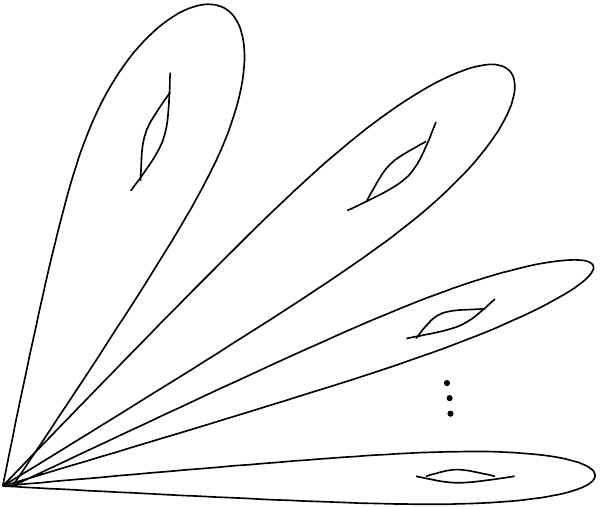}
  \caption{A non-expansive but cw1-expansive homeomorphism on a continuum that is not locally connected. 
  An Anosov diffeomorphism acts on each torus.}
  \label{figCw1}
\end{figure}

Under certain hypothesis, see Theorem \ref{thmCw1exp},
we can prove that cw1-expansivity implies expansivity. 
The following lemma is a part of its proof,
it is separated in order to simplify the notation. 
We assume that $f$ is not expansive because we will argue by contradiction. 

\begin{lem}
\label{lemCw1}
If $f$ is not expansive 
and $us_r(x)$ is a neighborhood of $x$ for all $x\in X$ and for all $r>0$,
then for all $\epsilon>0$ there are 
$x,y\in X$ such that 
$x\neq y$, $\dist(f^n(x),f^n(y))<\epsilon$ 
for all $n\in\Z$ and $x,y\in A$ with 
$A\in \continua^s_\epsilon\cup \continua^u_\epsilon$.
\end{lem}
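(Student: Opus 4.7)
The plan is to combine non-expansivity with the neighborhood hypothesis at a common limit point, then split into two cases depending on the geometry of the stable/unstable continua obtained.

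First I fix $r\in(0,\epsilon/3)$. Since $f$ is not expansive, for each $k\geq 1$ I pick distinct $x_k,y_k\in X$ with $\dist(f^n(x_k),f^n(y_k))<1/k$ for all $n\in\Z$. By compactness of $X$ I pass to a subsequence so that $x_k\to x_*$, and then $y_k\to x_*$ as well. By hypothesis $us_r(x_*)$ is a neighborhood of $x_*$, so I choose $\delta_*>0$ with $B_{\delta_*}(x_*)\subset us_r(x_*)$. For $k$ large enough that $x_k,y_k\in B_{\delta_*}(x_*)$, both points lie in $us_r(x_*)$, and unfolding the definition produces stable continua $A^s_1,A^s_2\in\continua^s_r(x_*)$ together with unstable continua $A^u_x\in\continua^u_r(x_k)$ and $A^u_y\in\continua^u_r(y_k)$, and points $w_k\in A^s_1\cap A^u_x$, $z_k\in A^s_2\cap A^u_y$.

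Now I split on whether $w_k$ and $z_k$ coincide. If $w_k\neq z_k$, the union $A:=A^s_1\cup A^s_2$ is a continuum since both summands share $x_*$; this same fact bounds $\diam(f^n(A))\leq 2r$ for every $n\geq 0$, placing $A$ in $\continua^s_{2r}\subset\continua^s_\epsilon$ and yielding $\dist(f^n(w_k),f^n(z_k))\leq 2r<\epsilon$ for $n\geq 0$. For $n\leq 0$ the triangle inequality through $x_k,y_k$ gives
\[
\dist(f^n(w_k),f^n(z_k))\leq\diam(f^n(A^u_x))+\dist(f^n(x_k),f^n(y_k))+\diam(f^n(A^u_y))\leq 2r+\tfrac{1}{k},
\]
which is $<\epsilon$ once $1/k<\epsilon/3$. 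If instead $w_k=z_k$, this common point lies in $A^u_x\cap A^u_y$, so $A:=A^u_x\cup A^u_y$ is a connected continuum in $\continua^u_{2r}\subset\continua^u_\epsilon$ containing the distinct pair $(x_k,y_k)$, whose orbit distance is bounded by $2r$ for $n\leq 0$ (via $\diam(f^n(A))$) and by $1/k$ for $n\geq 0$.

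The main obstacle is strategic rather than technical: one is tempted to use the non-expansive pair $(x_k,y_k)$ directly, but \emph{a priori} there is no small stable or unstable continuum containing them. The idea is to let both $x_k$ and $y_k$ activate the neighborhood $us_r(x_*)$, producing two small stable continua through the common limit $x_*$ that glue into a single small stable continuum containing $w_k$ and $z_k$; the only way these intersection points fail to be distinct is exactly the favourable case in which the two unstable continua glue at their common intersection point, providing a small unstable continuum containing the original pair $(x_k,y_k)$.
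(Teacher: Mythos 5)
Your proof is correct and follows essentially the same strategy as the paper's: anchor two small stable continua at a common point, attach unstable continua to the non-expansive pair, and split on whether the two intersection points coincide, gluing stable continua in one case and unstable continua in the other. The only (immaterial) difference is how the anchor is produced — you take a limit point of a sequence of non-expansive pairs, while the paper uses a finite cover and a Lebesgue-number argument to place a single non-expansive pair inside $us_{\epsilon/3}(z)$ for some $z$ in a fixed finite set.
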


\begin{proof}
For $\epsilon>0$ given, 
consider $\delta_1>0$ and a finite set $Z\subset X$ such that 
$B_{\delta_1}(z)\subset us_{\epsilon/3}(z)$ for each $z\in Z$ and 
$X=\cup_{z\in Z}B_{\delta_1}(z)$.
Take $\delta_2\in(0,\epsilon/3)$ such that if $v,w\in X$ and $\dist(v,w)<\delta_2$ then $v,w\in B_{\delta_1}(z)$ 
for some $z\in Z$.
Since $f$ is not expansive there are $v,w\in X$ such that $\dist(f^n(v),f^n(w))<\delta_2$ for all $n\in\Z$ 
and $v\neq w$. 
Take $z\in Z$ such that $v,w\in B_{\delta_1}(z)$. 
Then $v,w\in su_{\epsilon/3}(z)$.
By the definitions, we can take 
$A_v^s, A_w^s\in \continua^s_{\epsilon/3}$ and 
$A_v^u,A^u_w\in \continua^u_{\epsilon/3}$
such that $z\in A_v^s\cap A_w^s$, $A_v^s\cap A_v^u\neq\emptyset$, $A_w^s\cap A_w^u\neq\emptyset$, 
$v\in A_v^u$ and $w\in A_w^u$. 

Take $v_1\in A^s_v\cap A^u_v$ and $w_1\in A^s_w\cap A^u_w$. 
Since $A^s_v,A^s_w\in \continua^s_{\epsilon/3}$ with a common point, namely $z$, 
their union is a $2\epsilon/3$-stable continuum. 
Since $v_1,w_1\in A^s_v\cup A^s_w$ we have that 
\begin{equation}
  \label{ineqLem}
  \dist(f^n(v_1),f^n(w_1))<2\epsilon/3\text{ for all }n\geq 0.
\end{equation}
Also $\dist(f^n(v_1),f^n(w_1))$ 
is smaller or equal than $$\dist(f^n(v_1),f^n(v))+\dist(f^n(v),f^n(w))+\dist(f^n(w),f^n(w_1)).$$
For $n\leq 0$ we have that 
\[
 \begin{array}{l}
\dist(f^n(v_1),f^n(v))\leq \epsilon/3,\\
\dist(f^n(v),f^n(w))<\delta_2<\epsilon/3\text{ and }\\
\dist(f^n(w),f^n(w_1))\leq\epsilon/3.  
 \end{array}
\]
Therefore $\dist(f^n(v_1),f^n(w_1))<\epsilon$ for all $n\leq 0$. 
Recalling (\ref{ineqLem}) we conclude that 
$$\dist(f^n(v_1),f^n(w_1))<\epsilon$$
for all $n\in\Z.$

Recall that $A^s_v\cup A^s_w$ is a $2\epsilon/3$-stable continuum containing $v_1$ and $w_1$. 
If $v_1\neq w_1$ then we take $x=v_1$, $y=w_1$ and the $\epsilon$-stable continuum $A=A^s_v\cup A^s_w$.
If $v_1=w_1$ then we take $x=v$, $y=w$ and the $2\epsilon/3$-unstable continuum $A=A^u_v\cup A^u_w$.
\end{proof}

\azul{Notice that by Remark \ref{rmkNeighSU} and Proposition \ref{propsurloccon} the hypothesis of the next result imply that $X$ is locally connected.}

\begin{thm}
\label{thmCw1exp}
If $f\colon X\to X$ is a cw1-expansive homeomorphism of a continuum $X$
and $\W^s,\W^u$ is a generating pair 
then $f$ is expansive. 
\end{thm}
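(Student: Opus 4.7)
The plan is to argue by contradiction. Assume $f$ is cw1-expansive with constant $\delta_0>0$ and that $(\W^s,\W^u)$ is a generating pair, but that $f$ is not expansive. First I would collect the tools. By Remark \ref{rmkNeighSU} the generating condition makes $us_r(x)$ and $su_r(x)$ neighborhoods of every $x\in X$ for every $r>0$, so the standing hypothesis of Lemma \ref{lemCw1} is satisfied. By Proposition \ref{propExConNT} I fix $\delta_1>0$ so that every stable (respectively unstable) continuum of diameter at most $\delta_1$ belongs automatically to $\continua^s_{\delta_0}$ (resp.\ $\continua^u_{\delta_0}$). Using Definition \ref{dfGenPair} together with the compactness of $X$, I extract a uniform modulus $\delta_2>0$ for which $\dist(x,y)<\delta_2$ implies
\[
 Q^u_{B_{\delta_1/4}(x)}(x)\cap Q^s_{B_{\delta_1/4}(x)}(y)\neq\emptyset
 \quad\text{and}\quad
 Q^s_{B_{\delta_1/4}(x)}(x)\cap Q^u_{B_{\delta_1/4}(x)}(y)\neq\emptyset.
\]
Finally I fix $\epsilon<\min(\delta_1/8,\delta_2)$.

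Now I would invoke Lemma \ref{lemCw1} at this $\epsilon$ to obtain $x\neq y$ with $\dist(f^n(x),f^n(y))<\epsilon$ for every $n\in\Z$ and a continuum $A\in\continua^s_\epsilon\cup\continua^u_\epsilon$ containing both. As the hypotheses and conclusion of the theorem are symmetric in $f$ and $f^{-1}$, I may assume $A\in\continua^s_\epsilon$. The generating condition produces a point $z\in Q^u_{B_{\delta_1/4}(x)}(x)\cap Q^s_{B_{\delta_1/4}(x)}(y)$; the continuum $S:=A\cup Q^s_{B_{\delta_1/4}(x)}(y)$ is stable of diameter less than $\delta_1$, hence lies in $\continua^s_{\delta_0}$, and contains $\{x,y,z\}$, while $U:=Q^u_{B_{\delta_1/4}(x)}(x)\in\continua^u_{\delta_0}$ contains $\{x,z\}$. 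If $z\neq x$ then $|S\cap U|\geq 2$ contradicts cw1-expansivity and the proof is finished. A symmetric application with $w\in Q^s_{B_{\delta_1/4}(x)}(x)\cap Q^u_{B_{\delta_1/4}(x)}(y)$ disposes of the case $w\neq y$.

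The main obstacle is the residual degenerate configuration $z=x$ and $w=y$, which forces $x$ and $y$ to lie on a common small stable plaque while no unstable plaque at either point meets the other within the reference ball. To close this case I would break the symmetry using the Invariant Continuum Theorem \ref{thmExConNT}: the unstable plaque at $x$ has diameter bounded away from zero, so it contains a point $x'\neq x$ lying off the common stable plaque of $x$ and $y$. Applying the generating condition at $x'$ (still close to $y$ by the triangle inequality) yields an auxiliary stable-unstable intersection which, combined with the common stable plaque through $x$ and $y$ via upper semicontinuity of the decompositions, produces two distinct points in the intersection of a $\delta_0$-stable continuum with a $\delta_0$-unstable continuum, again contradicting cw1-expansivity. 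The delicate point is to verify that the auxiliary intersection does not collapse onto $x$, $y$, or $x'$ and that every continuum stays inside the window where the uniform estimates apply; this bookkeeping carries the technical weight of the proof.
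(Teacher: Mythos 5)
Your argument has a genuine gap, and it sits exactly where you locate ``the technical weight of the proof.'' First, note that the branch you treat as the main case is vacuous: since $x$ and $y$ lie on the common small stable continuum $A$, the point $x$ itself belongs to $Q^u_{B}(x)\cap\bigl(A\cup Q^s_{B}(y)\bigr)$, and cw1-expansivity then forces that intersection to be exactly $\{x\}$, so $z=x$ always; symmetrically $w=y$ always. Thus the ``residual degenerate configuration'' is the only configuration that ever occurs, and your contradiction never fires. Second, the sketch you offer for that configuration does not contain a mechanism that forces two \emph{distinct} points into a single stable-continuum/unstable-continuum intersection. Taking $x'\neq x$ on the unstable plaque of $x$ and applying the generating condition at $x'$ produces either a point $w'\in Q^u(x')\cap Q^s(y)$, for which the pair $\{x,w'\}$ lies in a stable and an unstable continuum but $w'=x$ cannot be excluded, or a point $z'\in Q^s(x')\cap Q^u(y)$, for which there is no evident second point of intersection because $Q^s(x')$ need not connect to the stable continuum through $x$ and $y$ (indeed $x'\notin A$). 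So the collapse you are trying to rule out is not ruled out.

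The paper closes this gap with an idea absent from your proposal: backward iteration to the first separation time. Since $x\neq y$, they cannot lie on a common $2\delta$-stable continuum for every backward iterate (otherwise, by the uniformity in Proposition \ref{propExConNT}, pushing such continua forward would give $\dist(x,y)=0$). Let $n_0$ be the first time $a=f^{-n_0}(x)$ and $b=f^{-n_0}(y)$ are \emph{not} joined by a $2\delta$-stable continuum; they are still joined by the $\gamma/3$-stable continuum obtained by pulling back one step the last common one. Now the $\delta$-stable plaques $A^s_a\ni a$ and $A^s_b\ni b$ produced by the generating condition are \emph{disjoint}, so the intersection points $p\in A^s_a\cap A^u_a$ and $q\in A^s_b\cap A^u_b$ are automatically distinct, and both lie on the $\gamma$-stable continuum $A^s_a\cup A^s_*\cup A^s_b$ and on the $2\delta$-unstable continuum $A^u_a\cup A^u_b$, contradicting cw1-expansivity. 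Without some device of this kind that breaks the ``$x$ and $y$ on a common stable plaque'' symmetry, your bookkeeping cannot be completed.
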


\begin{proof}
Take $\gamma>0$ such that 
\begin{equation}
  \text{ if }A^s\in \continua^s_\gamma \text{ and }A^u\in \continua^u_\gamma \text{ then }\card(A^s\cap A^u)\leq 1.
  \label{ecu1lem} 
\end{equation}
Consider $\delta\in(0,\gamma/3)$ such that
\begin{equation}
  \text{ if }\diam(A)\leq 2\delta\text{ then }
  \diam(f^{\pm 1}(A))<\gamma/3
  \label{ecu2lem}
\end{equation}
for every $A\subset X$.
Consider $\epsilon\in (0,\delta)$ such that 
\begin{equation}
  \text{ if }\dist(a,b)<\epsilon\text{ then there is }z\in X\text{ such that }a,b\in su_{\delta}(z)\cap us_{\delta}(z)
  \label{ecu3lem}
\end{equation}
for any $a,b\in X$.
Assume by contradiction that $f$ is not expansive.
  From Lemma \ref{lemCw1} there are $x,y\in X$ and 
  $A^s_0\in \continua^s_\epsilon$ such that $x\neq y$, $\dist(f^n(x),f^n(y))<\epsilon$ for all $n\in\Z$
  and $x,y\in A^s_0$. 
  If the lemma gives an $\epsilon$-unstable continuum then change $f$ by $f^{-1}$ in what follows. 

In this paragraph we will show that there is $n\geq 0$ such that $f^{-n}(x)$ and $f^{-n}(y)$ are not in a common 
$2\delta$-stable continuum. 
Arguing by contradiction, 
assume that for each $n\geq 0$ there is $A^s_n\in \continua^s_{2\delta}$ containing $f^{-n}(x)$ and $f^{-n}(y)$. 
Since $f$ is cw-expansive and $\delta$ is small, we have that $\diam(f^n(A^s_n))\to 0$ as $n\to\infty$. 
Since $x,y\in f^n(A^s_n)$ we conclude that $\dist(x,y)=0$, contradicting that $x\neq y$.

Let $n_0$ be the first positive number satisfying the condition proved in the previous paragraph. 
Then $f^{-n_0+1}(x)$ and $f^{-n_0+1}(y)$ are in a common $2\delta$-stable continuum $A^s_{n_0-1}$. 
From (\ref{ecu2lem}) we have that $\diam(f^{-1}(A^s_{n_0-1}))<\gamma/3$. 
Define $a=f^{-n_0}(x)$, $b=f^{-n_0}(y)$ and $A^s_*=f^{-1}(A^s_{n_0-1})$. 
We have that $A^s_*$ is a $\gamma/3$-stable continuum containing $a$ and $b$. 
Also, $a$ and $b$ are not in a common $2\delta$-stable continuum.

Since $\dist(a,b)<\epsilon$  
we can apply condition (\ref{ecu3lem}) to conclude that there is $z\in X$ such that $a,b\in su_{\delta}(z)$. 
Then, there are 
$A^s_a,A^s_b\in \continua^s_\delta$ and 
$A^u_a,A^u_b\in \continua^u_\delta$ such that 
$z\in A^u_a\cap A^u_b$, $a\in A^s_a$, $b\in A^s_b$, 
$A^s_a\cap A^u_a\neq\emptyset$ and
$A^s_b\cap A^u_b\neq\emptyset$.
We have that $A^s_a\cap A^s_b=\emptyset$ because $a$ and $b$ are not in a common $2\delta$-stable continuum. 
Therefore, we can take $p\in A^s_a\cap A^u_a$ and
$q\in A^s_b\cap A^u_b$ such that $p\neq q$. 
Since $\delta<\gamma/3$ we have that $p$ and $q$ are in a $\gamma$-stable continuum, namely, $A^s_a\cup A^s_*\cup A^s_b$. 
Also, $p$ and $q$ are in the $2\delta$-unstable continuum $A^u_a\cup A^u_b$. 
This contradicts (\ref{ecu1lem}) and proves that $f$ is expansive.
\end{proof}

\begin{rmk} 
\azul{The quasi-Anosov diffeomorphism of \S \ref{secQAnosov} is expansive but
it does not satisfy 
the hypothesis 
of Theorem \ref{thmCw1exp}.}
In this case, the set $su_\delta(x)\cap us_\delta(x)$ is not a neighborhood of a wandering point $x$. 
This means, if it is true that cw1-expansivity implies expansivity on a Peano continuum, 
then new arguments will be needed for its proof.
\end{rmk}

\section{Dendritations of surfaces}
\label{secDendritations}

In this section we will study cw-foliations of compact surfaces. 
Applications to cw-expansive surface homeomorphisms will be given.

\subsection{Cw-decompositions of discs}
\label{CwFCwExp}

Let $D$ be a metric space homeomorphic to the Euclidean disc 
$$\{(x,y)\in\R^2:x^2+y^2\leq 1\}.$$
As usual, $\partial D$ \azul{is the boundary of the disc}. 

\begin{df}
 A decomposition of $D$ is a \emph{cw-decomposition} 
 if it is codendritic and hereditarily unicoherent. 
 That is, the quotient space is a dendrite and each plaque is hereditarily unicoherent.
\end{df}

We say that $Q$ is $n$-\emph{dimensional} if $\dim Q(x)=n$ for all $x\in D\setminus\partial D$. 
\azul{As in \S \ref{secPartExp}, $\dim$ denotes the topological dimension}. 

\begin{prop}
\label{propHerUnico}
If $Q$ is a decomposition of $D$ then the following statements are equivalent:
\begin{enumerate}
 \item $Q$ is a cw-decomposition,
 \item $Q(x)\cap\partial D\neq\emptyset$ and $Q$ is 1-dimensional for all $x\in D$.
\end{enumerate}
\end{prop}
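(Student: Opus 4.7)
The plan is to use Janiszewski's theorem (Theorem~\ref{teoJanis}) with $D$ embedded in $S^2$, together with Theorem~\ref{teoCharDend}. The key observation is: for any continuum $C \subset D$, the complement $S^2 \setminus D$ is connected and hence lies in a single ``outer'' component of $S^2 \setminus C$, so every other component of $S^2 \setminus C$ is contained in $\interior(D)$.

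For $(2) \Rightarrow (1)$, both required conclusions follow by the same template. For hereditary unicoherence of plaques, suppose $C = A \cup B$ is a subcontinuum of $Q(x)$ with $A, B$ continua and $A \cap B$ disconnected. Janiszewski produces a non-outer component $W$ of $S^2 \setminus C$, and $W \subset \interior(D)$. Any $y \in W$ has $Q(y)$ connected and disjoint from $C \subset Q(x)$, so $Q(y) \subset W \subset \interior(D)$, contradicting $Q(y) \cap \partial D \neq \emptyset$. For codendricity, $D/Q$ is a Peano continuum by Proposition~\ref{propCoc1} combined with the standard fact that upper semicontinuous decompositions of Peano continua into continua yield Peano quotients; by Theorem~\ref{teoCharDend} it then suffices to prove $D/Q$ is hereditarily unicoherent. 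Subcontinua $A', B' \subset D/Q$ with $A' \cap B'$ disconnected lift, via the closed map $\pi$ with connected fibers, to continua $A = \pi^{-1}(A')$ and $B = \pi^{-1}(B')$ in $D$ with $A \cap B$ disconnected, and the same Janiszewski-plus-non-outer-component argument again produces an interior-trapped plaque, contradicting (2).

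For $(1) \Rightarrow (2)$, hereditary unicoherence rules out Jordan curves in plaques (a circle is not unicoherent), so plaques have empty $2$-dimensional interior in $D$, giving $\dim Q(x) \leq 1$. To show each plaque meets $\partial D$, suppose $Q(x_0) \subset \interior(D)$ and let $T = \pi(\partial D) \subset D/Q$, a sub-dendrite with $\pi(x_0) \notin T$. Take any arc $\alpha \colon [0,1] \to D/Q$ with $\alpha(0) = \pi(x_0)$ and $\alpha(1) \in T$, and set $t_0 = \inf\{t : \alpha(t) \in T\} > 0$. For any $s \in (0, t_0)$, the point $\alpha(s)$ is a cut point of the dendrite (interior points of arcs in dendrites always are), so $D \setminus Q(y_s)$ is disconnected, where $Q(y_s) = \pi^{-1}(\alpha(s)) \subset \interior(D)$. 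The nearest-point retraction $r \colon \R^2 \to D$ onto the convex $D$ then forces $\R^2 \setminus Q(y_s)$ to be disconnected as well, so $Q(y_s)$ separates $\R^2$. Combined with the hereditary unicoherence of $Q(y_s)$ and the $\dim \leq 1$ bound, this contradicts the classical non-separation property of $1$-dimensional hereditarily unicoherent planar continua. Finally, $\dim Q(x) \geq 1$ for interior $x$ follows since $Q(x)$ now contains both $x$ and a boundary point, hence is a non-degenerate continuum.

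The main obstacle is the non-separation claim for $1$-dimensional hereditarily unicoherent planar continua in the last step. Since plaques need not be locally connected, Borsuk's theorem does not apply directly, and exotic potential separators like pseudo-circles must be ruled out. The cleanest route is to exploit codendricity globally: a separating plaque $P \subset \interior(D)$ with enclosed region $W$ would decompose $D/Q$ as the wedge of $\pi(\overline{W})$ and $\pi(D \setminus W)$ at $\pi(x_0)$, where the $2$-dimensional content of $\overline{W}$ combined with the dimension restriction on plaques would prevent $\pi(\overline{W})$ from being a subdendrite of $D/Q$.
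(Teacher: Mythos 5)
Your direction $(2)\Rightarrow(1)$ is essentially sound, and your codendricity argument is a genuinely different route from the paper's: you lift a failure of unicoherence from $D/Q$ through the monotone quotient map $\pi$, apply Janiszewski (Theorem \ref{teoJanis}) to the saturated continua $\pi^{-1}(A')$, $\pi^{-1}(B')$, and trap a plaque in an interior component, then invoke the ``hereditarily unicoherent Peano continuum'' characterization of dendrites from Theorem \ref{teoCharDend}. The paper instead verifies the ``any two points are separated by a third'' characterization by building a crosscut $\Gamma$ between the two complementary boundary arcs and running a connectedness argument on $\Gamma=\Gamma_\alpha\cup\Gamma_\beta$ to produce a separating plaque. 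Your version trades that planar construction for the standard facts that monotone closed surjections pull continua back to continua and that $D/Q$ is Peano (Hahn--Mazurkiewicz on top of Proposition \ref{propCoc1}); both work. One small repair is needed in your hereditary-unicoherence step: for $y\in W$ you assert $Q(y)\cap C=\emptyset$, but $y$ could lie in $Q(x)\setminus C$, in which case $Q(y)=Q(x)\supset C$. You must split cases: either $W\subset Q(x)$, which contradicts $\dim Q(x)=1$ because $W$ is open, or some $y\in W$ avoids $Q(x)$ entirely, and then your argument applies.

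The genuine gap is in $(1)\Rightarrow(2)$, at exactly the step you flag. You correctly reach the same configuration as the paper --- a plaque $\pi^{-1}(\alpha(s))$ disjoint from $\partial D$ that separates $D$, hence separates the plane --- but you then need ``a $1$-dimensional hereditarily unicoherent plane continuum cannot separate the plane,'' and that statement is false: the pseudo-circle is hereditarily indecomposable, hence hereditarily unicoherent (an indecomposable continuum admits no decomposition into two proper subcontinua, so unicoherence holds vacuously), yet it separates the plane. Your proposed repair via the ``$2$-dimensional content of $\overline{W}$'' cannot work as stated: by the Hurewicz dimension-lowering inequality a $2$-dimensional continuum can admit a monotone upper semicontinuous decomposition with $1$-dimensional fibers and a $1$-dimensional, even dendritic, quotient (the vertical decomposition of a square already does this), so there is no dimension-theoretic obstruction to $\pi(\overline{W})$ being a subdendrite. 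The paper closes this step by applying Theorem \ref{teoJanis} directly to the separating interior plaque $Q(y)$ to conclude that $Q(y)$ is not unicoherent, contradicting hereditary unicoherence of the plaques; that appeal is the ingredient your write-up is missing. Note, though, that your pseudo-circle example shows this appeal is delicate --- an indecomposable separating plaque admits no decomposition $A\cup B$ into proper subcontinua to which Janiszewski's criterion could be applied --- so the step genuinely needs the planar separation theory in the precise form used by the paper, and cannot be replaced by the soft arguments you sketch.
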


\begin{proof}
($1\to 2$). 
Arguing by contradiction, suppose that $Q(x)\cap\partial D=\emptyset$ for some $x\in D$.
Since $Q$ is codendritic there is $y\in D$ close to $x$ such that the plaque $Q(y)$ separates $D$ and 
$Q(y)\cap\partial D=\emptyset$. 
By Janisewski's Theorem \ref{teoJanis} $Q(y)$ is not unicoherent, contradicting that $Q$ is a cw-decomposition.
To prove that $Q$ is 1-dimensional, note that each plaque $Q(x)$ has dimension 0, 1 or 2. 
If some plaque has dimension 2 then it has interior points and then it is not hereditarily unicoherent. 
For an interior point $x$ of the disc it holds that $\dim(Q(x))\neq 0$, because it is a continuum meeting $\partial D$.

($2\to 1$). Let us first show that $Q$ is hereditarily unicoherent. Arguing by contradiction suppose that 
$Q(x)$ is not hereditarily unicoherent. 
Then there are two continua $A,B$ such that $Q(x)=A\cup B$ and $A\cap B$ is disconnected. 
By Janisewski's Theorem $Q(x)$ separates $D$. Let $U$ be a component of $D\setminus Q(x)$ 
disjoint from $\partial D$.
This implies that $U\subset Q(x)$, contradicting that $Q(x)$ is one-dimensional. 

Now we prove that $Q$ is codendritic.
First we show that if $x,y\in D$  
and $Q(x)\cap Q(y)=\emptyset$ then there is $z\in D$ such that $Q(z)$ separates $Q(x)$ from $Q(y)$.   
Let $I,J$ be two arcs contained in $\partial D$ such that $Q(x)\cap\partial D\subset I$ 
and $Q(y)\cap\partial D\subset J$. 
Suppose that $I,J$ are minimal with this property.
Since $Q(x)$ and $Q(y)$ are disjoint we have that $I\cap J=\emptyset$. 
Denote by $\alpha$ and $\beta$ the \azul{open} arcs in $\partial D\setminus (I\cup J)$.
Take an arc $\Gamma\subset D$ from $a\in\alpha$ to $b\in\beta$ separating $Q(x)$ and $Q(y)$ 
as in Figure~\ref{figCoDend}.
\begin{figure}[ht]
 \center
 \includegraphics[scale=1]{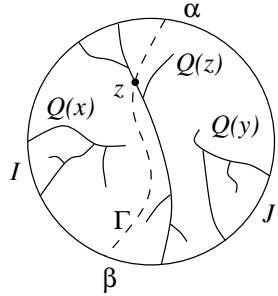}
 \caption{The plaque $Q(z)$ separates $Q(x)$ from $Q(y)$.}
 \label{figCoDend}
\end{figure}

Define the sets
\[
 \begin{array}{l}
\Gamma_\alpha=\{u\in \Gamma: Q(u)\cap \alpha\neq\emptyset\},\\
\Gamma_\beta=\{u\in \Gamma: Q(u)\cap \beta\neq\emptyset\}. 
 \end{array}
\]
We have that $a\in \Gamma_\alpha$ and $b\in\Gamma_\beta$, which implies that they are non-empty. 
\azul{Since $\Gamma$ is disjoint from $Q(x)$ 
and $Q(y)$, the upper semicontinuity of $Q$ implies that $\Gamma_\alpha$ and $\Gamma_\beta$ 
are closed, recall Remark \ref{rmkUpSCont}.} 
As every plaque intersects $\partial D$ we have that $\Gamma=\Gamma_\alpha\cup \Gamma_\beta$. 
Given that $\Gamma$ is connected, we find $z\in \Gamma_\alpha\cap \Gamma_\beta$. 
Therefore, $Q(z)$ separates $Q(x)$ from $Q(y)$.

By Proposition \ref{propCoc1} we know that $D/Q$ is a continuum
and from 
Theorem \ref{teoCharDend} 
we conclude that $Q$ is codendritic.
\end{proof}


\begin{prop}
\label{propSepara3}
 If $Q$ is a cw-decomposition and $x_1,x_2,x_3\in D$ are such that 
 $Q(x_i)$ does not separate $Q(x_j)$ from $Q(x_k)$ for $\{i,j,k\}=\{1,2,3\}$ 
 then there is $x\in D$ such that $Q(x_1),Q(x_2),Q(x_3)$ are in different components of $D\setminus Q(x)$. 
\end{prop}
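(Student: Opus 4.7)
The plan is to reduce the problem to the quotient dendrite $T=D/Q$ and exploit its tree-like structure. Let $\pi\colon D\to T$ be the quotient projection and set $p_i=\pi(Q(x_i))$ for $i=1,2,3$. Because $\pi$ is a continuous quotient map, if $Q(x_i)$ does not separate $Q(x_j)$ from $Q(x_k)$ in $D$ then $p_j$ and $p_k$ lie in a common component of $T\setminus\{p_i\}$; since $T$ is a dendrite, this is equivalent to $p_i\notin[p_j,p_k]$, where $[p_j,p_k]$ denotes the unique arc joining $p_j$ and $p_k$ in $T$.

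The key step is to produce a single point $p\in T$ that separates $p_1,p_2,p_3$ simultaneously. Write $I_1=[p_2,p_3]$, $I_2=[p_1,p_3]$, $I_3=[p_1,p_2]$. By Theorem \ref{teoCharDend}, $T$ is hereditarily unicoherent, so $I_2\cap I_3$ is a subcontinuum of $T$ containing $p_1$; being a connected closed subset of the arc $I_3$, it is itself a subarc $[p_1,p]$ for some $p\in I_3$. I would then verify that $p\in I_1$: the arcs $[p,p_2]\subset I_3$ and $[p,p_3]\subset I_2$ meet only at $p$, since any other common point would lie in $I_2\cap I_3=[p_1,p]$ and also in $[p,p_2]$, which is impossible. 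Hence $[p,p_2]\cup[p,p_3]$ is an arc from $p_2$ to $p_3$ and coincides with the unique arc $I_1$, so $p\in I_1$. The hypothesis forces $p\neq p_i$ for each $i$, since otherwise $p_i$ would belong to $I_i$, contradicting the first paragraph. Hence $p$ is an interior point of each arc $I_i$, and by the uniqueness of arcs in a dendrite it separates $p_1,p_2,p_3$ into three distinct components of $T\setminus\{p\}$.

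Finally I would pull this separation back to $D$. Pick any $x\in\pi^{-1}(p)$, so that $Q(x)=\pi^{-1}(p)$. If two plaques $Q(x_i)$ and $Q(x_j)$ were contained in a common component $C$ of $D\setminus Q(x)$, then $\pi(C)$ would be a connected subset of $T\setminus\{p\}$ containing both $p_i$ and $p_j$, contradicting the separation just obtained. Therefore the three plaques $Q(x_1),Q(x_2),Q(x_3)$ lie in three different components of $D\setminus Q(x)$, which is the desired conclusion.

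The main obstacle is the construction of the median point $p$ and the verification that $p\in I_1\cap I_2\cap I_3$; this is the only place where the full force of hereditary unicoherence of the quotient dendrite is used. The initial translation of the hypothesis and the final pullback of the separation to $D$ are routine consequences of $\pi$ being a continuous quotient map, together with the fact that in a dendrite a point separates two others if and only if it lies on the unique arc between them.
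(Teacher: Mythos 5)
Your proof is correct and follows the same route as the paper, whose own proof simply passes to the quotient dendrite $D/Q$ and invokes ``general properties of such spaces''; you have supplied exactly the missing general property, namely the existence of the median point of three points in a dendrite via hereditary unicoherence, together with the routine transfer of separation between $D$ and $D/Q$ through the monotone quotient map. (Both your argument and the statement itself implicitly assume the three plaques $Q(x_1),Q(x_2),Q(x_3)$ are pairwise distinct, since otherwise the conclusion cannot hold.)
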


\begin{proof}
Knowing that $D/Q$ is a dendrite the result follows by general properties of such spaces.
\end{proof}

\begin{rmk}
 On a dendrite there can be four points that cannot be separated simultaneously with a single point. 
 Consider for example a dendrite with the shape of the letter $H$ and try to separate with one single point the four ends. 
 It is easy to construct a decomposition $Q$ such that $D/Q$ has this topology.
 See Figure~\ref{figH}.
 \begin{figure}[ht]
 \center
 \includegraphics[scale=1]{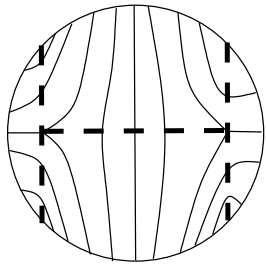}
 \caption{The quotient dendrite has the shape of the letter $H$.}
 \label{figH}
\end{figure}
\end{rmk}

\begin{rmk}
 It seems that given any tree $T$ (i.e., a dendrite with a finite number of ramification points) 
 there is a decomposition $Q$ of $D$ such that $D/Q$ is homeomorphic to $T$. 
 It also seems that this is still true for an arbitrary dendrite $T$, but a proof is not so clear for the author.
\end{rmk}

\begin{rmk}
In the study of \emph{complex dynamics}, several definitions of laminations can be found. 
Following \cite{McM}, a \emph{lamination} is a closed equivalence relation on $\partial D$ 
such that the convex hull of different equivalence classes are disjoint. 
The convex hull is considered with respect to (a differentiable structure and) a Riemannian metric on $D$ that makes it isometric with an Euclidean disc. 
There is a strong link between cw-decompositions  and laminations:
if $Q$ is a cw-decomposition of $D$ then the restriction of $Q$ to the boundary of $D$ is a lamination.
\end{rmk}

\begin{prop}
\label{propCwFolDec}
 If $\A$ is a cw-foliation over the complete basis $\U$ of a closed surface $S$ then
 there is $\delta>0$ such that $Q_{\clos U}$ is a cw-decomposition for all $U\in \U$ with $\diam(U)<\delta$. 
\end{prop}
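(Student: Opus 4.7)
The plan is to reduce the claim to Proposition \ref{propHerUnico}, which characterizes cw-decompositions of a disc by the equivalent and more concrete condition that every plaque meets the boundary and that the decomposition is $1$-dimensional. The principal input is Proposition \ref{propCwFolDec1}, which supplies a threshold $\delta_1>0$ such that for every $U\in\U$ with $\diam(U)<\delta_1$ and every $x\in U$ one has $Q_{\clos U}(x)\cap\partial U\neq\emptyset$ and $x$ is not a $\tau$-interior point of $Q_U(x)$.

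First I would use the surface hypothesis: since $S$ is a closed surface, there exists $\delta_0>0$ such that every subset of $S$ of diameter less than $\delta_0$ sits inside a chart whose closure is a topological disc, and because $\U$ is a complete basis of $\tau$ one may assume that the elements $U\in\U$ of diameter less than $\delta_0$ have $\clos U$ homeomorphic to a closed Euclidean disc. Set $\delta=\min(\delta_0,\delta_1)$ and fix any such $U\in\U$ with $\diam(U)<\delta$; write $D=\clos U$, so $\partial D=\partial U$. The first clause of Proposition \ref{propCwFolDec1} directly yields that $Q_D(x)\cap\partial D\neq\emptyset$ for every $x\in D\setminus\partial D$ (the inclusion being trivial on $\partial D$), giving the boundary-meeting half of Proposition \ref{propHerUnico}(2).

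For the dimension statement, fix an interior point $x\in D\setminus\partial D$. Since $Q_D(x)$ contains both $x$ and a point of $\partial D$, it is a nondegenerate continuum, so $\dim Q_D(x)\geq 1$. Conversely, for each $y\in Q_D(x)\cap U$ the second clause of Proposition \ref{propCwFolDec1} applied at $y$ says that $y$ is not a $\tau$-interior point of $Q_U(y)=\cc_y(Q_D(x)\cap U)$; since a $\tau$-interior point of $Q_D(x)$ lying in $U$ would automatically be interior to this component, no such point exists, and $Q_D(x)$ has empty interior in $S$. As $S$ is a $2$-manifold, \cite{HW}*{Corollary 1, p.\,46} then gives $\dim Q_D(x)\leq 1$, so $\dim Q_D(x)=1$. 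Proposition \ref{propHerUnico} now concludes that $Q_D$ is a cw-decomposition.

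The main obstacle is conceptual rather than computational: one must recognize that the conditions defining a cw-foliation (countable plaque union, meagreness of leaves, and uniform positive diameter) translate, via Proposition \ref{propCwFolDec1}, into exactly the geometric features---boundary touching and absence of interior---needed by Proposition \ref{propHerUnico}. The surface hypothesis is used essentially both to pass to disc neighborhoods and to upper-bound plaque dimension via manifold dimension theory; a minor subtlety worth handling carefully is the comparison between $Q_{\clos U}$ and its monotone restriction $Q_U$, which is resolved because any local interior of a plaque is automatically an interior of the component of its intersection with $U$.
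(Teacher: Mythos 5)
Your proof is correct and follows essentially the same route as the paper, which simply cites Propositions \ref{propCwFolDec1} and \ref{propHerUnico}; you have filled in the dimension-theoretic bridge (nondegeneracy gives $\dim\geq 1$, empty interior plus \cite{HW} gives $\dim\leq 1$) that the paper leaves implicit. The only point worth keeping an eye on is the implicit restriction to those small $U$ whose closure is actually a disc (and the density of $U$ in $\clos{U}$ when passing from ``no interior point in $U$'' to ``empty interior''), but the paper is equally tacit about this and your handling is consistent with how the proposition is used later.
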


\begin{proof}
It follows by 
Propositions \ref{propCwFolDec1} and \ref{propHerUnico}.
\end{proof}


%

%
%

\subsection{Dendritations and generic leaves}

The following is the main definition of the paper.
It combines the properties of Peano foliations, 
cw-foliations and the plane topology. 

\begin{df}
A \emph{dendritation} is a Peano cw-foliation of a compact surface.
\end{df}

By Remark \ref{rmkNoboundary} we know that compact surfaces with non-empty boundary admits no cw-expansive homeomorphisms. 
Then, we will assume that the surface has empty boundary.
Recall from (\ref{ecuUdeltaa}) the definition of the complete basis $\U_\delta$.

\begin{prop}
\label{propLocChartDendritation}
 If $\A=\{Q_U:U\in\U\}$ is an atlas of a dendritation, $\delta>0$ is sufficiently small, $U\in\U_\delta$ and 
 $D=\clos U$ is a disc then 
 each plaque $Q_D(x)$ is a dendrite and 
 the quotient $D /Q_D$ is a dendrite.
\end{prop}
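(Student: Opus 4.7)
The plan is to reduce the two conclusions to Proposition \ref{propCwFolDec} plus the dendrite characterization in Theorem \ref{teoCharDend}. Since $\A$ is the atlas of a dendritation, $\A$ is a Peano cw-atlas of the surface. For $\delta$ sufficiently small and $U\in\U_\delta$, Proposition \ref{propCwFolDec} asserts that $Q_D=Q_{\clos U}$ is a cw-decomposition of the disc $D$. By the definition of a cw-decomposition (see \S \ref{CwFCwExp}), this means exactly that $D/Q_D$ is a dendrite (so the second conclusion is immediate) and that each plaque $Q_D(x)$ is hereditarily unicoherent.

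By Theorem \ref{teoCharDend}, to prove that $Q_D(x)$ is a dendrite it remains to show that it is locally connected in $\tau$. Given $y\in Q_D(x)$ and $\epsilon>0$, I will produce a connected $\tau$-neighborhood of $y$ in $Q_D(x)$ of diameter less than $\epsilon$. I choose $W\in\U$ with $y\in W$, $\diam W<\epsilon$ and (in the interior case $y\in U$) $\clos W\subset U$. Combining atlas compatibility with Proposition \ref{propRestRest} gives
$$Q_W(y)=\cc_y(Q_D(x)\cap W)\subset Q_D(x),$$
so that $Q_W(y)$ is the component of $y$ in $Q_D(x)\cap W$. Since $\A$ is a Peano atlas, $Q_W(y)$ is connected and locally connected in $\tau$, and $\diam Q_W(y)\le\diam W<\epsilon$.

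The key remaining step is to argue that $Q_W(y)$ is actually a $\tau$-neighborhood of $y$ in $Q_D(x)$, that is, that no other component of $Q_D(x)\cap W$ accumulates at $y$. I would argue by contradiction: if distinct components $C_n$ of $Q_D(x)\cap W$ contained points $y_n\to y$, then hereditary unicoherence of $Q_D(x)$ would give, for each $n$, a unique minimal subcontinuum $A_n\subset Q_D(x)$ joining $y$ to $y_n$; each $A_n$ must leave $W$ since $C_n\neq Q_W(y)$. Taking Hausdorff limits of $A_n$ and combining with short sub-arcs inside $Q_W(y)$ and the $C_n$, I would produce two subcontinua of $Q_D(x)$ whose union fails to be unicoherent (equivalently, a simple closed curve inside $Q_D(x)$), contradicting the hereditary unicoherence of $Q_D(x)$. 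Once this non-accumulation is in hand, applying Sierpi\'nski's Theorem \ref{teoSierpinski} to $Q_D(x)$ with its finite cover by such neighborhoods completes the proof that $Q_D(x)$ is Peano, hence a dendrite.

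The main obstacle is precisely this non-accumulation step. I expect that all three ingredients — the Peano atlas (to control each individual open plaque), the hereditary unicoherence of $Q_D(x)$ (which forbids simple closed curves inside the plaque), and the planar geometry of the disc $D$ (which constrains how hereditarily unicoherent continua can sit together) — must be combined carefully to rule out the accumulating components. The boundary case $y\in\partial D$ follows the same scheme after replacing the containment $\clos W\subset U$ by a compatibility argument using an auxiliary $V\in\U$ with $\clos W\subset V$, which the completeness of the basis $\U$ (Proposition \ref{propAtlasViaCover}) makes available.
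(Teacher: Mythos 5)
Your reduction is the same as the paper's: Proposition \ref{propCwFolDec} makes $Q_D$ a cw-decomposition, which immediately gives that $D/Q_D$ is a dendrite and that each plaque is hereditarily unicoherent, and Theorem \ref{teoCharDend} then leaves only the local connectedness of each $Q_D(x)$ to be checked. The divergence is in how that last property is obtained, and this is where your argument has a genuine gap --- one you flag yourself. The paper simply reads the local connectedness of the plaques off the Peano hypothesis of the dendritation (``By hypothesis, the plaques are locally connected'') and stops. You instead try to deduce the local connectedness of the closed plaque $Q_D(x)$ from the local connectedness of the small open plaques $Q_W(y)$, and the ``non-accumulation'' step on which everything hinges is left as a sketch (``I would produce two subcontinua\dots'').

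That step cannot be completed with the ingredients you list. The $\sin(1/x)$-continuum $C=L\cup S$, with $L$ the limit segment and $S$ the graph, appears in Example \ref{exaDecSen} as a plaque of a cw-decomposition of a square; it is hereditarily unicoherent and planar, it is not locally connected, and yet for every $y\in C$ and every small open $W\ni y$ the component $\cc_y(C\cap W)$ is an arc, hence locally connected. So ``every small open plaque is Peano'' together with ``the plaque is hereditarily unicoherent and sits in a disc'' is perfectly compatible with infinitely many components of $C\cap W$ accumulating at $y$, and no violation of unicoherence (no simple closed curve) can be manufactured from the minimal continua $Q_D(y,y_n)$: in the model above these are the sets $L\cup S_b$, their Hausdorff limits stay inside the plaque, and all the relevant intersections remain connected. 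Consequently your contradiction argument would fail exactly where you expect trouble. The paper avoids this entirely by invoking the Peano property at the level of the plaque $Q_D(x)$ itself rather than only at the level of strictly smaller open plaques; if you want to keep your route you must import that hypothesis at the same level (note that neither ``$Q_{\clos U}(x)$ is the closure of the locally connected set $Q_U(x)$'' nor hereditary unicoherence rescues you, since the closure of a locally connected set need not be locally connected, as $\clos{S}=C$ shows).
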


\begin{proof}
By definition, we know that $\A$ is a cw-atlas. 
Take $\delta$ from Proposition \ref{propCwFolDec} such that $Q_D$ is a cw-decomposition. 
By the definition of cw-decomposition we know that $D/Q_D$ is a dendrite 
and that each plaque is a hereditarily unicoherent continuum. 
By hypothesis, the plaques are locally connected.
Applying Theorem \ref{teoCharDend} we conclude the
plaques are dendrites. 
\end{proof}

\begin{rmk} Knowing that each plaque is a dendrite 
it is natural to look for more topological information given that they are contained in the plane. 
Unfortunately, we must deal with the worst kind of dendrites because every dendrite can be embedded in the plane. 
Recall the Wazewski's universal dendrite, Theorem \ref{teoWaz}.
However, generic dendrites in a dendritation are nice, as we will now explain.
\end{rmk}

We will study the properties of the plaque of a point in a residual subset of the disc. 
A property is \emph{generic} if it holds in a residual subset.
For this purpose we recall that
in \cite{Mo28} Moore proved  that every family of mutually disjoint triods of the plane is countable. 
An alternative proof can be found in \cite{Pittman}. 
Recall that a \emph{triod} is a union of three arcs, 
homeomorphic to $([-1,1]\times\{0\})\cup (\{0\}\times [0,1])$ in the plane (three segments with a common end point).


\begin{prop}
\label{propGenDend}
If $Q$ is a cw-decomposition of the disk $D$ then there is a residual set $G\subset D$ such that 
$\card(Q(x)\cap\partial D)\leq 2$
for all $x\in G$.
\end{prop}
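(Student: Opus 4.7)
The plan is to show that the ``bad'' plaques, meaning those $P$ with $\card(P \cap \partial D) \geq 3$, form a countable family. Since Proposition~\ref{propHerUnico} guarantees that the decomposition $Q$ is $1$-dimensional at interior points of $D$, every plaque has empty interior and is therefore nowhere dense in $D$; the union of countably many bad plaques is then meagre, and its complement $G$ is the residual set of $x$ for which $\card(Q(x) \cap \partial D) \leq 2$.

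I would split the bad plaques according to the topology of $P \cap \partial D$. First, plaques $P$ such that $P \cap \partial D$ contains a nondegenerate arc of $\partial D$ are at most countable: such arcs, being contained in distinct plaques, are pairwise disjoint, and the circle $\partial D$ admits only countably many pairwise disjoint nondegenerate arcs. Second, for plaques $P$ with $P \cap \partial D$ totally disconnected and $\card(P \cap \partial D) \geq 3$, I would consider the quotient map $\pi \colon D \to T := D/Q$. Since $\pi$ is a monotone quotient, preimages of connected sets are connected, so the components of $D \setminus P = \pi^{-1}(T \setminus \{\pi(P)\})$ are in bijection with those of $T \setminus \{\pi(P)\}$. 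If $D \setminus P$ has at least three components, then $\pi(P)$ is a ramification point of the dendrite $T$; by Theorem~\ref{teoDenRamNum} there are only countably many such points, hence only countably many such plaques.

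The remaining step is the planar fact that a hereditarily unicoherent $P \subset D$ with $P \cap \partial D$ totally disconnected and of cardinality at least three has at least three components in $D \setminus P$. I would prove this via \v{C}ech--Alexander duality applied to $P \cup \partial D \subset S^2$ (obtained by attaching an exterior disc to $D$). A unicoherent continuum $X$ satisfies $\check H^1(X) = 0$, so the Mayer--Vietoris sequence for $(P, \partial D)$, using $\tilde{\check H}^0(P \cap \partial D) \cong \mathbb{Z}^{m-1}$ where $m = \card(P \cap \partial D)$ (or, for infinite totally disconnected $P \cap \partial D$, the rank of the countable free module of locally constant functions), yields $\check H^1(P \cup \partial D) \cong \mathbb{Z}^{m}$. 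Alexander duality then gives $m + 1$ components of $S^2 \setminus (P \cup \partial D)$; removing the one component that is the exterior of $D$ and matching each interior component of $\interior(D) \setminus P$ bijectively with an arc of $\partial D \setminus P$ via a collar argument---each arc has a thin connected collar lying in $\interior(D) \setminus P$, and each interior component must touch at least one arc since $\mathbb{R}^2 \setminus P$ is connected and any exit path from the component must cross $\partial D \setminus P$---produces $m$ components of $D \setminus P$, which is at least three.

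The main obstacle is the cohomological-and-planar step of the last paragraph, specifically the fact that a hereditarily unicoherent plane continuum has vanishing first \v{C}ech cohomology, together with the collar bijection between arcs of $\partial D \setminus P$ and interior components of $\interior(D) \setminus P$. The obstacle to a more elementary proof is that plaques need not be locally connected (they may be as pathological as pseudo-arcs), so one cannot directly extract a ``Y''-shaped arc-triod from $P$. An alternative closer to the paper's use of Moore's theorem would build a Jordan curve $\Gamma$ from a path in $D \setminus P$ joining two arcs of $\partial D \setminus P$ together with the sub-arc of $\partial D$ between them, and derive a contradiction via Janisewski's Theorem~\ref{teoJanis}, but the details are delicate precisely because of the lack of local connectedness of $P$.
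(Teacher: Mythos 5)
Your overall strategy is sound and, at bottom, parallel to the paper's: both proofs reduce the statement to (i) the countability of any family of pairwise disjoint nondegenerate arcs in $\partial D$ and (ii) the countability of the ramification points of the quotient dendrite $D/Q$ (Theorem \ref{teoDenRamNum}); the difference is that the paper passes from ``$D\setminus Q(x)$ has at most two components'' to ``$Q(x)\cap\partial D$ has at most two components'' via Janisewski's Theorem \ref{teoJanis}, while you argue the contrapositive direction with Mayer--Vietoris and Alexander duality. There is, however, one genuinely false assertion at the heart of your cohomological step: it is not true that a unicoherent (or even hereditarily unicoherent) continuum $X$ satisfies $\check{H}^1(X)=0$. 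The implication goes the other way ($\check{H}^1(X)=0$ implies unicoherence, by the very Mayer--Vietoris sequence you invoke); for the converse, Bing's pseudo-circle is hereditarily indecomposable, hence hereditarily unicoherent, yet it separates the plane, so $\check{H}^1\neq 0$. Since you use $\check{H}^1(P)=0$ both in the computation of $\check{H}^1(P\cup\partial D)$ and in the claim that every component of $\interior(D)\setminus P$ reaches $\partial D$, this is a real gap, not a cosmetic one.

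The gap is repairable, but the repair must use the decomposition structure rather than hereditary unicoherence alone: no plaque $P$ of a cw-decomposition separates $S^2$. Indeed, the exterior of $D$ and the open set $\partial D\setminus P$ lie in a single component $W$ of $S^2\setminus P$; any other component $V$ would then be contained in $\interior(D)$, and for $y\in V$ the plaque $Q(y)$, being connected and disjoint from $P$, would be contained in $V$, contradicting $Q(y)\cap\partial D\neq\emptyset$ from Proposition \ref{propHerUnico}. With $\check{H}^1(P)=0$ established this way, the rest of your argument (the rank count, the collar argument showing that attaching the arcs of $\partial D\setminus P$ does not merge distinct components of $\interior(D)\setminus P$, and the appeal to Theorem \ref{teoDenRamNum}) goes through. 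Note that the paper's route is shorter precisely because it avoids this delicate direction: it quotes Proposition \ref{propCoDendGen} to get a residual set on which $D\setminus Q(x)$ already has at most two components, and only then invokes planar separation theory.
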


\begin{proof}
First we show that there is a residual set $G_1\subset D$ such that $Q(x)\cap\partial D$ is totally disconnected 
for all $x\in G_1$.
Let $A\subset \partial D$ be a countable dense subset. 
If $Q(x)\cap\partial D$ contains a non-trivial arc then 
$A\cap Q(x)\neq\emptyset$. 
Therefore, at most a countable family of plaques intersects $\partial D$ in a non totally disconnected set. 
The complement of the union of such plaques gives our residual set $G_1$.

By Proposition \ref{propHerUnico} we know that $Q$ is codendritic. 
Then, by Proposition \ref{propCoDendGen} there is a residual set $G_2\subset D$ such that 
$D\setminus Q(x)$ has 1 or 2 components for all $x\in G_2$.
By Janisewski's Theorem this implies that $Q(x)\cap\partial D$ has 1 or 2 components. 
For $x\in G=G_1\cap G_2$ we have that $Q(x)\cap\partial D$ has 1 or 2 points.
\end{proof}

In the examples known to the author of decompositions defined from stable sets of cw-expansive homeomorphisms 
it is true that 
$\card(Q(x)\cap\partial D)=2$ on a residual set. 
In general, for arbitrary decompositions, this is not true as the next example shows.

\begin{exa}
\label{exaEspinaGenerica}
We will construct a cw-decomposition $Q$ of a disc $D$ 
such that for every residual set $G\subset D$ there is $x\in G$ whose dendrite $Q(x)$ does not separate $D$. 
 Consider a pseudo-Anosov diffeomorphism on a hyperbolic surface. 
 Denote by $U=\{(x,y)\in\R^2:x^2+y^2<1\}$ the universal cover of the surface and denote 
 by $\tilde F^s$ the lifting to $U$ of the stable singular foliation of the pseudo-Anosov diffeomorphism. 
 On the closed disc $D'=\clos{U}$ (the closure in the usual topology of $\R^2$) 
 consider the dendritic decomposition where
 $Q'(p)$ is the closure of $\tilde F^s(p)$. 
 Consider the annulus $A=\{(x,y)\in\R^2:1\leq x^2+ y^2\leq 2\}$ and the disc $D=U\cup A$. 
 We will extend $Q'$ to $D$. 
 For $p\in\partial U$ define $A_p=\{\lambda p:1\leq\lambda\leq 2\}$ a line segment from $p$ to $\partial D$ contained in the line through $p$ and the origin. 
 For $p\in D'$ define 
 \[
  Q(p)=Q'(p)\cup\bigcup_{q\in Q'(p)\cap\partial D'} A_q.
 \]
 It is easy to prove that $Q$ defines a dendritic decomposition on $D$.

 Let us show that for a generic point $p\in A$ we have that $Q(p)$ is a non-separating arc. 
 The proof follows the ideas in \cite{AHNO}*{Lemma 3.1}. 
 Let $D_n$ be the closed disc of radius $1-1/n$ centered at the origin. 
 Define $T_n=Q^{-1}(Q(D_n))$. 
 Notice that $$\bigcup^\infty T_n=\{p\in D:Q(p)\text{ separates } D\}.$$
 Also $A\setminus T_n$ is open and dense in $A$. 
 Define $G=A\setminus(\cup^\infty T_n)$.
 Therefore, $G$ is residual in $A$ and for all $p\in A$ the plaque $Q(p)$ is an arc of the form $A_q$ (containing $p$), 
 and this arc does not separate $D$.
\end{exa}

A decomposition is \emph{arc-connected} if each plaque is arc-connected.

\begin{prop}
\label{propGenDend2}
 If $Q$ is an arc-connected cw-decomposition on $D$ then there is a residual set $G\subset D$ 
 such that $Q(x)$ is an arc for all $x\in G$.
\end{prop}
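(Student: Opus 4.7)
The plan is to reduce the statement to a counting argument via Moore's triod theorem applied to the pairwise disjoint plaques. First I would observe that each plaque $Q(x)$ is a \emph{dendroid}---an arc-connected, hereditarily unicoherent continuum---the arc-connectedness being assumed and the hereditary unicoherence being part of the definition of cw-decomposition. In a dendroid, between any two points there is a unique arc of the continuum, so the notion of ramification point of $Q(x)$ (a point whose removal from the plaque leaves at least three components) is well-defined.

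The key continuum-theoretic step is the classical lemma: a non-degenerate dendroid $P$ is an arc if and only if it has no ramification point, and in that case $P$ contains no triod. The ``only if'' direction is immediate since arcs have no ramification points and every subcontinuum of an arc is an arc. For the substantive direction, if $p$ is a ramification point of $P$, one takes the unique arc of $P$ from $p$ to a point in each of three distinct components of $P\setminus\{p\}$; the three subarcs so obtained meet pairwise only at $p$, i.e.\ form a triod. For the converse, I would pick two endpoints $a,b$ of $P$ (which must exist for any non-degenerate dendroid), consider the unique arc $[a,b]\subset P$, and argue by contradiction: an alleged $c\in P\setminus[a,b]$ has a unique ``center'' $q$, the common point of the three arcs $[a,b]$, $[a,c]$ and $[b,c]$; if $q$ lies in the interior of any of these arcs one has a ramification point in $P$, while $q=a$ or $q=b$ is incompatible with $a,b$ being endpoints because it would force $P\setminus\{a\}$ (respectively $P\setminus\{b\}$) to have at least two components.

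With the lemma in hand, let $\mathcal{B}$ be the family of plaques that contain a triod. Since $Q$ is a partition, the elements of $\mathcal{B}$ are pairwise disjoint, and so are the triods they contain; by Moore's theorem \cite{Mo28} on disjoint plane triods, $\mathcal{B}$ is at most countable. By Proposition~\ref{propHerUnico} every plaque is one-dimensional in the two-disc $D$, hence nowhere dense, so $B=\bigcup_{P\in\mathcal{B}}P$ is meagre and $G=D\setminus B$ is residual. For $x\in G$ the plaque $Q(x)$ contains no triod, so by the lemma it is an arc, as required.

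The main obstacle will be the continuum-theoretic lemma, particularly the implication that a dendroid with no ramification point is an arc: both arc-connectedness and hereditary unicoherence have to be combined to control the possibility of a point lying off the spanning arc $[a,b]$, and one must handle with care the degenerate cases in which the center of the triple $\{a,b,c\}$ falls on an endpoint.
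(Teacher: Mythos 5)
Your argument is the paper's argument: the paper likewise reduces the statement to ``a plaque that is not an arc contains a triod'', invokes Moore's theorem to get at most countably many such plaques, and uses that each plaque is closed with empty interior to conclude that their union is meagre; your counting and residuality steps are exactly these. The one place where you add substance is the continuum-theoretic lemma, and there your formulation is imprecise for non-locally-connected plaques (which genuinely occur in cw-decompositions): a vertex of a triod in a dendroid need not be a ramification point in your sense of ``at least three complementary components''. For instance, attach to the arc $\{0\}\times[-1,1]$ the base $[0,1]\times\{0\}$ and the teeth $\{1/n\}\times[0,1]$; the resulting dendroid contains a triod with vertex at the origin, yet the complement of the origin has only two components, because $\{0\}\times(0,1]$ lies in the closure of the union of the teeth. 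Hence the step ``if $q$ lies in the interior of one of the arcs one has a ramification point'' does not follow, and for the same reason ``$q=a$ forces $P\setminus\{a\}$ to be disconnected'' fails if ``endpoint'' means non-cut point: the point $(0,1/2)$ of the comb above is a non-cut point that is interior to an arc. Both defects are repairable and do not affect the proposition: what you actually need, and actually use, is the implication ``a dendroid containing no triod is an arc'', and your centre argument proves exactly that once you take $a,b$ to be end points in the sense of points that are not interior to any arc of $P$ (every non-degenerate dendroid has at least two such) and read ``triod'' in place of ``ramification point'': if the centre $q$ of $\{a,b,c\}$ is interior to $[a,b]$ you exhibit a triod directly, while $q\in\{a,b\}$ places $a$ or $b$ in the interior of $[b,c]$ or $[a,c]$, contradicting the choice of end points.
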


\begin{proof}
If a plaque is not an arc then it has a ramification point and contains a triod. 
\azul{By Moore's Theorem \cite{Mo28} at most a countable number of disjoint triods can be embedded in the plane.}
Then, the set of plaques with ramification points is countable. 
Denote by $Q_1, Q_2,\dots$ such plaques.
Since each $Q_n$ is closed and has empty interior, its complement in the disc $D$, $U_n=D\setminus Q_n$ is an open and dense subset of $D$. 
Then, $G=\cap_{n\geq 1} U_n$ is a residual set of points $x$ such that $Q(x)$ is an arc.
\end{proof}

\azul{If $\W$ is a dendritation of a surface we say that $y\in\W(x)$ is a \emph{ramification point} if 
$\W(x)\setminus \{y\}$ has at least 3 components in the plaque topology.}

\begin{thm}
\label{thmGenCwfSup}
If $\W$ is a dendritation of a compact surface $S$ then there is a residual set $G\subset S$ such that $\W(x)$ has no ramification for all $x\in G$.
For all $x\in G$ the leaf $\W(x)$ with the plaque topology is a one-dimensional manifold: $\R$, $[0,+\infty)$, $[0,1]$ or the circle.
\end{thm}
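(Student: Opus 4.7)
The plan is to show that the set of leaves of $\W$ containing at least one ramification point is a countable union of leaves, hence meagre by the cw-foliation hypothesis, and then to apply the classification of connected topological $1$-manifolds (with boundary) to the remaining leaves.

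First I would fix a countable complete sub-basis $\{U_n\}_{n\in\N}$ of the topology of $S$ consisting of charts of the atlas with $\diam(U_n)$ so small that $\clos{U_n}$ is a disc and $Q_{\clos{U_n}}$ is a cw-decomposition; the existence of such sub-basis is guaranteed by Proposition~\ref{propCwFolDec}. For each $n$, Proposition~\ref{propLocChartDendritation} says that every plaque $Q_{\clos{U_n}}(y)$ is a dendrite; the argument used in Proposition~\ref{propGenDend2}, via Moore's theorem on disjoint triods in the plane, shows that only countably many plaques of $Q_{\clos{U_n}}$ contain a ramification point, and by Theorem~\ref{teoDenRamNum} each of these plaques possesses at most countably many ramification points. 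Consequently
\[
  E_n = \{\, y \in U_n : y \text{ is a ramification point of the dendrite } Q_{\clos{U_n}}(y)\,\}
\]
is countable.

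Next I would put $\tilde R = \bigcup_{n\in\N}\bigcup_{y\in E_n}\W(y)$, a countable union of $\W$-leaves. Since $\W$ is a cw-foliation every leaf is a meagre subset of $S$, so $\tilde R$ is meagre and $G=S\setminus\tilde R$ is residual. To verify the ramification-free statement for $x\in G$, I would argue that if some $y\in\W(x)$ were a ramification point of $\W(x)$, i.e.\ if $\W(x)\setminus\{y\}$ had at least three $\tau_\W$-components, then, picking $n$ with $y\in U_n$, the identity $\tau|_P=\tau_\W|_P$ for open plaques (Theorem~\ref{teoUnaPLC2}) together with the fact that the plaque $Q_{\clos{U_n}}(y)$ is a $\tau_\W$-neighbourhood of $y$ in $\W(x)$ would force $y$ to be a ramification of the dendrite $Q_{\clos{U_n}}(y)$, giving $y\in E_n$ and hence $x\in\W(y)\subset\tilde R$, a contradiction.

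For the manifold statement I would study $L=\W(x)$ with the plaque topology. Theorems~\ref{teoUnaPLC2} and~\ref{teoBaseNumLeaf} make $(L,\tau_\W|_L)$ connected, locally connected, metrizable, and second countable; Proposition~\ref{propTauAMetric} provides the metric. Each $y\in L$ admits the plaque $Q_{\clos{U_n}}(y)$ (for $y\in U_n$) as a closed $\tau_\W$-neighbourhood, and this plaque is a dendrite without ramification, hence an arc. Therefore $y$ possesses a $\tau_\W$-neighbourhood homeomorphic to an open interval when $y$ is interior to such an arc, and homeomorphic to a half-open interval otherwise. This makes $L$ a connected topological $1$-manifold, possibly with boundary, and the classification of such manifolds yields $L\in\{\R,\,[0,+\infty),\,[0,1],\,S^1\}$.

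The main obstacle is ensuring the local model at a point $y$ that happens to be an endpoint of $Q_{\clos{U_n}}(y)$ for some $n$: one must rule out a ``pseudo-endpoint'' coming only from the chart. I would handle this by passing to a larger chart $U_m\supset U_n$ and using compatibility, $Q_{\clos{U_n}}=Q_{\clos{U_m}}|^m_{\clos{U_n}}$, together with (\ref{ecuFiel}) in Theorem~\ref{teoUnaPLC2}: either $y$ becomes interior in the enlarged arc $Q_{\clos{U_m}}(y)$, producing an $\R$-chart, or $y$ remains an endpoint in every sufficiently large chart, producing a genuine $[0,+\infty)$-chart. Either alternative confirms that $y$ has a Euclidean manifold neighbourhood, completing the classification.
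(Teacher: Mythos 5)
Your proof is correct and follows essentially the same route as the paper's: the triod-counting argument of Proposition~\ref{propGenDend2} (Moore's theorem) to show only countably many leaves carry ramification, meagreness of leaves to get a residual complement, and the classification of one-dimensional manifolds. The paper's own proof is a four-sentence compression of exactly this argument; your extra care with the localization of ramification points and with chart-endpoints versus genuine boundary points only fills in details the paper leaves implicit.
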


\begin{proof}
By Proposition \ref{propTauAMetric} and Theorems \ref{teoBaseNumLeaf}, \ref{teoUnaPLC2}
we know that: $\tau_\W$ is a metric topology, the leaves are arc-connected in $\tau_\W$ and 
each leaf has a countable basis with respect to $\tau_\W$.
Arguing as in the proof of Proposition \ref{propGenDend2} we have that the number of 
leaves with ramification points is at most countable. 
Since each leaf is a meagre set (by the definition of cw-foliation), the set of points in leaves without ramifications is a residual set $G$.
Then, the result follows by the classification of one-dimensional manifolds, see for example \cite{Lee}.
\end{proof}

\begin{rmk}
On a residual set, as in Theorem \ref{thmGenCwfSup}, the one-dimensional manifolds can occur. 
For example, a torus can be foliated by circles. 
Also, a minimal flow on the torus has all the leaves homeomorphic to $\R$. 
Using Example \ref{exaEspinaGenerica} we can define a cw-foliation of the sphere (identifying two discs by the boundary) 
with a residual set of 
leaves homeomorphic to one-dimensional manifolds with boundary.
\end{rmk}

\begin{rmk}
It would be interesting to extend Theorem \ref{thmGenCwfSup}
without assuming the local connection of the plaques.
\end{rmk}

In the next result we will use the stable and unstable 
cw-foliations $\W^s,\W^u$ that were constructed in \S \ref{secSUCwfols} for an arbitrary 
cw-expansive homeomorphism of a
Peano continuum (as our compact surface $S$).

\begin{thm}
\label{thmCwExpGenDen}
If $f\colon S\to S$ is cw-expansive homeomorphism and $\W^s,\W^u$ are dendritations then 
 there is a residual set $G\subset S$ such that for all $x\in G$ 
 it holds that $\W^s(x)$ and $\W^u(x)$ are homeomorphic to $\R$ or $[0,+\infty)$.
\end{thm}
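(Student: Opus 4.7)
The plan is to combine two previously established results: the generic structure theorem for leaves of a surface dendritation (Theorem \ref{thmGenCwfSup}) and the absence of plaque-compact leaves for the stable and unstable cw-foliations of a cw-expansive homeomorphism (Theorem \ref{thmCovCwExpFol}).

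First I would apply Theorem \ref{thmGenCwfSup} separately to the stable dendritation $\W^s$ and the unstable dendritation $\W^u$. This produces residual subsets $G^s,G^u\subset S$ such that for every $x\in G^s$ (resp.\ $x\in G^u$) the leaf $\W^s(x)$ (resp.\ $\W^u(x)$), equipped with the plaque topology, is a one-dimensional manifold without boundary or with boundary, and hence is homeomorphic to one of $\R$, $[0,+\infty)$, $[0,1]$ or the circle $S^1$. Since the intersection of two residual sets in the Baire space $S$ is residual, $G=G^s\cap G^u$ is residual.

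Next I would rule out the two compact models $[0,1]$ and $S^1$. Both of these, as abstract topological spaces, are compact, so if $\W^\sigma(x)$ with its plaque topology were homeomorphic to either of them, the leaf would be plaque-compact. However, Theorem \ref{thmCovCwExpFol} asserts that the stable and unstable cw-foliations of a cw-expansive homeomorphism of a Peano continuum (and a compact surface is a Peano continuum) admit no plaque-compact leaves. This contradiction forces $\W^\sigma(x)$ to be homeomorphic to $\R$ or to $[0,+\infty)$ for every $x\in G$ and $\sigma\in\{s,u\}$, which is the desired conclusion.

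There does not seem to be any genuine obstacle here: the entire content of the theorem is the combination of the two cited results together with the observation that $[0,1]$ and $S^1$ are compact while $\R$ and $[0,+\infty)$ are not. The only point to be careful about is that the compactness in question is with respect to the \emph{plaque} topology, which is exactly the topology in which the classification of leaves as one-dimensional manifolds in Theorem \ref{thmGenCwfSup} takes place, so Theorem \ref{thmCovCwExpFol} applies directly without any additional matching of topologies.
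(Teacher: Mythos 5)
Your proof is correct and follows essentially the same route as the paper: intersect the residual sets obtained from Theorem \ref{thmGenCwfSup} for $\W^s$ and $\W^u$, then exclude the compact models using the absence of plaque-compact leaves from Theorem \ref{thmCovCwExpFol}. The only cosmetic difference is that the paper detours through Theorem \ref{thmPCompIffPeano} (plaque-compact if and only if Peano continuum in the relative topology) before discarding $[0,1]$ and the circle, whereas you observe directly that these models are compact in the plaque topology; your shortcut is valid since the classification in Theorem \ref{thmGenCwfSup} is stated precisely for the plaque topology.
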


\begin{proof}
Take from Theorem \ref{thmGenCwfSup} two residual sets $G^s$ and $G^u$ corresponding to $\W^s$ and $\W^u$ respectively 
and define $G=G^s\cap G^u$. 
For $x\in G$ we know that the stable and the unstable leaf of $x$ is a one-dimensional manifold. 
In Theorem \ref{thmCovCwExpFol} we proved that no leaf $\W^\sigma(x)$ is plaque-compact.
By Theorem 
\ref{thmPCompIffPeano}, this implies that no leaf $\W^\sigma(x)$ is a Peano continuum in the relative topology of $\tau$ (the topology of the surface).
Since $[0,1]$ and the circle are Peano continua 
we know from Theorem \ref{thmCovCwExpFol} that $\W^s(x)$ and $\W^u(x)$ is homeomorphic to $\R$ or $[0,+\infty)$ for each $x\in G$.
\end{proof}

\begin{rmk}
 In the examples known to the author, for every cw-expansive homeomorphism of a compact surface there is
 a residual set of points whose stable and unstable leaves are homeomorphic to $\R$ 
 and just a countable number of leaves are homeomorphic to $[0,+\infty)$.
\end{rmk}

\subsection{Graph like continua and decompositions}
\label{secGraphLike}
In this section we develop a technique to construct examples of decompositions.

Let $I, J\subset \R$ be compact intervals. 
Consider a continuum with empty interior $C\subset I\times J$ 
disjoint from $I\times\partial J$ and 
such that for each $x \in I$ the set $C\cap [\{x\}\times J]$ is connected and non-empty. 
In this case we say that $C$ is a \emph{graph like continuum}. 
For such continuum $C$ define 
\[
\begin{array}{l}
 U^+=\{(x,y)\in I\times J: \text{ if } (x,y')\in C\text{ then } y>y'\},\\
 U^-=\{(x,y)\in I\times J: \text{ if } (x,y')\in C\text{ then } y<y'\}.
 \end{array}
\]
We have that $I\times J$ is the disjoint union of $C, U^+$ and $U^-$.

We will define a decomposition of $I\times J$ associated to a graph like continuum $C$. 
Define $v\colon I\times \R\to\R^2$ by $v(p)=(0,\dist(p,C))$,
where $\dist(p,C)=\min\{\dist(p,q):q\in C\}$. 
\azul{Since $v$ is Lipschitz, it} defines a continuous flow $\phi$ on $I\times\R$ with equilibrium points at every point of $C$. 
For $p\in I\times J$ define $Q_C(p)=C$ if $p\in C$ and 
\[
 Q_C(p)=\{q\in I\times J:\exists t\in\R\text{ such that }\phi_t(p),\phi_t(q)\in I\times\partial J\}
\]
if $p\notin C$.
\azul{Recall from \S \ref{secPartandMonRes} that 
the continuity of a decomposition is with respect to the Hausdorff metric.}

\begin{prop}
 If $C\subset I\times J$ is a graph like continuum then 
 $Q_C$ is a cw-decomposition and: 
 \begin{enumerate}
  \item $Q_C(p)$ is the graph of a continuous map $I\to J$ for all $p\notin C$,
  \item $Q_C$ is continuous at every $p\notin C$, 
  \item $Q_C$ is continuous (at every point in $C$) if and only if $C\cap\partial U^+=C\cap\partial U^-$,
  \item $[I\times J]/Q_C$ is an arc.
 \end{enumerate}
\end{prop}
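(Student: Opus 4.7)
The plan is to exploit the explicit flow structure to identify the plaques in $U^\pm$ as graphs and then invoke Proposition~\ref{propHerUnico} to obtain the cw-decomposition property. First I set up the flow analysis. Since $v(p)=(0,\dist(p,C))$ is Lipschitz and vanishes exactly on $C$, the flow $\phi$ is well defined, $C$ is its equilibrium set, and the $y$-coordinate strictly increases on non-equilibrium orbits. Writing $f_+(x)=\max\{y:(x,y)\in C\}$ (so that $U^+=\{(x,y):y>f_+(x)\}$) and analogously $f_-$ for $U^-$, one has for $p=(x_0,y_0)\in U^+$ the exit time through the top
\[
 t^+(p)=\int_{y_0}^{\max J}\frac{ds}{\dist((x_0,s),C)},
\]
which is finite but satisfies $t^+(p)\to+\infty$ as $p$ approaches $C$ from $U^+$, because the bound $\dist((x_0,s),C)\le s-f_+(x_0)$ forces the integral to diverge. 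Standard ODE theory moreover gives that the backward orbit from $p$ cannot reach $C$ in finite time and so stays in $U^+$ while approaching $C$ asymptotically. A symmetric statement holds on $U^-$ with a function $t^-\colon U^-\to(-\infty,0)$.

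Next I handle items~(1) and~(2) together with the cw-decomposition property. On each vertical segment $\{x\}\times(f_+(x),\max J]$ the map $y\mapsto t^+(x,y)$ is a continuous strictly decreasing bijection onto $[0,+\infty)$, so every level set $\{t^+=T\}$ meets every vertical line in exactly one point and is the graph of a continuous function $I\to J$ taking values above $f_+$; this is item~(1) on $U^+$, and the $U^-$ case is symmetric. Continuity of $t^\pm$ on $U^\pm$ immediately yields item~(2). To apply Proposition~\ref{propHerUnico} I must check that $Q_C$ is an upper semicontinuous monotone partition with $1$-dimensional plaques meeting $\partial(I\times J)$. Monotonicity and $1$-dimensionality are clear (graphs are arcs, $C$ has empty interior by hypothesis), graph plaques meet $\partial(I\times J)$ at their endpoints over $\partial I$, and $C$ meets $\partial I\times J$ through its slices at the endpoints of $I$. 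Upper semicontinuity only needs checking at $p\in C$: if $p_n\to p$ with $p_n\in U^+$, then $\dist(p_n,C)\to 0$ forces $t^+(p_n)\to+\infty$, so the plaque graphs sit in a layer $\{q\in U^+:\dist(q,C)<\epsilon_n\}$ with $\epsilon_n\to 0$, whence the Hausdorff limit is contained in $C\cap\partial U^+\subset C=Q_C(p)$; the $U^-$ case is symmetric.

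For item~(4) I introduce $h\colon I\times J\to[-1,1]$ given by $h=1/(1+t^+)$ on $U^+$, $h=-1/(1+|t^-|)$ on $U^-$, and $h=0$ on $C$. Continuity of $h$ follows from continuity of $t^\pm$ on $U^\pm$ together with $|t^\pm|\to+\infty$ at $C$; its fibres are exactly the plaques, so $h$ induces a continuous bijection from the compact quotient $(I\times J)/Q_C$ onto $[-1,1]$, which is therefore a homeomorphism. For item~(3), refining the analysis above, the Hausdorff limit of $Q_C(p_n)$ for $p_n\in U^+$ approaching $C$ is exactly $C\cap\partial U^+$, and symmetrically $C\cap\partial U^-$ from $U^-$. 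Since $C$ has empty interior we have $C=(C\cap\partial U^+)\cup(C\cap\partial U^-)$, and continuity of $Q_C$ at every point of $C$ is therefore equivalent to both $C\cap\partial U^+=C$ and $C\cap\partial U^-=C$, which is in turn equivalent to the single identity $C\cap\partial U^+=C\cap\partial U^-$.

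The main technical obstacle is establishing the Hausdorff limit identity in the previous paragraph: the squeezing $\limsup Q_C(p_n)\subset C\cap\partial U^+$ is straightforward from the vanishing of $\dist(\cdot,C)$ on $C$, but the reverse inclusion $C\cap\partial U^+\subset\liminf Q_C(p_n)$ — showing that every point of $C\cap\partial U^+$ really is approached by these graphs — requires using the characterization of $\partial U^+$ as the set of points approachable from $U^+$, together with a careful uniformity argument in the presence of possible jump discontinuities of $f_+$ that make the flow speed vary non-uniformly near $C$.
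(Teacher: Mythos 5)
Your proof runs on the same engine as the paper's: the paper also uses the flow $\phi$ and the hitting time $T(p)$ of the boundary component $L^+$ (your $t^+$), identifies the plaques in $U^\pm$ as graphs moved by the flow, gets continuity off $C$ from continuity of the hitting time, and proves upper semicontinuity at $C$ by the same squeeze ($T(p_n)\to\infty$ forces any limit of points of $Q_C(p_n)$ into $C$). Where you genuinely diverge is in item (4) and in the packaging of the cw-decomposition property. The paper deduces that the quotient is an arc by observing that every plaque except $L^\pm$ separates the rectangle, and gets codendriticity and hereditary unicoherence from that; you instead build the explicit function $h=\pm 1/(1+|t^\pm|)$ collapsing exactly along plaques, so that the quotient is an arc because a continuous bijection from a compact space onto $[-1,1]$ is a homeomorphism, and you obtain the cw-decomposition property by verifying the hypotheses of Proposition \ref{propHerUnico} (plaques meet $\partial(I\times J)$ and are one-dimensional). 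Your route is more explicit and self-contained; the paper's is shorter but leans on the separation characterization. Two remarks. First, a small imprecision: your justification that $t^+(p)\to+\infty$ as $p\to C$ uses the bound $\dist((x_0,s),C)\le s-f_+(x_0)$, but when $f_+$ is discontinuous a sequence $p_n=(x_n,y_n)\to p\in C$ with $p_n\in U^+$ need not have $y_n-f_+(x_n)\to 0$; the correct estimate is $\dist((x_0,s),C)\le (s-y_0)+\dist(p,C)$, which gives $t^+(p)\ge\log\bigl((\max J-y_0+\dist(p,C))/\dist(p,C)\bigr)\to\infty$ since $\dist(p_n,C)\to 0$. Second, the Hausdorff limit identity $Q_C(p_n)\to C\cap\partial U^+$ that you flag as the remaining technical obstacle for item (3) is simply asserted without proof in the paper as well, so on that point you are not behind the paper --- you are merely more candid about what still needs an argument.
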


\begin{proof}
 Fix $p\in U^+$. 
 Define $L^{\pm}$ as the components of $I\times\partial J$ contained in $\partial U^\pm$ respectively.
 Take $T(p)\in\R$ such that $\phi_{T(p)}(p)\in L^+$. 
 Since $L^\pm$ are transverse to $\phi$ we have that $T\colon U^+\to\R$ is continuous. 
 From the definition of $Q_C$ we have that $Q_C(p)=\phi_{-T(p)}(L^+)$, given that $p\in U^+$. 
 Therefore, $Q_C(p)$ is the graph of a continuous map for all $p\in U^+$. 
 In a similar way this conclusion holds for all $p\in U^-$. 
 This implies that each $Q_C(p)$ is a continuum meeting $\partial (I\times J)$. 
 Also, no $Q_C(p)$ has interior points. 
 
 In order to conclude that $Q_C$ is a decomposition it only remains to prove 
 the upper semicontinuity. 
 The continuity of the flow $\phi$ implies that the functions whose graphs are $Q_C(p)$, for $p\notin C$, 
 varies with continuity. This proves the continuity of $Q_C$ at the points $p\notin C$. 
 Now consider $p\in C$ and $p_n\to p$ and assume without loss of generality that $p_n\in U^+$. 
 We have that $T(p_n)\to-\infty$. Take $q_n\in Q_C(p_n)$ with $q_n\to q$. 
 By the continuity of $T$ we have that $q\in C$. This proves the upper semicontinuity of $Q_C$ at $p\in C$. 
 Therefore $Q_C$ is a decomposition. 
 
 With the previous notation, we have that $Q_C(p_n)\to C\cap\partial U^+$. 
 Therefore, $Q_C$ is continuous at points in $C$ if and only if $C\cap\partial U^+=C\cap\partial U^-$.
 In order to conclude that the quotient is an arc note that except $L^\pm$, every plaque separates the rectangle $I\times J$. 
 This also implies that $Q_C$ is codendritic and a cw-decomposition.
 \end{proof}

\begin{exa}
\label{exaCocArc}
In the square $[-1,1]\times [-1,1]$ consider the graph like continuum
\[
 C=([-1,1]\times\{0\})\cup (\{0\}\times [0,1/2]).
\]
The decomposition $Q_C$ induced by $C$ is shown in Figure~\ref{figCocArc}. 
We see that it can be continuous at some $x$ while not being continuous at some $y\in Q_C(x)$.
\begin{figure}[ht]
 \center
  \includegraphics[scale=.65]{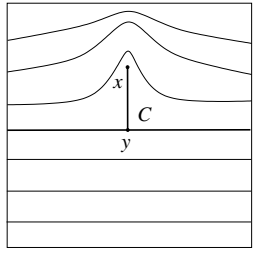}
  \caption{The decomposition is continuous at $x$ and discontinuous at $y$ (approximate $y$ from below).}
  \label{figCocArc}
 \end{figure}
\end{exa}

\begin{exa}
\label{exaDecSen}
In the square $[-2,2]\times [-2,2]$ consider the graph like continuum
\[
 C=(\{0\}\times[-1,1])\cup\{(x,\sin(1/x)):0<|x|\leq 2\}.
\]
We have that $Q_C$ is a continuous decomposition. 
It is illustrated in Figure~\ref{figDecSen}.
\begin{figure}[ht]
 \center
  \includegraphics[scale=.7]{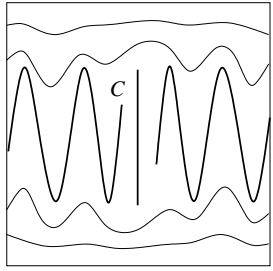}
  \caption{A continuous decomposition with a plaque that is not arc-connected.}
  \label{figDecSen}
 \end{figure}
\end{exa}

\begin{exa}
 Consider $K\subset [0,1]$ a Cantor set and define the graph like continuum
 $$C=(K\times[1/3,2/3])\cup([0,1]\times\{1/3\}).$$ 
Consider the decomposition $Q_C$ in the square $[0,1]\times[0,1]$.  
See Figure~\ref{figCantorCuadrado}.
If we think of $Q_C$ as a cw-foliation of this square and we consider 
a complete basis of small open sets then we see that $C$ is a leaf 
that is not a countable union of plaques. 
\begin{figure}[ht]
 \center
  \includegraphics[scale=.9]{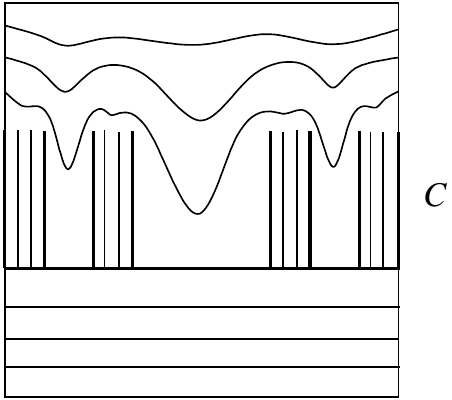}
  \caption{The (non-locally connected) leaf $C$ is not a countable union of small plaques.}
  \label{figCantorCuadrado}
\end{figure}
\end{exa}

\subsection{The foliated box}
\label{secFolBox}

The purpose of the present section is to give a characterization of the standard foliated two-dimensional box (a product structure of a rectangle).

\begin{prop}
\label{propContDend2}
Let $Q\colon D\to\continua(D)$ be a cw-decomposition. 
If $P$ is an arc-connected plaque that separates $D$ and $Q$ is continuous at every $x\in P$ 
then $P$ is an arc and $P\cap\partial D$ has exactly two points.
\end{prop}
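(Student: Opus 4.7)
My plan is to derive a \emph{propagation lemma} from the continuity hypothesis and combine it with planar topology. By Proposition~\ref{propHerUnico} the decomposition $Q$ is one-dimensional, so $P$ is nowhere dense in $D$ and meets $\partial D$; combined with $P$ being arc-connected (hypothesis) and hereditarily unicoherent (since $Q$ is a cw-decomposition), any two points of $P$ are joined by a unique arc inside $P$. The \emph{propagation lemma} states: for every component $U$ of $D\setminus P$, one has $P\subset\overline{U}$. Indeed, the boundary of $U$ in $D$ lies in $P$, so some $y\in P\cap\overline{U}$ exists; for any sequence $y_n\in U$ with $y_n\to y$, the plaque $Q(y_n)$ is a continuum disjoint from $P$ and meeting $U$, hence contained in $U$, and continuity of $Q$ at $y$ forces $Q(y_n)\to P$ in Hausdorff metric, so every $z\in P$ lies in $\overline{U}$.

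Next I would show $P\cap\partial D$ consists of exactly two points. Janisewski's Theorem~\ref{teoJanis} applied to the continua $P,\partial D\subset S^2$ (whose union separates $S^2$ because $P$ separates $D$) forces $P\cap\partial D$ to be disconnected, so there are at least two components. To rule out three isolated points $a_1,a_2,a_3\in P\cap\partial D$, I take the three unique arcs $\beta_{ij}\subset P$ joining them; they form a triod $T$ with branch point $x^*\in D\setminus\partial D$ and arms $\gamma_i$ going through $D\setminus\partial D$ to $a_i$. Planar topology (with all three ends on $\partial D$) yields three components $V_1,V_2,V_3$ of $D\setminus T$, with the open arm $\gamma_i\setminus\{x^*,a_i\}$ disjoint from $\overline{V_i}$. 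Since $T\subset P$ and $P$ is nowhere dense, each $V_i$ contains some component $U_i$ of $D\setminus P$, so the propagation lemma yields $P\subset\overline{U_i}\subset\overline{V_i}$, contradicting $\gamma_i\not\subset\overline{V_i}$. To exclude a non-degenerate arc-component $\alpha\subset P\cap\partial D$: since $P$ separates $D$, $P$ must contain a crossing arc through $D\setminus\partial D$; combining it with $\alpha$ either closes a simple curve in $P$ (violating hereditary unicoherence) or leaves some interior point of $\alpha$ accessible from only one component of $D\setminus P$, which contradicts the propagation lemma.

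Finally I rule out branch points of $P$. If $P$ had a branch point $y^*$, then $P\setminus\{y^*\}$ has three components; since arcs in the dendroid $P$ are unique and the only points of $P$ on $\partial D$ are $\{a_1,a_2\}$, at least one arm $\delta$ of the branch must terminate at an endpoint $b$ of $P$ lying in $D\setminus\partial D$. The remaining two arms form an arc from $a_1$ to $a_2$ through $y^*$, which separates $D$ into two regions $V^+,V^-$, and $\delta$ is a dead-end embedded in one of them, say $V^+$, hence disjoint from $\overline{V^-}$ away from $y^*$. Applying the propagation lemma to a component $U^-\subset V^-$ of $D\setminus P$ gives $\delta\subset P\subset\overline{U^-}\subset\overline{V^-}$, a contradiction. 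Thus $P$ is a dendroid with no branch point and exactly two endpoints $a_1,a_2$, which forces $P$ to be the arc in $P$ joining them. The main difficulties I expect are the rigorous treatment of the arc-component case and the dead-end argument when $P$ may have complex sub-dendroid structure near $b$; both are precisely the places where the continuity hypothesis, through the propagation lemma, does the essential work of ruling out pathological planar accumulation of $P$.
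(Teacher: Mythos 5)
Your propagation lemma is correct and is exactly the engine of the paper's proof (approximate a point of $P$ from a complementary component $U$; plaques of points of $U\setminus P$ are connected, disjoint from $P$, hence trapped in $U$; continuity at the limit point forces $P\subset\clos{U}$). The problem is the superstructure you build on top of it, which has genuine gaps. (i) The triod argument assumes $\beta_{12}\cup\beta_{13}\cup\beta_{23}$ is a triod with branch point $x^*\in D\setminus\partial D$ and arms meeting $\partial D$ only at the $a_i$; none of this is automatic: one of the three points may lie on the arc in $P$ joining the other two (so $T$ is an arc and there is no branch point), the branch point may sit on $\partial D$, and the arms may touch $\partial D$ at interior points — in each case the ``three components $V_1,V_2,V_3$'' picture on which the contradiction rests breaks down. (ii) The exclusion of a nondegenerate arc-component of $P\cap\partial D$ is not an argument as written; the second alternative (``leaves some interior point accessible from only one component'') is precisely what would need proving. (iii) The dead-end argument assumes the ramification point $y^*$ lies on the arc of $P$ from $a_1$ to $a_2$ (``the remaining two arms form an arc from $a_1$ to $a_2$ through $y^*$''); a ramification point of the dendroid $P$ need not lie on that arc.

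All of this case analysis is unnecessary, and the paper's proof shows how a single application of your lemma finishes the argument. Janisewski gives $a,b\in P\cap\partial D$ in different components of $P\cap\partial D$; the unique arc in $P$ joining them is not contained in $\partial D$, so it has a subarc $\gamma$ with $\gamma\cap\partial D=\{a',b'\}$, $a'\neq b'$, which therefore separates $D$ into exactly two components $D_1,D_2$. If some $z\in P\setminus\gamma$ existed, say $z\in D_1$, take a component $U$ of $D\setminus P$ meeting $D_2$ (it exists because $P$ is nowhere dense); since $U$ is connected and disjoint from $\gamma\subset P$, $U\subset D_2$, and your propagation lemma gives $P\subset\clos{U}\subset\clos{D_2}$, contradicting $z\in D_1$. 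Hence $P=\gamma$, which yields simultaneously that $P$ is an arc, that $P$ has no ramification points, and that $P\cap\partial D$ has exactly two points. I recommend you replace steps (i)--(iii) by this argument; as it stands your plan would require substantial extra work to cover the degenerate configurations listed above.
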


\begin{proof}
By Janisewski's Theorem \ref{teoJanis}\footnote{In this case the Jordan's closed curve Theorem could also be applied.}, we have that $P\cap\partial D$ is disconnected because $P$ is a continuum separating $D$. 
Take $a,b\in\partial D$ in different components of $P\cap \partial D$. 
Since $P$ is arc-connected there is an arc $\gamma\subset P$ from $a$ to $b$. 
We can assume that $\gamma\cap\partial D=\{a,b\}$.

We will show that $\gamma= P$. 
Arguing by contradiction assume that there is $z\in P\setminus\gamma$.
Denote by $D_1,D_2$ the components of $D\setminus\gamma$ and suppose that $z\in D_1$.
Take $y\in\gamma$ and $x_n\in D_2$ with $x_n\to y$. 
Since $Q$ is continuous at every point of $P$, it is continuous at $y$. 
But $z$ is not in the limit of $Q(y_n)$. 
This contradiction proves that $P=\gamma$.
\end{proof}

\begin{rmk} 
We wish to remark the necessity of the hypothesis in Proposition \ref{propContDend2}.
The arc connection is needed by Example \ref{exaDecSen}, where $C$ is separating and continuous but not arc-connected. 
In Example \ref{exaCocArc} we see that $Q$ must be continuous at every point of $C$ in order to conclude that it is an arc, even being arc-connected and separating.
We need to assume that $C$ is separating in order to conclude that it is an arc. An example is illustrated in Figure~\ref{figEndDend}.
\end{rmk}
\begin{figure}[ht]
\center
\includegraphics[scale=.5]{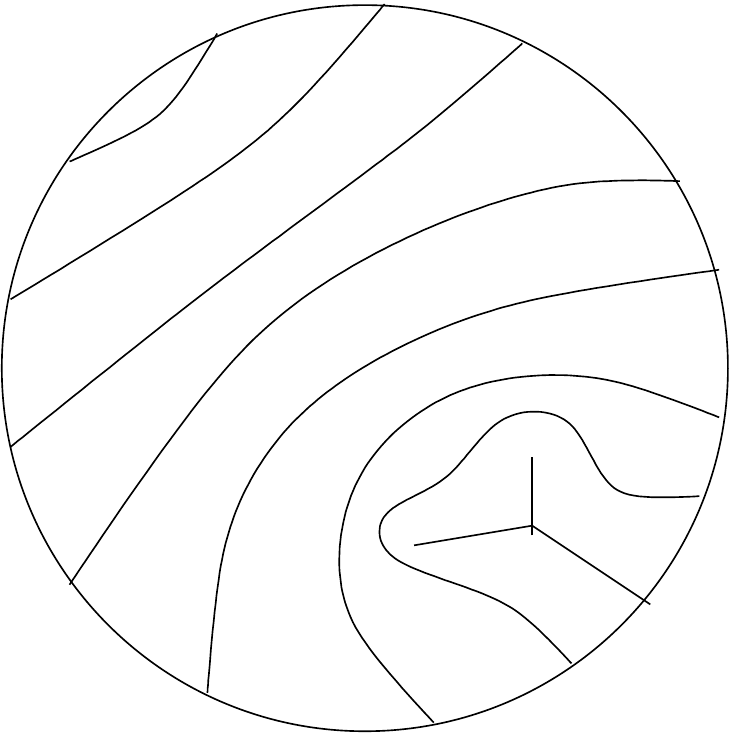}
\caption{A non-separating plaque not being an arc in a continuous arc-connected decomposition of a disc.}
\label{figEndDend}
\end{figure}

\begin{prop}
\label{propContDend}
If $Q\colon D\to\continua(D)$ is a continuous arc-connected cw-decomposition 
then $D/Q$ is an arc. 
\end{prop}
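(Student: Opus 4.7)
The plan is to show that the quotient dendrite $D/Q$---which is a dendrite by Proposition \ref{propHerUnico}---has no ramification points and is non-degenerate, so it must be an arc. Non-degeneracy is immediate: by Proposition \ref{propHerUnico} each plaque of a cw-decomposition is $1$-dimensional and hence distinct from $D$, so $D/Q$ contains at least two points.

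I would then argue by contradiction, supposing that $[P] = Q(x)$ is a ramification point, so that $(D/Q) \setminus \{[P]\}$ has at least three components $V_1, V_2, V_3$. The first substantive step is to transport this back to $D$: writing $\pi\colon D \to D/Q$ for the quotient map, I claim each $\pi^{-1}(V_i)$ is a non-empty open saturated \emph{connected} subset of $D\setminus P$. Openness and saturation are automatic; connectedness is the standard monotone-upper-semicontinuous-decomposition fact, proved by observing that any clopen partition of $\pi^{-1}(V_i)$ has each block saturated (since fibres are connected) and thus descends to a clopen partition of $V_i$, contradicting the connectedness of $V_i$. This yields at least three components of $D \setminus P$.

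In particular $P$ separates $D$. Since $P$ is arc-connected by hypothesis and $Q$ is continuous at every point of $P$, Proposition \ref{propContDend2} applies and $P$ is an arc meeting $\partial D$ in exactly two points; moreover, inspecting the proof of that proposition, these two points are the endpoints of $P$ and the rest of $P$ lies in $\interior(D)$. By the Jordan curve theorem, applied after closing up $P$ with either of the two boundary arcs of $\partial D$ cut out by its endpoints, the complement $D \setminus P$ has exactly two components. This contradicts the three components produced above and completes the proof.

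The main obstacle I anticipate is the lemma that the preimage under $\pi$ of a connected open subset of $D/Q$ is connected, i.e., that ramification of $[P]$ in $D/Q$ faithfully reflects the number of components of $D \setminus P$. Everything else is a clean invocation of Proposition \ref{propContDend2} together with the Jordan curve theorem.
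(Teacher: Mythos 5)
Your proof is correct and follows essentially the same route as the paper: reduce to showing the quotient dendrite has no ramification points, pull a hypothetical ramification back to three or more components of $D\setminus P$, and contradict Proposition \ref{propContDend2} (plus the Jordan curve theorem). The only difference is that you make explicit the component-correspondence lemma for monotone upper semicontinuous quotients, which the paper leaves implicit.
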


\begin{proof}
By definition of cw-decomposition we know that $D/Q$ is a dendrite.
In order to prove that it is an arc it is sufficient to prove that 
if $Q(x)$ \azul{separates} $D/Q$ then it separates in two components and this is a direct consequence of Proposition \ref{propContDend2}.
\end{proof}

\begin{exa} Proposition \ref{propContDend} is not true if we do not assume the arc-connection of each $Q(x)$. 
 An example can be constructed based on Wada's lakes. 
 See Figure~\ref{figWada}.
 A similar decomposition was previously considered in \cite{ChaCha}*{Remark 4.11}. 
 It is a continuous decomposition $Q$ of a disc $D$ such that $Q/D$ is a triod.
\begin{figure}[ht]
\center
\includegraphics[scale=.3]{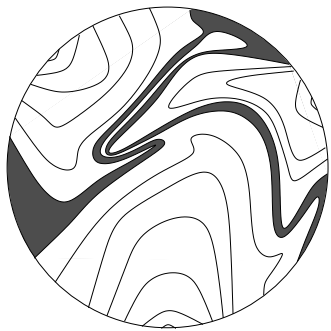}
\caption{Wada's lakes}
\label{figWada}
\end{figure}
\end{exa}

For the following proof we recall that a \emph{size function} is a continuous function $\mu\colon \continua(D)\to\R$ 
such that: $\mu(C)\geq 0$ for all $C\in\continua(D)$ with equality if and only if $C$ is a singleton; 
if $C\subset C'$ and $C\neq C'$ then $\mu(C)<\mu(C')$. 
Size functions exist \cite{Nadler} and in fact were introduced by Whitney in \cite{Whitney33} 
for the study of regular families of curves (very similar to our case).
The following result characterizes standard foliated boxes 
(\azul{or product structure as defined in \S \ref{secDecomp}}).
Its proof uses techniques from \cite{Whitney33}*{Theorem 17A}.

\begin{thm}
\label{thmCajaCaract}
If $Q\colon D\to\continua(D)$ is a $\continua$-smooth, continuous
cw-decomposition 
with two non-trivial plaques contained in $\partial D$
then it is a product structure.
\end{thm}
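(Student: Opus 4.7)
The plan is to construct an explicit homeomorphism $\Psi\colon D\to[0,1]\times[0,1]$ that carries each plaque onto a horizontal segment, thereby exhibiting $Q$ as a product structure in the sense of Definition \ref{dfProSt}.

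First I would collect the topological consequences of the hypotheses. By Proposition \ref{propHuacImpDend} every plaque is a dendrite, hence arc-connected; Proposition \ref{propContDend} then shows that $D/Q$ is an arc, which I identify with $[0,1]$, writing $h\colon D\to[0,1]$ for the quotient map. The plaques $P_0,P_1\subset\partial D$ do not separate $D$ (they are proper arcs on the boundary circle), while every other plaque does separate $D$: if $t$ is not an endpoint of the quotient arc, $(D/Q)\setminus\{t\}$ is disconnected and, $h$ being a quotient map, so is $D\setminus P_t$. A single interior point of a disc cannot disconnect $D$, so every non-endpoint plaque is automatically non-degenerate; Proposition \ref{propContDend2} then forces it to be an arc. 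Normalize so that $h(P_0)=0$, $h(P_1)=1$; every plaque is a non-trivial arc.

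Next I would organise the boundary data. Write $\partial D=P_0\cup\alpha\cup P_1\cup\beta$ with $\alpha,\beta$ the two complementary arcs. For each $t\in(0,1)$ the separating arc $P_t$ has its two endpoints on $\partial D$, and a connectedness argument on $\partial D$ (using that the two components of $D\setminus P_t$ contain $P_0$ and $P_1$ respectively) shows that one endpoint lies on $\bar\alpha$ and the other on $\bar\beta$. This gives selections $e_\alpha,e_\beta\colon[0,1]\to\partial D$ taking values in $\bar\alpha,\bar\beta$, continuous by continuity of $Q$ together with the preservation of endpoints under Hausdorff convergence of arcs. A companion argument using $\continua$-smoothness establishes that $h|_\alpha$ and $h|_\beta$ are homeomorphisms onto $[0,1]$, so $P_t\cap\partial D=\{e_\alpha(t),e_\beta(t)\}$ for every $t\in(0,1)$.

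Now fix any Whitney size function $\mu\colon\continua(D)\to[0,\infty)$. Since plaques are hereditarily unicoherent arcs, for each $y\in D$ there is a unique minimal sub-continuum $\gamma(y)=Q(e_\alpha(h(y)),y)$, namely the sub-arc of $P_{h(y)}$ from $e_\alpha(h(y))$ to $y$; the map $y\mapsto\gamma(y)$ is continuous by $\continua$-smoothness. Define
\[
 s(y)=\frac{\mu(\gamma(y))}{\mu(P_{h(y)})},\qquad \Psi(y)=(h(y),s(y)).
\]
The denominator is continuous (continuity of $Q$ and of $\mu$) and strictly positive (all plaques are non-trivial), so $\Psi\colon D\to[0,1]\times[0,1]$ is continuous. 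On each plaque $P_t$ the function $s$ is the normalised $\mu$-length from $e_\alpha(t)$ and is strictly monotone by strictness of $\mu$ on proper subcontinua, hence a bijection onto $\{t\}\times[0,1]$; since plaques are distinguished by $h$, $\Psi$ is bijective. Compactness of $D$ upgrades $\Psi$ to a homeomorphism that carries each plaque to a horizontal segment, and this realises $Q$ as a product structure.

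The main obstacle is establishing the boundary rigidity statement that $h|_\alpha$ and $h|_\beta$ are homeomorphisms (equivalently, that no interior plaque returns to $\partial D$ a third time). If $h|_\alpha$ were not injective, some plaque $P_t$ would contain two distinct points of $\alpha$, and one must rule this out using continuity, $\continua$-smoothness, and the fact that $P_0,P_1$ are the \emph{only} plaques contained entirely in $\partial D$. Once this rigidity is in place, everything else is routine bookkeeping with the size function and Hausdorff continuity; the $\continua$-smoothness hypothesis enters crucially both to ensure $y\mapsto\gamma(y)$ is continuous and to control the boundary behaviour near the endpoint plaques $P_0$ and $P_1$.
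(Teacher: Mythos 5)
Your proposal is correct and follows essentially the same route as the paper: reduce via Propositions \ref{propHuacImpDend} and \ref{propContDend} to the case where $D/Q$ is an arc and all plaques are arcs, select a boundary transversal, and conjugate to the horizontal decomposition using a Whitney size function normalized along each plaque. The ``boundary rigidity'' you flag as the main obstacle is already supplied by Proposition \ref{propContDend2} (each separating plaque meets $\partial D$ in exactly two points) combined with the fact that each such plaque separates the two endpoint plaques and must therefore meet each of the complementary boundary arcs exactly once, which is precisely how the paper defines its projection $\pi$.
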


\begin{proof}
\azul{In Proposition \ref{propHuacImpDend} we proved that 
 every $\continua$-smooth decomposition is dendritic (in particular, arc-connected). 
Then, we can apply Proposition \ref{propContDend} to conclude that the quotient $D/Q$ is an arc.} 
Therefore, it has two non-separating plaques.
Let $E_1, E_2\subset\partial D$ be the non-separating plaques of $Q$. 
Take two arcs $A_1,A_2\subset \partial D$ intersecting $E_1$ and $E_2$ in one point to each one, as shown in Figure~\ref{figArcosAyE}.
\begin{figure}[ht]
\center
\includegraphics[scale=.9]{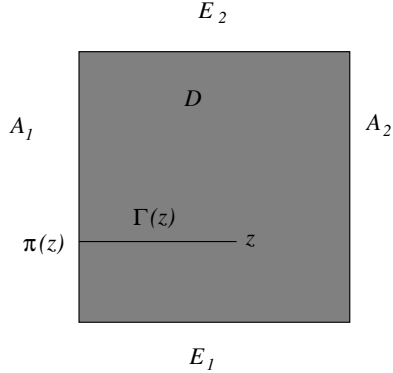}
\caption{The construction of suitable coordinates}
\label{figArcosAyE}
\end{figure}
Since $D/Q$ is an arc, $Q(z)$ separates $E_1$ from $E_2$ for all $z\in D$.
Then $Q(z)\cap A_i\neq\emptyset$ for all $z\in D$ and $i=1,2$.
By Proposition \ref{propContDend2} we have that $Q(z)\cap A_i$ is a singleton.
Define $\pi\colon D\to A_1$ by $\{\pi(z)\}=A_1\cap Q(z)$. 
Proposition \ref{propContDend2} also gives us that 
each $Q(z)$ is an arc. 
Define $\Gamma\colon D\to \continua(D)$ such that $\Gamma(z)\subset Q(z)$ is the arc from $z$ to $\pi(z)$. 

Let $\mu\colon \continua(D)\to\R$ be a size function 
and define $h\colon D\to \azul{A_1}\times [0,1]$ by 
\begin{equation}
 \label{ecuDefConj}
h(z)=(\pi(z),\mu(\Gamma(z))/\mu(Q(z))). 
\end{equation}
Since $Q$ has not trivial plaques we have that $\mu(Q(z))\neq 0$ for all $z\in D$. 
Let us show that $h$ is continuous. 
As $Q(z)\cap A_1$ is a singleton for all $z\in D$ we have that $\pi$ is continuous.
Given that $Q$ is $\continua$-smooth, we conclude that $\Gamma$ is continuous.
Since $\mu$ is continuous we conclude that $h$ is continuous. 

To prove that $h$ is injective suppose that $h(z)=h(z')$. 
Then $\pi(z)=\pi(z')$ and $\mu(\Gamma(z))/\mu(Q(z))=\mu(\Gamma(z'))/\mu(Q(z'))$. 
Therefore $z,z'$ are in the same plaque and $\mu(Q(z))=\mu(Q(z'))$, 
which implies that $\mu(\Gamma(z))=\mu(\Gamma(z'))$. 
Since every plaque is an arc we conclude that $z=z'$ and the injectivity of $h$. 
Similar arguments prove the surjectivity of $h$
and we conclude that $h$ is a homeomorphism.
Denote by $\tilde Q$ the decomposition of the rectangle $A_1\times [0,1]$ in horizontal lines $\{a\}\times [0,1]$ for $a\in A$.
Since it holds that $h(Q(z))=\tilde Q(h(z))$ 
the proof ends.
\end{proof}

It is natural to ask if a 
continuous arc-connected cw-decomposition $Q\colon D\to\continua(D)$
must be a product structure. In the next example we show that this is not always the case.

\begin{exa} 
\label{exaCuasiBox}
The cw-decomposition illustrated in Figure~\ref{figX3} 
is not a product structure because it is not $\continua$-smooth.
\begin{figure}[ht]
\center
\includegraphics[scale=.7]{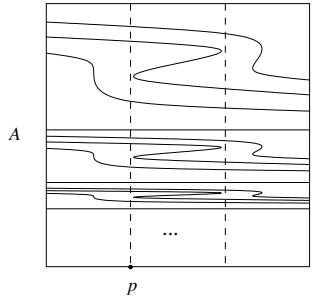}
\caption{Every plaque is an arc from the left side to the right side but $Q$ is not $\continua$-smooth. 
Also, the quotient space is an arc.}
\label{figX3}
\end{figure}
\end{exa}

%
%

\subsection{Smooth dendritations}
Let $\W$ be a cw-foliation on a closed surface $S$ with atlas $\A=\{Q_{\clos U}:U\in\U\}$. 

\begin{df}
We say that cw-foliation $\W$ is \emph{$\continua$-smooth} if 
each $Q_{\clos U}$ is a $\continua$-smooth decomposition.
\end{df}
By Proposition \ref{propRestSmooth} we know that monotone restrictions of a $\continua$-smooth cw-decomposition are $\continua$-smooth. 
Then, a $\continua$-smooth cw-foliations is defined ever a complete basis.

\begin{thm}
\label{thmSmoothFol}
Every $\continua$-smooth cw-foliation of a closed surface $S$ is 
a standard foliation with a finite set of prong-singularities and no 1-prongs.
\end{thm}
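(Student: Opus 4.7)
The plan is a local-to-global analysis: identify the regular points where the local decomposition is a product structure, show they form an open set whose complement is a finite set, and analyze each remaining point as a $k$-prong singularity with $k \geq 3$. By Proposition \ref{propHuacImpDend} each plaque of $\W$ is a dendrite, so $\W$ is a dendritation. Combining Propositions \ref{propCwFolDec} and \ref{propRestSmooth}, after passing to the complete basis $\U_\delta$ of small enough disks, every local decomposition $Q_{\clos D}$ is a $\continua$-smooth cw-decomposition of the disk $\clos D$. Call a point $p \in S$ \emph{regular} if some chart $D$ around $p$ has $Q_{\clos D}$ equivalent to a product structure; the set $R$ of regular points is open, since inside a product chart every coordinate sub-rectangle inherits a product structure.

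For the isolation of singularities, fix $p \in S \setminus R$ and a small disk $D$ around $p$. For any sub-disk $D' \subset D$ that avoids $p$ and whose boundary is inscribed in two distinct plaques of $Q_{\clos D}$, the restriction $Q_{\clos{D'}}$ is $\continua$-smooth by Proposition \ref{propRestSmooth}, is continuous (the only candidate discontinuity of $Q_{\clos D}$ is at $p$), is arc-connected by Proposition \ref{propHuacImpDend}, and has two non-trivial plaques contained in $\partial D'$. Theorem \ref{thmCajaCaract} then yields a product structure on $D'$, so every interior point of $D'$ is regular. Since such $D'$ can be arranged to cover a full punctured neighborhood of $p$, the point $p$ is isolated in $S \setminus R$; compactness of $S$ then gives that $S \setminus R$ is finite.

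For the local classification at each singularity $p$ in a small chart $\clos D$, upper semicontinuity and $\continua$-smoothness organize the Hausdorff limits of the plaques $Q_{\clos D}(x_n)$, for sequences $x_n \to p$ coming from different sectors, into finitely many branches of $P_0 := Q_{\clos D}(p)$ meeting at $p$. This number $k$ is the prong order. A $2$-prong produces continuity of $Q_{\clos D}$ at $p$ together with two non-trivial boundary plaques, so Theorem \ref{thmCajaCaract} forces $p$ to be regular; hence $k \neq 2$.

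The main obstacle is excluding $k = 1$. At an alleged $1$-prong, since $p$ is an interior endpoint of $P_0$ and nearby plaques must fill the complement of $P_0$ in a neighborhood of $p$, there is a sequence of plaques $P_n$ Hausdorff-converging to $P_0$ by folding around $p$. Choose points $u_n, v_n \in P_n$ on opposite folded arms, at a fixed arc-distance $\epsilon$ from the fold. As the two arms collapse onto the single branch of $P_0$ at $p$, both $u_n$ and $v_n$ converge to a common point $w \in P_0$ at distance $\epsilon$ from $p$, yet $Q(u_n, v_n) \subset P_n$ is the subarc through the fold and has diameter bounded below by approximately $\epsilon$. The $\continua$-smoothness condition requires $Q(u_n, v_n) \to Q(w, w) = \{w\}$, contradicting the positive lower bound on diameters. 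Hence no $1$-prong is possible and every singularity is a $k$-prong with $k \geq 3$.
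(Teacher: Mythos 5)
Your overall strategy (regular points form an open set, singularities are isolated, classify the prong order, kill the $1$-prong with $\continua$-smoothness) parallels the paper's, but the step that makes everything else run is unjustified. To invoke Theorem \ref{thmCajaCaract} on the sub-disc $D'$ you need $Q_{\clos{D'}}$ to be \emph{continuous}, and you assert this with the parenthetical ``the only candidate discontinuity of $Q_{\clos D}$ is at $p$.'' Nothing in the hypotheses gives you that: $\continua$-smoothness does not imply Hausdorff-continuity of $x\mapsto Q(x)$ (at an $n$-prong with $n\geq 3$ the decomposition is $\continua$-smooth but discontinuous, which is exactly why Corollary \ref{corCharFol} must assume continuity \emph{and} $\continua$-smoothness), and the set of points where continuity fails is a priori just some closed set --- proving it is finite is essentially the content of the theorem, so assuming $Q_{\clos D}$ is continuous off $\{p\}$ begs the question and the isolation argument for $S\setminus R$ collapses. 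The paper sidesteps this entirely: it never uses Theorem \ref{thmCajaCaract}, but instead (i) proves from $\continua$-smoothness that every end point of every plaque lies on $\partial D$ (via separating plaques $P_n$ whose arcs $A_n=Q(x_n,y_n)$ must converge to an arc with endpoints on $\partial D$ containing the alleged interior end point in its interior), (ii) proves that ramification points of plaques cannot accumulate in the interior (boundary points $a_n,b_n$ of a ramifying plaque can be chosen with $\dist(a_n,b_n)\to0$ while $\diam Q(a_n,b_n)$ stays large), and then (iii) builds the product structure sector by sector with a size function, using $\continua$-smoothness --- not continuity --- to establish continuity of the arc map $\Gamma$.

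Two smaller gaps. You assert without argument that the branches of $P_0$ at $p$ are finite in number; a dendrite can have points of infinite order, and finiteness needs the same $\continua$-smoothness trick as in (ii). And your $1$-prong exclusion presupposes the local folding model and that points at equal arc-distance from the fold on the two arms converge to a \emph{common} point $w$; arc-length is not controlled under Hausdorff convergence, so this needs justification. The paper gets $k\neq 1$ for free from (i): an interior point of the surface cannot be an end point of its plaque, hence its plaque has at least two local branches and there are at least two sectors.
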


\begin{proof}
Consider a cw-atlas $\A=\{Q_{\clos U}:U\in\U\}$ of the 
$\continua$-smooth cw-foliation $\W$. 
By Proposition \ref{propHuacImpDend} we have that $\W$ is a dendritation.
Take $U\in\U$ such that $D=\clos{U}$ is a disc and define $Q=Q_D$.

In this paragraph we will show that 
if $z$ is an end point of the dendrite $Q(z)$ then $z\in\partial D$.
Arguing by contradiction suppose that 
there is $z\in D\setminus\partial D$ that is an end point of its plaque. 
Take $z_n\in D\setminus Q(z)$ such that $z_n\to z$.
Since $Q$ is codendritic, for each $n\geq 1$ there is a plaque $P_n$ separating $Q(z)$ from $z_n$.
Consider an arc $A_n\subset P_n$ separating $Q(z)$ from $z_n$ such that 
$A_n\cap\partial D=\{x_n,y_n\}$.
Consider a Riemannian metric on $D$ and for each $n\geq 1$ a geodesic arc $\gamma_n$ from $z$ to $z_n$. 
Since $A_n$ separates $z$ from $z_n$, we can take $z'_n\in \gamma_n\cap A_n$. 
Since $\gamma_n$ is a geodesic arc we have that $z'_n\to z$. 
Taking subsequences we can assume that $x_n\to x$ and $y_n\to y$ with $x,y\in Q(z)\cap\partial D$. 
Given that $Q$ is $\continua$-smooth we have that $A_n\to A$ where $A$ is an arc from $x$ to $y$.
Since $z'_n\in A_n$ and $z_n\to z$ we have that $z\in A$. 
The upper semicontinuity of $Q$ gives us that $A\subset Q(z)$.
Since $z\notin\partial D$ we have that $z$ is not an end point of $Q(z)$. 
This contradiction proves that every end point of each plaque is in the boundary of $D$.

Now we will show that
 for all compact set $K\subset\interior(D)$ 
 the set of points $x\in K$ such that $x$ is a ramification point of $Q(x)$ is finite.
The result of the previous paragraph implies that for all $x\in D$
every component of $Q(x)\setminus\{x\}$ meets $\partial D$. 
Now suppose that there is a sequence $y_n\to y$ such that 
$y_n\in K$ and each $y_n$ is a ramification point of $Q(y_n)$. 
Denote by $A_n=Q(y_n)\cap\partial D$. 
We have that each $A_n$ has at least three points. 
We can take $a_n,b_n\in A_n$ such that $\dist(a_n,b_n)\to 0$ 
and $y_n\in Q(a_n,b_n)$\footnote{For two points $a,b$ in a common plaque, $Q(a,b)$ is the arc in the plaque from $a$ to $b$.}. 
This contradicts that $Q$ is $\continua$-smooth 
and proves that the set of ramification points can only accumulate on $\partial D$.
Then, there is a finite number of ramification points in the whole surface.

Take $x\in S$ and a small disc $D$ around $x$. 
Assume that in $D\setminus\{x\}$ there are no ramification points. 
We know that $Q_D(x)$ separates $D$ in at least two components. 
Some of these components may be far from $x$ and only a finite number of them contain $x$ in its boundary, see Figure~\ref{figSmoothChart}. 
Denote by $D_1,\dots,D_n$ the closures of such components. 

\begin{figure}[ht]
\center
  \includegraphics[scale=1]{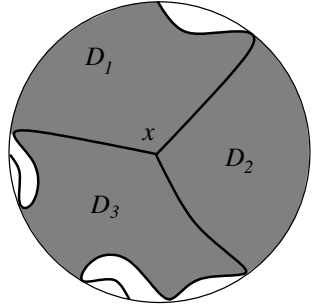}
  \caption{A finite number of components of $D\setminus Q(x)$ are discs with $x$ in its boundary.}
  \label{figSmoothChart}
\end{figure}
Since the only possible ramification point in $D$ is $x$, we know that in each $D_i$ every plaque is an arc.
As the end points of the plaques are in the boundary, each plaque is an arc starting and ending at the boundary. 
Given $y$ in the interior of $D_i$, the arcs $Q_{D_i}(x)$ and $Q_{D_i}(y)$ determine two arcs $A, B$ contained in the boundary of $D_i$ 
as shown in Figure~\ref{figSmoothChart2}.
\begin{figure}[ht]
\center
  \includegraphics[scale=1]{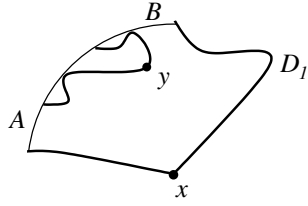}
  \caption{The arcs $A$ and $B$ cuts $Q_{D_1}(x)$ and $Q_{D_1}(y)$ in one point.}
  \label{figSmoothChart2}
\end{figure}

Now we will prove that there is a neighborhood $V$ of 
$x$ in $D_i$ such that every $z\in V$ 
separates $Q_{D_i}(z)$ in two arcs $\gamma_A(z)$ and $\gamma_B(z)$ such that 
$\gamma_A(z)$ cuts $A$ and is disjoint from $B$ and 
$\gamma_B(z)$ cuts $B$ and is disjoint from $A$.
Arguing by contradiction assume first that there is $z_n\to x$ such that $Q_{D_i}(z_n)\cap B=\emptyset$. 
We can take two points $a_n,b_n\in A\cap Q_{D_i}(z_n)$ such that $\dist(a_n,b_n)\to 0$. 
This contradicts that $Q_{D_i}$ is $\continua$-smooth. 
Therefore, if $z$ is close to $x$ then $Q_{D_i}(z)$ cuts $A$ and $B$.
A similar argument proves that if $z$ is sufficiently close to $x$ then 
one component of $Q_{D_i}(z)\setminus\{z\}$ cuts $A$ and not $B$ while the other component 
cuts $B$ and not $A$.

Define $K$ as the union of all the arcs starting and ending in $A$ and contained in a plaque of $Q_{D_i}$. 
Considering that the arcs defining $K$ may be trivial, we have that $A\subset K$. 
In this way $K$ is a continuum.
For $u,v\in D_i$ in a common plaque define $\arc(u,v)$ as the arc from $u$ to $v$ inside the plaque. 
For $u\in V$ define $\pi(u)$ as the first point of $\gamma_A(u)$ in $A$. 
Define $\Gamma\colon V\to \continua(D_i)$ as 
\[
 \Gamma(u)=\arc(u,\pi(u))\cup K.
\]

In this paragraph we show that $\Gamma$ is continuous. 
Take $u\in V$ and $u_n\to u$. 
We can assume that $\Gamma(u_n)$ converges and $\pi(u_n)\to v\in \gamma_A(u)\cap A$. 
Since $Q$ is $\continua$-smooth we have that $\arc(u_n,\pi(u_n))\to \arc(u,v)$. 
It is clear that $\arc(u,\pi(u))\subset\arc(u,v)$. Consequently, $\Gamma(u)\subset \lim\Gamma(u_n)$. 
Since $\arc(\pi(u),v)\subset K$ and 
$$\arc(u,v)=\arc(u,\pi(u))\cup\arc(\pi(u),v)$$ we have that 
$\lim\Gamma(u_n)\subset\Gamma(u)$. This proves the continuity of $\Gamma$.

Consider a size function $\mu\colon \continua(D_i)\to\R$ 
and define $\varphi\colon V\to\R$ as $\varphi(u)=\mu(\Gamma(u))$. 
The function $\varphi$ is continuous and increases along a plaque from $A$ to $B$. 
The level sets of $\varphi$ and the plaques give coordinates and define a product structure 
in a neighborhood of $x$ in $D_i$ (contained in $V$). 

Arguing in the same way in the sectors $D_1,\dots,D_n$ an $n$-prong structure is defined around $x$. 
As we proved, $n\neq 1$ and the set of $n$-prongs with $n\geq 3$ is finite. This finishes the proof.
\end{proof}

\subsection{Continuous dendritations}

%
%


Let $\A=\{Q_{\clos U}:U\in\U\}$ be an atlas of a dendritation $\W$ over the complete basis $\U$ of a surface $S$. 

\begin{df} 
\label{dfContDend}
 We say that a dendritation $\W$ is \emph{continuous} 
 if for all $x\in U\in\U$ there is a closed disc $D\subset U$ 
 such that $x$ is in the interior of $D$ and $Q_D$ is continuous.
\end{df}

This definition of continuity is stronger
than the one given in \S \ref{secConAtl}
because we require the continuity on discs instead of arbitrary neighborhoods.

Denote by $Q_1$ the decomposition of $D=[-1,1]\times [-1,1]$ in horizontal plaques (a foliated box). 
Consider the equivalence relation on $D$ generated by $(x,1)\sim(-x,1)$ for all $x\in [-1,1]$. 
Denote by $Q_2$ the induced decomposition on the quotient space $\tilde D=D/\sim$. 
The point $\{(0,1)\}$ in $\tilde D$ is called \emph{1-prong} of $Q_2$.

\begin{thm}
\label{teoClassRegCwFol}
Every continuous dendritation is a foliation possibly with a finite number of 1-prongs.
\end{thm}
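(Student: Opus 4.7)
The plan is to mirror the proof of Theorem \ref{thmSmoothFol} while replacing every use of $\continua$-smoothness by plain continuity. Fix a continuous dendritation $\W$ of $S$. For each $x\in S$, by Definition \ref{dfContDend} there is a closed disc $D$ around $x$ with $Q_D$ continuous; by Proposition \ref{propLocChartDendritation}, $Q_D$ is a cw-decomposition whose plaques are dendrites and whose quotient $D/Q_D$ is a dendrite. I will show three things: (i) no interior point of $D$ is a ramification of its plaque, so every interior point is either regular or an endpoint of its plaque; (ii) interior endpoints are finitely many in $S$, and each one is modelled locally by the standard 1-prong decomposition $Q_2$; (iii) at regular points the decomposition is locally a product structure. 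The $n$-prong singularities of Theorem \ref{thmSmoothFol} with $n\geq 3$ are then ruled out by (i), and only the 1-prong model survives.

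For (i), suppose $y$ is an interior ramification of $Q_D(y)$. Then at least three local arms of $Q_D(y)$ emanate from $y$, dividing a punctured neighborhood into at least three sectors $V_1,V_2,V_3$. Choose $z_n\to y$ with $z_n\in V_1$ for every $n$. Since distinct plaques of a decomposition are disjoint, $Q_D(z_n)\subset \clos{V_1}$. By continuity $Q_D(z_n)\to Q_D(y)$ in the Hausdorff metric, but the limit is contained in $\clos{V_1}$ and cannot include the arms of $Q_D(y)$ that lie in $V_2\cup V_3$, contradicting $Q_D(y)\subset\clos{V_1}$. Hence every plaque is locally unbranched at interior points of $D$, so each such point is either regular (interior of an arc plaque) or an endpoint.

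For step (ii) I would analyze an interior plaque endpoint $y$. Small separating plaques near $y$ are arcs by Proposition \ref{propContDend2}, so by codendricity of $Q_D$ and the classification of continuous 1-parameter families of arcs obtained from Propositions \ref{propContDend2}--\ref{propContDend}, a small neighborhood of $y$ is equivalent (as a decomposition) to $Q_2$. Finiteness of 1-prongs: suppose distinct 1-prongs $y_n\to y$. By continuity $Q_D(y_n)\to Q_D(y)$ in Hausdorff distance, and since the $y_n$ lie in distinct plaques they are all disjoint from each other. Either $y$ is regular in $Q_D(y)$, in which case nearby $Q_D(y_n)$ must approach $Q_D(y)$ from one side as "half-plaques" with endpoint $y_n\to y$, contradicting continuity of $Q_D$ (the Hausdorff limit would be only a half of $Q_D(y)$); or $y$ is also a 1-prong, in which case infinitely many pairwise disjoint plaques with endpoints accumulating at $y$ would, through the continuity of the decomposition, clash with the codendritic structure of $D/Q_D$ at the class of $Q_D(y)$. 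Compactness of $S$ then leaves only finitely many 1-prongs. For (iii), at each regular point a small sub-disc $D'\subset D$ has every plaque separating $D'$ (by codendricity) and hence an arc, and $D'/Q_{D'}$ is an arc by Proposition \ref{propContDend}. I would then mimic the construction of Theorem \ref{thmCajaCaract}: pick two transversal arcs $A_1,A_2\subset\partial D'$, define $\pi\colon D'\to A_1$ by $\{\pi(z)\}=A_1\cap Q(z)$, let $\Gamma(z)$ be the subarc from $z$ to $\pi(z)$, and set $h(z)=(\pi(z),\mu(\Gamma(z))/\mu(Q(z)))$ as in (\ref{ecuDefConj}). Continuity of $\Gamma$ at a regular point follows from continuity of $Q_D$ together with the local unbranched arc structure around the point (so that the subarc $\Gamma(z)$ is uniquely determined and moves continuously with $z$), and $h$ is a homeomorphism onto a rectangle. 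These product charts together with the $Q_2$-model at each 1-prong assemble into a standard foliation of $S$ with a finite set of 1-prong singularities.

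The main obstacle will be step (ii), specifically the isolation (hence finiteness) of 1-prongs. Without $\continua$-smoothness one does not have direct control over sub-arcs $Q(a,b)$, so the accumulation argument cannot be run as in Theorem \ref{thmSmoothFol}. The proof must instead exploit the interplay between the continuity of the plaque map $z\mapsto Q_D(z)$ into $\continua(D)$ and the dendritic structure of both the individual plaques and the quotient $D/Q_D$, using that distinct plaques are disjoint to rule out the endpoint-accumulations. A secondary difficulty is justifying the continuity of the subarc map $\Gamma$ in step (iii), which in Theorem \ref{thmCajaCaract} was a direct consequence of $\continua$-smoothness and which here requires the additional observation that near a regular point the plaques are locally arcs without branching.
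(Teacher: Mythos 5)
Your outline (product charts at regular points, finitely many $1$-prong models) matches the shape of the conclusion, but the decisive step is missing. At regular points you assert that the subarc map $\Gamma(z)=\arc(z,\pi(z))$ is continuous because $Q_D$ is continuous and the plaques are locally unbranched arcs. That assertion is exactly the content of $\continua$-smoothness, and it does \emph{not} follow from Hausdorff continuity of the plaque map together with the plaques being arcs: Example \ref{exaCuasiBox} is a continuous cw-decomposition of a disc in which every plaque is an arc crossing the disc and the quotient is an arc, yet the decomposition is not $\continua$-smooth and is not a product structure. The point of Definition \ref{dfContDend} is that continuity is demanded on a sub-disc around \emph{every} point, and the paper's proof uses this in an essential way to upgrade continuity to $\continua$-smoothness: if $Q(x_n,y_n)\not\to Q(x,y)$, the Hausdorff limit is an arc $C\subset Q(x)$ with an extra endpoint $z\notin Q(x,y)$; choosing a small disc $D'$ around $z$ on which the restricted decomposition $Q'$ is continuous, the plaques $Q'(z_n)$ are separating arcs of $D'$, and three of them can be arranged so that each lies in the same component of the complement of the others, forcing $D'/Q'$ to have at least three end points and contradicting Proposition \ref{propContDend}. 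Once $Q_D$ is known to be $\continua$-smooth, Theorem \ref{thmCajaCaract} applies verbatim. Without this (or an equivalent) argument your step (iii) is unsupported.

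Two smaller problems. In step (i), disjointness of $Q_D(z_n)$ from $Q_D(y)$ only places $Q_D(z_n)$ in a single component of $D\setminus Q_D(y)$, and one component can meet several of your local sectors if an arm of $Q_D(y)$ terminates in the interior of $D$ without reaching $\partial D$; the paper instead obtains ``every separating plaque is an arc meeting $\partial D$ in two points'' directly from Proposition \ref{propContDend2}, and the (at most two) non-separating plaques given by Proposition \ref{propContDend} are handled by cutting the disc along them — this is also where the $1$-prong model and its finiteness come from, one possible $1$-prong per chart, rather than from your accumulation argument. Finally, applying Proposition \ref{propContDend} to an arbitrary sub-disc $D'\subset D$ requires knowing that $Q_{D'}$ is continuous, which is not inherited from continuity of $Q_D$ (compare Example \ref{exaCocArc}); you must re-invoke Definition \ref{dfContDend} at the relevant point to obtain a suitable disc.
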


\begin{proof}
By Proposition \ref{propContDend2} there are no ramification points, that is, 
every plaque is an arc. 
Take $x\in S$ and a disc $D$ around $x$ with $Q_D$ continuous. 
From Proposition \ref{propContDend} we know that $D/Q$ is an arc. 
Then there are exactly two non-separating plaques that will be called $E_1, E_2$.
From Proposition \ref{propContDend2}, every separating plaque cuts the boundary of $D$ 
at exactly two points.
Reducing $D$ with two separating plaques (if needed) we can assume that $E_1$ and $E_2$ are non-trivial (are not singletons).

In this paragraph we will show that if $E_1,E_2\subset \partial D$ then $Q$ is a product structure.
For this purpose we will show that $Q$ is $\continua$-smooth and apply Theorem \ref{thmCajaCaract}.
Arguing by contradiction assume that $Q(x_n,y_n)$ does not converge to $Q(x,y)$ 
assuming that $x_n\to x$, $y_n\to y$ and $y_n\in Q(x_n)$. 
Taking a subsequence we can assume that $Q(x_n,y_n)\to C$ for a continuum $C\subset D$. 
Since $Q(x_n,y_n)\subset Q(x_n)$ and $Q$ is continuous we have that 
$C\subset Q(x)$. 
Therefore, $C$ is an arc because $Q(x)$ is an arc. 
Moreover, $x,y\in C$.
Since $C\neq Q(x,y)$, we have that $y$ (or $x$) is an interior point of the arc $C$.
Let $z\in C$ be the end point of $C$ that is not in $Q(x,y)$. 
Consider $z_n\in Q(x_n,y_n)$, an interior point, 
such that $z_n\to z$. 
Since the dendritation is continuous
there is a disc $D'$, a neighborhood of $z$, 
such that $Q'=Q|^m_{D'}$ is continuous. 
We can assume that $D'$ is so small that $y,y_n\notin D'$ for all $n\geq 1$. 
Therefore, $Q'(z_n)$ is a sequence of arcs separating $D'$. 
We can find $n_1,n_2,n_3$ such that
$z$ and $Q'(z_{n_i})$ are in the same component of $D'\setminus Q'(z_{n_j})$ if $i\neq j$, $i,j\in\{1,2,3\}$. 
Then $Q'$ has at least three non-separating plaques.
This contradicts Proposition \ref{propContDend} in the disc $D'$
and proves that $Q$ is $\continua$-smooth.
Since $E_1,E_2$ are non-trivial we can apply 
Theorem \ref{thmCajaCaract} to conclude that $Q$ is a product structure.

If $x\in E_1$ (i.e., $E_1$ is not contained in the boundary of $D$) 
we can cut $D$ along $E_1$ and reduce the proof to the previous case ($E_1,E_2\subset \partial D$). 
In this case we obtain that $Q$ is a 1-prong decomposition around $x$. 
This also proves that the number of 1-prongs is finite.
\end{proof}

In the following result the word foliation means a one-dimensional $C^0$ foliation in the standard sense (without singular points).

\begin{cor}
\label{corCharFol}
 A dendritation is a foliation if and only if it is continuous and $\continua$-smooth.
\end{cor}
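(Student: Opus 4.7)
The proof naturally splits along the biconditional, and both directions are largely bookkeeping given what has been proved.

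For the forward implication ($\continua$-smooth + continuous $\Rightarrow$ standard foliation), the plan is to cite the two classification theorems already in hand. By Theorem \ref{teoClassRegCwFol}, a continuous dendritation is a standard foliation except possibly for finitely many 1-prong singularities. By Theorem \ref{thmSmoothFol}, a $\continua$-smooth cw-foliation of a closed surface is a standard foliation with finitely many prong singularities \emph{and no 1-prongs}. Since a point cannot be simultaneously a 1-prong and a prong of order $\geq 3$, the two lists of allowable singularities are disjoint. Therefore, if both hypotheses hold, the foliation has neither 1-prongs nor higher-order prongs, so it is a genuine (nonsingular) one-dimensional foliation.

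For the reverse implication (foliation $\Rightarrow$ continuous and $\continua$-smooth), the plan is to check the two properties in a standard foliated chart and then extend by the local nature of both definitions. A standard $C^0$ foliated chart around $x$ is, after a homeomorphism, the product decomposition of a square $[-1,1]^2$ into horizontal plaques (as in Example \ref{exaDecProd}). Such a product decomposition is continuous (the horizontal lines vary continuously in the Hausdorff metric) and each plaque is an arc, so it is hereditarily unicoherent and each subarc $Q(x,y)$ between two points on the same horizontal line varies continuously as the endpoints vary continuously; hence the decomposition is $\continua$-smooth. Since Definition \ref{dfContDend} requires continuity only on some disc neighborhood of each point, and $\continua$-smoothness is preserved under monotone restrictions by Proposition \ref{propRestSmooth}, both properties pass to the dendritation atlas generated by the foliated charts.

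The only step requiring a little care is verifying that the local foliated chart, once one regards it as part of the maximal dendritation atlas of the surface, still yields the decompositions $Q_{\clos U}$ appearing in Definitions \ref{dfContDend} and \ref{dfCsmooth}. This amounts to identifying, for a small disc $D$ inside a foliated chart, the plaque $Q_D(x)$ with the component of the intersection of the leaf through $x$ with $D$, which is exactly the content of the leaves-atlas equivalence established in Theorem \ref{teoUnaPLC2} (equation (\ref{ecuFiel})) for Peano foliations. The main conceptual point, and the only one I would expect to write out carefully, is that the absence of 1-prongs in the forward direction really is forced by $\continua$-smoothness alone, which is what Theorem \ref{thmSmoothFol} already packages for us; after that, the corollary is essentially immediate.
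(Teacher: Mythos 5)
Your proof is correct and follows essentially the same route as the paper, whose entire argument is to combine Theorem \ref{teoClassRegCwFol} (only 1-prongs can occur) with Theorem \ref{thmSmoothFol} (no 1-prongs occur), forcing the singular set to be empty. The converse verification in a product chart, which you spell out via Example \ref{exaDecProd}, Proposition \ref{propRestSmooth} and equation (\ref{ecuFiel}), is left implicit in the paper but is the natural completion of the ``if and only if.''
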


\begin{proof}
It follows by Theorems \ref{thmSmoothFol} and \ref{teoClassRegCwFol}.
\end{proof}

\subsection{Cw$_{\bf F}$-expansivity and dendritations}


\azul{If two arcs $\alpha,\beta$ in a surface meet at $x$ we say that they are \emph{topologically transverse} at $x$
if the components of $\alpha\setminus\beta$ are in 
different components of $D\setminus\beta$, where $D$ is a disc separated by $\beta$ and containing $\alpha$.
See Figure~\ref{figDenSU}.}
\begin{figure}[ht]
\center
\includegraphics[scale=.9]{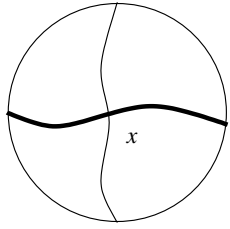}
\caption{Two arcs topologically transverse at the point $x$.}
\label{figDenSU}
\end{figure}

The following result gives some ideas of how the stable and the unstable cw-foliations of a \cwfin-expansive 
homeomorphism are distributed in the surface.

\begin{thm}
\label{thmCwfSuperficies}
 If $f$ is a \cwfin-expansive homeomorphism of a compact surface $S$ then: 
 \begin{enumerate}
  \item $\W^s$ and $\W^u$ are $f$-invariant dendritations and no leaf is a Peano continuum, 
  \item there is a residual set $G\subset S$ such that $\W^s(x)$ and $\W^u(x)$ are non-compact 
  one-dimensional manifolds for all $x\in G$, 
  \item there is a subset $H\subset G$ that is dense in $S$ such that $\W^s(x)$ and $\W^u(x)$ are topologically transverse at $x$ for all $x\in H$.
 \end{enumerate}
\end{thm}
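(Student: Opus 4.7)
The plan is to prove the three conclusions in order, leveraging Theorem \ref{thmCovCwExpFol} on the existence of stable/unstable cw-foliations, Theorem \ref{thmCwExpGenDen} on generic leaf structure, and the capacitor Theorem \ref{teoCap} to exploit \cwfin-expansivity. For part (1), I would start from Theorem \ref{thmCovCwExpFol}, which (since \cwfin-expansivity implies cw-expansivity) already provides invariant cw-foliations $\W^s, \W^u$ with no plaque-compact leaves. So invariance and meagreness of leaves come for free, and it only remains to show each stable (resp.\ unstable) plaque is a dendrite. Fixing a small disc $D$ for which $Q^s_{\clos{D}}$ is a cw-decomposition (Proposition \ref{propCwFolDec}), Proposition \ref{propHerUnico} already gives hereditary unicoherence of each plaque and that each meets $\partial D$; by Theorem \ref{teoCharDend} the remaining point is local connection. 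If some stable plaque $P$ fails to be locally connected, Theorem \ref{teoLocConConvCont} yields a convergence continuum, i.e., disjoint subcontinua $A_n \subset P$ Hausdorff-converging to a subcontinuum $A \subset P$ disjoint from all $A_n$. I would construct a sequence of $(\epsilon_n, r, x)$-capacitors with stable plates taken from $A$ and $A_n$ and separations $\epsilon_n \to 0$; Theorem \ref{teoCap} delivers unstable continua $C_n$ each meeting both plates, and a Hausdorff limit argument produces a non-trivial unstable continuum either contained in the stable continuum $A$ (contradicting cw-expansivity) or meeting $P$ infinitely often (contradicting \cwfin-expansivity). The ``no Peano leaf'' claim then follows from the ``no plaque-compact leaf'' conclusion of Theorem \ref{thmCovCwExpFol} combined with Theorem \ref{thmPCompIffPeano}.

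With $\W^s, \W^u$ established as dendritations, part (2) is direct: Theorem \ref{thmCwExpGenDen} yields a residual $G\subset S$ such that for every $x\in G$, both $\W^s(x)$ and $\W^u(x)$ (in the plaque topology) are homeomorphic to $\R$ or to $[0,+\infty)$, and these are precisely the non-compact connected one-dimensional manifolds. For part (3), I would let $H$ be the set of $x\in G$ where the stable and unstable arcs through $x$ cross topologically. Suppose $x\in G$ is not in $H$: in a small chart disc $D$, both $\W^s_D(x)$ and $\W^u_D(x)$ are arcs, $\W^s_D(x)$ separates $D$ into two components, and the two sides of $\W^u_D(x)$ at $x$ both lie in the closure of a single component. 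Using the codendritic structure (Proposition \ref{propCoDendGen}) applied to the stable cw-decomposition of $D$, pick a regular stable plaque $B$ lying on the other side of $\W^s_D(x)$ and arbitrarily close to it; with $A = \W^s_D(x)$ and plates $A, B$, I would assemble an $(\epsilon, r, x)$-capacitor whose bridge continuum is a short subarc of $\W^u_D(x)$. Theorem \ref{teoCap} then produces an unstable continuum $C$ joining $A$ and $B$ inside a thin strip between them; concatenating $C$ with the part of $\W^u_D(x)$ on the same side and letting $B$ approach $A$ along a sequence of regular plaques gives an unstable continuum intersecting the single stable set $A$ in infinitely many points, contradicting \cwfin-expansivity. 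Density of $H$ follows because on any open set $V\subset S$ the residual subset $V\cap G$ must contain a transverse point.

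The main obstacle, in both parts (1) and (3), is the construction of the capacitors: verifying in each setting that appropriate subcontinua of the given stable pieces meet the four requirements of the capacitor definition, most delicately that the open set $G$ has $(\partial G)\cap B_r(x) \subset A\cup B$ and that the bridging continuum $\gamma$ passes through $x$ inside $B_{r/2}(x)$. This is a piece of careful planar topology rather than a single clever estimate; once the capacitors are in place, the contradiction with \cwfin-expansivity follows uniformly by Hausdorff-limit arguments in the spirit of the proof of Theorem \ref{teoCap}.
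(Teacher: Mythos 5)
Your treatment of parts (1) and (2) follows the paper's route: Theorem \ref{thmCovCwExpFol} for invariance and the absence of plaque-compact leaves, a convergence-continuum argument via Theorem \ref{teoLocConConvCont} combined with capacitors for local connectedness, and Theorems \ref{thmCwExpGenDen} and \ref{thmPCompIffPeano} for the rest. However, in part (1) you are vague at exactly the decisive step. The unstable continua $C_n$ produced by Theorem \ref{teoCap} join plates whose mutual distance tends to $0$, so $\diam(C_n)$ may tend to $0$ and the Hausdorff limit may be a single point; neither branch of your dichotomy (a non-trivial unstable continuum inside $A$, or a continuum meeting $P$ infinitely often) is then forced. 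The paper closes this by first passing to a \emph{monotone} (nested) subsequence of the separating plaques $Q^s(x_n)\subset P$: a single unstable bridge from the limit continuum $C$ to $Q^s(x_n)$, obtained from one application of Theorem \ref{teoCap} with the fixed $\epsilon$ attached to $r$, must then cross every intermediate plaque $Q^s(x_m)$, $m\geq n$, and hence meets the one stable continuum $P$ in infinitely many points, contradicting \cwfin-expansivity. Without the monotonicity observation the contradiction does not materialize.

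Part (3) is a genuine gap: your argument, if it worked, would show that \emph{every} $x\in G$ is a transverse point, i.e.\ $H=G$. That is false. The Q$^r$-Anosov diffeomorphisms of \S \ref{secQrAnosov} are $N$-expansive, hence \cwfin-expansive, and have quadratic tangencies between one-dimensional stable and unstable leaves at wandering points; such points lie in $G$ (their leaves have no ramifications) but not in $H$. Concretely, the construction breaks in two places. First, the bridge cannot be a subarc of $\W^u(x)$: if $B$ lies on the opposite side of $A=\W^s(x)$ from the unstable arc, that subarc meets $A$ only at $x$ and never reaches $B$, so the third condition in the definition of capacitor fails; and if you replace the bridge by an arbitrary arc, the unstable continuum $C$ that Theorem \ref{teoCap} returns is unrelated to $\W^u(x)$, so the proposed concatenation has no meaning. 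Second, letting $B\to A$ produces a \emph{sequence} of unstable continua each meeting $A$ at least once, not one unstable continuum meeting $A$ infinitely often. The paper instead argues only for density and does so existentially: near any point it takes four consecutive, monotonically arranged stable arcs through points of $G$, applies Theorem \ref{teoCap} to the capacitor bounded by the outermost two to obtain a point $p\in G$ between the middle two whose unstable arc crosses from one side of an intermediate stable arc to the other; since by \cwfin-expansivity that unstable arc meets the intermediate stable arc in only finitely many points, at least one of the crossings must be topologically transverse. You should replace your part (3) with an argument of this type.
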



\begin{proof}
By Theorem \ref{thmCovCwExpFol} we know that $\W^s$ and $\W^u$ are $f$-invariant cw-foliations 
without plaque-compact leaves. 
We start proving the local connection of stable and unstable plaques to conclude that they are dendritations.
Arguing by contradiction, let $P\subset S$ be a non-locally connected stable continuum. 
By Theorem \ref{teoLocConConvCont}
we can consider $P_n$ a sequence of subcontinua of $P$ such that 
$P_i\cap P_j=\emptyset$ if $i\neq j$ and $\dist(P_n,P_*)\to 0$ for some non-trivial continuum $P_*\subset P$. 
Take $x\in P_*$ and a small disc $D$ around $x$ such that every $P_n$ separates $D$ 
for every $n\geq n_0$, for some $n_0$. 
We can assume that $D=B_r(x)$ for some $r>0$.
Denote by $Q^s$ and $Q^u$ the stable and the unstable cw-decompositions of $D$, respectively. 
Consider $x_n\in P_n\cap D$ such that $x_n\to x$ and each $Q^s(x_n)$ separates $D$. 
Taking a subsequence if needed we can assume that $Q^s(x_n)$ converges to a stable continuum $C\subset Q^s(x)$. 
Moreover, we can assume that the sequence of plaques $Q^s(x_n)$ is monotonous (i.e., if $i<j<k$ then $Q^s(x_j)$ separates 
$Q^s(x_i)$ from $Q^s(x_k)$).
For the value of $r>0$ fixed above ($D=B_r(x)$) 
take $\epsilon>0$ given by Theorem \ref{teoCap}.
Denote by $G_n$ the component of $D$ between $C$ and $Q^s(x_n)$. 
Since $Q^s(x_n)\to C$ in the Hausdorff metric, there is $n_1>0$ such that 
$Q^s(x_n)\subset B_\epsilon(C)$ for all $n\geq n_1$. 
Consequently, $G_n\subset B_\epsilon(C)$ for all $n\geq n_1$. 
If $\epsilon$ is small, we can take $y_n\in \partial B_{r/2}(x)\cap Q^s(x_n)$ 
and an arc $\gamma_n\subset B_{r/2}(x)$ from $x_n$ to $y_n$.
\azul{Taking a subarc of $\gamma_n$ we can assume that $\gamma_n\subset \clos{G_n}\cap B_{r/2}(x)$ 
meeting $C$ and $Q^s(x_n)$.}
This proves that $(C,G_n,Q^s(x_n))$ is an $(r,\epsilon,x)$-capacitor for all $n\geq n_1$. 
Applying Theorem \ref{teoCap} we can take an unstable plaque $Q^u(z)$ cutting $C$ and $Q^s(x_n)$. 
This unstable plaque cuts $Q^s(x_m)$ for all $m\geq n$. 
Since the plaques $Q^s(x_n)$ are contained in the stable plaque $P$ (considered at the beginning of the proof) 
we have a contradiction with the \cwfin-expansivity of $f$.
This proves that $\W^s$ and $\W^u$ are dendritations. 

Since \cwfin-expansivity implies cw-expansivity we can now apply Theorem \ref{thmCwExpGenDen} 
to obtain the residual set $G$.

By Theorem \ref{thmCovCwExpFol} no stable leaf is plaque compact and 
by Theorem \ref{thmPCompIffPeano}, a leaf is plaque compact if and only if it is a Peano continuum. 
Therefore, no stable leaf is a Peano continuum.

Let $D\subset S$ be a small disc and take an interior point $x\in D$. 
We can take a sequence $x_n\in G$ such that $x_n\to x$. 
Consider a stable arc $\gamma^s_n$ from $x_n$ to $y_n\in\partial D$. 
We can assume that $y_n\to y$ and $\gamma_n\to C$ where $C$ is a dendrite containing $x$ and $y$. 
Taking a subsequence we can suppose that $y_n$ is monotone in $\partial D$. 
Consider four consecutive arcs $\gamma^s_{n+1},\dots,\gamma^s_{n+4}$. 
As before, for $n$ large, we can apply Theorem \ref{teoCap} to the capacitor determined by 
$\gamma^s_{n+1}$ and $\gamma^s_{n+4}$ to find a point $p\in G$ between $\gamma^s_{n+2}$ and $\gamma^s_{n+3}$ 
whose unstable arc cuts $\gamma^s_{n+1}$ or $\gamma^s_{n+4}$. 
Suppose that it cuts $\gamma^s_{n+1}$. 
Then, it cuts $\gamma^s_{n+2}$. 
\azul{By the \cwfin-expansivity of $f$,
the unstable arc of $p$ intersects $\gamma^s_{n+2}$ in finitely many points.
By the construction, at least one of this cuts is topologically transverse.} 
\end{proof}

\subsection{Cw1-expansivity}
\label{secPairDec}

In this section we show that every cw1-expansive homeomorphism of a compact surface is expansive.

\begin{lem}
\label{lemArcoSepara}
Let $Q^1,Q^2$ be two cw-decompositions of $D$ such that 
$Q^1(x)\cap Q^2(x)=\{x\}$ for all $x\in D$, 
$\partial D$ is the union of two arcs $\alpha$ and $\beta$ with extreme points $p,q$ 
and $\alpha\subset Q^1(a)$ for some $a\in \partial D$\azul{. Then}
for all $x\in \alpha$, $x\notin\{p,q\}$, 
it holds that $Q^2(x)\cap \beta\neq\emptyset$.
\end{lem}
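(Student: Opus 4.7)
I plan to pass to the quotient dendrite $D/Q^2$ and argue by contradiction. Since $Q^2$ is a cw-decomposition, the quotient $D/Q^2$ is a dendrite by definition and the projection $\pi_2\colon D\to D/Q^2$ is continuous (and $D/Q^2$ is Hausdorff, cf. Proposition \ref{propCoc1}). I will suppose for contradiction that there exists $x\in\alpha\setminus\{p,q\}$ with $Q^2(x)\cap\beta=\emptyset$, equivalently $\pi_2(x)\notin\pi_2(\beta)$.

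The first step is to establish that $\pi_2|_\alpha$ is injective. Given distinct $y_1,y_2\in\alpha\subset Q^1(a)$, the equality $Q^1(y_1)=Q^1(a)=Q^1(y_2)$ combined with $Q^2(y_1)=Q^2(y_2)$ would place $y_2$ in $Q^1(y_1)\cap Q^2(y_1)=\{y_1\}$, a contradiction. Since $\alpha$ is a compact arc and $D/Q^2$ is Hausdorff, this makes $\pi_2|_\alpha$ a homeomorphism onto its image, so $\pi_2(\alpha)$ is an arc in $D/Q^2$ with endpoints $\pi_2(p)$ and $\pi_2(q)$.

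To finish, I will use that $\pi_2(\beta)$ is a continuum in $D/Q^2$ containing both $\pi_2(p)$ and $\pi_2(q)$. Since $D/Q^2$ is a dendrite, Theorem \ref{teoCharDend} gives hereditary unicoherence, and a standard consequence is that any connected subset of a dendrite containing two points contains the unique arc between them. Hence $\pi_2(\beta)\supset\pi_2(\alpha)\ni\pi_2(x)$, contradicting $\pi_2(x)\notin\pi_2(\beta)$. The main obstacle I anticipate is really only the injectivity bookkeeping: once $\pi_2(\alpha)$ is identified as an arc in the quotient dendrite, the conclusion is a one-line appeal to the topology of dendrites via the cross-intersection hypothesis $Q^1(x)\cap Q^2(x)=\{x\}$.
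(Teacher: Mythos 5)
Your proof is correct, and it takes a genuinely different route from the paper's. Both arguments pass to the quotient dendrite $D/Q^2$ and both hinge on the hypothesis $Q^1(z)\cap Q^2(z)=\{z\}$, but they use it differently. The paper argues by contradiction: if $Q^2(x)\cap\beta=\emptyset$ then $Q^2(x)\cap\partial D=\{x\}$ (since $Q^2(x)\cap\alpha\subset Q^2(x)\cap Q^1(x)$), so $Q^2(x)$ is an end point of $D/Q^2$; one then approximates it by a cut point $Q^2(y)$ of the dendrite, whose trace on $\partial D$ is disconnected by Janisewski's Theorem \ref{teoJanis} and is forced to lie in $\alpha\subset Q^1(a)$, giving two points in $Q^1(y)\cap Q^2(y)$. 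Your argument instead uses the cross-intersection hypothesis once, to make $\pi_2|_\alpha$ injective, identifies $\pi_2(\alpha)$ as \emph{the} arc of the dendrite from $\pi_2(p)$ to $\pi_2(q)$, and then invokes the standard fact that a connected subset of a dendrite containing two points contains the arc joining them; this yields the stronger conclusion $\pi_2(\alpha)\subset\pi_2(\beta)$ directly, with no contradiction needed. Your route avoids Janisewski's theorem, the density of cut points in the quotient, and the somewhat delicate ``arbitrarily close'' approximation step in the paper's sketch, at the cost of one standard dendrite lemma. One small attribution quibble: that lemma is not really a consequence of hereditary unicoherence alone as you suggest via Theorem \ref{teoCharDend}; the usual proof uses that every interior point of the arc $[a,b]$ in a dendrite separates $a$ from $b$ (equivalently, unique arcwise connectedness plus local connectedness). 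The fact itself is standard and true, so the proof stands.
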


\begin{proof}
Suppose that for some $x\in \alpha$, $x\neq p,q$, we have that 
$Q^2(x)\cap\beta=\emptyset$. 
We know that $D/Q^2$ is a dendrite.
Since $Q^2(x)\cap Q^1(x)=\{x\}$ we have that $Q^2(x)$ is an end of $D/Q^2$.
In the quotient dendrite we can find a point say $Q^2(y)$ that separates $D/Q^2$ 
and is arbitrarily close to $Q^2(x)$. 
Then $Q^2(y)$ cuts $Q^1(a)$ in at least two points, contradicting our hypothesis.
\end{proof}

\begin{df}
Given two cw-decompositions $Q^1,Q^2$ of a disc $D$ we say that $\partial D$ is a $(Q^1,Q^2)$-\emph{rectangle}  
if there are $x_1,y_1,x_2,y_2\in\partial D$
such that $\partial D$ is contained in the ordered union $Q^1(x_1)\cup Q^2(y_1)\cup Q^1(x_2)\cup Q^2(y_2)$.
\end{df}

\begin{prop}
\label{propCajaProd}
 Let $Q^1,Q^2$ be two cw-decompositions of a disc $D$ such that $Q^1(x)\cap Q^2(x)=\{x\}$ for all $x\in D$ and 
 $\partial D$ is a $(Q^1,Q^2)$-rectangle\azul{. Then} there is 
 a homeomorphism $h\colon D\to [0,1]\times[0,1]$ sending the plaques of $Q^1$ and $Q^2$ \azul{onto} horizontal and vertical 
 segments, respectively.
\end{prop}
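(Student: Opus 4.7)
The plan is to take $h=(\pi_2,\pi_1)$, where $\pi_i\colon D\to D/Q^i$ is the quotient projection, show it is a homeomorphism of $D$ onto $[0,1]\times[0,1]$, and verify that $Q^1$-plaques go to horizontal and $Q^2$-plaques to vertical segments. Continuity of $h$ follows from upper semicontinuity of each $Q^i$, and injectivity is immediate from the hypothesis: $h(z)=h(z')$ forces $z'\in Q^1(z)\cap Q^2(z)=\{z\}$. By compactness, $h$ is a topological embedding onto $h(D)$.

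Write the four boundary arcs as $A_i=\partial D\cap Q^1(x_i)$ and $B_i=\partial D\cap Q^2(y_i)$, with cyclic corners $c_1\in A_1\cap B_1$, $c_2\in B_1\cap A_2$, $c_3\in A_2\cap B_2$, $c_4\in B_2\cap A_1$, and set $a_j=\pi_1(x_j)\in D/Q^1$, $b_j=\pi_2(y_j)\in D/Q^2$. The key step is to prove that $D/Q^1$ is an arc from $a_1$ to $a_2$ (and symmetrically $D/Q^2$ an arc from $b_1$ to $b_2$). By Proposition~\ref{propCoc1} and the cw-decomposition hypothesis, $D/Q^1$ is a dendrite. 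The hypothesis $Q^1(z)\cap Q^2(z)=\{z\}$ forces every $Q^1$-plaque to meet $Q^2(y_i)\supset B_i$ in at most one point, so $\pi_1|_{B_i}$ is injective, hence a homeomorphism of the arc $B_i$ onto an arc in $D/Q^1$ with endpoints $\pi_1(c_1)=\pi_1(c_4)=a_1$ and $\pi_1(c_2)=\pi_1(c_3)=a_2$ (distinct, by the same injectivity). Since arcs between two points in a dendrite are unique, $\pi_1(B_1)=\pi_1(B_2)$ coincides with this unique arc from $a_1$ to $a_2$. Because every $Q^1$-plaque meets $\partial D$, we get $D/Q^1=\pi_1(\partial D)=\{a_1,a_2\}\cup\pi_1(B_1)\cup\pi_1(B_2)$, which is exactly that arc.

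Identify $D/Q^1,D/Q^2\cong[0,1]$ so that $a_1,b_1\leftrightarrow 0$ and $a_2,b_2\leftrightarrow 1$. The symmetric argument gives that $\pi_2|_{A_i}\colon A_i\to[0,1]$ is a homeomorphism, so $h(A_i)=\{a_i\}\times[0,1]$, and likewise $h(B_i)=[0,1]\times\{b_i\}$; hence $h(\partial D)=\partial([0,1]^2)$. Since $h$ is a topological embedding of the disc $D$ whose image has $\partial([0,1]^2)$ as its boundary curve, the Schoenflies theorem forces $h(D)=[0,1]^2$. For $z\in D$, $\pi_1$ is constant on $Q^1(z)$, so $h(Q^1(z))\subset[0,1]\times\{\pi_1(z)\}$; the surjectivity of $h$ onto $[0,1]^2$ then upgrades this to equality, so $Q^1$-plaques map onto horizontal segments, and symmetrically $Q^2$-plaques onto vertical ones. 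The main obstacle is this arc-identification of $D/Q^1$: a priori it is only a dendrite, and excluding branching requires precisely the single-point intersection hypothesis, which turns $B_1$ and $B_2$ into two arcs in $D/Q^1$ sharing the same pair of endpoints and thus coinciding with the unique arc between them.
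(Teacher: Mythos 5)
Your proof is correct, and although it is built around the same map $h(z)=(Q^1(z),Q^2(z))$ (up to the order of the coordinates) as the paper's proof, both nontrivial steps are carried out by genuinely different arguments. To show $D/Q^1$ is an arc, the paper rules out ramification points of the quotient dendrite: a branch point $Q^1(x)$ would separate $D$ into at least three components, so $Q^1(x)\cap\partial D$ would have at least three components and $Q^1(x)$ would have to meet one of the boundary $Q^2$-plaques of the rectangle in at least two points, contradicting the fact (which you also use) that the hypothesis $Q^1(z)\cap Q^2(z)=\{z\}$ forces any $Q^1$-plaque and any $Q^2$-plaque to meet in at most one point. You instead map the two opposite $Q^2$-sides of the rectangle injectively into the dendrite $D/Q^1$, note that they become arcs sharing the same pair of endpoints, invoke uniqueness of arcs in a dendrite, and use the fact that every plaque of a cw-decomposition meets $\partial D$ to get $D/Q^1=\pi_1(\partial D)$; this is an equally valid route and makes the role of the rectangle hypothesis (pinning down the endpoints) more explicit. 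For surjectivity of $h$, the paper proves that every $Q^1$-plaque meets every $Q^2$-plaque via Lemma~\ref{lemArcoSepara}, whereas you deduce $h(\partial D)=\partial([0,1]^2)$ and conclude $h(D)=[0,1]^2$ by planar topology (the invariance-of-domain/Jordan--Schoenflies argument). Your version avoids Lemma~\ref{lemArcoSepara} altogether at the cost of importing a plane-topology input, while the paper stays within dendrite and decomposition arguments and reuses that lemma again in Proposition~\ref{propCw1Gen}. Two minor remarks: with your convention $h=(\pi_2,\pi_1)$ the image of $A_i$ is $[0,1]\times\{a_i\}$ rather than $\{a_i\}\times[0,1]$ (a harmless bookkeeping slip), and it is worth noting explicitly that $\partial D\cap Q^2(y_i)$ equals the side arc of the rectangle (any extra intersection with the other sides is excluded by the one-point-intersection fact and disjointness of distinct plaques), so that $\pi_1(B_i)$ really is an arc.
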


\begin{proof}
We know that $D/Q^1$ and $D/Q^2$ dendrites. 
Let us show that they are arcs. 
For this purpose we will show that they have no ramification point. 
Suppose by contradiction that $Q^1(x)$ (similarly for $Q^2$) is a ramification of the dendrite $D/Q^1$ for some $x\in D$.
Then $Q^1(x)$ separates $D$ into at least three components. 
Consequently, $Q^1(x)\cap \partial D$ has at least three components.
Since $\partial D$ is a $(Q^1,Q^2)$-rectangle, we have that $Q^1(x)$ cuts in at least two points 
to a plaque of $Q^2$ in the boundary of $D$. This contradiction proves that $D/Q^1$ is an arc. 

Let $h\colon D\to [D/Q^1]\times [D/Q^2]$ as $h(x)=(Q^1(x),Q^2(x))$.
We have that $h$ is continuous and injective. 
In order to conclude that it is a homeomorphism it is sufficient to prove that 
it is surjective. 
The surjectivity in this case means that $Q^1(x)$ cuts $Q^2(y)$ for all $x,y\in D$, 
and this follows by Lemma \ref{lemArcoSepara}.
\end{proof}

\begin{prop}
\label{propCw1Gen}
If $Q^1,Q^2$ is a pair of dendritic cw-decompositions of $D$ and 
$Q^1(x)\cap Q^2(x)=\{x\}$ for all $x\in D$ then $(Q^1,Q^2)$ is a generating pair.
\end{prop}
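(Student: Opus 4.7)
The plan is to realize a small neighborhood of $x \in D$ as a product structure for $Q^1$ and $Q^2$ via Proposition \ref{propCajaProd}, so that the generating condition reduces to the fact that horizontal and vertical arcs always meet in a rectangle. Fix $x$ in the interior of $D$ and $\epsilon > 0$ with $\clos{B_\epsilon(x)} \subset \interior D$, and write $D_\epsilon = \clos{B_\epsilon(x)}$. By Proposition \ref{propRestDecEsDec}, Theorem \ref{teoSubContDend} and Proposition \ref{propHerUnico}, each restriction $R^i := Q^i|^m_{D_\epsilon}$ is a dendritic cw-decomposition of the disc $D_\epsilon$ satisfying $R^1(z) \cap R^2(z) = \{z\}$ for every $z \in D_\epsilon$; moreover, if $c \in R^1(z) \cap R^2(w)$ then $R^1(c) = R^1(z)$ and $R^2(c) = R^2(w)$, so any $R^1$-plaque meets any $R^2$-plaque in at most one point.

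The key step will be to construct a topological subdisc $D' \subset B_\epsilon(x)$ with $x$ in its interior whose boundary is a $(R^1|^m_{D'}, R^2|^m_{D'})$-rectangle. Since $R^1(x)$ and $R^2(x)$ are dendrites through $x$ meeting only at $x$, I would pick short arcs $\alpha \subset R^1(x)$ and $\beta \subset R^2(x)$ with $x$ as an interior point and with endpoints $a_\pm$ and $b_\pm$ very close to $x$. For each sign pair $(\sigma_1,\sigma_2) \in \{+,-\}^2$, a connectedness argument along plaques in the spirit of the last part of the proof of Proposition \ref{propHerUnico}, combined with upper semicontinuity and the pointwise disjointness hypothesis, should yield a corner $c_{\sigma_1,\sigma_2} \in R^2(a_{\sigma_1}) \cap R^1(b_{\sigma_2})$ close to $x$; its uniqueness is automatic from the observation above. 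Joining consecutive corners by subarcs of the plaques $R^2(a_\pm)$ and $R^1(b_\pm)$ that contain them produces a Jordan curve $\partial D'$ with the required alternating plaque structure, enclosing a disc $D'$ that contains $x$.

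Given $D'$, Proposition \ref{propCajaProd} furnishes a homeomorphism $h \colon D' \to [0,1]^2$ sending plaques of $R^1|^m_{D'}$ and $R^2|^m_{D'}$ onto horizontal and vertical segments respectively, so any two plaques of opposite types meet in exactly one point. Choose $\delta > 0$ with $B_\delta(x) \subset \interior D'$. For $y \in B_\delta(x)$ and $\{i,j\} = \{1,2\}$, Proposition \ref{propRestRest} applied to $D' \subset D_\epsilon$ gives $R^i|^m_{D'}(z) \subset R^i(z) = Q^i|^m_{B_\epsilon(x)}(z)$ for every $z \in D'$, whence
\[
  Q^i|^m_{B_\epsilon(x)}(x) \cap Q^j|^m_{B_\epsilon(x)}(y) \;\supset\; R^i|^m_{D'}(x) \cap R^j|^m_{D'}(y) \;\neq\; \emptyset,
\]
which is exactly the generating condition for $B_\epsilon(x)$ with witness $\delta$.

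The main obstacle lies in the second step: proving existence of the four corners $c_{\sigma_1,\sigma_2}$ and checking that the four joining arcs assemble into a simple closed curve enclosing $x$. This will require combining upper semicontinuity of both decompositions, the codendritic quotient structure of each (Proposition \ref{propHerUnico}), and the at-most-one-point intersection of opposite-type plaques, all inside a sufficiently small neighborhood of $x$.
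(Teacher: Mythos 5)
Your overall strategy---reduce to Proposition \ref{propCajaProd} by exhibiting a $(Q^1,Q^2)$-rectangle near $x$ and then push the product structure back through monotone restrictions---is the same as the paper's, and your first and third steps are fine. The gap is in the second step, and it is not merely a missing verification: the construction as described cannot work in general. You choose arcs $\alpha\subset Q^1(x)$ and $\beta\subset Q^2(x)$ having $x$ as an \emph{interior} point, i.e., you implicitly assume that $Q^1(x)\setminus\{x\}$ and $Q^2(x)\setminus\{x\}$ each have exactly two components near $x$. The plaques are only assumed to be dendrites, so $x$ may be an end point of $Q^1(x)$ (one prong, as at the 1-prong singularities of the pseudo-Anosov of \S\ref{secPAS2}, a case this proposition must cover) or a ramification point (three or more prongs). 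In those cases there is no $(Q^1,Q^2)$-rectangle containing $x$ in its interior at all: by Proposition \ref{propCajaProd} such a rectangle carries a product structure, which forces every plaque meeting its interior to be locally a two-prong arc there. So a single rectangle with $x$ in its interior can never certify the generating property at a singular point of either decomposition.

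The paper's proof avoids this by working sector by sector. After shrinking the disc so that each component of $Q^i(x)\setminus\{x\}$ meets $\partial D$ in a single point, Lemma \ref{lemArcoSepara} forces the prongs of $Q^1(x)$ and $Q^2(x)$ to alternate around $x$; each sector is a subdisc bounded by one prong $\alpha^1$ of $Q^1(x)$, one consecutive prong $\alpha^2$ of $Q^2(x)$ and an arc $\gamma\subset\partial D$. Inside a sector, Lemma \ref{lemArcoSepara} yields a $Q^2$-arc from any interior point of $\alpha^1$ to $\gamma$ and a $Q^1$-arc from any interior point of $\alpha^2$ to $\gamma$ (these arcs cannot terminate on $\alpha^2$, resp.\ $\alpha^1$, because distinct plaques are disjoint); the two arcs must cross, producing a rectangle with $x$ as a \emph{corner}, to which Proposition \ref{propCajaProd} applies. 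The finitely many sector rectangles together cover a full neighborhood of $x$. To repair your write-up, replace the single rectangle centered at $x$ by this family of corner rectangles; your corner points $c_{\sigma_1,\sigma_2}$ then become the crossing points supplied by Lemma \ref{lemArcoSepara}, one per sector, and the rest of your argument (the restriction bookkeeping and the final inclusion of plaques) goes through unchanged.
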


\begin{proof}
Fix a point $x\in D$. 
\azul{Taking a small} subdisc around $x$ we can assume that the intersection of each component of 
$Q^i(x)\setminus\{x\}$ with $\partial D$ is a \azul{single point}, $i=1,2$.
In this way $D\setminus Q^i(x)$ has a finite number of components.
By Lemma \ref{lemArcoSepara} we know that each component of $D\setminus Q^1(x)$ is separated by a component of $Q^2(x)\setminus\{x\}$. 
Therefore, the components of $Q^1(x)\setminus\{x\}$ and $Q^2(x)\setminus\{x\}$ are alternated in the disc. 
Denote by $D'$ a disc contained in $D$ bounded by an arc $\alpha^1\subset Q^1(x)$, an arc $\alpha^2\subset Q^2(x)\setminus\{x\}$ and 
an arc $\gamma\subset\partial D$. 
Assume that $\alpha^1$ and $\alpha^2$ are consecutive. 

By Lemma \ref{lemArcoSepara} we have that for each interior point 
$y\in \alpha^1$ there is an arc $\beta^2_y$ (of $Q^2$) from $y$ to $\gamma$. 
Also, for each interior point 
$z\in \alpha^2$ there is a stable arc $\beta^u_z$ from $z$ to $\gamma$. 
Since $\alpha^s$ and $\alpha^u$ are consecutive, if $z$ and $y$ are close to $x$ 
we have that $\beta^s_y$ cuts $\beta^u_z$. 
In this way we obtain a $(Q^1,Q^2)$-rectangle and we can apply Proposition \ref{propCajaProd} to prove that both decompositions 
generate a neighborhood of $x$ in $D'$.

Repeating this argument in each sector we conclude 
that $Q^1$ and $Q^2$ generate a (full) neighborhood of $x$. 
\end{proof}

\begin{teo}
\label{teoEquivPANo1prong}
Every cw1-expansive homeomorphism of a compact surface is expansive.
\end{teo}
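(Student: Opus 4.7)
The plan is to reduce the claim to Theorem \ref{thmCw1exp}: once the stable and unstable cw-foliations $\W^s$ and $\W^u$ of $f$ are shown to form a generating pair in the sense of Definition \ref{dfGenPair}, expansivity follows immediately from that theorem.

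Since the hierarchy in Table \ref{tablaExp} gives cw1-exp $\Rightarrow$ cw2-exp $\Rightarrow \cdots \Rightarrow$ \cwfin-exp $\Rightarrow$ cw-exp, every cw1-expansive homeomorphism of $S$ is, in particular, \cwfin-expansive (and hence cw-expansive). By Theorem \ref{thmCovCwExpFol}, the stable and unstable cw-foliations $\W^s$ and $\W^u$ are therefore well defined, invariant and without plaque-compact leaves; and by Theorem \ref{thmCwfSuperficies}, both $\W^s$ and $\W^u$ are dendritations of $S$. Combining Proposition \ref{propCwFolDec} with Proposition \ref{propLocChartDendritation}, for every sufficiently small closed disc $D\subset S$ the restricted decompositions $Q^s_D$ and $Q^u_D$ are dendritic cw-decompositions of $D$.

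Next I would use the extra strength of cw1-expansivity to pin down the intersection of local plaques. Let $\delta>0$ be a cw1-expansivity constant, shrunk if necessary so that every open set of diameter less than $\delta$ belongs to both complete bases $\U^s$ and $\U^u$ defined in (\ref{ecuUdelta}). For a disc $D$ with $\diam(D)<\delta$ and any $x\in D$, the plaques $Q^s_D(x)$ and $Q^u_D(x)$ are a $\delta$-stable and a $\delta$-unstable continuum respectively. Definition \ref{defcwN} therefore forces $\card(Q^s_D(x)\cap Q^u_D(x))\le 1$; since $x$ lies in both plaques, we conclude
\[
 Q^s_D(x)\cap Q^u_D(x)=\{x\}\quad\text{for every } x\in D.
\]

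With these ingredients in hand, Proposition \ref{propCw1Gen} applied to the pair $(Q^s_D,Q^u_D)$ on every sufficiently small disc $D$ produces the local generating behaviour around every point of $S$, which is exactly the quantifier-over-small-balls condition appearing in Definition \ref{dfGenPair}. Hence $(\W^s,\W^u)$ is a generating pair for $f$, and Theorem \ref{thmCw1exp} yields the expansivity of $f$. The main (minor) obstacle I anticipate is bookkeeping: translating the per-disc conclusion of Proposition \ref{propCw1Gen} into the uniform statement of Definition \ref{dfGenPair}, which amounts to choosing the $\delta$-neighborhood of $x$ compatibly with the disc $D$ on which the generating pair structure was extracted.
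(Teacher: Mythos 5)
Your proposal is correct and follows essentially the same route as the paper: the paper's own proof consists precisely of invoking Proposition \ref{propCw1Gen} to show that $\W^s$ and $\W^u$ generate (with the dendritic structure coming from Theorem \ref{thmCwfSuperficies} and the singleton-intersection property from the cw1-expansivity constant) and then applying Theorem \ref{thmCw1exp}. The bookkeeping you flag at the end is indeed the only detail left implicit in the paper's two-line argument.
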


\begin{proof}
If $f$ is cw1-expansive then by Proposition \ref{propCw1Gen} we have that $\W^s$ and $\W^u$ generate.
Then, the result follows by Theorem \ref{thmCw1exp}. 
\end{proof}

\azul{As usual, $\Omega(f)$ denotes the non-wandering set of $f$.}

\begin{cor}
For a homeomorphism $f\colon S\to S$ of a compact surface the following statements are equivalent: 
 \begin{enumerate}
  \item $f$ is cw1-expansive,
  \item $f$ is expansive,
  \item $f$ is 2-expansive and $\Omega(f)=S$,
  \item $f$ is conjugate to a pseudo-Anosov diffeomorphism without 1-prongs.
 \end{enumerate}
\end{cor}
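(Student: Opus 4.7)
The plan is to prove the four equivalences by closing the cycle $(1)\Leftrightarrow(2)$, $(2)\Leftrightarrow(4)$, $(2)\Leftrightarrow(3)$, reusing machinery already developed in the paper. The easy block is $(1)\Leftrightarrow(2)$: the implication $(2)\Rightarrow(1)$ is contained in Table \ref{tablaExp} (expansivity $=$ 1-expansivity implies cw1-expansivity), and $(1)\Rightarrow(2)$ is exactly Theorem \ref{teoEquivPANo1prong}, which has just been proved on surfaces using that $\W^s,\W^u$ form a generating pair (Proposition \ref{propCw1Gen}) together with Theorem \ref{thmCw1exp}.

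For $(2)\Leftrightarrow(4)$, I would invoke the Hiraide--Lewowicz classification mentioned in the introduction: every expansive homeomorphism of a compact surface is conjugate to a pseudo-Anosov diffeomorphism. To rule out 1-prongs in (4) when starting from (2), apply Proposition \ref{propPAS2Cantor}: at a 1-prong one constructs arbitrarily small Cantor sets whose entire orbit remains $\epsilon$-close, so a pseudo-Anosov with 1-prongs fails to be $N$-expansive for any $N$, in particular fails to be expansive. The converse $(4)\Rightarrow(2)$ is the classical statement that pseudo-Anosov diffeomorphisms without 1-prongs are expansive.

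For $(2)\Leftrightarrow(3)$, the direction $(2)\Rightarrow(3)$ is immediate: expansivity implies 2-expansivity by Table \ref{tablaExp}, and once $f$ is conjugate to a pseudo-Anosov without 1-prongs (by the already-proved $(2)\Rightarrow(4)$), topological transitivity of pseudo-Anosov maps gives $\Omega(f)=S$. The hard direction is $(3)\Rightarrow(2)$; the natural strategy is to reduce it to cw1-expansivity and then apply Theorem \ref{teoEquivPANo1prong}. From 2-expansivity one gets cw2-expansivity, hence \cwfin-expansivity, so Theorem \ref{thmCwfSuperficies} yields stable and unstable dendritations with topologically transverse leaves on a dense set $H\subset S$. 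The role of $\Omega(f)=S$ is precisely to rule out the obstructions in the known examples of $N$-expansive non-expansive maps (the Q$^r$-Anosov diffeomorphisms of \S\ref{secQAnosov}--\S\ref{secQrAnosov} and the anomalous saddles of \S\ref{secAnomalous}), where the extra stable/unstable intersections or non-transversalities occur at wandering points. Concretely, if some $\delta$-stable $A^s$ and $\delta$-unstable $A^u$ met in two distinct points $p\ne q$, one would use recurrence coming from $\Omega(f)=S$ to find orbit returns close to $p$ and $q$ and apply the capacitor construction (Theorem \ref{teoCap}) to produce a third intersection point of the iterated continua, contradicting 2-expansivity.

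The main obstacle is exactly this $(3)\Rightarrow(2)$ step: promoting the \emph{generic} topological transversality granted by Theorem \ref{thmCwfSuperficies} to a \emph{uniform} single-intersection property of all small $\delta$-stable and $\delta$-unstable continua, using only 2-expansivity and the non-wandering hypothesis. The delicate point is that 2-expansivity bounds the cardinality of $A\cap B$ only for sets (not continua) staying $\delta$-close for all time, so one must first show that the extra cw-intersections propagate into genuine 2-expansivity violations by exploiting density of recurrent orbits and the capacitor mechanism of Theorem \ref{teoCap}; the rest then follows by Theorem \ref{teoEquivPANo1prong} and the already-established equivalence $(1)\Leftrightarrow(2)$.
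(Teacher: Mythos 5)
Your handling of $(1)\Leftrightarrow(2)$ coincides with the paper: $(2)\Rightarrow(1)$ is the trivial implication from Table \ref{tablaExp}, and $(1)\Rightarrow(2)$ is Theorem \ref{teoEquivPANo1prong}. For $(2)\Leftrightarrow(4)$ you also land where the paper does, namely on the Hiraide--Lewowicz classification \cites{Hi,L}; your extra remark that 1-prongs are excluded ``by Proposition \ref{propPAS2Cantor}'' is only a heuristic, since that proposition is proved for one specific example on the sphere and not for an arbitrary pseudo-Anosov with 1-prongs, but this does not matter because the classification theorem already produces a pseudo-Anosov without 1-prongs.

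The genuine gap is in $(3)\Rightarrow(2)$. The paper does not prove this implication internally at all: it cites the external result \cite{APV} (Artigue--Pac\'{\i}fico--Vieitez, \emph{N-expansive homeomorphisms on surfaces}) for the equivalence of (2) and (3). You instead propose to derive cw1-expansivity from 2-expansivity plus $\Omega(f)=S$ via Theorem \ref{thmCwfSuperficies} and the capacitor mechanism of Theorem \ref{teoCap}, and you yourself flag that the central step --- promoting generic topological transversality to the uniform statement that every small $\delta$-stable and $\delta$-unstable continuum meet in at most one point --- is not carried out. As written this is a strategy, not a proof: 2-expansivity only bounds cardinalities of sets whose \emph{entire} orbit stays $\delta$-small, so turning two intersection points of a stable and an unstable continuum into a violation of 2-expansivity requires a recurrence argument that is precisely the content of \cite{APV} and is nowhere supplied here. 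Either complete that argument or, as the paper does, invoke \cite{APV} for $(2)\Leftrightarrow(3)$.
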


\begin{proof}
We have that $2\to 1$ on arbitrary metric spaces. 
The equivalence of 2 and 4 was shown in \cites{Hi,L}. 
The equivalence of 2 and 3 follows by \cite{APV}.
We have that $1 \to 2$ by Theorem \ref{teoEquivPANo1prong}. 
\end{proof}

\begin{bibdiv}
\begin{biblist}

\bib{AHNO}{article}{
author={G. Acosta}, 
author={R. Hernández-Gutiérrez}, 
author={I. Naghmouchi}, 
author={P. Oprocha},
title={Periodic points and transitivity on dendrites},
journal={Ergodic Theory and Dynamical Systems},
year={2016}}

\bib{AH}{book}{
author={N. Aoki},
author={K. Hiraide},
title={Topological theory of dynamical systems},
publisher={North-Holland},
year={1994}}

%
%
%
\bib{ArKinExp}{article}{
author = {A. Artigue},
title = {Kinematic expansive flows},
journal = {Ergodic Theory and Dynamical Systems},
volume = {\azul{36}},
year = {2016},
pages = {\azul{390--421}}}
%
%

\bib{ArRobNexp}{article}{
author={A. Artigue},
title={Robustly N-expansive surface diffeomorphisms},
volume={36},
year={2016},
journal={Discrete and Continuous Dynamical Systems},
pages={2367--2376}}

\bib{ArAnomalous}{article}{
author={A. Artigue},
title={Anomalous cw-expansive surface homeomorphisms},
journal={Discrete and Continuous Dynamical Systems},
pages={\azul{3511--3518}},
volume={36},
year={2016}}

\bib{APV}{article}{
author={A. Artigue},
author={M.J. Pacífico},
author={J.L. Vieitez},
title={N-expansive homeomorphisms on surfaces},
year={to appear},
journal={Communications in Contemporary Mathematics}}

\bib{BM}{article}{
author={M. Barge},
author={B.F. Martensen},
title={\azul{Classification of expansive attractors on surfaces}},
journal={Ergodic Theory and Dynamical Systems},
volume={31},
year={2011}, 
pages={1619--1639}}

%
%
%
\bib{BW}{article}{
author={R. Bowen and P. Walters}, 
title={Expansive one-parameter flows}, 
journal={J. Diff. Eq.}, year={1972}, pages={180--193},
volume={12}}

\bib{BS}{book}{
author={M. Brin},
author={G. Stuck},
title={Introduction to Dynamical Systems},
publisher={Cambridge University Press},
year={2003}}

\bib{CaNe}{book}{
author={C. Camacho},
author={A.L. Neto},
title={Geometric Theory of Foliations},
publisher={Birkhäuser Boston, Inc.},
year={1985}}

\bib{CaCo}{book}{
author={A. Candel},
author={L. Conlon},
title={Foliations I},
publisher={Am. Math. Soc.},
year={2000},
series={Graduate Studies in Math.},
volume={23}}

\bib{CaPa}{article}{
title={Monotone quotients of surface diffeomorphisms},
author={A. de Carvalho},
author={M. Paternain},
journal={Math. Research Letters},
volume={10},
year={2003},
pages={603--619}}

\bib{ChaCha}{article}{
author={J.J. Charatonik},
author={W.J. Charatonik},
title={Connected subsets of dendrites and separators of the plane},
journal={Topology and its Applications},
volume={36}, 
year={1990}, 
pages={233--245}}

\bib{Da}{book}{
author={R.J. Daverman}, 
title={Decompositions of manifolds},
publisher={Academic Press. Inc},
year={1986},
series={Pure and applied mathematics}}
%

\bib{Engel}{book}{
title={General Topology},
author={R. Engelking},
publisher={Heldermann Verlag},
series={Sigma series in pure math.},
volume={6},
year={1989}}

%
%
\bib{FR}{article}{
author={J. Franks},
author={C. Robinson},
title={A quasi-Anosov diffeomorphism that is not Anosov},
journal={Trans. of the AMS},
volume={223},
year={1976},
pages={267--278}}

%
%
%

\bib{Hi89}{article}{
author={K. Hiraide},
title={Expansive homeomorphisms with the pseudo-orbit tracing property of $n$-tori},
journal={J. Math. Soc. Japan}, 
volume={41},
year={1989}, 
pages={357--389}}

\bib{Hi}{article}{
author={K. Hiraide},
title={Expansive homeomorphisms of compact surfaces are pseudo-Anosov},
journal={Osaka J. Math.}, 
volume={27},
year={1990}, 
pages={117--162}}

\bib{HPS}{book}{
author={M. Hirsch}, 
author={C. Pugh},
author={M. Shub},
title={Invariant manifolds},
series={Lecture notes in mathematics}, 
publisher={Springer-Verlag}, 
volume={583}, 
year={1977}}
\bib{HY}{book}{
author={J.G. Hocking},
author={G.S. Young},
title={Topology},
publisher={Addison-Wesley Publishing Company, Inc.},
year={1961}}
%
%
\bib{HW}{book}{
author={W. Hurewicz},
author={H. Wallman},
title={Dimension Theory},
publisher={Princeton University Press},
year={1948},
edition={Revised edition}}

\bib{IN}{book}{
author={A. Illanes},
author={A.B. Nadler},
title={Hyperspaces, Fundamentals and recent advances},
publisher={Marcel Dekker, Inc},
year={1999}}

\bib{Jo}{article}{
journal={Bull. Amer. Math. Soc.},
volume={68},
year={1962}, 
pages={117--119},
title={On the existence of a small connected open set with a connected boundary},
author={F.B. Jones}}

%
%
\bib{Ka93}{article}{
author={H. Kato},
title={Continuum-wise expansive homeomorphisms},
journal={Can. J. Math.},
volume={45},
number={3},
year={1993},
pages={576--598}}
\bib{Ka2}{article}{
author={H. Kato},
title={Concerning continuum-wise fully expansive homeomorphisms of continua},
journal={Topology and its Applications},
volume={53},
year={1993},
pages={239--258}}

\bib{KTT}{article}{
author={K. Kawamura},
author={H.M. Tuncali}, 
author={E.D. Tymchatyn},
journal={Houston Journal of Mathemtaics},
volume={21}, 
year={1995},
title={Expansive homeomorphisms on Peano curves},
pages={573--583}}

%
%
\bib{Ko84}{article}{
author={M. Komuro}, 
title={Expansive properties of Lorenz
attractors}, journal={The Theory of dynamical systems and its
applications to nonlinear problems}, year={1984}, place={Kyoto},
pages={4--26}, publisher={World Sci. Singapure}}
\bib{Kur1}{book}{
author={K. Kuratowski},
title={Introduction to set theory and topology},
publisher={Pergamon Press},
year={1961}}

\bib{Kur}{book}{
author={K. Kuratowski},
title={Topology},
volume={II},
publisher={Academic Press, New York and London},
year={1968}}

\bib{Lee}{book}{
author={J.M. Lee},
title={Introduction to Topological Manifolds},
series={Graduate texts in mathematics},
volume={202},
publisher={Springer-Verlag},
year={2000}}


%
%
\bib{L}{article}{
author={J. Lewowicz},
title={Expansive homeomorphisms of surfaces},
journal={Bol. Soc. Bras. Mat.},
volume={20},
pages={113-133},
year={1989}}
%
%
\bib{Ma75}{incollection}{
author={R. Mañé},
title={Expansive diffeomorphisms},
year={1975},
booktitle={Dynamical Systems - Warwick 1974},
volume={468},
series={Lecture Notes in Mathematics},
editor={Manning, Anthony},
publisher={Springer Berlin Heidelberg},
pages={162-174}}
 
\bib{Ma}{article}{
author={R. Mañé},
title={Expansive homeomorphisms and topological dimension},
journal={Trans. of the AMS}, 
volume={252}, 
pages={313--319}, 
year={1979}}

\bib{McM}{book}{
author={C.T. McMullen}, 
title={Complex dynamics and renormalization},
series={Annals of Mathematical Studies},
volume={135},
publisher={Princeton University Press, Princeton, NJ},
year={1994}}


\bib{Mo25}{article}{
author={R.L. Moore}, 
title={Concerning upper-semicontinuous collections of continua}, 
journal={Transactions of the AMS},
volume={4},
year={1925},
pages={416--428}}

\bib{Mo32}{book}{
author={R.L. Moore}, 
title={Foundations of point set theory}, 
publisher={Amer. Math. Soc. Colloquium Publications}, 
volume={13}, 
place={New York}, 
year={1932}}

\bib{Mo28}{article}{
author={R.L. Moore},
journal={Proc. Natl. Acad. Sci. USA},
year={1928},
volume={14},
pages={85--88},
title={Concerning Triods in the Plane and the Junction Points of Plane Continua}}

\bib{Mo12}{article}{
author={C.A. Morales},
title={A generalization of expansivity},
journal={Disc. and Cont. Dyn. Sys.},
volume={32},
year={2012}, 
pages={293--301}}
%
%
\bib{Nadler}{book}{
author={S. Nadler Jr.},
title={Hyperspaces of Sets},
publisher={Marcel Dekker Inc. New York and Basel},
year={1978}}

\bib{Nadler2}{book}{
author={S. Nadler Jr.}, 
title={Continuum Theory}, 
series={Pure and Applied Mathematics}, 
volume={158}, 
publisher={Marcel Dekker, New York},
year={1992}} 
 
\bib{PaVi}{article}{
author={M.J. Pacifico},
author={J.L. Vieitez},
title={\azul{Entropy expansiveness and domination for surface diffeomorphisms}},
journal={Rev. Mat. Complut.},
volume={21},
number={2},
year={2008},
pages={293--317}}

\bib{PaPuVi}{article}{
title={\azul{Robustly expansive homoclinic classes}},
author={M.J. Pacifico},
author={E.R. Pujals},
author={J.L. Vieitez},
journal={Ergodic Theory and Dynamical Systems},
year={2005}, 
volume={25}, 
pages={271--300}}


\bib{PX}{article}{
year={2014},
journal={Mathematische Zeitschrift},
volume={278},
title={A classification of minimal sets for surface homeomorphisms},
author={A. Passeggi},
author={J. Xavier},
pages={1153--1177}}

%
%

\bib{Pittman}{article}{
author={C.R. Pittman}, 
title={An elementary proof of the triod theorem}, 
journal={Proc. Amer. Math. Soc.},
volume={25},
year={1970}, 
pages={919}}


\bib{Ro}{article}{
author={J.H. Roberts}, 
title={There does not exist an upper semi-continuous decomposition of E into arcs},
journal={Duke Math. J.},
volume={2}, 
year={1936}, 
pages={10--19}}

\bib{RoSt}{article}{
title={Monotone Transformations of Two-Dimensional Manifolds},
author={J.H. Roberts}, 
author={N.E. Steenrod},
journal={Annals of Mathematics}, 
volume={39}, 
year={1938},
pages={851--862}}

%
%
%

\bib{Samba}{book}{
author={M. Sambarino},
title={Estructura local de conjuntos estables e inestables de homeomorfismos en superficies},
year={1993},
publisher={Universidad de la República, Uruguay, Monograph}}


%
%

\bib{Sm}{article}{
author={M. Smith},
title={A theorem on continuous decompositions of the plane into nonseparating continua},
journal={Proceedings of the AMS},
volume={55},
pages={221--222},
year={1976}}

%
%
\bib{Vi2002}{article}{
author={J.L. Vieitez},
title={Lyapunov functions and expansive diffeomorphisms on 3D-manifolds},
journal={Ergodic Theory and Dynamical Systems},
year={2002},
volume={22},
pages={601--632}}

%

\bib{WaltersET}{book}{
author={P. Walters},
title={\azul{An introduction to ergodic theory}},
publisher={Springer-Verlag New York, Inc.},
year={1982}}


\bib{Whitney33}{article}{
author={H. Whitney},
title={Regular families of curves},
journal={Ann. of Math.}, 
number={34},
year={1933}, 
pages={244--270}}

\end{biblist}
\end{bibdiv}
\noindent Departamento de Matemática y Estadística del Litoral, Universidad de la República,\\
Gral. Rivera 1350, Salto-Uruguay\\
E-mail: artigue@unorte.edu.uy
\end{document}